\documentclass[10pt]{article}

\setlength{\textheight}{225truemm}
\setlength{\textwidth}{140truemm}
\setlength{\headheight}{14.0truemm}
\setlength{\hoffset}{-10truemm} 
\setlength{\voffset}{-20truemm} 

\usepackage{amsmath,amsthm} 
\usepackage{amssymb}
\usepackage[dvipsnames, table]{xcolor} 
\usepackage{rotating}
\usepackage[all]{xy} 
\usepackage{comment} 
\usepackage{tikz} 
\usetikzlibrary{arrows, arrows.meta, decorations.markings, decorations.pathmorphing, shapes} 
\usepackage{comment} 
\usepackage{centernot}  
\usepackage{mathtools} %
\usepackage{ stmaryrd } 
\usepackage{multicol} 
\usepackage{mathabx} 
\usepackage[colorlinks=true]{hyperref}
\usepackage{mathrsfs}
 
\numberwithin{equation}{section}

\newtheorem{thm}{Theorem}[section]
\newtheorem{cor}[thm]{Corollary}
\newtheorem{conj}[thm]{Conjecture}
\newtheorem{lem}[thm]{Lemma}
\newtheorem{prop}[thm]{Proposition}
\theoremstyle{definition} 
\newtheorem{defn}[thm]{Definition}
\newtheorem{rem}[thm]{Remark}
\newtheorem{exam}[thm]{Example}

\newcommand{\bA}{\mathbb{A}} 
\newcommand{\bC}{\mathbb{C}}

\newcommand{\bR}{\mathbb{R}}
\newcommand{\bZ}{\mathbb{Z}}
\newcommand{\bN}{\mathbb{N}} 
\newcommand{\cB}{\mathcal{B}}
\newcommand{\cF}{\mathcal{F}}
\newcommand{\cW}{\mathcal{W}} 
\newcommand{\fB}{\mathfrak{B}} 
\newcommand{\fR}{\mathfrak{R}} 
\newcommand{\CE}{\operatorname{CE}}
\newcommand{\red}{\text{red}}
\newcommand{\gr}{\operatorname{gr}}
\newcommand{\ev}{\operatorname{ev}}
\newcommand{\op}{\operatorname{op}}

\newcommand{\Id}{\operatorname{Id}}
\newcommand{\Hom}{\operatorname{Hom}}
\newcommand{\RHom}{\operatorname{RHom}}
\newcommand{\End}{\operatorname{End}}
\newcommand{\Aut}{\operatorname{Aut}}
\newcommand{\Ext}{\operatorname{Ext}}
\newcommand{\REnd}{\operatorname{REnd}}
\newcommand{\Rep}{\text{Rep}}
\newcommand{\Spec}{\operatorname{Spec}}

\newcommand{\Mat}{\operatorname{Mat}}
\newcommand{\GL}{\text{GL}}
\newcommand{\MC}{\text{MC}}
\newcommand{\wMC}{\widehat{MC}}
\newcommand{\fg}{\mathfrak{g}}
\newcommand{\fh}{\mathfrak{h}}
\newcommand{\imm}{\operatorname{im}}

 \newcommand\scalemath[2]{\scalebox{#1}{\mbox{\ensuremath{\displaystyle #2}}}}

\definecolor{forest}{rgb}{.133,.545,.133}

\newcommand{\Addresses}{{
  \bigskip
  \footnotesize

  D.~Kaplan, \textsc{School of Mathematics, University of Birmingham,  Birmingham UK B15 2TT}\par\nopagebreak
  \textit{E-mail address}, D.~Kaplan: \texttt{d.kaplan@bham.ac.uk}

  \medskip

  T.~Schedler, \textsc{Department of Mathematics, Imperial College London,
    London, UK SW7 2AZ}\par\nopagebreak
  \textit{E-mail address}, T.~Schedler: \texttt{t.schedler@imperial.ac.uk}
}}

\begin{document}

\title{Multiplicative preprojective algebras are 2-Calabi--Yau}
\author{Daniel Kaplan and Travis Schedler}
\date{}

\maketitle

\begin{abstract}
We prove that multiplicative preprojective algebras, defined by Crawley-Boevey and Shaw, are 2-Calabi--Yau algebras, in the case of quivers containing  unoriented cycles. If the quiver is not itself a cycle, we show that the center is trivial, and hence the Calabi--Yau structure is unique.  If the quiver is a cycle, we show that the algebra is a non-commutative crepant resolution of its center, the ring of functions on the corresponding multiplicative quiver variety with a type A surface singularity.   We  also  prove  that  the  dg  versions  of  these  algebras (arising as certain Fukaya  categories) are  formal. We  conjecture  that  the  same  properties hold for all non-Dynkin quivers, with respect to any extended Dynkin subquiver (note that  the  cycle  is  the  type  A  case). Finally,  we  prove  that multiplicative quiver varieties---for  all  quivers---are  formally  locally  isomorphic  to  ordinary  quiver varieties.  In particular, they are all symplectic singularities (which implies they are normal and have rational Gorenstein singularities). This includes character varieties of Riemann surfaces with punctures and monodromy conditions. We deduce this from a more general statement about 2-Calabi--Yau algebras (following Bocklandt, Galluzzi, and Vaccarino).
\end{abstract}


\tableofcontents

\section{Introduction}

Multiplicative preprojective algebras have recently gained attention in geometry and topology. These algebras appear in the study of certain wrapped Fukaya categories, see \cite{Lekili}, \cite{Lekili2}, in the study of microlocal sheaves on rational curves, see \cite{Bez}, and in the study of generalized affine Hecke algebras, see Appendix 1 in \cite{Rains}. Their moduli spaces of representations are called multiplicative quiver varieties, and are analogues of Nakajima's quiver varieties. These include character varieties of rank $n$ local systems on closed Riemann surfaces, or on open Riemann surfaces with punctures and monodromy conditions \cite{Shaw, Yamakawa, Schedler}. 
Multiplicative quiver varieties have also been studied from various viewpoints in \cite{VdB_GVMM}, \cite{Boalch}, \cite{CB2}, and \cite{OM}.
A quantization was defined in \cite{Jordan} and further studied in \cite{Pavel}.

Historically, Crawley-Boevey and Shaw defined the multiplicative preprojective algebra, in \cite{Shaw}, to view solutions of the Deligne--Simpson problem as irreducible representations of multiplicative preprojective algebras of certain star-shaped quivers. Their paper establishes the foundations for much of this work. For a fixed field $k$ and a quiver $Q$ with vertex set $Q_{0}$ and arrow set $Q_{1}$ and $q \in (k^\times)^{Q_{0}}$, Crawley-Boevey and Shaw define
$$
\Lambda^{q}(Q) := \frac{L_{Q}}{J_{Q}} := \frac{k \overline{Q} [ (1+ a a^*)^{-1}]_{a \in \overline{Q}}}{\left ( r := \prod_{a \in Q_{1}} (1+ a a^*)(1+ a^* a)^{-1} - q  \right )},
$$
a quotient of the localized path algebra of the double quiver, $L_{Q}$, by the two-sided ideal $J_Q$ generated by the single relation, $r$. 

Many of the desirable properties of the (additive) preprojective algebra seem to hold for the multiplicative preprojective algebra. But establishing this rigorously is difficult, as most proof techniques in the additive case (employing the grading on the algebra) are not available in the multiplicative case. In particular, the multiplicative preprojective algebra is not in general a deformation of the ordinary one, nor does it have a useful Hilbert series for a filtration (due to the localization). 

The goal of this paper is to overcome these difficulties when the quiver contains a cycle, and properly formulate the general expectations. This is sufficient for applications to multiplicative quiver varieties for \emph{every} quiver. 

The main statement is the following:

\begin{conj} \label{conj: 2CY}
$\Lambda^{q}(Q)$ is 2-Calabi--Yau for all $q \in (k^\times)^{Q_{0}}$ and all $Q$ connected and not Dynkin; moreover, it is a prime ring, and the family $\Lambda^q(Q)$ is flat in $q$.\footnote{A prime ring is a non-commutative analogue of an integral domain, being a ring $R$ in which $aRb=0$ implies $a=0$ or $b=0$.}
If $Q$ is furthermore not extended Dynkin, then $Z(\Lambda^q(Q))=k$, and the Calabi--Yau structure is unique. 
\end{conj}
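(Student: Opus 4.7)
The plan is to construct, uniformly in $q$, a projective $\Lambda^e$-bimodule resolution of $\Lambda := \Lambda^q(Q)$ of length two, to verify it is self-dual up to a shift by two, and to extract each clause of the conjecture from properties of this resolution. With $S = kQ_0$ and $V = k\overline Q_1$, my candidate is
$$ 0 \to \Lambda \otimes_S \Lambda \xrightarrow{d_2} \Lambda \otimes_S V \otimes_S \Lambda \xrightarrow{d_1} \Lambda \otimes_S \Lambda \xrightarrow{\mu} \Lambda \to 0, $$
where $\mu$ is multiplication, $d_1$ is the double-quiver differential $1 \otimes a \otimes 1 \mapsto a \otimes 1 - 1 \otimes a$, and $d_2$ encodes the multiplicative moment-map relation $r - q$ via noncommutative partial derivatives, corrected by the localization units $(1+aa^\ast)^{-1}$. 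The leftmost term captures the single relation, and its correct rank will immediately give flatness in $q$ and nonzerodivisibility of $r-q$.

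The main obstacle is exactness of this sequence. In the additive preprojective setting one concludes via the Hilbert series of the associated graded algebra, a tool destroyed by the localization. I would replace it by a two-layer inductive strategy, for now restricted to $Q$ containing an unoriented cycle as in the paper's main case. The base case is the pure $n$-cycle $\tilde A_{n-1}$, for which (as the abstract indicates) $\Lambda^q$ is a noncommutative crepant resolution of a type-A surface singularity; exactness and the length-two order structure can be checked directly against that geometric model. The inductive step attaches an extra vertex or arrow to a subquiver that already contains a cycle; I would argue that each such attachment is a formally smooth localized extension of algebras, so that bimodule $\operatorname{Tor}$-vanishing of the previous resolution is preserved by base change. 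The remaining cycle-free non-Dynkin cases of the conjecture fall outside this argument, consistent with the statement being left as a conjecture.

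Given exactness, self-duality of the complex up to shift two is formal: the involution $a \leftrightarrow a^\ast$ on $\overline Q$ together with the cyclic antisymmetry of $r - q$ up to units supplies an isomorphism between $P_\bullet$ and its $\RHom_{\Lambda^e}(-,\Lambda^e)$-dual shifted by $2$, yielding the 2-Calabi-Yau property. Primeness would be bootstrapped from the cycle base case, where normality of the associated surface singularity and the explicit order structure give primeness directly, and then propagated along the formally smooth inductive extensions. For $Q$ not extended Dynkin, triviality of $Z(\Lambda) = HH^0(\Lambda)$ would be read off from the resolution by combining $\RHom_{\Lambda^e}(\Lambda,\Lambda) \simeq \Lambda[-2]$ with the combinatorial observation that such $Q$ admits no nontrivial $S$-bimodule invariants in $\Lambda$ beyond the scalars; uniqueness of the CY structure then follows in the standard way, since $Z(\Lambda) = k$ reduces the torsor of CY classes to a single $k^\times$-orbit.
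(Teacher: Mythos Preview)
First, note that this is stated as a \emph{conjecture}: the paper only proves it when $Q$ contains an unoriented cycle (Theorem~1.2), and leaves the remaining non-Dynkin cases open. So you are right that a full proof is not expected; the question is whether your outline matches the paper's strategy for the cycle-containing case.

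Your overall architecture---build the length-two complex $P_\bullet$, show it resolves $\Lambda$, verify self-duality---matches the paper. The complex you write is exactly Crawley-Boevey--Shaw's, and the self-duality step is the content of the paper's Theorem~3.3. But your proposed proof of exactness has a genuine gap. You plan an induction: base case the pure cycle, inductive step ``attach an extra vertex or arrow,'' arguing that this is a ``formally smooth localized extension'' so that $\operatorname{Tor}$-vanishing is preserved by base change. This inductive step is not justified and is where the difficulty lies. Adding a vertex or arrow to $Q$ does not present $\Lambda^q(Q)$ as a flat extension or base change of the smaller algebra in any evident sense; the multiplicative relation at each vertex involves \emph{all} incident arrows, so the new arrow perturbs the old relations. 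There is no off-the-shelf mechanism by which exactness of $P_\bullet$ transfers along such an attachment.

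The paper's actual mechanism is entirely different and is the technical heart of the work: the \emph{strong free product property} (Definition~3.3, Theorem~3.4, proved in Section~7). Using Bergman's Diamond Lemma over the localized ring $kQ_0[t,(t+q)^{-1}]$, they construct an explicit $kQ_0$-basis of $L_Q$ exhibiting a linear isomorphism $L_Q \cong \Lambda^q(Q) *_{kQ_0} kQ_0[t,(t+q)^{-1}]$. The decomposition is not vertex-by-vertex but rather $\Lambda^q(Q) \cong \Lambda^{q_E}(\text{cycle}) *_{kQ_0} \Lambda^{q'}(\text{complement}, \mathcal{W})$ as modules, with the complement treated as a \emph{partial} multiplicative preprojective algebra. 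Exactness of $P_\bullet$ (specifically, injectivity of the leftmost map) then follows from the weak free product property, a consequence of the strong one (Propositions~3.5--3.6).

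Your claims for primeness and triviality of the center are also too optimistic. Neither is ``read off from the resolution'': the paper proves both (Propositions~8.5, 8.6) by direct manipulation of the explicit Diamond-Lemma bases, showing that left and right multiplication by certain paths are injective and then deriving contradictions. The 2-Calabi--Yau isomorphism $\RHom_{\Lambda^e}(\Lambda,\Lambda^e) \simeq \Lambda[2]$ does not by itself compute $HH^0(\Lambda) = Z(\Lambda)$.
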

  
Here (extended) Dynkin refers to the underlying unoriented graph being of types A, D, or E.  We explain how one can reduce the conjecture to the case where $Q$ is extended Dynkin in Section \ref{ss: cont cycle}. We carry out this procedure for $Q = \widetilde{A_n}$ and thereby prove the conjecture for all connected quivers containing it:
\begin{thm} \label{thm: 2CY}
$\Lambda^{q}(Q)$ is 2-Calabi--Yau and prime for any $q \in (k^\times)^{Q_{0}}$ and any $k$ a field, and $Q$ connected and containing an unoriented cycle. The family of algebras is flat in $q$. If the quiver properly contains a cycle, then $Z(\Lambda^q(Q))=k$, and the Calabi--Yau structure is unique.
\end{thm}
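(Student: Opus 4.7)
The plan is to separate the theorem into the base case $Q = \widetilde{A_n}$ and then a reduction showing that the general case follows from this base case. For $\widetilde{A_n}$ itself, I would first compute $Z(\Lambda^q(\widetilde{A_n}))$ directly and identify it with the coordinate ring of the corresponding multiplicative quiver variety, which in type $A$ should be a (multiplicative analogue of a) Kleinian surface singularity. I would then exhibit $\Lambda^q(\widetilde{A_n})$ as the endomorphism algebra of a tilting module over this centre, giving a non-commutative crepant resolution. Since the centre is a Gorenstein surface, Van den Bergh's theorem (NCCRs of Gorenstein surfaces are 2-Calabi--Yau) supplies the 2-CY property, and primeness follows since the algebra embeds into a matrix algebra over the function field of an irreducible variety. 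Flatness in $q$ can be checked by producing a free module structure over $k[(k^\times)^{Q_0}]$.

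For the general case, let $Q$ be connected with an unoriented cycle $C \cong \widetilde{A_n}$, and suppose $Q \ne C$. I would invoke the reduction procedure described in Section~\ref{ss: cont cycle}: the vertices of $Q \setminus C$ are arranged as trees attached to $C$, and each pendant vertex can be peeled off at the level of $\Lambda^q(Q)$. The key claim to verify is that peeling a pendant vertex produces a multiplicative preprojective algebra on a smaller quiver, and that the passage between them is an equivalence (Morita, or at worst derived) preserving 2-Calabi--Yau-ness, primeness, and flatness in $q$. Iterating until only $C$ remains transfers all the desired properties back up. Triviality of the centre when $Q$ properly contains $C$ is then obtained by a direct argument: a central element restricts to a central element of $\Lambda^{q'}(C)$, which is already known not to be trivial in general, but must also commute with the extra arrows adjoining the pendant trees, forcing it to be a scalar. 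Uniqueness of the 2-CY structure is then formal since such structures form a torsor over $Z(\Lambda^q(Q))^\times/k^\times$.

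The main obstacle I anticipate is the reduction step. Unlike the additive setting, where one can exploit a natural grading and view the preprojective algebra as a deformation, in the multiplicative case the relation $r$ is inhomogeneous and the presence of the inversions $(1+aa^*)^{-1}$ forbids a useful Hilbert-series argument. Concretely, one must track how these inversions and the single relation transform when a pendant vertex is removed, and argue that the resulting algebra has a clean description matching $\Lambda^{q'}$ of the smaller quiver. A subsidiary difficulty is primeness in the general case: without a Hilbert-series or deformation argument, one likely needs either an Ore-localization description of $\Lambda^q(Q)$ or a careful bootstrap from the cycle case through the reduction, exploiting that the pendant-peeling operation cannot introduce nontrivial two-sided nilpotent or zero-dividing ideals.
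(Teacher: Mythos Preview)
Your plan is genuinely different from the paper's, and there are real gaps in both halves. For the base case, you propose to deduce the 2-Calabi--Yau property for $\widetilde{A_n}$ from the NCCR structure via Van den Bergh. But establishing that $\Lambda^q(\widetilde{A_n}) \cong \End_{e_0\Lambda e_0}(e_0\Lambda)$ already requires knowing that $\Lambda$ is prime (this is how injectivity of the natural map is checked), and computing the center requires comparable control over the algebra. You have not supplied an independent route to either; the paper in fact runs the logic the other way, deriving the NCCR statement as a \emph{consequence} of primeness and 2-CY. For the reduction, you invoke Section~\ref{ss: cont cycle} but misread what it does: it is not a pendant-peeling Morita reduction. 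At a pendant vertex $v$ with $q_v=1$ the relation becomes $aa^*=0$, so the idempotent $e_v$ does not become Morita-superfluous; a uniform reduction of this kind valid for all $q$ is not available. Your center argument also appeals to restricting to the cycle, but $\Lambda^{q_E}(Q_E)$ is not a subalgebra of $\Lambda^q(Q)$ in any natural way (the defining relation at a cycle vertex involves arrows leaving the cycle), so ``restriction'' has no obvious meaning.

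What the paper actually does is construct, via Bergman's Diamond Lemma, an explicit $kQ_0$-basis of the localization $L_Q$ exhibiting it linearly as $\Lambda^q(Q) *_{kQ_0} kQ_0[t,(q+t)^{-1}]$ (the strong free product property, Theorem~\ref{thm:sfpp}). This single combinatorial result does all the work: it makes the Crawley-Boevey--Shaw complex $P_\bullet$ a genuine length-two projective bimodule resolution (Proposition~\ref{prop: wfpp implies P resolution}), and a direct computation with the map $\theta$ shows $P_\bullet^\vee \simeq \Lambda[2]$ (Theorem~\ref{thm: commuting diagram dual}), giving 2-CY; the explicit basis immediately yields flatness in $q$ (Corollary~\ref{cor: flat}); primeness and the triviality of the center are proved by hand from the basis (Propositions~\ref{prop: prime for quivers containing cycle} and~\ref{prop: general center trivial}); and uniqueness of the CY structure follows formally from $Z=k$. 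The reduction in Section~\ref{ss: cont cycle} is a linear free-product decomposition $\Lambda^q(Q) \cong \Lambda^{q_E}(Q_E) *_{kQ_0} \Lambda^{q'}(Q',\cW)$ obtained by merging the reduction systems for the cycle and for the partial multiplicative preprojective algebra of the complement, not a categorical equivalence.
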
 

This theorem is established in Corollary \ref{cor: 2-CY}, Corollary \ref{cor: flat},  and the results of Section \ref{s:center}: Proposition \ref{prop: prime for quivers containing cycle}, Proposition \ref{prop: general center trivial}, and Corollary \ref{c: unique CY}.  Each relies on technical results proven in Section \ref{s: free-product}. Before outlining the proof techniques, we give four different perspectives on this work:\\
\\
\textbf{(I) Symplectic Topology: Wrapped Fukaya Categories} \\
Multiplicative preprojective algebras arise from studying certain wrapped Fukaya categories. Let $X_{\Gamma}$ be the Weinstein manifold formed by plumbing cotangent bundles of 2-spheres according to the graph $\Gamma$. Ekholm and Lekili \cite{Lekili3} and Etg{\"u} and Lekili \cite{Lekili}, \cite{Lekili2} produced quasi-isomorphisms,
$$
\xymatrix{
\cW(X_{\Gamma}) \ar[rrr]^{\cite{Lekili3}}_{\cong} & & & 
\mathscr{B}_{\Gamma} \ar[rrr]^{\cite{Lekili}, \cite{Lekili2}}_{\cong} & & & \mathscr{L}_{\Gamma}
}
$$
where $\cW(X_{\Gamma})$ is the partially wrapped Fukaya category of $X_{\Gamma}$,  $\mathscr{B}_{\Gamma}$ is the Chekanov--Eliashberg dg-algebra and $\mathscr{L}_{\Gamma}$ is the dg multiplicative preprojective algebra following \cite{Ginzburg}, with $q=1$, see Definition \ref{def: dg_mult_preproj}. 


Since $X_{\Gamma}$ is a Liouville manifold, \cite[Theorem 1.3]{Ganatra} shows that $\cW(X_{\Gamma})$ is a 2-Calabi-Yau category and hence $\mathscr{L}_{\Gamma}$ is 2-Calabi--Yau, as a dg-algebra. We establish this result purely algebraically, in the case $\Gamma$ contains a cycle. In particular, we show in this case that 
$$
\xymatrix{
 \Lambda^{1}(\Gamma) = H^{0}(\mathscr{L}_{\Gamma})  \ar@{^{(}->}[rr]^-{\text{Prop \ref{prop: formal}}}_-{=} & & H^{*}(\mathscr{L}_{\Gamma}) 
}
$$ 
and hence $\mathscr{L}_{\Gamma}$ is formal. By Theorem \ref{thm: 2CY}, the dg multiplicative preprojective algebra $\mathscr{L}_{\Gamma}$ is formal.\footnote{Additionally, since submission of this article, the dg multiplicative preprojective algebra was shown to be 2-Calabi--Yau for all $q$ and all $\Gamma$ in \cite{BCS}.}


Consequently, deformations of the wrapped Fukaya category, $\cW(X_{\Gamma})$, as an $A_{\infty}$-category (respectively Calabi--Yau $A_{\infty}$-category) over a degree zero base, are given by deformations of $\Lambda^{1}(\Gamma)$ as an associative algebra (respectively Calabi--Yau algebra). The infinitesimal deformations can be identified with $\text{HH}^{2}(\Lambda^{1}(\Gamma))$. 
Thanks to Theorem \ref{thm: 2CY}, $\Lambda^1(\Gamma)$ is 2-Calabi--Yau. Hence, Van den Bergh duality identifies
$\text{HH}^{2}(\Lambda^{1}(\Gamma))$
with $\text{HH}_{0}(\Lambda^{1}(\Gamma))$. The techniques in \cite{Schedler_HH} can likely be adapted to compute the latter
using the explicit basis for $\Lambda^{q}(Q)$ computed here. Furthermore, by the 2-Calabi--Yau property, $\text{HH}^3(\Lambda^q(Q))=0$, so there are no obstructions to extending to infinite order deformations. 

We conjecture that the same holds for every connected, non-Dynkin quiver. More precisely, in addition to Conjecture \ref{conj: 2CY}, we expect the following:
\begin{conj}\label{conj:formal}
If $Q$ is connected and not  Dynkin, then the dg multiplicative preprojective algebra $\Lambda^{dg,q}(Q)$ is quasi-isomorphic to $\Lambda^q(Q)$, in degree zero.
\end{conj}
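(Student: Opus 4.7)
The plan is to extend Proposition~\ref{prop: formal} (formality in the presence of an unoriented cycle) to all connected non-Dynkin $Q$, assuming Conjecture~\ref{conj: 2CY} for the relevant $Q$. Concretely, since $\Lambda^{dg,q}(Q)$ is concentrated in non-positive degrees and satisfies $H^0(\Lambda^{dg,q}(Q))=\Lambda^q(Q)$ by construction, the task is to show that the canonical surjection $\Lambda^{dg,q}(Q) \twoheadrightarrow \Lambda^q(Q)$ is a quasi-isomorphism, i.e.\ that $H^i(\Lambda^{dg,q}(Q))=0$ for $i<0$.

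The first step is to recognise $\Lambda^{dg,q}(Q)$, following \cite{Ginzburg}, as (a variant of) Keller's deformed $2$-Calabi--Yau completion of the localized path algebra $L_Q$: the degree-$(-1)$ loops $t_i$ and the differential $dt_i=(r-q)_i$ implement the multiplicative moment-map relation derivedly. This identification equips $\Lambda^{dg,q}(Q)$ with an intrinsic $2$-CY dg structure and with a tautological length-$2$ bimodule resolution over itself (generators in degree $0$, relation in degree $-1$, Ginzburg cycle in degree $-2$). In parallel, Conjecture~\ref{conj: 2CY} produces an explicit self-dual length-$2$ projective bimodule resolution of $\Lambda^q(Q)$, of exactly the same shape as the resolution underlying Corollary~\ref{cor: 2-CY} in the cycle case.

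The second step is to compare these two resolutions via the surjection $\Lambda^{dg,q}(Q) \twoheadrightarrow \Lambda^q(Q)$: matching the tautological dg bimodule resolution term-by-term with the 2-CY bimodule resolution of $\Lambda^q(Q)$ forces the higher cohomology of the dg algebra to collapse. This is essentially the mechanism of Proposition~\ref{prop: formal}, but now driven by the abstract 2-CY bimodule resolution rather than by cycle-specific identities. Equivalently, one may reformulate the comparison as an exactness statement for the Koszul-type complex obtained by multiplying by the single defining relation $r-q$, the required exactness being precisely what the 2-CY bimodule resolution supplies.

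The third step is a dg analogue of the reduction of Section~\ref{ss: cont cycle}: show that formality is preserved under attaching a ``tail'' outside an extended Dynkin subquiver, so that it suffices to treat $Q$ extended Dynkin. Since the case $Q=\widetilde{A}_n$ is already covered by Proposition~\ref{prop: formal} (it contains a cycle), the main obstacle is the case $Q\in\{\widetilde{D}_n,\widetilde{E}_6,\widetilde{E}_7,\widetilde{E}_8\}$, where no cycle is available and the algebraic identities that drove Proposition~\ref{prop: formal} fail. Here one expects to need either an explicit construction of the 2-CY bimodule resolution for these types (parallel to the one constructed here for cycles) against which the dg resolution can be matched, or an appeal to the symplectic-topological formality of $\mathfrak{L}_\Gamma$ coming from \cite{Lekili3,Lekili,Lekili2,Ganatra} at $q=1$, propagated to arbitrary $q$ and arbitrary base field by the flatness assertion of Conjecture~\ref{conj: 2CY}. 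Producing that bimodule resolution uniformly for all $\widetilde{D}$, $\widetilde{E}$ appears to be the genuinely hard part.
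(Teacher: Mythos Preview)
This statement is a \emph{conjecture}: the paper does not prove it in general. What the paper does establish is the case where $Q$ contains an unoriented cycle, via Proposition~\ref{prop: formal}. The mechanism there is not a comparison of bimodule resolutions. Rather, the strong free product property (Conjecture~\ref{conj: sfpp}, proved for cycles in Theorem~\ref{thm:sfpp}) provides a $kQ_0$-linear isomorphism $L_Q \cong \Lambda^q(Q) *_{kQ_0} kQ_0[r,(r+q)^{-1}]$, and then Lemma~\ref{lem: homology of A} supplies an explicit contracting homotopy on $\Lambda^q(Q) *_{kQ_0} kQ_0[r,(r+q)^{-1}] *_{kQ_0} kQ_0[s]$ onto $\Lambda^q(Q)$. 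No bimodule resolution enters.

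Your Step~2 contains the essential gap. You claim that the 2-Calabi--Yau property of $\Lambda^q(Q)$ (equivalently, exactness of the self-dual length-two complex $P_\bullet$) forces $H^{<0}(\Lambda^{dg,q}(Q))=0$. The paper does not prove, and does not claim, this implication. Its chain of implications (see the diagram in the introduction and Remark~\ref{r: Anick paper}) runs one way: strong free product $\Rightarrow$ weak free product $\Rightarrow$ $P_\bullet$ exact $\Rightarrow$ 2-CY, and separately strong free product $\Rightarrow$ formality. The Anick-type equivalence ``length-two bimodule resolution $\Leftrightarrow$ dg algebra formal'' that you are implicitly invoking is only established for \emph{graded connected} algebras (Remark~\ref{rem:add_preproj_formal}, Remark~\ref{r: Anick paper}); in the ungraded multiplicative setting the $J$-adic filtration need not be Hausdorff, and one genuinely needs the linear isomorphism $\sigma'$ of the strong free product property, not merely its associated graded. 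So assuming Conjecture~\ref{conj: 2CY} does not, by any argument given in the paper, yield Conjecture~\ref{conj:formal}.

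Your Step~3 is closer to the paper's actual strategy, but the reduction the paper advocates is to Conjecture~\ref{conj: sfpp} (strong free product) for extended Dynkin quivers, from which both Conjectures~\ref{conj: 2CY} and~\ref{conj:formal} would follow simultaneously. Reducing instead to the 2-CY property alone does not suffice, for the reason above.
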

We give the precise definitions and details, as well as proof in the case $Q$ contains a cycle, in Section \ref{s:formal}. 
\\
\\
\textbf{(II) Quiver varieties: Local structure of multiplicative quiver varieties} \\ 
\hspace*{3.73cm} \textbf{and moduli spaces attached to $2$-Calabi--Yau algebras} \\
Given a dimension vector $\alpha \in \bN^{Q_0}$, the affine multiplicative quiver variety is defined as the (coarse) moduli scheme of representations of $\Lambda^q(Q)$. Explicitly, it is the geometric invariant theory quotient of the space of all representations $\Lambda^q(Q) \to \bigoplus_i \Mat_{\alpha_i}(k)$ by the action of $\prod_i \GL(\alpha_i)$ by change of basis. See Section \ref{s:quiver_varieties} for more details (where we also recall a version incorporating a stability condition).

Properties of multiplicative preprojective algebras determine properties of the corresponding multiplicative quiver varieties. 
For instance, in Section 7.5 of \cite{Schedler}, Tirelli and the second author observe, following \cite{Bocklandt}, that the 2-Calabi--Yau property determines the (formal) local structure of the moduli space of representations. Namely, any formal neighborhood can be identified with the formal neighborhood of the zero representation of the moduli space of representations of some (additive) preprojective algebra.  This is proved in more detail here, in Theorems \ref{t:fnt-general} (filling in some more details from \cite[\S 6]{Bocklandt} where a similar result is given).  Among other applications mentioned in \cite{Schedler}, it follows that, when $k$ has characteristic zero, the corresponding multiplicative quiver varieties are normal and are symplectic singularities in the sense of Beauville \cite{Beauss} (in particular, they are normal and have rational Gorenstein singularities): see Corollary \ref{c:symp-sing}.

This includes (as an open subset) character varieties of Riemann surfaces of positive genus with punctures and prescribed monodromy conditions, as explained in \cite[\S 3]{Schedler} (following \cite{Shaw, Yamakawa}). (In the case of closed Riemann surfaces, as pointed out in
\cite{Bellamy-Schedler-character}, this statement does not require our
result, since the group algebra $k[\pi_1(\Sigma)]$ is well-known to be 2-Calabi--Yau.)

One subtle point is that we can describe the local structure of multiplicative quiver varieties for \emph{all} quivers despite the fact that we only prove the 2-Calabi--Yau property for quivers containing a cycle (Theorem \ref{t:fnt-mult}). The key idea is that any quiver can be embedded into a quiver containing a loop and hence any representation of a quiver can be viewed as a representation of a quiver with a cycle. Therefore, its formal neighborhood can be identified with a formal neighborhood of the zero representation of an (additive) preprojective algebra. For detailed definitions, statements, and proofs see Section \ref{s:quiver_varieties}.\\
\\
\textbf{(III) Non-commutative algebraic geometry: Non-commutative resolutions}\\
Although in the non-Dynkin, non-extended Dynkin case, the center is expected to be trivial (Conjecture \ref{conj: 2CY}, proved when the quiver contains a cycle), this is far from true in the extended Dynkin case.  
Indeed, ordinary
preprojective algebras of extended Dynkin quivers have a large center, the spectrum of which is a du Val singularity. The algebra itself is a non-commutative crepant resolution of this center. Moreover, this center is the algebra of functions on a natural quiver variety.  So it is reasonable to ask if multiplicative preprojective algebras also resolve the corresponding multiplicative quiver variety.

In Shaw's thesis, he makes progress towards this question by showing that, for an extended Dynkin quiver $Q$ with extended vertex $v$, the subalgebra $e_{v} \Lambda^{1}(Q) e_{v}$ 
is commutative of dimension 2, with a unique singularity at the origin \cite{Shaw_thesis}; he expects that (for $k$ of characteristic zero) the singularity there has the corresponding du Val type (cf.~Remark \ref{r:wemyss}).

In further analogy to the additive case, it is reasonable to pose the following conjecture:
\begin{conj} \label{conj: NCCR}
Let $Q$ be extended Dynkin. The algebra $\Lambda^1(Q)$ is a 2-dimensional non-commutative crepant resolution (NCCR) of its center, which is the ring of functions on the associated multiplicative quiver variety $\mathcal{M}_{1,0}(Q,\delta)$. Moreover, 
the Satake map\footnote{We use the terminology ``Satake'' following the analogous one for symplectic reflection algebras at $t=0$ of Etingof--Ginzburg,  itself coming from the map for affine Hecke algebras proposed by Lusztig.} $Z(\Lambda^1(Q)) \to e_v \Lambda^1(Q) e_v$, defined by $z \mapsto e_v z$, is an isomorphism.  
\end{conj}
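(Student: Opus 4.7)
The plan is to combine the 2-Calabi--Yau property (Theorem \ref{thm: 2CY} when $Q=\widetilde{A_n}$, Conjecture \ref{conj: 2CY} otherwise) with Shaw's computation that $R:=e_v\Lambda^1(Q)e_v$ is a two-dimensional commutative local domain, and to follow the template of the classical proof for additive preprojective algebras of extended Dynkin type. The three assertions---NCCR property, Satake isomorphism, and identification of the center with $\mathcal{O}(\mathcal{M}_{1,0}(Q,\delta))$---should follow in tandem once one establishes that $P:=\Lambda^1(Q)e_v$ is a progenerator inducing a Morita equivalence between $\Lambda^1(Q)$ and $R$.

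First I would prove that $\Lambda^1(Q)$ is module-finite over $R$. This requires an explicit finite right $R$-spanning set for each $e_i\Lambda^1(Q)e_j$, which one hopes to extract from the normal forms and confluence results underlying Theorem \ref{thm: 2CY}: namely, one decomposes an arbitrary path from $j$ to $i$ as a bounded initial/final segment times loops at $v$, each loop lying in $R$. Second, I would show $P$ is a progenerator, i.e.\ $\Lambda^1(Q)e_v\Lambda^1(Q)=\Lambda^1(Q)$. The key device is that the defining relation $r$ makes each $1+aa^*$ a unit, so that paths containing an arrow incident to $v$ can be made invertible up to correction by elements of $R$; connectivity of $Q$ then propagates $e_v$ to every vertex.

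With $P$ a progenerator, Morita theory yields $\Lambda^1(Q) \cong \End_R(P)$ together with the canonical isomorphism $Z(\Lambda^1(Q)) \xrightarrow{\sim} Z(R) = R$, which is precisely the Satake map. Combined with the 2-Calabi--Yau property, the Reiten--Van den Bergh characterization of NCCRs over two-dimensional normal Gorenstein bases (under which $P$ is automatically reflexive and maximal Cohen--Macaulay over $R$) then delivers the NCCR assertion. To identify $R$ with $\mathcal{O}(\mathcal{M}_{1,0}(Q,\delta))$, one uses that $\delta_v=1$: evaluation at the extended vertex defines a ring homomorphism $R \to \mathcal{O}(\mathcal{M}_{1,0}(Q,\delta))$, which one checks is an isomorphism by arguing surjectivity in the spirit of the first fundamental theorem (cf.~\cite{VdB_GVMM}) and injectivity via the density of simple representations of dimension vector $\delta$, which separate elements of the commutative ring $R$.

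The main obstacle is the module-finiteness of $\Lambda^1(Q)$ over $R$: in the additive case this drops out of Hilbert-series arguments based on the $\bN$-grading, but in the multiplicative case the localization destroys any grading, forcing a hands-on argument using the basis of Section \ref{s: free-product} together with careful bookkeeping of how traversal of $v$ absorbs path length. A secondary hurdle is that for non-$\widetilde{A_n}$ extended Dynkin types one must first establish Conjecture \ref{conj: 2CY}, which remains open in those cases.
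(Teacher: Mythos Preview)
Your approach has a genuine gap: the progenerator claim $\Lambda^1(Q)e_v\Lambda^1(Q)=\Lambda^1(Q)$ is false. When $q=1$, every vertex simple $S_i$ (one-dimensional at vertex $i$, all arrows acting by zero) is a $\Lambda^1(Q)$-module, since the defining relation at each vertex reduces to $0=0$. For $i\neq v$ we have $e_vS_i=0$, so the two-sided ideal $\Lambda^1(Q)e_v\Lambda^1(Q)$ annihilates $S_i$ and is therefore proper. Thus $\Lambda^1(Q)$ is \emph{not} Morita equivalent to $R=e_v\Lambda^1(Q)e_v$, and your route to both $\Lambda^1(Q)\cong\End_R(P)$ and the Satake isomorphism via Morita theory collapses. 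Your ``key device''---that $1+aa^*$ is a unit---does not make $a$ itself invertible, which is what propagating $e_v$ to every vertex would actually require.

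The paper (for $Q=\widetilde{A_n}$) reaches the endomorphism isomorphism $\Lambda\cong\End_R(e_0\Lambda)$ without fullness of $e_0$. It explicitly identifies each $e_0\Lambda e_i$ with the ideal $(Z^i,Y)\subset R$ (in Shaw's generators $X,Y,Z$), writes down $\phi:\Lambda\to\End_R(e_0\Lambda)$ on quiver generators, obtains surjectivity because every $R$-linear map between ideals of the domain $R$ is multiplication by an element of its fraction field, and obtains injectivity from primality of $\Lambda$. The Satake isomorphism is then deduced from a short general lemma: whenever $A^{\op}\to\End_{eAe}(eA)$ is an isomorphism and $eAe$ is commutative, one has $Z(A)\cong\End_{eAe\otimes A^{\op}}(eA)\cong Z(eAe)=eAe$. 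The identification $Z(\Lambda)\cong k[\mathcal{M}_{1,0}(Q,\delta)]$ proceeds roughly as you sketch. So the module-finiteness issue you flag as the main obstacle is in fact handled cheaply once each $e_0\Lambda e_i$ is recognized as an ideal of $R$; the real obstacle is that the Morita shortcut is unavailable.
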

See Section \ref{s:quiver_varieties} for the precise definition of the multiplicative quiver variety. Thanks to our aforementioned results on its local structure, the conjecture implies Shaw's expectation that the singularity of $e_{v} \Lambda^{1}(Q) e_{v}$ is du Val of the corresponding type.

For $Q = \widetilde{A_n}$, we prove the conjecture in 
Section \ref{s:NCCR_proof}. In the process, we obtain an explicit description of the center, $Z(\Lambda^1(\widetilde{A_{n}}))$, which may be of independent interest.\\


\noindent \textbf{(IV) Representation Theory: Kontsevich--Rosenberg Principle} \\
A final perspective on this work involves the Kontsevich--Rosenberg Principle which says: a non-commutative geometric structure on an associative algebra $A$ should induce a geometric structure on the representation spaces $\Rep_{n}(A)$, for all $n \geq 1$. This principle needs adjusting for structures living in the \emph{derived} category of $A$-modules, as the representation functor is not exact. For a d-Calabi--Yau structure on $A$, it is shown in \cite{Brav} and \cite{Wai-kit} 
 that the derived moduli stack of perfect complexes of $A$-modules, $\bR \text{Perf}(A)$, has a canonical $(2-d)$-\emph{shifted} symplectic structure.  Since the dg multiplicative preprojective algebra is 2-Calabi--Yau, this implies that its moduli stack of representations has a $0$-shifted symplectic structure. By 
 Conjecture \ref{conj:formal}, it is the same as the moduli stack of representations of $\Lambda^q(Q)$ itself.  Note that the multiplicative quiver variety can be viewed as a coarse moduli space of semistable representations; so the aforementioned result that this variety locally has the structure of an ordinary quiver variety is a singular analogue of the statement on the moduli stack.
 \\
 \\
 We now give a brief overview of the proof of Conjecture \ref{conj: 2CY} and \ref{conj:formal} for quivers containing a cycle. We prove Theorem \ref{thm: 2CY} using a complex:
$$
P_{\bullet} := \Lambda^{q}(Q) \otimes_{kQ_{0}} kQ_{0} \otimes_{kQ_{0}} \Lambda^{q}(Q) \overset{\alpha}{\longrightarrow}
\Lambda^{q}(Q) \otimes_{kQ_{0}} k\overline{Q}_{1} \otimes_{kQ_{0}} \Lambda^{q}(Q) \overset{\beta}{\longrightarrow}
\Lambda^{q}(Q) \otimes_{kQ_{0}} \Lambda^{q}(Q)
$$
defined originally in \cite{Shaw} (following \cite[Theorem 10.3]{Schofield} and \cite[Corollary 2.11]{Cuntz-Quillen}) and shown to resolve $\Lambda^{q}(Q)$, \emph{except} for the injectivity of the map $\alpha$. We show $\alpha$ is injective and then show the dual complex $P_{\bullet}^{\vee}$ is a resolution of $\Lambda^{q}(Q)[-2]$, which implies $\Lambda^{q}(Q)$ is 2-Calabi--Yau. 

First, we establish a chain of implications to reduce the proof to a presentation of the localization $L_{Q}$ that we call the \emph{strong free product property}, established in Theorem \ref{thm:sfpp}, see Definition \ref{def: sfpp} or see below for a rough definition. The strong free product property is a version of Anick's weak summand property in the ungraded case, see \cite{Anick}. 

To prove the 2-Calabi--Yau property from the strong free product property we establish the chain of implications:
$$
\begin{array}{c}
\text{Strong Free Product Property for } Q: \\ 
\exists \ \sigma': \Lambda^{q}(Q) *_{kQ_{0}} kQ_{0}[t, (q+t)^{-1}] \rightarrow L_{Q} \text{ a linear isomorphism} \\
\textcolor{white}{Section 3.1} \Downarrow \text{ \textcolor{blue}{Section \ref{section: definition of sffp}}} \\
\text{Weak Free Product Property for } Q: \\
\gr(\sigma'): \Lambda^{q}(Q) *_{kQ_{0}} kQ_{0}[t] \rightarrow \gr(L_{Q}) \text{ is an algebra isomorphism} \\
\textcolor{white}{Prop 3.12} \Downarrow \text{ \textcolor{blue}{Prop  \ref{prop: wfpp implies P resolution}}} \\
\gr(\sigma')_{1}:\Lambda^{q}(Q) \otimes_{kQ_{0}} kQ_{0}[t] \otimes_{kQ_{0}} \Lambda^{q}(Q)  \rightarrow  J_{Q}/J_{Q}^{2} \text{ is an isomorphism of } \Lambda^{q}(Q)\text{-bimodules} \\
\textcolor{white}{Prop 3.11 \& Prop 3.12.3} \Downarrow \text{ \textcolor{blue}{Prop  \ref{prop: P is a resolution} \& Prop  \ref{prop: wfpp implies P resolution}}} \\
P_{\bullet} \text{ is a length two projective } \Lambda^{q}(Q)\text{-bimodule resolution of } \Lambda^{q}(Q) \\
\textcolor{white}{Prop 3.3.3} \Downarrow \text{ \textcolor{blue}{\text{Thm } \ref{thm: commuting diagram dual}}}   \\
\Lambda^{q}(Q) \text{ is 2-Calabi--Yau}
\end{array}
$$
Here the isomorphism $\sigma'$ is determined by a choice of $kQ_0$-bimodule section $\Lambda^q(Q) \to L_Q$ of the quotient map $L_{Q} \twoheadrightarrow \Lambda^q(Q)$, but $\gr(\sigma')$ is independent of this choice. The element $t$ maps to the relation, and the filtrations used are the $t$-adic one on the source and the $J_Q$-adic filtration on $L_{Q}$.  

In Section \ref{s:formal}, we show that the strong free product property implies that the dg multiplicative preprojective algebra $\Lambda^{dg,q}(Q)$ is formal. 
Therefore, by the results of this paper, both Conjectures \ref{conj: 2CY} and \ref{conj:formal} would follow from the following more general statement (see Section \ref{s: free-product} for precise details).
\begin{conj} \label{conj: sfpp}
If $Q$ is a connected, non-Dynkin quiver, then $\sigma'$ as above is a linear isomorphism: $(L_Q, r, \sigma, kQ_0[t,(t+q)^{-1}])$ satisfies the strong free product property.
\end{conj}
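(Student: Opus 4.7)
The plan is to use the reduction procedure of Section \ref{ss: cont cycle}, which identifies the strong free product property for an arbitrary connected non-Dynkin $Q$ with the same property when $Q$ is itself extended Dynkin. Combined with the existing proof of Theorem \ref{thm:sfpp} in the $\widetilde{A}_n$ case, this leaves the remaining extended Dynkin types $\widetilde{D}_n$ ($n \geq 4$), $\widetilde{E}_6$, $\widetilde{E}_7$, and $\widetilde{E}_8$.

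For each such $Q$ the natural approach is to emulate the template used for $\widetilde{A}_n$. Pick an orientation of $Q$ and a linear ordering of the doubled arrow set $\overline{Q}_1$, chosen with care at the branching vertices, and use these data to define a $kQ_0$-bimodule section $\sigma: \Lambda^q(Q) \to L_Q$ via a PBW-style normal form on paths in $\overline{Q}$. Extending multiplicatively and sending $t \mapsto r$ produces the candidate map $\sigma': \Lambda^q(Q) *_{kQ_0} kQ_0[t,(q+t)^{-1}] \to L_Q$; invertibility of $q+t$ on the source corresponds to the fact that $\prod_{a \in Q_1} (1+aa^*)(1+a^*a)^{-1} = q + t$ is a product of invertible elements in $L_Q$. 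Surjectivity of $\sigma'$ would then follow from a rewriting algorithm which uses this identity to push the single relator $t$ to prescribed positions within any word of $L_Q$. Injectivity is the key point, and should be established by constructing an explicit linear basis of $L_Q$ matching the basis on the free-product side, analogous to the Anick-type chain of reductions used in the cyclic case.

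The hardest step, and the expected main obstacle, is injectivity at the branching vertices. In $\widetilde{A}_n$ the cyclic symmetry allows one to linearly order the factors $(1+aa^*)(1+a^*a)^{-1}$ at each vertex unambiguously, but at a trivalent vertex of $\widetilde{D}_n$, or the higher-valence vertex of $\widetilde{E}_k$, the single relation $r$ nontrivially intertwines three or more local factors and no natural compatible ordering exists. Overcoming this will likely require genuinely new combinatorial input: either a carefully designed admissible monomial order in $L_Q$ that respects a chosen local rotation around each branching vertex, verified case-by-case (and probably requiring computer algebra assistance for $\widetilde{E}_7$ and $\widetilde{E}_8$), or an indirect argument that first establishes flatness of the family $\Lambda^q(Q)$ in $q$ and then specializes to a distinguished parameter value where a better-behaved subquiver structure governs the reduction. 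In either direction, the bulk of the remaining work lies in this branching-vertex combinatorics.
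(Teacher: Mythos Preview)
This statement is a \emph{conjecture} in the paper, not a theorem; the paper does not prove it in general. What the paper establishes is the special case where $Q$ contains an unoriented cycle (Theorem~\ref{thm:sfpp}, proved as Proposition~\ref{p:mpa-general-q}). Your proposal is therefore not a proof but an outline of a strategy, and you are candid about this: you say the branching-vertex combinatorics ``will likely require genuinely new combinatorial input'' and that ``the bulk of the remaining work lies'' there. That is an honest assessment, but it means there is no proof here to compare against.

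One additional point where you overstate what is available: you write that Section~\ref{ss: cont cycle} ``identifies the strong free product property for an arbitrary connected non-Dynkin $Q$ with the same property when $Q$ is itself extended Dynkin.'' The paper does \emph{not} prove this reduction in general. Proposition~\ref{p:mpa-general-q} carries out the reduction only when the embedded extended Dynkin subquiver is a cycle $\widetilde{A}_n$; the text explicitly says the technique ``should extend to the case of general extended Dynkin quivers,'' i.e.\ this extension is itself expected but unproved. So even granting a hypothetical proof of the strong free product property for $\widetilde{D}_n, \widetilde{E}_6, \widetilde{E}_7, \widetilde{E}_8$, you would still need to establish the analogue of Proposition~\ref{p:mpa-general-q} for quivers containing those subquivers, and the Diamond Lemma argument there is not automatic (the perturbation of the Substitution Reductions in Section~\ref{ss: cont cycle} relied on the specific form of the cycle relation).

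In summary: your sketch correctly identifies the two-step strategy the paper uses in the $\widetilde{A}_n$ case (reduce to an extended Dynkin subquiver, then handle that subquiver directly via the Diamond Lemma), but neither step is complete for the remaining types, and both are genuine open problems as the paper stands.
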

Proposition \ref{p:mpa-general-q} proves the conjecture for quivers containing a cycle. This is the technical heart of the paper. Our main technique involves reduction systems over the localized ring $kQ_0[t,(t+q)^{-1}]$.  Using the Diamond Lemma \cite{Bergman}, we show these give unique reductions of elements of $L_Q$ to basis elements of the given free product. As a consequence, $\Lambda^q(Q)$ itself obtains the module structure of a free product of the cycle part and the rest of the quiver: see \eqref{e:iso-general-q} for a precise statement.\\

\begin{rem}After submission of this article, in \cite[Theorem 1.1]{CBK-deformed}, Crawley--Boevey and Yuta Kimura proved that a related, more well-studied algebra, the \emph{deformed preprojective algebra} \cite{CBH}, is 2-Calabi--Yau, in the case that the quiver is connected and non-Dynkin. This algebra is a deformation of the usual (additive) preprojective algebra, given as the quotient of the path algebra of the double quiver by the single relation $\sum_{a \in Q_1} a a^* - a^* a - \sum_{i \in Q_0} \lambda_i e_i$, the case $\lambda_i=0$ returning the original preprojective algebra. This can also be proved via the techniques of this article, by deducing  the strong free
product property 
from the known one for the additive preprojective algebra for non-Dynkin quivers. 

Namely, the latter are non-commutative complete intersections \cite{Anick, Etingof-Ginzburg}, shown in 
\cite[Proposition 5.2.1]{Schedler_HH} to be equivalent, in the context of graded algebras, to the (strong or weak) free product property.
More generally, let $A = TV/(r)$ be a graded algebra satisfying the free product property. Briefly, this means that we have a section $\sigma: A \to TV$, which we can take to be graded, so that the induced linear map
 $\sigma': A*_k k[t] \to TV$ sending $t$ to $r$ is a linear isomorphism. Then for every $\lambda \in k$, $\sigma'$ also defines a linear isomorphism by the same formula except sending $t$ to $r+\lambda$. This is because, taking a homogeneous basis of $A$ with degree nondecreasing, we obtain a homogeneous basis of $TV$ via the free product, and the substitution $r \mapsto r+\lambda$ is a strictly triangular change of basis.  Thus, the algebra $TV/(r+\lambda)$ also satisfies the strong free product property. 
Note that the same argument given here applies if we replace $r$ by any filtered deformation $r + r'$, with $r'$ in degrees strictly lower than $r$. They also apply to the quiver context, replacing $k$ by $kQ_0$, hence imply that the deformed preprojective algebra satisfies the strong free product property.
It seems likely this argument can apply to many other interesting algebras.
\end{rem}

An outline of the paper is as follows. 
In Section \ref{s:MPA}, we give elementary background information on multiplicative preprojective algebras and produce an alternative generating set crucial for our approach to the 2-Calabi--Yau property. 
In Section \ref{s: 2CY-property}, we prove the 2-Calabi--Yau property for $\Lambda^{q}(Q)$ assuming the strong free product property.
In Section \ref{s:formal}, using the strong free product property, we show that the dg multiplicative preprojective algebra has homology $\Lambda^{q}(Q)$, concentrated in degree zero. In Section \ref{s:quiver_varieties}, we use the 2-Calabi--Yau property to describe the formal neighborhoods of multiplicative quiver varieties as formal neighborhoods of the zero representation in certain quiver varieties. In Section \ref{s:NCCR} we use the 2-Calabi--Yau and prime properties in the cycle case, together with work of Shaw, to show the multiplicative preprojective algebra is a non-commutative resolution over its center. In Section \ref{s: free-product}, we prove the strong free product property first for multiplicative preprojective algebras of cycles, then for partial multiplicative preprojective algebras. Putting the two together, we deduce the strong free product property for connected quivers containing cycles. The key point of the argument, of independent interest, is a construction of bases of these algebras. Finally, in Section \ref{s:center}, we establish the prime property of $\Lambda^{q}(Q)$ using our explicit bases. We furthermore show that $Z(\Lambda^{q}(Q))=k$ for $Q$ connected and properly containing a cycle. 
This shows that the Calabi--Yau structure in these cases are unique, up to scaling. 
\\
\\
{\bf Acknowledgements} \\ 
The first author was supported by the Roth Scholarship through the Department of Mathematics at Imperial College London. We thank the Max Planck Institute for Mathematics in Bonn for their support and ideal working conditions.  We'd like to thank Yank\i \ Lekili for bringing the problem to our attention and discussing the Fukaya category perspective. We're grateful to Michael Wemyss for explaining the NCCR perspective and to Georgios Dimitroglou Rizell who identified an issue with our definition of dg multiplicative preprojective algebra. The anonymous referee caught a few errors and provided useful comments. Finally, special thanks to Sue Sierra for carefully reading a draft and providing detailed corrections and  suggestions.

\section{The multiplicative preprojective algebra} \label{s:MPA}

\subsection{Definitions}
Throughout the paper we fix an arbitrary field $k$. For each quiver (i.e. directed graph) $Q$, let $Q_{0}$ be the vertex set, $Q_{1}$ be the arrow set, and $h, t: Q_{1} \rightarrow Q_{0}$ the head and tail maps, respectively. We will assume that $Q_{0}$ and $Q_{1}$ are finite for convenience, but really only need finitely many arrows incident to each vertex. 

Let $Q^{\text{op}}$ denote the quiver with the same underlying graph of vertices and edges, but with every arrow in the opposite direction. $\overline{Q}$ denotes the quiver with the same vertex set as $Q$ and $Q^{op}$ and with arrow set $Q_{1} \sqcup Q^{op}_{1}$. For each arrow $a \in Q_{1}$, we write $a^{*}$ for the corresponding arrow in $Q_{1}^{op}$, and vice versa. In $\overline{Q}$ we distinguish between arrows in $Q$ and $Q^{op}$ using a function 
$$
\epsilon: \overline{Q}_{1} \rightarrow \{ \pm 1 \} \hspace{1cm} \epsilon(a) := \begin{cases} 
1 & \text{ if } a \in Q_{1} \\
-1 & \text{ if } a \in Q^{\text{op}}_{1}.
\end{cases}
$$

For a quiver $Q$, we denote the path algebra by $kQ$ and follow the convention that paths are concatenated from left to right. We have an inclusion $e_{(-)}: Q_{0} \rightarrow kQ$ in order to view a vertex $i \in Q_{0}$ as a length zero path $e_{i}$. 

For $a \in \overline{Q}_{1}$, define $g_{a} := 1+ a a^{*} \in k \overline{Q}$. Consider the localization $L_{Q} := k \overline{Q}[g_{a}^{-1}]_{a \in \overline{Q}_{1}}$. We write $L:=L_{Q}$, when the quiver is clear from context. 
Notice, for all $a \in \overline{Q}_{1}$:
\begin{equation} \label{identity: g}
g_{a} a = a + a a^{*} a = a g_{a^*}.
\end{equation}
This implies:
$$
g_{a^*} a^* = a^{*} g_{a} \hspace{2cm}
g_{a}^{-1} a = a g_{a^{*}}^{-1} \hspace{2cm}
g_{a^{*}}^{-1} a^{*} = a^{*} g_{a}^{-1}. 
$$
Fixing a total ordering $\leq$ on the set of arrows 
 $\overline{Q}_{1}$, one can make sense of a product over (subsets of) the arrow set. Using $\leq$ and $\epsilon$ we define:
 $$
 \rho := \prod_{a \in \overline{Q}_1} g_{a}^{\epsilon(a)}
 \hspace{2cm} l_{a} := \prod_{b \in \overline{Q}_1, b < a } g_{b}^{\epsilon(b)}
 \hspace{2cm} r_{a} := \prod_{b \in \overline{Q}_1, b>a} g_{b}^{\epsilon(b)}.
 $$
When we need to make the role of the total ordering $\leq$ more explicit, we write $\rho_{\leq}$ (respectively $l_{a, \leq}$ and $r_{a, \leq}$) for $\rho$ (respectively $l_{a}$ and $r_{a}$). By definition, $l_{a}$ and $r_{a}$ are the subproduct of $\rho$ to the left and right of $a$, respectively. Therefore,
\begin{equation} \label{identity: rho}
\rho = l_{a} g_{a}^{\epsilon(a)} r_{a}
\end{equation}
for all $a \in \overline{Q}_{1}$. 

\begin{defn}
Fix a quiver $Q$ and $q \in (k^\times)^{Q_{0}}$. Consider $\overline{Q}$ and fix an ordering $\leq$ on the arrows and a map $\epsilon$ as defined above. The \emph{multiplicative preprojective algebra}, $\Lambda^{q}(Q)$, is defined to be
$$
\Lambda^{q}(Q) := L/ J
$$
where $L = k \overline{Q} [ g_{a}^{-1} ]_{a \in \overline{Q}_{1}}$ is the localization and $J$ is the two-sided ideal generated by the element $\rho-q$. 
\end{defn}

Note that $q$ is viewed as an element of $k \overline{Q}$ via $\sum_{i \in Q_{0}} q_{i} e_{i} \in k Q_{0} \subset k \overline{Q}$, and as $\rho$ is invertible we need $q_{i} \neq 0$ so $e_{i} \Lambda^{q}(Q) \neq 0$, for all $i$. 

\begin{rem} \label{rem: independence}
The isomorphism class of $\Lambda^{q}(Q)$ is independent of both the orientation of the quiver and the choice of an ordering on the arrows, by Section 2 in \cite{Shaw}.
\end{rem}

In the multiplicative preprojective algebra, (\ref{identity: rho}) becomes the identity:
\begin{equation*} \label{identity: q}
l_{a} g_{a}^{\epsilon(a)} r_{a} = q.
\end{equation*}
Hence:
\begin{equation}
 r_{a} l_{a} = q g_{a}^{-\epsilon(a)}.  \label{identity: rl}
\end{equation}

As mentioned in the introduction, we say that a quiver is (extended) Dynkin, we mean that the underlying unoriented graph is an (extended) type ADE Dynkin diagram.  We don't consider non-simply laced types because, given a quiver, the associated Cartan matrix is  $2I-A$ where $A$ is the adjacency matrix of the underlying unoriented graph, which is symmetric.

\begin{exam} \label{exam: Dynkin} (Dynkin Case) \\
Let $Q$ be a Dynkin quiver and let $R$ be a commutative ring. Note that the definitions of (multiplicative) preprojective algebra make sense over $R$. In \cite[Section 5]{Kaplan21}, the first named author constructed explicit isomorphisms
\[
\Lambda^{1}(Q) \cong \Pi^{0}(Q) := \left. R \overline{Q} \middle/  \left ( \sum_{a \in Q_{1}} [a, a^*] \right ) \right.
\]
if $2$, $3$, and $5$ are invertible in $R$ (see also the earlier work \cite[Lemma 5.2.1]{Shaw_thesis}, \cite[Corollary 1]{CB2}, \cite[Theorem 13]{Lekili}, and \cite[Section 5]{LU21}). 
In particular, for a field $k$ of characteristic zero, we can work over $k[\![\hbar]\!]$ and set $q=e^{\hbar}$. Then $\Lambda^q$ is a formal deformation of $\Lambda^1$. Hence, by \cite[Proposition 5.0.2]{EE}, there exists some $\lambda \in k[\![\hbar]\!]$ such that there is a $k[\![\hbar]\!]$-linear algebra isomorphism
$$
\Lambda^{q}(Q) \cong \Pi^{\lambda}(Q) := \left. k\overline{Q} \middle/ \left (\sum_{a \in Q_{1}} [a, a^*]- \sum_{i \in Q_{0}} \lambda_{i} e_{i} \right ) \right. .
$$ 

In $A$ types, such an isomorphism holds  over any field $k$ and for any actual parameter $q$.
Namely, identifying $(A_{n})_{0} = \{ 1, 2, \dots, n \}$ and $(A_{n})_{1} = \{ a_{1}, a_{2}, \dots, a_{n-1} \}$ with tail $t(a_{i})=i$, the isomorphism is given by
$$
\Lambda^{q}(A_{n}) \cong \Pi^{\lambda}(A_{n}) \hspace{1cm} e_{i} \mapsto e_{i}, \ a_{i}  \mapsto a_{i} , \ a_{i}^*  \mapsto \left ( \prod_{j>i} q_{j} \right ) a_{i}^*
$$ 
where $\lambda_{i} := (q_{i}-1) \prod_{j >i} q_{j}$. Since $\Pi^{\lambda}(A_{n})$ is non-zero if and only if there exists $i, j$ with $i<j$ and $\sum_{\ell=i}^{j} \lambda_{\ell} = 0$, it follows that $\Lambda^{q}(A_{n})$ is non-zero if and only if $\prod_{\ell=i}^{j} q_{\ell} = 1$. 

\end{exam}


\subsection{The map \texorpdfstring{$\theta$}{theta}}

In \cite{Shaw}, the important map $\theta: \overline{Q}_{1} \rightarrow \Lambda^{q}(Q)$ is defined by $\theta(a) = q^{-1} l_{a} a r_{a^{*}}$ and extended to $k \overline{Q}$ by the identity on $Q_{0}$ and by requiring $\theta$ to be an algebra map. Then Lemma 3.3 in \cite{Shaw} shows that
\begin{align}  
\theta(g_{a}) &=  l_{a} g_{a} l_{a}^{-1}    \label{identity: Phi of g}   \\
&= r_{a}^{-1} g_{a} r_{a},  \label{identity: Phi of g 2}
\end{align}
so $\theta(g_a)$ is invertible. Hence $\theta$ factors through the localization $L:= k \overline{Q}[g_{a}^{-1}]_{a \in \overline{Q}_{1}}$. We will show $\theta$ descends to the quotient $\Lambda^{q}(Q)$, with the ordering of the arrows reversed, using the following result.

\begin{lem} \label{lem: Phi sends r to l}
Let $\leq$ denote a total order on $\overline{Q}_{1}$ and let $\geq$ denote its opposite ordering, i.e. $a \geq b$ if $b \leq a$. Such an order fixes a bijection $\overline{Q}_{1} \cong \{ a_{1}, a_{2}, \dots, a_{\mid \overline{Q}_{1} \mid} \}$. Then 
$$
\theta(r_{a_{j}, \geq}) = l_{a_{j}, \leq} =: l_{a_{j}} \hspace{1cm} \text{and} \hspace{1cm} \theta(l_{a_{j}, \geq}) = r_{a_{j}, \leq} =: r_{a_{j}}
$$ for any $a_{j} \in \overline{Q}_{1}$.
\end{lem}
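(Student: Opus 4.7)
The plan is to prove both identities in parallel by induction on the position of $a_j$ in the ordering, where the two identities use the two dual expressions for $\theta(g_a)$ given in \eqref{identity: Phi of g} and \eqref{identity: Phi of g 2}. Let $a_1 <_\leq a_2 <_\leq \cdots <_\leq a_n$ be the enumeration of $\overline{Q}_1$ in the $\leq$-order, so that the $\geq$-order reverses this list. Unwinding the definitions, one finds that the elements appearing in $r_{a_k,\geq}$ are precisely those appearing in $l_{a_k,\leq}$, but multiplied in the opposite order, and similarly $l_{a_k,\geq}$ reverses $r_{a_k,\leq}$. Concretely,
\[
r_{a_k,\geq}=g_{a_{k-1}}^{\epsilon(a_{k-1})}\cdots g_{a_1}^{\epsilon(a_1)},\qquad l_{a_k,\leq}=g_{a_1}^{\epsilon(a_1)}\cdots g_{a_{k-1}}^{\epsilon(a_{k-1})},
\]
and symmetrically for the second pair. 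From these formulas one reads off the recursions $r_{a_k,\geq}=g_{a_{k-1}}^{\epsilon(a_{k-1})}\,r_{a_{k-1},\geq}$ and $l_{a_k,\leq}=l_{a_{k-1},\leq}\,g_{a_{k-1}}^{\epsilon(a_{k-1})}$, which will drive the induction.

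For the first identity I would induct upward on $k$: the base case $k=1$ is trivial since both sides are the empty product. For the inductive step, applying the algebra homomorphism $\theta$ to the recursion gives
\[
\theta(r_{a_k,\geq})=\theta\bigl(g_{a_{k-1}}^{\epsilon(a_{k-1})}\bigr)\,\theta(r_{a_{k-1},\geq})=\theta\bigl(g_{a_{k-1}}^{\epsilon(a_{k-1})}\bigr)\,l_{a_{k-1},\leq},
\]
using the inductive hypothesis for the second equality. Now \eqref{identity: Phi of g} says $\theta(g_{a_{k-1}})=l_{a_{k-1},\leq}\,g_{a_{k-1}}\,l_{a_{k-1},\leq}^{-1}$, so raising to the power $\epsilon(a_{k-1})$ and right-multiplying by $l_{a_{k-1},\leq}$ collapses the conjugation, yielding $l_{a_{k-1},\leq}\,g_{a_{k-1}}^{\epsilon(a_{k-1})}=l_{a_k,\leq}$, as desired.

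For the second identity the same pattern works, but inducting downward from $k=n$ (where both sides are empty) using the recursions $l_{a_k,\geq}=l_{a_{k+1},\geq}\,g_{a_{k+1}}^{\epsilon(a_{k+1})}$ and $r_{a_k,\leq}=g_{a_{k+1}}^{\epsilon(a_{k+1})}\,r_{a_{k+1},\leq}$. The analogous telescoping now requires the \emph{right-conjugation} expression \eqref{identity: Phi of g 2}, $\theta(g_{a_{k+1}})=r_{a_{k+1},\leq}^{-1}\,g_{a_{k+1}}\,r_{a_{k+1},\leq}$, so that $r_{a_{k+1},\leq}\cdot\theta\bigl(g_{a_{k+1}}^{\epsilon(a_{k+1})}\bigr)=g_{a_{k+1}}^{\epsilon(a_{k+1})}\,r_{a_{k+1},\leq}$.

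There is no real obstacle here: the lemma is essentially bookkeeping, and the only substantive input is the pair of conjugation formulas \eqref{identity: Phi of g}--\eqref{identity: Phi of g 2} from \cite{Shaw}. The only subtle point is keeping the two orderings straight and noting that reversing a product of $g$'s is reflected in the choice between left- and right-conjugation descriptions of $\theta(g_a)$; this is precisely why the two identities pair naturally with the two formulas for $\theta(g_a)$.
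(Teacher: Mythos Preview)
Your proof is correct and follows exactly the same approach as the paper: induction on the index using the recursion $r_{a_{k},\geq}=g_{a_{k-1}}^{\epsilon(a_{k-1})}r_{a_{k-1},\geq}$ together with the conjugation identity \eqref{identity: Phi of g} for the first statement, and the symmetric argument using \eqref{identity: Phi of g 2} for the second. The paper only writes out the first identity explicitly and leaves the second as ``similar,'' whereas you spell out both, but the content is identical.
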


\begin{proof}
We prove $\theta(r_{a_{j}, \geq}) = l_{a_{j}}$  by induction on $j$, where $j=1$ is the identity $\theta(1) =1$. Then,
\begin{align*}
\theta( r_{a_{j+1}, \geq} )  & \ =  \ \theta(g_{a_{j}}^{\epsilon(a_{j})}) \theta( r_{a_{j}, \geq} )  \\
&\overset{\text{(IH)}}{=} \theta(g_{a_{j}}^{\epsilon(a_{j})}) l_{a_{j}} \\
&\overset{(\ref{identity: Phi of g})}{=} l_{a_{j}} g_{a_{j}}^{\epsilon(a_{j})} l_{a_{j}}^{-1} l_{a_{j}}   \\
& \ = \ l_{a_{j}} g_{a_{j}}^{\epsilon(a_{j})} \\
& \ = \ l_{a_{j+1}}.
\end{align*}
The second identity is similar and one can formally obtain a proof from the above by exchanging the symbols $r$ and $l$, the identity \ref{identity: Phi of g} for \ref{identity: Phi of g 2}, and the order of the multiplication. 
\end{proof}

\begin{cor} \label{cor: Phi preserves rho}
$\theta( \rho_{\geq} ) = \rho$.
\end{cor}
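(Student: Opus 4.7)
The plan is to derive the corollary as an immediate consequence of the preceding lemma together with identities (\ref{identity: Phi of g}) and (\ref{identity: rl}), using a single factorization of $\rho_{\geq}$ around an arbitrary arrow.

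Concretely, I would fix any arrow $a_j \in \overline{Q}_1$ and start from the analogue of identity (\ref{identity: rho}) applied to the opposite order:
\[
\rho_{\geq} \;=\; l_{a_j,\geq}\, g_{a_j}^{\epsilon(a_j)}\, r_{a_j,\geq}.
\]
Since $\theta$ is an algebra homomorphism, I would apply it term by term. Lemma \ref{lem: Phi sends r to l} converts the outer factors into their counterparts for $\leq$, giving $\theta(l_{a_j,\geq}) = r_{a_j}$ and $\theta(r_{a_j,\geq}) = l_{a_j}$, while (\ref{identity: Phi of g}) computes the middle factor as $\theta(g_{a_j}^{\epsilon(a_j)}) = l_{a_j}\, g_{a_j}^{\epsilon(a_j)}\, l_{a_j}^{-1}$. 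Putting these together:
\[
\theta(\rho_{\geq}) \;=\; r_{a_j}\cdot l_{a_j}\, g_{a_j}^{\epsilon(a_j)}\, l_{a_j}^{-1} \cdot l_{a_j} \;=\; r_{a_j}\, l_{a_j}\, g_{a_j}^{\epsilon(a_j)}.
\]
Finally, identity (\ref{identity: rl}) gives $r_{a_j} l_{a_j} = q\, g_{a_j}^{-\epsilon(a_j)}$ in $\Lambda^q(Q)$, so the $g_{a_j}^{\pm\epsilon(a_j)}$ factors cancel and $\theta(\rho_{\geq}) = q = \rho$, as required.

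There is essentially no obstacle here: the two substitutions from the lemma and the conjugation identity (\ref{identity: Phi of g}) are designed to combine with (\ref{identity: rl}) (which is exactly the image of the defining relation $\rho = q$). The only thing to keep track of is the order in which the factors $l_{a_j}$ and $r_{a_j}$ appear after applying $\theta$ (they get swapped, which is the whole point of introducing the reversed ordering). If one wanted a variant proof avoiding the choice of $a_j$, one could instead prove inductively that $\theta$ applied to the product $g_{a_n,\geq}^{\epsilon} \cdots g_{a_1,\geq}^{\epsilon}$ telescopes to $l_{a_n}\, g_{a_n}^{\epsilon(a_n)}$ and then invoke (\ref{identity: rl}); but using the lemma already does this work.
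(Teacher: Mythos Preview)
Your proof is correct. It differs slightly from the route the paper has in mind: since the corollary is stated without proof, it is meant to follow directly from Lemma~\ref{lem: Phi sends r to l}, and indeed the inductive step in that proof, carried one step further (equivalently, choosing $a_j$ to be the $\leq$-maximal arrow so that $l_{a_j,\geq}=1$ and $r_{a_j}=1$), gives
\[
\theta(\rho_{\geq}) \;=\; \theta\bigl(g_{a_N}^{\epsilon(a_N)}\bigr)\,\theta\bigl(r_{a_N,\geq}\bigr)
\;=\; l_{a_N}\, g_{a_N}^{\epsilon(a_N)}\, l_{a_N}^{-1}\cdot l_{a_N}
\;=\; l_{a_N}\, g_{a_N}^{\epsilon(a_N)}
\;=\; \rho,
\]
without invoking identity~(\ref{identity: rl}). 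Your argument instead works with a generic $a_j$, which forces the extra step through $r_{a_j} l_{a_j} = q\, g_{a_j}^{-\epsilon(a_j)}$ and lands on $q$ rather than $\rho$; this is fine since $\rho = q$ in $\Lambda^q(Q)$, but it is a small detour.
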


This corollary implies $\theta$ descends to a map $\Lambda^{q}(Q, \geq) \rightarrow \Lambda^{q}(Q, \leq)$. Notice that we can similarly define $\theta_{\geq}: \Lambda^{q}(Q, \leq) \rightarrow \Lambda^{q}(Q, \geq)$. 

\begin{prop} \label{prop: Phi is an auto}
$\theta_{\geq} \circ \theta = \text{Id}_{\Lambda^{q}(Q, \leq)}$.
\end{prop}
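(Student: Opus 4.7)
The plan is to reduce to generators and unfold via the defining formulas. Since $\theta$ and $\theta_{\geq}$ are each algebra homomorphisms that fix $kQ_{0}$, so is their composition; verifying $\theta_{\geq}\circ\theta = \Id$ on $kQ_{0}$ is trivial, and it suffices to check $\theta_{\geq}(\theta(a)) = a$ for every $a \in \overline{Q}_{1}$.

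Starting from $\theta(a) = q^{-1} l_{a,\leq}\, a\, r_{a^{*},\leq}$ and applying $\theta_{\geq}$ as an algebra map that fixes $q^{-1} \in kQ_{0}$, I obtain
\[
\theta_{\geq}(\theta(a)) = q^{-1} \theta_{\geq}(l_{a,\leq})\, \theta_{\geq}(a)\, \theta_{\geq}(r_{a^{*},\leq}).
\]
The $\geq$-analogue of Lemma \ref{lem: Phi sends r to l}, proved by the same induction with the two orderings swapped (and invoking the analogous identity $\theta_{\geq}(g_{a}) = l_{a,\geq} g_{a} l_{a,\geq}^{-1}$), gives $\theta_{\geq}(l_{a,\leq}) = r_{a,\geq}$ and $\theta_{\geq}(r_{a^{*},\leq}) = l_{a^{*},\geq}$. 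Inserting these and the definition $\theta_{\geq}(a) = q^{-1} l_{a,\geq}\, a\, r_{a^{*},\geq}$ produces
\[
\theta_{\geq}(\theta(a)) = q^{-1} r_{a,\geq}\; q^{-1} l_{a,\geq}\; a\; r_{a^{*},\geq}\, l_{a^{*},\geq}.
\]

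To collapse this expression I exploit that $q$, $l_{\cdot,\geq}$, $r_{\cdot,\geq}$, and every $g_{b}$ lie in the diagonal subalgebra (the direct sum of vertex loop subalgebras), where they mutually commute. Consolidating the diagonals on either side of $a$ and applying \eqref{identity: rl} twice inside $\Lambda^{q}(Q,\geq)$---once for $r_{a,\geq} l_{a,\geq} = q g_{a}^{-\epsilon(a)}$ and once for $r_{a^{*},\geq} l_{a^{*},\geq} = q g_{a^{*}}^{-\epsilon(a^{*})} = q g_{a^{*}}^{\epsilon(a)}$, using $\epsilon(a^{*}) = -\epsilon(a)$---transforms the expression into one proportional to $g_{a}^{-\epsilon(a)}\, a\, g_{a^{*}}^{\epsilon(a)}$. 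The rearrangement identity \eqref{identity: g}, in the form $a g_{a^{*}}^{\epsilon(a)} = g_{a}^{\epsilon(a)} a$, then causes the two $g$-factors to annihilate each other, leaving $a$.

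I expect the main subtlety to lie in the $q$-bookkeeping: although $q$ is not globally central in $\Lambda^{q}(Q)$, it commutes with every diagonal element, and the placement of the $q^{-1}$ on the left of $\theta(a)$ is precisely arranged so that after the composition the excess $q$-powers produced by the two uses of \eqref{identity: rl} cancel cleanly against the two $q^{-1}$ factors. Once this telescoping is organized correctly, the computation reduces to a direct simplification.
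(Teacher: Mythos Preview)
Your approach matches the paper's: reduce to arrows, expand via the $\geq$-analogue of Lemma~\ref{lem: Phi sends r to l}, insert the formula for $\theta_{\geq}(a)$, then collapse using \eqref{identity: rl} and \eqref{identity: g}. One small correction: the diagonal elements $l_{\cdot,\geq}$, $r_{\cdot,\geq}$, $g_b$ do \emph{not} all mutually commute (distinct $g_b$'s based at the same vertex generally don't), but your argument only actually uses that $q \in kQ_0$ commutes with each of them, which is correct and is exactly what the paper's computation relies on.
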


\begin{proof}
It suffices to check $\theta_{\geq} \circ \theta$ is the identity on arrows in $Q_{1}$ and indeed:
\begin{align*}
\theta_{\geq}(\theta(a)) &= \theta_{\geq}( q^{-1} l_{a} a r_{a^{*}} ) \\
&= q^{-1} \theta_{\geq}(l_{a}) \theta_{\geq}(a) \theta_{\geq}( r_{a^{*}}) \\
(\ref{lem: Phi sends r to l}) &= q^{-1}  r_{a} \theta_{\geq}(a) l_{a^{*}} \\
&= q^{-1} r_{a} (q^{-1} l_{a} a r_{a^{*}}) l_{a^{*}} \\
(\ref{identity: rl})&= g_{a}^{-\epsilon(a)} a g_{a^{*}}^{-\epsilon(a^*)} \\
(\ref{identity: g}) &= a g_{a^{*}}^{-\epsilon(a)} g_{a^{*}}^{-\epsilon(a^*)} \\
&= a
\end{align*}
\end{proof}


\section{Calabi--Yau and free product properties} \label{s: 2CY-property}
The goal of this section is to prove that $\Lambda^{q}(Q)$ is 2-Calabi--Yau for $Q$ containing an unoriented cycle. We do so by exhibiting a length two, projective, $\Lambda^{q}(Q)$-bimodule resolution $P_{\bullet}$ of $\Lambda^{q}(Q)$, whose bimodule dual complex $P_{\bullet}^{\vee}$ is quasi-isomorphic to $\Lambda^{q}(Q)$. This resolution is due to Crawley-Boevey and Shaw, but they don't state nor prove that it is exact. The main new ingredient we provide is the injectivity of $\alpha$ which relies on the weak free product property. Hence we begin this section with a short digression explaining the strong and weak free product properties.

\subsection{Free product (complete intersection) properties} \label{section: definition of sffp}
Recall that, if $R$ is a commutative ring and $r \in R$ an element, then there is a dg analogue of the quotient $R/(r)$: the Koszul complex $(R[s]/(s^2), d)$ with $d|_R=0$ and $ds=r$, here $|s|=-1$. Note here that, in spite of the notation,  $R[s]/(s^2)$ is the graded-commutative algebra freely generated by $R$ and a single generator $s$ in degree $-1$.  The quotient map $(R[s]/(s^2), d) \to R/(r)$ is a quasi-isomorphism if and only if $r$ is a non-zerodivisor. 

Thus, in the commutative setting, the non-zerodivisor condition is the correct one for which  the Koszul complex (derived imposition of $r=0$) is equivalent to the quotient algebra. 

Now let us pass to the non-commutative setting. If $A$ is an algebra over a ring $S$
and $J=(r)$ an ideal generated by a single relation $r$, we can form a canonical algebra map,
\begin{equation}
\Phi: A/J *_S S[t] \to \gr_J A, \quad \Phi|_{A/J} = \text{Id}, \Phi(t)=r,
\end{equation}
where $\gr_J$ means the associated graded algebra with respect to the $J$-adic filtration.
\begin{defn} The pair $(A, r)$ satisfies the \emph{weak free product property} if $\Phi$ is an isomorphism.
\end{defn}
\begin{rem}
This condition is significantly more subtle than in the commutative case. In particular, it is insufficient for $r$ to be a non-zerodivisor. For example, if $A=k[x]$ and $r=x^2$, then we have $H^1(A * k[s], d) \ni [xs-sx] \neq 0$.  (Here $A$ is actually commutative, but we take the non-commutative construction; for a non-commutative example, simply replace $A$ with $k\langle x,y \rangle$.) 
\end{rem}
 The weak free product property is an analogue of a \emph{non-commutative complete intersection (NCCI)} \cite{Etingof-Ginzburg}, and closely matches the \emph{weak summand} property from \cite{Anick} (considered in the graded setting).   We have chosen this terminology to make the algebraic property we are using more evident.
 
 When the context is clear, we will sometimes abuse notion and say the quotient $A/J$, for $J=(r)$, satisfies the weak free product property, even though the choice of $A$ and $r \in A$ is important. 
 
Given an $S$-bimodule section $\sigma: A/J \to A$ of the quotient map $\pi: A \rightarrow A/J$, we can form an associated linear map,
\begin{equation}
\widetilde{\sigma}: A/J *_S S[t] \to A, \quad (a_0 t^{m_1} a_1 t^{m_2} \cdots t^{m_n} a_n) \mapsto  \sigma(a_0) r^{m_1} \sigma(a_1) r^{m_2} \cdots r^{m_n} \sigma(a_n),
\end{equation}
for $m_{i} > 0$, for all $i$. The existence of such a $\sigma$ (and hence $\tilde{\sigma}$) is automatic if $S$ is separable, as is the case when $S = kQ_0$ below.

By construction, this is $S[t]$-bilinear, where $t$ acts on $A$ by multiplication by $r$. It also reduces to the identity modulo $(t)$ on the source and $J$ on the target. If $(A,r)$ satisfies the weak free product property, then moreover the completion
\begin{equation}
\widehat{\widetilde{\sigma}}: \widehat{A/J *_S S[t]} \to \widehat{A},
    \end{equation}
    with respect to the $t$-adic and $J$-adic filtrations, is a linear isomorphism.

The goal of the strong free product property is to find a description of $A$ itself as a free product. The first version is the following:


\begin{defn} The triple
$(A,r,\sigma)$ satisfies the \emph{strong free product property} if $\widetilde{\sigma}: A/J *_S S[t] \to A$ is an $S$-bimodule isomorphism. 
\end{defn}

\begin{rem} \label{r: choice of sigma}
The choice of $\sigma$ is important. Let $A = k \langle x, y \rangle$ and $J=(y)$ so $A/J \cong k[x].$ Here $k[t]$ acts on $A$ via $tf:=yf$. Consider two different choices 
$$
\sigma_{1}, \sigma_{2}: k[x] \rightarrow k \langle x, y \rangle \hspace{1cm}
\sigma_{1}(x + (y)) = x,  \ \ \sigma_{2}(x + (y)) = x-xy.
$$ 
Then $\widetilde{\sigma}_{1}$ is a linear isomorphism, while $\widetilde{\sigma}_{2}$ is not surjective as 
$$x = \sigma_{2}(x + (y)) (1-y)^{-1} =  \sigma_{2}(x + (y)) \sum_{i \geq 0} y^{i} \notin \widetilde{\sigma}_{2}(k[x] *_{k} k[t]).
$$
\end{rem}


This property is too much to expect in many situations, such as in the presence of rational functions in $t$. To fix this, let $B=S[t,f^{-1}]$ be a localization of $S[t]$ obtained by inverting 
some 
$f \in S^\times + (t)$, such that the map $S[t] \to A$ extends to an algebra map $\tau: B \to A$ (such an extension is necessarily unique). Let $\overline{B} := tB$, so that we have an $S$-bimodule decomposition $B = S \oplus \overline{B}$.
Then $\widetilde{\sigma}$ extends to a map 
$\sigma': A/J *_S B \to A$, which has the form
\begin{equation} \label{eq: sigma' definition}
    a_0 b_1 a_1 \cdots b_n a_n \mapsto \sigma(a_0) \tau(b_1) \cdots \tau(b_n) \sigma(a_n), a_i \in A/J, b_i \in \overline{B}.
    \end{equation}
  
    \begin{defn} \label{def: sfpp}
The quadruple $(A,r,\sigma,B)$ satisfies the \emph{strong free product property} if $\sigma'$ is a linear isomorphism. 
\end{defn}
This definition reduces to the previous definition in the case $B = S[t]$, $\tau(t) = r$.

In this case, it follows by taking associated graded algebras that $(A,r)$ satisfies the weak free product property. Moreover, $A$ is Hausdorff in the $J$-adic filtration (because the source of $\sigma'$ is Hausdorff in the $t$-adic filtration), and $\sigma'$ is indeed a restriction of $\widehat{\widetilde{\sigma}}$.

\begin{rem}
It is important in the definition of $\sigma'$ to use the natural bimodule complement $\overline{B}=tB$.  Here is an example to show why (see also Remark \ref{r:overline-b-matters} for another one, which we naively ran into before realizing our mistake).  Let $A=k\langle x,y,z \rangle/(xyz-xz), r = y$, so that $A/J \cong k\langle x,z\rangle/(xz)$. A basis for $A/J$ is given by $\{z^i x^j\}_{i,j \geq 0}$. Let $\sigma: A/J \to A$ be the section preserving this, i.e.,  $\sigma(z^i x^j +(y,xyz-xz))=z^i x^j+(xyz-xz)$. Set $B:=k[t]$. Then $\sigma'=\widetilde{\sigma}:A/J *_k k[t] \to A$ is a linear isomorphism, so $(A,r,\sigma)$ is a strong free product.  However, if we were to instead choose a complement $\overline{B} = (t-1)B$, then we now have $\sigma'(x(t-1)z) = 0$,  so the map $\sigma': A/J *_k B \to A$ defined using $\overline{B}$ is not injective. (It is also not surjective, as $xz$ is not in the image.) On the other hand, for general weak free products, using the correct choice $\overline{B}=tB$, $\sigma'$ is always injective.

    \end{rem}  

Now we return to the setup of $Q$ a connected, non-Dynkin quiver and $q= (k^\times)^{Q_0}$. Let $B := kQ_0[t,(q+t)^{-1}]$ and $\overline{B} := t B = 
\text{Span}(t^m,(t')^m \mid m \geq 1\}$, for $t' := (q+t)^{-1}-q^{-1}$.
We conjecture that, for every such $Q$, there exists $\sigma$ such that the quadruple $(L_Q,r,\sigma,B)$ satisfies the strong free product property. Moreover, in Section \ref{s: free-product} we prove this conjecture in the case of quivers containing a cycle:

\begin{thm} [Proposition \ref{p:mpa-general-q} in Section \ref{s: free-product}] \label{thm:sfpp}
Let $Q$ be a connected quiver containing an unoriented cycle. Let $B = kQ_{0}[t, (q+t)^{-1}]$ and let $r$ denote the multiplicative preprojective relation. There exists a section $\sigma$ such that $(L_{Q}, r, \sigma, B)$ satisfies the strong free product property. 
\end{thm}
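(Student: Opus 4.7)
The strategy I would follow is the one outlined in the introduction: use Bergman's Diamond Lemma to construct explicit $kQ_0$-bimodule bases of $L_Q$ and $\Lambda^q(Q) *_{kQ_0} B$ that match under the map $\sigma'$. The natural decomposition is into a ``cycle part'' and a ``free part'' for the arrows outside the cycle, so I would first handle the case of a cycle quiver $Q=\widetilde{A}_n$, then extend to partial multiplicative preprojective algebras for connected quivers containing a cycle, and finally glue.

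For Stage 1, fix an oriented cycle $\widetilde{A}_n$ and single out one arrow $a_0$ on the cycle. Work over the base ring $B = kQ_0[t,(q+t)^{-1}]$ where $t$ is the indeterminate destined to map to $r = \rho - q$. Consider the $B$-algebra generated by the arrows $a \in \overline{Q}_1$ and the formal inverses $g_a^{-1}$ with $a$ on the cycle, modulo the relations
\begin{itemize}
\item the commutations $g_a a = a g_{a^*}$, $g_a^{-1} a = a g_{a^*}^{-1}$, and the corresponding identities for $a^*$, coming from \eqref{identity: g};
\item the unitality $g_a g_a^{-1} = 1 = g_a^{-1} g_a$;
\item a substitution rule $g_{a_0}^{\epsilon(a_0)} = l_{a_0}^{-1}(q+t) r_{a_0}^{-1}$, which in $\Lambda^q$ is the multiplicative preprojective relation written as an expression for the distinguished $g_{a_0}^{\epsilon(a_0)}$ in terms of the remaining $g_a^{\pm 1}$ (and inflated by $t$ to track the deviation from $\rho = q$).
\end{itemize}
Pick a monomial order (say lexicographic once a total order on the generators is fixed, with $g_{a_0}^{\epsilon(a_0)}$ largest) making these relations into a rewrite system. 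Apply Bergman's Diamond Lemma: the irreducible monomials are products of arrows and $g_a^{\pm 1}$ ($a \neq a_0$), possibly multiplied by an element of $\overline{B} = tB$, and they give a $B$-basis of $L_Q$. This matches, term by term, a $kQ_0$-bimodule basis of $\Lambda^q(\widetilde{A}_n) *_{kQ_0} B$ obtained by lifting irreducible monomials to $L_Q$ via the section $\sigma$.

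For Stage 2, extend to an arbitrary connected $Q$ containing a cycle $C$ by enlarging the reduction system with the commutation and unitality rules for the $g_b^{\pm 1}$ associated with arrows $b \in \overline{Q}_1 \setminus \overline{C}_1$. These new rules have no overlap with the cycle-only substitution rule (because the substitution only rewrites $g_{a_0}^{\epsilon(a_0)}$, which lives in the cycle subalgebra), so confluence reduces to Stage 1 together with the routine confluence check for a ``free'' localization. This yields a $B$-basis of $L_Q$ which matches a basis of $\Lambda^q(Q) *_{kQ_0} B$. Define $\sigma$ to send each basis monomial of $\Lambda^q(Q)$ to its unique $t$-free irreducible lift in $L_Q$; then $\sigma'$ is a $kQ_0$-bimodule isomorphism by construction.

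The main obstacle is the Stage 1 confluence check. The overlap ambiguities that require care are those between the commutation rules $g_a a = a g_{a^*}$ (and their inverse-variants) and the substitution rule for $g_{a_0}^{\epsilon(a_0)}$: one must verify that rewriting $g_{a_0}^{\epsilon(a_0)} a_0$ by commuting first versus substituting first yields the same element, and similarly for products with other cycle arrows adjacent to $a_0$, using the identities proved in Lemma \ref{lem: Phi sends r to l} and the structure of $l_{a_0}, r_{a_0}$. A secondary subtlety is termination: because the substitution rule introduces $t$ on the right-hand side, one must filter by $t$-adic degree (or equivalently run the Diamond Lemma over $B$ viewed as a filtered base) so that reductions terminate in each fixed $t$-degree. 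Once confluence and termination are established, the strong free product property follows immediately from the matching of normal forms with free-product monomials, giving the precise statement \eqref{e:iso-general-q} referenced in the overview.
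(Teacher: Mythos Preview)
Your high-level plan (Diamond Lemma, treat the cycle first, then glue the rest) is the paper's, but two of the concrete steps you propose would fail as written.

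\textbf{One substitution does not suffice.} The element $r=\rho-q$ has one component $e_v r = \prod_{t(a)=v} g_a^{\epsilon(a)}-q_v$ at \emph{each} vertex $v$, so the ideal $(r)$ imposes $|Q_0|$ independent relations. Your single rewrite rule $g_{a_0}^{\epsilon(a_0)}=l_{a_0}^{-1}(q+t)r_{a_0}^{-1}$ encodes only the relation at $t(a_0)$; the others are never used, so your irreducible monomials are still a spanning set for a much larger algebra than $\Lambda^q(Q)*_{kQ_0}B$, and $\sigma'$ cannot be an isomorphism. For the cycle the paper avoids this by packaging all vertex relations into the single equation $xy^{-1}=q+r$ via $x:=1+aa^*$, $y:=1+a^*a$ with $a:=\sum_i a_i$, and then substituting $y^{\pm1}$ in terms of $x^{\pm1}$ and $r,r'$; for non-cycle vertices one instead substitutes one $g_a$ per vertex, the choice being governed by a spanning forest rooted at the cycle.

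\textbf{The ``no overlap'' claim in Stage 2 is false.} At a cycle vertex that is also incident to an arrow $b\notin\overline{C}_1$, the relation $\prod_{t(a)=v}g_a^{\epsilon(a)}=q_v+te_v$ genuinely mixes $g_b^{\pm1}$ with the cycle generators, so the cycle substitution \emph{does} interact with the external generators. The paper handles this by introducing an intermediate object you do not mention: the \emph{partial} multiplicative preprojective algebra $\Lambda^{q'}(Q',\cW)$ of the complement (relations imposed only at vertices off the cycle), proving a separate Diamond Lemma basis for it using the spanning forest, and then \emph{perturbing} the cycle reduction system by replacing $\rho_{Q_E}$ with $\rho_{Q_E}\rho_{Q'}$. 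Showing that this perturbation does not spoil confluence---and constructing the ordering that guarantees termination, which is not $t$-adic but a lexicographic function $N$ built from occurrence counts---is where the actual work lies.
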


The proof of this theorem is technical and uses combinatorial algebraic techniques. Therefore we delay its proof until Section \ref{s: free-product}, which does not result in circular logic as that section does not depend on results after Section \ref{s:MPA}.



\begin{rem} \label{rem: weakening assumptions on quiver}
The connectedness assumption can be weakened as follows. If $Q = Q' \sqcup Q''$ then $L_{Q} = L_{Q'} \oplus L_{Q''}$ and $\Lambda^{q}(Q) = \Lambda^{q}(Q') \oplus \Lambda^{q}(Q'')$ and so by adding sections, the strong free product property for 
$$(L_{Q'} \oplus L_{Q''}, r' + r'', \sigma' +\sigma'', B' \oplus B'') = (L_{Q}, r, \sigma, B)$$
 follows from the strong free product properties for $(L_{Q'}, r', \sigma', B')$ and $(L_{Q''}, r'', \sigma'', B'')$. So one only needs the weaker assumption that $Q$ is a quiver with \emph{each} component containing an unoriented cycle. But we state results in the connected setting to simplify the hypotheses. 
\end{rem}

\begin{cor} \label{cor: weak free product property}
Let $Q$ be a connected quiver containing an unoriented cycle. Then $\Lambda^{q}(Q)$ satisfies the weak free product property. In particular, there exists an isomorphism of graded algebras:
$$
\sum_{i} \varphi_{i} : \gr( \Lambda^{q}(Q) *_{kQ_{0}} kQ_{0}[t]) \rightarrow \gr(L_{Q})
$$
where the associated graded algebras are taken with respect to the $t$-adic and $J_{Q}$-adic filtrations on $\Lambda^{q}(Q) *_{kQ_{0}} kQ_{0}[t]$ and $L_{Q}$ respectively. 
\end{cor}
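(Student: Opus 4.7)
The plan is to derive this corollary from Theorem~\ref{thm:sfpp} by passing to associated graded algebras. First I would invoke the strong free product property to obtain a $kQ_{0}$-bimodule section $\sigma$ and a linear isomorphism $\sigma': \Lambda^{q}(Q) *_{kQ_{0}} B \to L_{Q}$, with $B = kQ_{0}[t,(q+t)^{-1}]$ and $\tau(t) = r$.

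Next, I would equip the source with the $t$-adic filtration (giving $\overline{B} = tB$ weight $\geq 1$ and $\Lambda^{q}(Q)$ weight $0$) and the target with the $J_{Q}$-adic filtration. The map $\sigma'$ is filtration-preserving because $\tau(t) = r \in J_{Q}$ and $\tau((q+t)^{-1}) - q^{-1} = -q^{-1} r \rho^{-1} \in J_{Q}$. The $t$-adic filtration on $B$ is Hausdorff---$B$ embeds into $kQ_{0}[\![t]\!]$ via the expansion $(q+t)^{-1} = q^{-1}\sum_{n \geq 0}(-q^{-1}t)^{n}$---so $\gr B \cong kQ_{0}[t]$, and consequently $\gr(\Lambda^{q}(Q) *_{kQ_{0}} B) \cong \Lambda^{q}(Q) *_{kQ_{0}} kQ_{0}[t]$ as graded $kQ_{0}$-algebras. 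Under this identification, $\gr \sigma'$ is the algebra map sending $a \in \Lambda^{q}(Q)$ to its class in $\gr^{0} L_{Q}$ and $t \mapsto [r] \in \gr^{1} L_{Q}$, which is precisely the map whose bijectivity is the content of the weak free product property.

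The remaining hurdle---and the most delicate point---is to verify that $\sigma'$ is a \emph{strict} filtered isomorphism, so that $\gr \sigma'$ is actually an isomorphism. This amounts to showing that $\sigma'(F_{m}\,\text{source}) = J_{Q}^{m}$ for all $m$; the inclusion $\subseteq$ is automatic from filtration-preservation, and the reverse inclusion follows from the fact that the source carries a natural $kQ_{0}$-bimodule direct sum decomposition indexed by alternating words in $\Lambda^{q}(Q)$ and $\overline{B}$, each $\overline{B}$-factor in turn filtered by its $t$-adic order. This is the implication ``strong free product $\Rightarrow$ weak free product'' already indicated in Section~\ref{section: definition of sffp}. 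Setting $\varphi_{i}$ to be the degree-$i$ component of $\Phi := \gr \sigma'$ then yields the claimed graded algebra isomorphism $\sum_{i} \varphi_{i}: \gr(\Lambda^{q}(Q) *_{kQ_{0}} kQ_{0}[t]) \to \gr(L_{Q})$.
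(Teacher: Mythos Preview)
Your approach is correct and matches the paper's: the corollary is recorded as an immediate consequence of Theorem~\ref{thm:sfpp} via the general implication ``strong free product $\Rightarrow$ weak free product'' from Section~\ref{section: definition of sffp}, and your write-up is actually more detailed than what the paper provides. One sharpening for the strictness step you flag: the reverse inclusion $J_Q^{m}\subseteq\sigma'(F_m)$ follows cleanly from the identity $\sigma'(x_0\,t\,x_1\cdots t\,x_m)=\sigma'(x_0)\,r\,\sigma'(x_1)\cdots r\,\sigma'(x_m)$, which is immediate from the defining formula~\eqref{eq: sigma' definition} (equivalently, the $S[t]$-bilinearity of $\sigma'$ noted in Section~\ref{section: definition of sffp}); combined with surjectivity of $\sigma'$ this exhibits every spanning element $\ell_0 r\cdots r\ell_m$ of $J_Q^m$ as an element of $\sigma'(F_m)$. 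The direct-sum decomposition you mention is useful context but is not by itself the mechanism that yields this inclusion.
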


\begin{rem} \label{r: Anick paper}
Note that for ordinary preprojective algebras, the free product
property was observed in \cite[Propositions 5.1.9,5.2.1]{Schedler_HH}.
In fact, as these algebras are non-negatively
graded with finite-dimensional subspaces in each degree, and one-dimensional in degree zero (connected), 
the strong and weak free product properties are
equivalent (and independent of the choice of graded section $\sigma$), as was already observed by Anick in \cite{Anick}.\footnote{Anick works in the graded context over a field rather than $kQ_{0}$, but his results generalize to this setting, see \cite{Etingof-Ginzburg, Schedler_HH}.} 
Moreover, if $A$ has global dimension at most two, then these conditions are also equivalent to
the condition that $A/(r)$ also has global dimension at most two.
In the case $A$ is a tensor or path algebra, such algebras were called
\emph{non-commutative complete intersections} in \cite{Etingof-Ginzburg} due to their close relationship to the condition that representation varieties be complete intersections. For a nuanced discussion of this relationship, including sufficient conditions for the representation variety to be a complete intersection, see the introduction and Theorem 24 in \cite{Berest}. 

However, in the ungraded case, we only have the implication that we need, that the free product property implies the existence of a length-two
projective bimodule resolution. Indeed, 
the latter property only depends on a piece of the associated graded algebra with
respect to the $(r)$-adic filtration, and in the ungraded case this filtration need not even be Hausdorff. In contrast, the strong free
product property implies the Hausdorff condition and gives information about the algebra itself.

Motivated by this, we believe that the strong free product property can be viewed as an ungraded analogue of the non-commutative complete
intersection property. It is an interesting question to investigate when their representation varieties are complete intersections.
\end{rem}

\subsection{A bimodule resolution of \texorpdfstring{$\Lambda$}{Lambda}}
\label{ss:bimodule-resolution}
In this subsection, we show that for \emph{any} quiver, the weak free product property for $\Lambda^{q}(Q)$ implies $\Lambda^{q}(Q)$ has a length two projective bimodule resolution. Consequently, since we establish the weak free product property for connected quivers containing a cycle, we prove $\Lambda^{q}(Q)$ has Hochschild dimension two for connected quivers containing a cycle. For ease of notation, write $\Lambda := \Lambda^{q}(Q)$.

Crawley-Boevey and Shaw build a chain complex of $\Lambda$-bimodules $P_{\bullet} = P_{2} \overset{\alpha}{\rightarrow} P_{1} \overset{\beta}{\rightarrow} P_{0}$ where,
$$
P_{2} = P_{0} := \Lambda \otimes_{kQ_{0}} kQ_{0} \otimes_{kQ_{0}} \Lambda = \langle \eta_{v} \rangle_{v \in Q_{0}} \hspace{1cm}
P_{1} := \Lambda \otimes_{kQ_{0}} k \overline{Q}_{1} \otimes_{kQ_{0}} \Lambda = \langle \eta_{a} \rangle_{a \in \overline{Q}_{1}} 
$$
and
$$
\alpha(\eta_{v}) := \sum_{a \in \overline{Q}_{1} : t(a) = v} l_{a} \Delta_{a} r_{a}
\hspace{.5cm} \text{where} \hspace{.5cm}
\Delta_{a} = 
\begin{cases}
\eta_{a} a^{*} + a \eta_{a^{*}} & \text{if } a \in Q_{1} \\
-g_{a}^{-1}( \eta_{a} a^{*} + a \eta_{a^{*}}) g_{a}^{-1} & \text{if } a 
\in Q_{1}^{op}. 
\end{cases}
$$
$$
\beta(\eta_{a}) := a \eta_{t(a)} - \eta_{h(a)} a.
$$

We claim that it is a resolution of $\Lambda$. To see this, following \cite{Shaw}, we first write down an explicit chain map of $\Lambda$-bimodule complexes $\psi: P_{\bullet} \rightarrow Q_{\bullet}$, where $Q_{\bullet}$ is quasi-isomorphic to $\Lambda$; we then prove it is an isomorphism. $Q_{\bullet}$ is the cotangent exact sequence in Corollary 2.11 in \cite{Cuntz-Quillen}, but in this context it was defined earlier (and shown quasi-isomorphic to $\Lambda$) by Schofield in \cite{Schofield}. So we have the maps:
$$
\xymatrix{
P_{\bullet} \ar[rr]^{\psi}_{\text{\cite{Shaw}}} && Q_{\bullet} \ar[rr]^{\text{quasi-iso}}_{\text{\cite{Schofield}}} && \Lambda.
}
$$

\begin{prop}[Lemma 3.1 in \cite{Shaw}] \label{prop: P is a resolution}
For \emph{any} quiver $Q$, the following diagram commutes:
$$
\xymatrix{
P_{0} \ar[rr]^{\alpha}   \ar[d]^{\psi_{2}} && P_{1} \ar[rr]^{\beta}  \ar[d]^{\psi_{1}}_{\cong} && P_{0} \ar[rr]^{\gamma}  \ar[d]^{\psi_{0}}_{\cong} &&  \Lambda \ar[d]^{id}_{=}  \\
J/J^{2} \ar[rr]^-{\kappa} && \Lambda \otimes_{L} \Omega_{kQ_{0}}(\Lambda) \otimes_{L} \Lambda \ar[rr]^-{\lambda} && \Lambda \otimes_{kQ_{0}} \Lambda \ar[rr]^{\mu} && \Lambda 
}
$$
where the vertical maps are $\Lambda$-bimodule maps defined on generators by,
\begin{align*}
\psi_{2}(\eta_{v}) &:= \rho e_{v} - q e_{v} &\psi_{1}(\eta_{a}) := 1 \otimes_{L} [a \otimes_{kQ_{0}} 1 - 1 \otimes_{kQ_{0}} a] \otimes_{L} 1 && \psi_{0}(\eta_{v}) = e_{v} \otimes e_{v}.
\end{align*}
Here $\alpha$ and $\beta$ are as defined above and $\gamma(\eta_v) := e_v$. The horizontal maps are defined by,
\begin{align*}
&\kappa(x + J^{2}) := 1 \otimes_{L} \delta(x) \otimes 1 \ \ \ \text{ where } \delta(x) := x \otimes 1 - 1 \otimes x \ \  \text{ for } x \in J  \\
 &\lambda(1 \otimes_{L} [ab \otimes_{kQ_{0}} c - a \otimes_{kQ_{0}} bc] \otimes_{L} 1) :=  ab \otimes_{kQ_{0}} c - a \otimes_{kQ_{0}} bc  \ \ \ \text{ for } a, b, c \in L \\
 &\mu( a \otimes b ) := ab \ \ \ \text{ for } a, b \in \Lambda.
\end{align*}
\end{prop}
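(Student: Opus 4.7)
My plan is to verify the commutativity of each of the three squares separately. Since $\psi_0, \psi_1, \psi_2$ are $\Lambda$-bimodule maps by construction, it suffices to check each identity on the bimodule generators $\eta_v \in P_0$ and $\eta_a \in P_1$.

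The rightmost square is immediate: $\mu(\psi_0(\eta_v)) = \mu(e_v \otimes e_v) = e_v = \gamma(\eta_v)$. For the middle square, I would expand both sides on $\eta_a$: using the idempotent identities $a e_{t(a)} = a = e_{h(a)} a$ together with $\sum_v e_v = 1$, both $\psi_0(\beta(\eta_a))$ and $\lambda(\psi_1(\eta_a))$ reduce to $a \otimes_{kQ_0} 1 - 1 \otimes_{kQ_0} a$, matching directly.

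The main effort is the leftmost square. My first step is to establish the intermediate identity
$$
\psi_1(\Delta_a) = 1 \otimes_L \delta(g_a^{\epsilon(a)}) \otimes_L 1 \quad \text{for all } a \in \overline{Q}_1.
$$
For $a \in Q_1$, expanding $\psi_1(\eta_a a^* + a \eta_{a^*})$ and collecting the cross-terms telescopes to $1 \otimes_L \delta(aa^*) \otimes_L 1 = 1 \otimes_L \delta(g_a) \otimes_L 1$, since $\delta(1) = 0$. For $a \in Q_1^{\text{op}}$, the analogous expansion together with the Leibniz-derived identity $\delta(g_a^{-1}) = -g_a^{-1} \delta(g_a) g_a^{-1}$ (obtained by applying $\delta$ to $g_a \cdot g_a^{-1} = 1$) yields the corresponding formula for $\epsilon(a) = -1$.

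The final step is to match $\psi_1(\alpha(\eta_v)) = \sum_{t(a)=v} 1 \otimes_L l_a \delta(g_a^{\epsilon(a)}) r_a \otimes_L 1$ with $\kappa(\psi_2(\eta_v)) = 1 \otimes_L \delta(\rho e_v) \otimes_L 1$. Since $\delta$ vanishes on $kQ_0$, we have $\delta((\rho-q)e_v) = \delta(\rho)\, e_v$; applying the Leibniz rule to $\rho = \prod_a g_a^{\epsilon(a)}$ gives $\delta(\rho) = \sum_a l_a \delta(g_a^{\epsilon(a)}) r_a$. The remaining task is to check that right-multiplication by $e_v$ isolates arrows with $t(a) = v$: the key observation is that $g_a^{\epsilon(a)} - 1 \in e_{t(a)} L e_{t(a)}$ (a closed loop at $t(a)$), so $\delta(g_a^{\epsilon(a)})$ is supported at the idempotent $e_{t(a)}$ on both tensor factors; commuting $e_v$ through $r_a$ via its vertex decomposition then retains only the terms with $t(a) = v$. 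I expect this idempotent bookkeeping to be the most delicate step, since one has to track how $e_v$ interacts with the nontrivial product expansion of $r_a$ inside $\Omega_{kQ_0}$ after localization.
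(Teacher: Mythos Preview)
Your proposal is correct and follows what is essentially the only natural approach. Note that the paper does not actually supply its own proof here: the proposition is stated as a citation of Lemma~3.1 in \cite{Shaw}, and the surrounding text only uses its conclusion. So there is no in-paper argument to compare with beyond the reference.

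A minor remark on your final step: the idempotent bookkeeping is less delicate than you suggest. Each $g_b^{\pm 1}=1+(\text{loop at }t(b))$ commutes with every $e_v$ in $L$, so $r_a e_v = e_v r_a$ on the nose; combined with $\delta(g_a^{\epsilon(a)}) \in e_{t(a)}\,\Omega_{kQ_0}(L)\,e_{t(a)}$ (which follows from $\delta(g_a^{\epsilon(a)})=\delta(g_a^{\epsilon(a)}-1)$ and the fact that the tensor is over $kQ_0$, so $e_v\otimes x = e_v\otimes e_v x$), the term $l_a\,\delta(g_a^{\epsilon(a)})\,r_a\,e_v$ vanishes immediately when $t(a)\neq v$ and is unchanged when $t(a)=v$. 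There is no subtle interaction with the localization or with the product expansion of $r_a$ inside $\Omega_{kQ_0}$.
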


Since $\psi_{0}$ and $\psi_{1}$ are $\Lambda$-bimodule isomorphisms, it remains to show $\psi_{2}$ is a $\Lambda$-bimodule isomorphism. We show this using the weak free product property.




\begin{prop} \label{prop: wfpp implies P resolution}
Suppose $\Lambda$ satisfies the weak free product property. Then $P_{\bullet}$ is a bimodule resolution of $\Lambda$. 
\end{prop}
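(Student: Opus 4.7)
The plan is to combine Proposition \ref{prop: P is a resolution} with the weak free product property, reducing the assertion to showing that a single vertical map in the comparison diagram is an isomorphism. By Proposition \ref{prop: P is a resolution}, we have a commutative diagram of $\Lambda$-bimodule complexes comparing $P_\bullet \to \Lambda$ with the cotangent complex $J/J^2 \to \Lambda \otimes_L \Omega_{kQ_0}(\Lambda) \otimes_L \Lambda \to \Lambda \otimes_{kQ_0} \Lambda \to \Lambda$. The bottom row is exact (it is the resolution of $\Lambda$ due to \cite{Schofield}; cf.~\cite[Cor.~2.11]{Cuntz-Quillen}), and the vertical maps $\psi_1, \psi_0$ are already $\Lambda$-bimodule isomorphisms. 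Moreover each $P_i$ is a projective $\Lambda$-bimodule, being of the form $\Lambda \otimes_{kQ_0} M \otimes_{kQ_0} \Lambda$ with $M$ a $kQ_0$-bimodule and $kQ_0$ semisimple. Hence to show $P_\bullet$ is a projective bimodule resolution of $\Lambda$ of length at most two, it suffices to verify that $\psi_2 : P_0 \to J/J^2$ is a bimodule isomorphism.

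Unwinding definitions, $\psi_2(1 \otimes e_v \otimes 1) = \rho e_v - q e_v = r e_v + J^2$. On the other hand, by hypothesis the weak free product property provides an algebra isomorphism
\[
\Phi : \Lambda *_{kQ_0} kQ_0[t] \xrightarrow{\ \sim\ } \gr_J L_Q, \qquad \Phi|_{\Lambda} = \operatorname{Id}, \quad \Phi(t) = r + J^2.
\]
Since $\Phi$ respects the grading by powers of $t$ on the source and the $J$-adic grading on the target, restricting to the piece in $t$-degree one produces a $\Lambda$-bimodule isomorphism
\[
\Phi_1 : \Lambda \otimes_{kQ_0} (kQ_0 \cdot t) \otimes_{kQ_0} \Lambda \xrightarrow{\ \sim\ } J/J^2, \qquad 1 \otimes e_v t \otimes 1 \ \longmapsto\ r e_v + J^2.
\]
Identifying $kQ_0 \cdot t$ with $kQ_0$ (hence the source with $P_0$) identifies $\Phi_1$ with $\psi_2$, so $\psi_2$ is an isomorphism. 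Because all three vertical maps are then bimodule isomorphisms, the comparison $\psi$ is an isomorphism of augmented complexes, and exactness of the bottom row forces exactness of the top.

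The one nontrivial input, and what would otherwise be the main obstacle, is establishing that the degree-one piece $\Phi_1$ is an isomorphism; this is exactly the content of the weak free product hypothesis, and the remark following its definition shows that mere non-zerodivisor-type properties of $r$ would not suffice. Once the hypothesis is granted, the argument is essentially an identification of two maps in degree one, and is quite short.
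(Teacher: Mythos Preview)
Your proof is correct and follows essentially the same approach as the paper: use the commutative diagram of Proposition~\ref{prop: P is a resolution}, observe that $\psi_0,\psi_1$ are already isomorphisms, and identify $\psi_2$ with the degree-one piece $\Phi_1$ of the weak free product isomorphism $\Lambda *_{kQ_0} kQ_0[t]\cong \gr_J L_Q$. The paper's argument is the same, only more terse.
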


\begin{proof}
Taking the $i=1$ piece of the graded isomorphism
$$
\gr(\varphi) = \sum_{i} \varphi_{i} : \gr(\Lambda *_{kQ_{0}} kQ_{0}[t, (t+q)^{-1}]) \longrightarrow \gr(L_{Q})
$$
gives an isomorphism of $\Lambda$-bimodules,
$$
\varphi_{1} : \Lambda \otimes_{kQ_{0}} kQ_{0} \cdot t \otimes_{kQ_{0}}
\Lambda \rightarrow J_{Q}/J_{Q}^{2}.
$$
Since $\varphi_{1}$ sends $t \mapsto r$, it sends $t e_{v} \mapsto r e_{v} = (\rho-q) e_{v}$ and hence $\varphi_{1} = \psi_{2}$. We conclude that $\psi_{2}$ is an isomorphism of $\Lambda$-bimodules and hence $\psi_{\bullet}: P_{\bullet} \rightarrow Q_{\bullet}$ is an isomorphism of $\Lambda$-bimodule complexes. In particular, $P_{\bullet}$ is a resolution since $Q_{\bullet}$ is a resolution.  
\end{proof}

For a complex $C_{\bullet}$ concentrated in non-negative degrees, define the length by 
\[
\text{len}(C_{\bullet}):= \sup \{ i \in \bN \mid C_{i} \neq 0 \}.
\]
For an algebra $A$, the Hochschild dimension of $A$ is $\text{HH.dim}(A) := \text{len}(\text{HH}_{\bullet}(A))$ and the global dimension of $A$, is 
$\text{gl.dim}(A) := \sup_{M \in A\text{-mod}} \inf_{P_{\bullet}} \{ \text{len}(P_{\bullet}) \}$ where the infimum is taken over all projective $A$-module resolutions of M.


\begin{cor} \label{cor: global dim 2}
Let $Q$ be a connected quiver containing a cycle. Then
\[
\text{gl.dim}(\Lambda) \leq \text{HH.dim}(\Lambda) = 2.
\]
\end{cor}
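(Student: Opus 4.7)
The strategy is to combine Corollary \ref{cor: weak free product property} and Proposition \ref{prop: wfpp implies P resolution} directly. Since $Q$ is connected and contains an unoriented cycle, Corollary \ref{cor: weak free product property} furnishes the weak free product property for $\Lambda := \Lambda^{q}(Q)$, and Proposition \ref{prop: wfpp implies P resolution} then upgrades the complex $P_{\bullet}$ to a genuine length-two projective $\Lambda$-bimodule resolution
$$
0 \longrightarrow P_{2} \xrightarrow{\alpha} P_{1} \xrightarrow{\beta} P_{0} \longrightarrow \Lambda \longrightarrow 0.
$$
Because Hochschild (co)homology can be computed by applying $\Lambda \otimes_{\Lambda^{e}} (-)$, respectively $\Hom_{\Lambda^{e}}(-, M)$, to any projective bimodule resolution, this immediately yields $\text{HH.dim}(\Lambda) \le 2$.

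The inequality $\text{gl.dim}(\Lambda) \le \text{HH.dim}(\Lambda)$ is a standard consequence of the bimodule resolution: for any left $\Lambda$-module $M$, the complex $P_{\bullet} \otimes_{\Lambda} M$ is a length-two resolution of $M$, and each term $P_{i} \otimes_{\Lambda} M$ remains projective as a left $\Lambda$-module since each $P_{i}$ is a summand of a free bimodule of the form $\Lambda \otimes_{kQ_{0}} V \otimes_{kQ_{0}} \Lambda$. Exactness of the tensored complex follows from right-$\Lambda$-projectivity of each $P_{i}$. Hence $\text{pd}_{\Lambda}(M) \le 2$ for every $M$, giving $\text{gl.dim}(\Lambda) \le 2$.

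The main obstacle is the matching lower bound $\text{HH.dim}(\Lambda) \ge 2$, since a priori the explicit resolution $P_{\bullet}$ might be homotopy-equivalent to a shorter one. The cleanest route is to compute the bimodule dual complex $P_{\bullet}^{\vee} = \Hom_{\Lambda^{e}}(P_{\bullet}, \Lambda^{e})$ and observe that $H^{2}(P_{\bullet}^{\vee}) = \Ext^{2}_{\Lambda^{e}}(\Lambda, \Lambda^{e}) \neq 0$. This nonvanishing is precisely the content of the forthcoming 2-Calabi--Yau computation (Theorem \ref{thm: commuting diagram dual}), which identifies $H^{2}(P_{\bullet}^{\vee})$ with $\Lambda$ itself; with this in hand, $\text{HH.dim}(\Lambda) \ge 2$ follows and equality holds.
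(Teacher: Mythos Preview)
Your proof is correct and follows essentially the same line as the paper: the resolution $P_\bullet$ (via Corollary~\ref{cor: weak free product property} and Proposition~\ref{prop: wfpp implies P resolution}) gives the upper bound, tensoring gives the global-dimension bound, and the forthcoming 2-Calabi--Yau computation supplies the lower bound. The paper's proof is terser---it simply asserts $\text{HH}_2(\Lambda)\neq 0$---whereas you spell out that this comes from Theorem~\ref{thm: commuting diagram dual}.

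One small imprecision is worth flagging. You argue that $\Ext^2_{\Lambda^e}(\Lambda,\Lambda^e)\neq 0$ yields $\text{HH.dim}(\Lambda)\geq 2$. But the paper defines $\text{HH.dim}(\Lambda)=\sup\{i:\text{HH}_i(\Lambda)\neq 0\}$, which concerns Hochschild \emph{homology}, i.e.\ $\text{Tor}^{\Lambda^e}_2(\Lambda,\Lambda)$, not $\Ext^2_{\Lambda^e}(\Lambda,\Lambda^e)$. Nonvanishing of the latter only gives projective dimension $\geq 2$, which does not in general force $\text{HH}_2\neq 0$. The fix is immediate from what you are already citing: the full 2-Calabi--Yau isomorphism $P_\bullet^\vee\simeq P_\bullet[2]$ of Theorem~\ref{thm: commuting diagram dual} gives Van den Bergh duality $\text{HH}_2(\Lambda)\cong \text{HH}^0(\Lambda)=Z(\Lambda)\supseteq k$, which is nonzero. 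So your conclusion stands; just route the last implication through $\text{HH}_2\cong Z(\Lambda)$ rather than through $\Ext^2$ alone.
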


\begin{proof}
Use $P_{\bullet}$ to compute $\text{HH}_{i}(\Lambda)$; $\text{HH}_{i}(\Lambda) =0$ for $i>2$ while $\text{HH}_2(\Lambda) \neq 0$. Therefore $\text{HH.dim}(\Lambda)=2$. Every left $\Lambda$-module, $M$, has a length two projective left $\Lambda$-module resolution $P_{\bullet} \otimes_{\Lambda} M$,  and hence $\Lambda$ has global dimension at most two. 
\end{proof}

\begin{exam} \label{ex: quantum weyl algebra}
Note that the inequality in Corollary \ref{cor: global dim 2} may be strict. If $Q$ is the Jordan quiver (i.e the quiver with one vertex and one loop) then 
$$
\Lambda^{q}(Q) \cong k \langle a, a^* \rangle [ (1+a^*a)^{-1}] /(aa^*-qa^*a-(q-1)).
$$
The change of variables $x :=a$ and $y := a^*/(q-1)$ when $q \neq 1$, identifies $\Lambda^{q}(Q)$ with a localization of the first quantum Weyl algebra, $k \langle x, y \rangle/(xy-qyx-1)$, which has global dimension one. 
\end{exam}

\subsection{The dual complex} \label{ss: dual complex}
In this subsection, we show for \emph{any} quiver that if $P_{\bullet}$ is a resolution of $\Lambda^{q}(Q)$, then $\Lambda^{q}(Q)$ is 2-Calabi--Yau. Combining this with the previous subsection, we get that if $\Lambda^{q}(Q)$ satisfies the weak free product property then $\Lambda^{q}(Q)$ is 2-Calabi--Yau. In particular, this shows that $\Lambda^{q}(Q)$ is 2-Calabi--Yau for connected quivers containing a cycle. 

First we recall the notion of $d$-Calabi--Yau algebras \cite{Ginzburg}.
\begin{defn} $A$ is $d$-Calabi--Yau if: (a) $A$ has finite projective dimension as an $A$-bimodule; (b) $\Ext^i(A, A \otimes A) = 0$ for $i \neq d$; and (c) there exists
an $A$-bimodule isomorphism 
$$
\eta: \text{Ext}^{d}_{A-\text{bimod}}(A, A \otimes A) \rightarrow A.
$$ 
The map $\eta$ is called a $d$-Calabi--Yau structure.
\end{defn}

\begin{rem}
For perfect $A$-modules, $M$ and $N$, one has a quasi-isomorphism,
$$
\text{RHom}_{A-\text{bimod}}(M, N) \overset{\cong}{\longrightarrow}  \Hom_{A-\text{bimod}}(M, A \otimes A) \otimes^{\mathbb{L}}_{A \otimes A^{op}} N.
$$ 
Taking $M=A^{\vee}$ and $N=A$ gives $\text{RHom}_{A-\text{bimod}}(A^{\vee}, A) \cong A \otimes^{\mathbb{L}}_{A \otimes A^{op}} A.$
The isomorphism on the level of $d$th homology realizes
\[
 \eta \in \Hom_{A-\text{bimod}}(A^{\vee}, A[-d]) =: \Ext^{-d}_{A-\text{bimod}}(A^{\vee}, A) \cong \text{HH}_d(A)
 \]
as a class in $d$th Hochschild homology. 

For dg-algebras, one further equips this structure with a class in negative cyclic homology that lifts the Hochschild homology class of the isomorphism.  But, as shown in Proposition 5.7 and explained in Definition 5.9 of \cite{VdB-TV}, for ordinary algebras this additional structure exists uniquely.
\end{rem}

We have established $P_{\bullet}$ as a $\Lambda$-bimodule resolution of $\Lambda$, if $Q$ is connected and contains a cycle. To show $\Lambda$ is 2-Calabi--Yau, it suffices to show that its dual complex
\begin{align*}
\RHom_{\Lambda-\text{bimod}}( \Lambda, \Lambda \otimes \Lambda)
& :=
\Hom_{\Lambda-\text{bimod}}( P_{\bullet}, \Lambda \otimes \Lambda)  =: P^{\vee}_{\bullet} 
\end{align*}
is quasi-isomorphic to $\Lambda[-2]$.

Define $\eta_{v}^{\vee} \in \Hom_{\Lambda-\text{bimod}}(P_{0}, \Lambda \otimes \Lambda)$ and $\eta_{a}^{\vee} \in \Hom_{\Lambda-\text{bimod}}(P_{1}, \Lambda \otimes \Lambda)$ by,
$$
\eta_{v}^{\vee}(\eta_{w})  := \begin{cases}
e_{v} \otimes e_{v} & \text{ if } v = w \\
0 & \text{ otherwise } 
\end{cases} \hspace{1cm}
\eta_{a}^{\vee}(\eta_{b})  := \begin{cases}
e_{t(a)} \otimes e_{h(a)} & \text{ if } b= a^* \\
0 & \text{ otherwise }. 
\end{cases}
$$
These are generators of $P_{0}^{\vee}$ and $P_{1}^{\vee}$ respectively and give isomorphisms,
$$
P_{0}^{\vee} \cong \Lambda \otimes_{kQ_{0}} kQ_{0} \otimes_{kQ_{0}} \Lambda = \langle \eta_{v}^{\vee} \rangle, \hspace{1cm}
P_{1}^{\vee} \cong \Lambda \otimes_{kQ_{0}} k \overline{Q}_{1} \otimes_{kQ_{0}} \Lambda = \langle \eta_{a}^{\vee} \rangle. 
$$

Rather than directly study the dual complex $P^{\vee}_{\bullet}$, we modify the formulas for $\alpha^{\vee}$ and $\beta^{\vee}$ using the map $\theta$, in a way that doesn't affect the homology of the complex. Namely, after choosing generators $\{ \xi_{v} \}$ for $P_{0}^{\vee}$ and $\{ \xi_{a} \}$ for $P_{1}^{\vee}$, defined below, one can expand:
$$
\alpha^{\vee}(\xi_{a}) = \sum_{v \in Q_{0}} a'_{v} \xi_{v} a''_{v}
\hspace{1cm}
\beta^{\vee}(\xi_{v}) = \sum_{a \in \overline{Q}_{1} } b'_{a} \xi_{a} b''_{a},
$$
for some $a'_{v}, a''_{v}, b'_{a}, b''_{a} \in \Lambda$ and then define
$$
\alpha^{\vee}_{\theta}(\xi_{a}) := \sum_{v \in Q_{0}} \theta(a'_{v}) \xi_{v} \theta(a''_{v})
\hspace{1cm}
\beta^{\vee}_{\theta}(\xi_{v}) := \sum_{a \in \overline{Q}_{1} } \theta(b'_{a}) \xi_{a} \theta(b''_{a}).
$$
It suffices to show that
$$
\xymatrix{
(P_{\bullet}^{\vee})_{\theta} &:=&  P_{0}^{\vee} \ar[rr]^{-\beta^{\vee}_{\theta}} && P_{1}^{\vee} \ar[rr]^{ \alpha^{\vee}_{\theta}} && P_{0}^{\vee}   \\
}
$$
is quasi-isomorphic to $\Lambda[-2]$.\\
\\
We prove this by establishing an isomorphism of $\Lambda$-bimodule complexes
$\varphi_{\bullet} : P_{\bullet}[2] \rightarrow (P^{\vee}_{\bullet})_{\theta}$
following Crawley-Boevey and Shaw, so
$$
\xymatrix{
(P_{\bullet}^{\vee})_{\theta} \ar[rr]^{\varphi_{\bullet}^{-1}} && P_{\bullet}[2] \ar[rr]^{\psi_{\bullet}[2]} && Q_{\bullet}[2] \ar[rr]^{\text{quasi-iso}} & &\Lambda[2].
}
$$

\begin{thm} \label{thm: commuting diagram dual}
The following diagram commutes:
$$
\xymatrix{
P_{0}^{\vee} \ar[rr]^{- \beta^{\vee}_{\theta}} 
\ar@{}[drr] | {\textbf{(II)}} 
&& P_{1}^{\vee} \ar[rr]^{\alpha^{\vee}_{\theta}} 
\ar@{}[drr] | {\textbf{(I)}}
&& P_{0}^{\vee} \ar[rr]^{\gamma \circ \varphi_{0}^{-1}} 
&& \Lambda  \\
P_{0} \ar[rr]^{\alpha}  \ar[u]^{\varphi_{0}}_{\cong}  
&& P_{1} \ar[rr]^{\beta} \ar[u]^{\varphi_{1}}_{\cong} 
&& P_{0} \ar[rr]^{\gamma} \ar[u]^{\varphi_{0}}_{\cong} 
&& \Lambda \ar[u]^{\text{id}}_{=} 
}
$$
where the vertical maps are $\Lambda$-bimodule isomorphisms defined on generators by,
\begin{align*}
\varphi_{0}(\eta_{v}) &:= \xi_{v} :=  q \eta_{v}^{\vee}  &\varphi_{1}(\eta_{a}) := \xi_{a^*} := 
\begin{cases}
l_{a} \eta_{a^*}^{\vee} l_{a^{*}}^{-1} & \text{if } a \in Q_{1}^{\text{op}} \\
-r_{a^*}^{-1} \eta_{a^*}^{\vee} r_{a} & \text{if } a \in Q_{1}.
\end{cases} 
\end{align*}

\end{thm}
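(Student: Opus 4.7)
The plan is to verify the diagram commutes by checking it on the bimodule generators $\eta_v$ of $P_0$ and $\eta_a$ of $P_1$; the rightmost triangle is immediate from $\gamma \circ \varphi_0^{-1} \circ \varphi_0 = \gamma$, so the content lies in squares (I) and (II). As preparation I would first compute the dual differentials $\alpha^\vee$ and $\beta^\vee$ on the generators $\eta_v^\vee, \eta_a^\vee$. Using the pairing $\eta_a^\vee(\eta_b) = \delta_{b, a^*}(e_{t(a)} \otimes e_{h(a)})$, one verifies
\[
\beta^\vee(\eta_v^\vee) \;=\; \sum_{a \in \overline{Q}_1,\, t(a)=v}\bigl(a\,\eta_{a^*}^\vee - \eta_a^\vee\, a^*\bigr),
\]
with a similar but bulkier expression for $\alpha^\vee(\eta_a^\vee)$ decorated by the $l_a, r_a, g_a^{\pm 1}$ appearing in $\alpha$ and $\Delta_a$. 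Applying $\theta$ coefficient-wise then produces $\beta^\vee_\theta$ and $\alpha^\vee_\theta$; in particular $\beta^\vee_\theta(\eta_v^\vee)$ involves $\theta(a) = q^{-1} l_a a\, r_{a^*}$ and $\theta(a^*) = q^{-1} l_{a^*} a^* r_a$.

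For square (II), I would compute both $-\beta^\vee_\theta(\xi_v) = -q\,\beta^\vee_\theta(\eta_v^\vee)$ and $\varphi_1(\alpha(\eta_v))$, splitting the sum over $\{a : t(a)=v\}$ according to whether $a \in Q_1$ or $a \in Q_1^{\mathrm{op}}$ and substituting the appropriate cases of $\varphi_1(\eta_a)$ and $\varphi_1(\eta_{a^*})$ from the theorem statement. Matching the coefficient of each $\eta_{a^*}^\vee$ (equivalently each $\eta_a^\vee$) then reduces to an identity in $\Lambda$ among products of $l_a^{\pm 1}$, $r_a^{\pm 1}$, $g_a^{\pm 1}$, and arrows. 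The key identities I expect to invoke are \eqref{identity: g} ($g_a a = a g_{a^*}$), \eqref{identity: rl} ($r_a l_a = q\,g_a^{-\epsilon(a)}$), and the conjugation formulas \eqref{identity: Phi of g}--\eqref{identity: Phi of g 2} ($\theta(g_a) = l_a g_a l_a^{-1} = r_a^{-1} g_a r_a$); together these move $l_a^{\pm 1}$ and $r_a^{\pm 1}$ across arrows and through $g_a^{\pm 1}$ so that the two case-split sums collapse into the uniform form dictated by $\theta$.

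Square (I) is handled by an analogous, slightly shorter calculation comparing $\alpha^\vee_\theta(\varphi_1(\eta_a)) = \alpha^\vee_\theta(\xi_{a^*})$ with $\varphi_0(\beta(\eta_a)) = q(a\,\eta_{t(a)}^\vee - \eta_{h(a)}^\vee a)$, again splitting on the orientation of $a$ and drawing on the same pool of identities; since $\beta$ has no $l_a, r_a$ decorations, fewer rearrangements are needed. The main obstacle throughout will be bookkeeping rather than depth: the case split $a \in Q_1$ versus $a \in Q_1^{\mathrm{op}}$, the sign and $l$-versus-$r$ discrepancy between the two cases in the definition of $\varphi_1$, and the repeated need to collapse the relation $l_a g_a^{\epsilon(a)} r_a = q$ in $\Lambda$ to cancel expected factors. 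Once both sides of each square sit in the unique cyclic bimodule summand containing $\eta_{a^*}^\vee$ (resp.\ $\eta_a^\vee$), commutativity reduces to verifying a pair of equalities in $\Lambda$ arrow by arrow.
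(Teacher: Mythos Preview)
Your proposal is correct and would work, but it takes a different route from the paper for square \textbf{(II)}. Both you and the paper verify \textbf{(I)} by direct computation (the paper quotes this as Lemma~\ref{lem: alpha vee}, which is Lemma~3.2 of \cite{Shaw}). For \textbf{(II)}, however, the paper avoids computing $\beta^\vee$ explicitly: instead it observes that dualizing \textbf{(I)} and applying $(-)_\theta$ yields \textbf{(II)}, because $(\alpha^\vee_\theta)^\vee_\theta = \alpha$ (a consequence of $\theta_{\geq}\circ\theta = \operatorname{Id}$, Proposition~\ref{prop: Phi is an auto}), $(\varphi_0)^\vee_\theta = \varphi_0$, and $(\varphi_1)^\vee_\theta = -\varphi_1$. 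The last equality is a short check using $\theta(l_a) = r_a$ and $\theta(r_a) = l_a$ from Lemma~\ref{lem: Phi sends r to l}.

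The paper's trick buys you a cleaner argument: rather than repeating all the case-split identity manipulations for \textbf{(II)}, you only need the three compatibility equalities above, and the latter two are essentially formal. Your direct route has the advantage of being self-contained and not relying on $\theta$ being an anti-involution between the two orderings, at the cost of the bookkeeping you anticipate. In fact, the paper contains (commented out) a lemma carrying out your computation of $\beta^\vee$ directly, so the authors evidently tried your approach first and then found the shortcut.
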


Note that $\varphi_{1}$ is an invertible map since $r_a$ and $l_a$ are invertible elements of $\Lambda$ for all $a \in \overline{Q}_{1}$. The commuting of {\bf(I)} becomes clear once we compute the maps $\alpha^{\vee}$, the content of the next lemma.

\begin{lem}[Lemma 3.2 in \cite{Shaw}] \label{lem: alpha vee}
\begin{align*}
 & \alpha^{\vee}(\eta_{a}^{\vee}) = \begin{cases}
a^* r_{a} \eta_{h(a)}^{\vee} l_{a} - g_{a^*}^{-1}r_{a^*} \eta_{t(a)}^{\vee} l_{a^*} g_{a^*}^{-1} a^* & \text{if } a \in Q_{1} \\
r_{a^*} \eta^{\vee}_{t(a)} l_{a^*} a^* - a^* g_{a}^{-1} r_{a} \eta_{h(a)}^{\vee} l_{a} g_a^{-1} & \text{if } a \in Q_{1}^{op}
\end{cases}\\
 & \alpha^{\vee}(\xi_{a}) = \theta(a^*) \xi_{t(a^*)} - \xi_{h(a^*)} \theta(a^*) \\
 & \alpha^{\vee}_{\theta}(\xi_{a^*}) = a \xi_{t(a)} - \xi_{h(a)} a \\
 & \beta = \varphi_{0}^{-1} \circ \alpha^{\vee}_{\theta} \circ \varphi_{1} 
\end{align*} 
So square \small{\textbf{(I)}} in Theorem \ref{thm: commuting diagram dual} commutes.
\end{lem}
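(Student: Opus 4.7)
I would prove the four claims in the order listed. The first is a direct computation of the bimodule dual. Unwind $\alpha^\vee(\eta_a^\vee)(\eta_v) = \eta_a^\vee(\alpha(\eta_v))$ and recall that $\eta_a^\vee$ annihilates every $\eta_c$ with $c \neq a^*$. In the sum $\alpha(\eta_v) = \sum_{b:\, t(b)=v} l_b \Delta_b r_b$, exactly two summands produce an $\eta_{a^*}$: the term with $b = a$ (whose $\Delta_a$ has $a \eta_{a^*}$ on the right) and the term with $b = a^*$ (whose $\Delta_{a^*}$ has $\eta_{a^*} a$ on the left). Collecting both contributions, and carefully tracking whether the outer $\Delta_b$ is twisted by $g_b^{-1}(\,\cdot\,)g_b^{-1}$ (which happens exactly when $b \in Q_1^{\operatorname{op}}$), yields the two-case formula for $\alpha^\vee(\eta_a^\vee)$.

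The passage from $\alpha^\vee(\eta_a^\vee)$ to $\alpha^\vee(\xi_a)$ is the technical heart of the lemma. Since $\alpha^\vee$ is $\Lambda$-bilinear and
\[
\eta_a^\vee = \begin{cases} l_{a^*}^{-1} \xi_a \, l_a & a \in Q_1,\\ -r_a \, \xi_a \, r_{a^*}^{-1} & a \in Q_1^{\operatorname{op}},\end{cases} \qquad \eta_v^\vee = q^{-1} \xi_v,
\]
I would plug these substitutions into the Step 1 formula and then shuffle the resulting factors of $l$, $r$, $g$, $q$ around using Identity \eqref{identity: g} (commuting an arrow past its $g$), Identity \eqref{identity: rl} ($r_a l_a = q g_a^{-\epsilon(a)}$), and the defining formula $\theta(a) = q^{-1} l_a a r_{a^*}$. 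A mechanical but somewhat delicate computation in each sub-case produces, after the dust settles, the single uniform expression $\alpha^\vee(\xi_a) = \theta(a^*) \xi_{t(a^*)} - \xi_{h(a^*)} \theta(a^*)$.

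Replacing $a$ by $a^*$ gives $\alpha^\vee(\xi_{a^*}) = \theta(a) \xi_{t(a)} - \xi_{h(a)} \theta(a)$. The modified map $\alpha^\vee_\theta$ is obtained by applying $\theta$ (with respect to the opposite ordering, so that Proposition \ref{prop: Phi is an auto} gives the involutive identity $\theta\!\circ\!\theta = \operatorname{Id}$) to the bimodule coefficients. The doubled $\theta$ collapses each factor $\theta(a)$ back to $a$, yielding $\alpha^\vee_\theta(\xi_{a^*}) = a \xi_{t(a)} - \xi_{h(a)} a$. Square \textbf{(I)} then commutes because, on a generator, $\varphi_0(\beta(\eta_a)) = \varphi_0(a \eta_{t(a)} - \eta_{h(a)} a) = a \xi_{t(a)} - \xi_{h(a)} a = \alpha^\vee_\theta(\xi_{a^*}) = \alpha^\vee_\theta(\varphi_1(\eta_a))$.

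The main obstacle is the bookkeeping in the second step. The two sub-cases of the Step 1 formula look asymmetric—one carries $g_{a^*}^{-1}$-conjugations, the other $g_a^{-1}$-conjugations—and it is far from obvious that they collapse to a single expression in $\theta(a^*)$. The collapse relies on repeated use of Equations \eqref{identity: Phi of g}, \eqref{identity: Phi of g 2} together with \eqref{identity: rl}, in slightly different guises in each case, to shuffle the various $l, r, g$ factors past one another until the inverses cancel pairwise.
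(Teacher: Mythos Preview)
Your proposal is correct and follows essentially the same route as the paper's proof, which defers the first two equalities to \cite{Shaw} (where they are computed exactly as you outline) and observes that the last two follow from the definitions together with Proposition \ref{prop: Phi is an auto}. Your Steps 1--2 reconstruct Shaw's direct computation, and your Steps 3--4 match the paper's ``clear from definitions'' claim precisely.
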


\begin{proof}
The first two equalities are shown directly in \cite{Shaw} and the last two are clear from the definitions together with Proposition \ref{prop: Phi is an auto}.
\end{proof}

\begin{proof}[Proof of Theorem \ref{thm: commuting diagram dual}]
By Lemma \ref{lem: alpha vee}, it suffices to show that \small{\textbf{(II)}} commutes. While one can similarly compute $\beta^{\vee}$ directly, such a calculation is unnecessary as the commuting of \small{\textbf{(II)}} follows from that of \small{\textbf{(I)}}.

Indeed, dualizing and applying $(-)_{\theta}$ to the maps in \small{\textbf{(I)}}, produces a still commuting diagram:
$$
\xymatrix{
 P_{1} \ar[d]_{(\varphi_{1})^{\vee}_{\theta}} 
\ar@{}[drr] | {\text{\small{\textbf{(I)}}}_{\theta}^{\vee}}
&& P_{0} \ar[ll]_{(\alpha_{\theta}^{\vee})_{\theta}^{\vee}} 
\ar[d]^{(\varphi_{0})^{\vee}_{\theta}}
\ar@{}[drr] | {\text{=}}
&&
 P_{1} \ar[d]_{-\varphi_{1}} 
\ar@{}[drr] | {\text{\small{\textbf{(I)}}}_{\theta}^{\vee}}
&& P_{0} \ar[ll]_{\alpha} 
\ar[d]^{\varphi_{0}}  \\
P_{1}^{\vee}  
&& P_{0}^{\vee}  \ar[ll]^{\beta^{\vee}_{\theta}} 
&& P_{1}^{\vee}  
&& P_{0}^{\vee}  \ar[ll]^{\beta^{\vee}_{\theta}} 
}
$$
which shows $\varphi_{1} \circ \alpha = - \beta_{\theta}^{\vee} \circ \varphi_{0}$, i.e. \small{\textbf{(II)}} commutes. 

The equality of maps $(\alpha_{\theta}^{\vee})_{\theta}^{\vee} = \alpha$ follows from Proposition \ref{prop: Phi is an auto}, and $(\varphi_{0})^{\vee}_{\theta}= \varphi_{0}^{\vee} = \varphi_{0}$ follows from the definitions. For $(\varphi_{1})^{\vee}_{\theta} = -\varphi_{1}$, observe that it suffices to show $(\varphi_{1})_{\theta} = -(\varphi_{1})^{\vee}$ and indeed, 
\begin{align*}
(\varphi_{1})_{\theta}(\eta_{a^*}) = (\xi_{a})_{\theta}
&=\begin{cases}
\theta(l_{a^*}) \eta_{a}^{\vee} \theta(l_{a}^{-1}) & \text{ if } \epsilon(a) = 1 \\
-\theta(r_{a^*}^{-1}) \eta_{a}^{\vee} \theta(r_{a}) & \text{ if } \epsilon(a) = 1 \\
\end{cases} \\
& =
\begin{cases}
r_{a^*} \eta_{a}^{\vee} r_{a}^{-1}  & \text{if } \epsilon(a) = 1 \\
-l_{a^*}^{-1} \eta_{a}^{\vee} l_{a} & \text{if } \epsilon(a) = -1
\end{cases} \\
&= -(\varphi_{1})^{\vee}(\eta_{a^*}).\qedhere
\end{align*}
\end{proof}

So without conditions on the quiver, we have established:
\begin{cor}
If $P_{\bullet} \rightarrow \Lambda$ is exact then $(P_{\bullet}^{\vee})_{\theta} \rightarrow \Lambda[-2]$ is exact and $P^{\vee}_{\bullet} \rightarrow \Lambda[-2]$ is exact. 
\end{cor}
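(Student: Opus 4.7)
My plan is to obtain the first claim as an immediate consequence of Theorem \ref{thm: commuting diagram dual}, then deduce the second via a ``twist by $\theta$'' argument. For the first claim, Theorem \ref{thm: commuting diagram dual} already supplies an isomorphism of augmented $\Lambda$-bimodule complexes $(P_\bullet \to \Lambda) \overset{\cong}{\longrightarrow} ((P_\bullet^\vee)_\theta \to \Lambda)$, with augmentation $\gamma$ on the bottom and $\gamma \circ \varphi_0^{-1}$ on top. Exactness is preserved by an isomorphism of complexes, so the hypothesis that $P_\bullet \to \Lambda$ is exact transfers directly to exactness of $(P_\bullet^\vee)_\theta \to \Lambda$; reading $P_i^\vee$ in cohomological degree $i$, this is precisely the claim that $(P_\bullet^\vee)_\theta \to \Lambda[-2]$ is exact.

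For the second claim, the complexes $P_\bullet^\vee$ and $(P_\bullet^\vee)_\theta$ share identical underlying $\Lambda$-bimodules; in the generating sets $\{\xi_v\}$ and $\{\xi_a\}$ of Theorem \ref{thm: commuting diagram dual} their differentials are related term-by-term by applying $\theta$ to the matrix coefficients, by the very definition of $(-)_\theta$. Proposition \ref{prop: Phi is an auto}, applied to both orderings $\leq$ and $\geq$ (using Remark \ref{rem: independence} to identify $\Lambda^q(Q, \leq)$ with $\Lambda^q(Q, \geq)$), shows that $\theta$ is a $k$-algebra automorphism of $\Lambda$. Accordingly, define $F_\theta \colon P_\bullet^\vee \to (P_\bullet^\vee)_\theta$ by the $\theta$-twisted rule $F_\theta(a \cdot \xi_\star \cdot b) := \theta(a) \cdot \xi_\star \cdot \theta(b)$, for $\xi_\star \in \{\xi_v, \xi_a\}$ and $a, b \in \Lambda$. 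Since each $P_i^\vee$ is freely generated by $\{\xi_\star\}$ over $\Lambda \otimes_{kQ_0} \Lambda^{\op}$, the map $F_\theta$ is well-defined, and the formulas $\alpha^\vee(\xi_a) = \sum a_v' \xi_v a_v''$ versus $\alpha_\theta^\vee(\xi_a) = \sum \theta(a_v') \xi_v \theta(a_v'')$ (together with the analogous equations for $\beta^\vee$ and $\beta_\theta^\vee$) make $F_\theta$ a chain map by a direct computation.

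Since $\theta$ is bijective, $F_\theta$ is a $kQ_0$-bilinear isomorphism of chain complexes and therefore induces an isomorphism on homology. By the first paragraph the homology of $(P_\bullet^\vee)_\theta$ is $\Lambda$ concentrated in cohomological degree $2$, so the same holds for $P_\bullet^\vee$, giving exactness of $P_\bullet^\vee \to \Lambda[-2]$. The only substantive point to check is that $\theta$ is a genuine automorphism (rather than merely a one-sided inverse), for which one applies Proposition \ref{prop: Phi is an auto} symmetrically in both orderings; beyond that, the argument is a direct unwinding of the definition of $(-)_\theta$, and I do not anticipate a serious obstacle.
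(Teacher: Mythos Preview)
Your proof is correct and follows the same approach the paper has in mind: the first claim is immediate from the isomorphism of augmented complexes in Theorem~\ref{thm: commuting diagram dual}, and the second follows because twisting the differentials by the algebra automorphism $\theta$ does not change the homology. The paper in fact only asserts this last point (just before Theorem~\ref{thm: commuting diagram dual}) without spelling out the twist map $F_\theta$, so your argument supplies a detail the paper leaves implicit.
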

Therefore, the 2-Calabi--Yau property for $\Lambda$ follows from the a priori weaker Hochschild dimension two property. In the previous subsection, we showed that $\Lambda$ has Hochschild dimension two for $Q$ connected and containing a cycle. 

\begin{cor} \label{cor: 2-CY}
If $Q$ is connected and contains a cycle then $\Lambda^{q}(Q)$ is 2-Calabi--Yau.
\end{cor}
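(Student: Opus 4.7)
The proof assembles results already established in the excerpt. All three clauses in the definition of 2-Calabi--Yau need to be verified: finite projective dimension, vanishing $\Ext^i$ away from degree $2$, and an $\Lambda$-bimodule isomorphism $\Ext^2_{\Lambda\text{-bimod}}(\Lambda, \Lambda \otimes \Lambda) \cong \Lambda$.

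The plan is as follows. First, I invoke Theorem \ref{thm:sfpp} to obtain the strong free product property for $(L_Q, r, \sigma, B)$ in the case that $Q$ is connected and contains an unoriented cycle. By the discussion following Definition \ref{def: sfpp}, passing to associated graded algebras gives the weak free product property (this is exactly Corollary \ref{cor: weak free product property}). With the weak free product property in hand, Proposition \ref{prop: wfpp implies P resolution} guarantees that the complex $P_\bullet$ of Section \ref{ss:bimodule-resolution} is a length-two projective $\Lambda$-bimodule resolution of $\Lambda$.

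From this resolution, two consequences follow immediately. On the one hand, $\Lambda$ has finite projective dimension as a bimodule (at most $2$), verifying condition (a). On the other hand, $\Ext^i_{\Lambda\text{-bimod}}(\Lambda, \Lambda \otimes \Lambda)$ is computed by the cohomology of the dual complex $P_\bullet^\vee$. By the Corollary stated just before Corollary \ref{cor: 2-CY}, the exactness of $P_\bullet \to \Lambda$ implies that $P_\bullet^\vee \to \Lambda[-2]$ is exact. Hence $\Ext^i_{\Lambda\text{-bimod}}(\Lambda, \Lambda \otimes \Lambda) = 0$ for $i \neq 2$, verifying condition (b), and $\Ext^2_{\Lambda\text{-bimod}}(\Lambda, \Lambda \otimes \Lambda) \cong \Lambda$ as $\Lambda$-bimodules, verifying condition (c).

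There is no serious obstacle at this stage: the technical heart of the argument lies in Theorem \ref{thm:sfpp} (proven in Section \ref{s: free-product}) and in the identification $\varphi_\bullet : P_\bullet[2] \to (P_\bullet^\vee)_\theta$ of Theorem \ref{thm: commuting diagram dual}, both of which are already in place. The only minor point worth being explicit about is that the isomorphism $P_\bullet^\vee \to \Lambda[-2]$ obtained from composing $\varphi_\bullet^{-1}$ with $\psi_\bullet[2]$ and the Schofield quasi-isomorphism $Q_\bullet \to \Lambda$ is indeed a map of $\Lambda$-bimodules, so the resulting class in $\Ext^2$ is a genuine $2$-Calabi--Yau structure rather than merely a linear isomorphism. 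Thus the proof reduces to citing the chain of implications already assembled and recording that all maps involved are bimodule maps.
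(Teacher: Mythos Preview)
Your proposal is correct and follows exactly the chain of implications the paper assembles: Theorem~\ref{thm:sfpp} gives the strong (hence weak) free product property, Proposition~\ref{prop: wfpp implies P resolution} then makes $P_\bullet$ a resolution, and the corollary preceding Corollary~\ref{cor: 2-CY} shows $P_\bullet^\vee \to \Lambda[-2]$ is exact, yielding the three conditions for being 2-Calabi--Yau. The paper's own treatment is terser---it simply observes that the 2-Calabi--Yau property follows once $P_\bullet$ is a resolution---but your more explicit unpacking of conditions (a)--(c) and the remark that all maps are bimodule maps are entirely in line with what is implicit there.
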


\section{Formality of dg multiplicative preprojective algebras} \label{s:formal}


%

In this section we show that if $Q$ satisfies the strong free product property, then the dg multiplicative preprojective algebra is formal. In particular this proves Conjecture \ref{conj:formal} in the case $Q$ is connected and contains a cycle. Moreover, it reduces Conjecture \ref{conj:formal} to the remaining extended Dynkin cases and Conjecture \ref{conj: sfpp}. 

If one views the dg multiplicative preprojective algebra as the central object of study, as in \cite{Lekili} and \cite{Lekili2}, then we are showing one can formally replace it by the non-dg version. \\ 

We begin with an elementary lemma. It is not strictly required, but it demonstrates more transparently the construction we will use.
\begin{lem}
Let $K$ be a commutative ring. 
Let $A$ be the dg-algebra defined as a graded algebra to be $K[r] * K[s]$ with $|r|=0$ and $|s|=-1$,
product given by concatenation of words, and differential extended as a derivation from the generators $d(s) =r$ and $d(r) =0$. Then $A$ is quasi-isomorphic to its cohomology $H^{*}(A) = K$ concentrated in degree zero. In fact, the identity map is chain homotopic to the augmentation map $A \to K$.
\end{lem}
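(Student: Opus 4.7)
The plan is to construct an explicit chain contraction of $A$ onto its degree-zero summand $K \cdot 1$, exploiting the fact that $A$ is the tensor algebra of a contractible two-term complex. Write $V = K r \oplus K s$ for the complex concentrated in cohomological degrees $0$ and $-1$ with $d(s) = r$ and $d(r) = 0$. Then as a graded $K$-module $A = T_K(V) = \bigoplus_{n \geq 0} V^{\otimes n}$, and the differential on $A$—being the unique derivation extending $d|_V$—preserves each summand $V^{\otimes n}$ and restricts there to the standard tensor-product differential. So it suffices to chain-contract each $V^{\otimes n}$ for $n \geq 1$ and assemble the contractions into a single map.

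The key input is that $V$ itself is contractible via the degree $-1$ map $h_V \colon V \to V$ with $h_V(r) = s$ and $h_V(s) = 0$; a direct check gives $d h_V + h_V d = \Id_V$. I would then define $h_n \colon V^{\otimes n} \to V^{\otimes n}$ for $n \geq 1$ by applying $h_V$ to the leftmost tensor factor,
\[
  h_n(v_1 \otimes v_2 \otimes \cdots \otimes v_n) \;=\; h_V(v_1) \otimes v_2 \otimes \cdots \otimes v_n.
\]
A one-line sign check, using that the cross terms from $d h_n$ and $h_n d$ carry opposite signs $(-1)^{|h_V v_1|}$ and $(-1)^{|v_1|}$—which cancel because $|h_V v_1| = |v_1| - 1$—then yields $d h_n + h_n d = \Id_{V^{\otimes n}}$.

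Finally, define $h \colon A \to A$ by setting $h|_{V^{\otimes n}} = h_n$ for $n \geq 1$ and $h|_K = 0$. Then $dh + hd = \Id_A - \iota \epsilon$, where $\epsilon \colon A \to K$ is the augmentation (projection onto $V^{\otimes 0} = K$) and $\iota \colon K \hookrightarrow A$ is the unit. This simultaneously proves that $\epsilon$ is a quasi-isomorphism (so $H^*(A) = K$ concentrated in degree zero) and produces the stated chain homotopy between $\Id_A$ and $\iota \epsilon$. The only delicate point is the sign bookkeeping on each $V^{\otimes n}$; beyond that the argument is purely formal and avoids any case analysis on the internal structure of words in $r$ and $s$.
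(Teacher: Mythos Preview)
Your argument is correct and is essentially the paper's own proof. The paper defines the homotopy by $h(rf)=sf$, $h(sf)=0$, $h(K)=0$, which is exactly your ``apply $h_V$ to the leftmost tensor factor''; your sign check on $V^{\otimes n}$ is just a more careful write-up of the same computation, and the paper's concluding remark that $A$ is the tensor algebra on an acyclic complex is the viewpoint you make explicit from the start.
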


\begin{proof}
Let $h: A \rightarrow A[-1]$ be the homotopy with the property $h(rf) = sf$ and $h(sf) = 0$ for all $f \in A$, and $h(K)=0$. Then $h \circ d + d \circ h - 1_{A}$ is the projection with kernel $K$ to the augmentation ideal of $A$.  Therefore, it defines a contracting homotopy from $A$ to $K$.
\end{proof}
In other words, the lemma is observing that $A$, as the tensor algebra on an acyclic complex $Kr \oplus Ks$, is itself quasi-isomorphic to $K$.


\begin{lem} \label{lem: homology of A}
The dg-algebra $A$ given by 
$$
\Lambda^{q}(Q) *_{kQ_{0}} kQ_{0}[r, (r+q)^{-1}] *_{kQ_{0}} kQ_{0}[s]
\hspace{.5cm} \text{with } |r | =0 \text{ and  } | s | = -1 
$$
and with differential determined by $d(s)=r$ is quasi-isomorphic to $\Lambda^{q}(Q)$ concentrated in degree zero. 
\end{lem}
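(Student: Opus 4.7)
The plan is to mimic the preceding lemma: construct the dg-algebra augmentation $\epsilon : A \to \Lambda^{q}(Q)$ sending $r, s \mapsto 0$ and $(r+q)^{-1} \mapsto q^{-1}$ (identity on $\Lambda^{q}(Q)$), and produce a $kQ_{0}$-bimodule contracting homotopy $h : A \to A[-1]$ with $dh + hd = \Id_{A} - \iota \circ \epsilon$, where $\iota$ is the inclusion. That $\epsilon$ is a dg-map is clear from $d|_{\Lambda^{q}(Q)} = 0$ and $\epsilon(d(s)) = 0 = d(\epsilon(s))$. Writing $R := kQ_{0}[r,(r+q)^{-1}]$ and $S := kQ_{0}[s]$, the free product $A$ admits a $kQ_{0}$-bimodule basis consisting of the unit $1$ together with alternating words $w_{1}\cdots w_{n}$ whose letters are drawn from the augmentation ideals $\Lambda^{q}(Q)^{+}, R^{+}, S^{+}$ with no two consecutive letters in the same factor.

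I first construct a $kQ_{0}$-bilinear antiderivative $h_{R} : R^{+} \to A$ satisfying $d \circ h_{R} = \Id_{R^{+}}$, using the generating set $\{r^{m}\}_{m\geq 1} \cup \{(r+q)^{-m} - q^{-m}\}_{m\geq 1}$. On the polynomial generators set $h_{R}(r^{m}) := s\, r^{m-1}$; for the localized generators, note that the naive choice $-(r+q)^{-1} s\, q^{-1}$ at $m=1$ fails in extended verifications (because $s$ does not commute with $r$ across factors of the free product, leaving an uncanceled term of the form $(r+q)^{-1} s r q^{-1}$ when tested on $w = (r+q)^{-1} s$). The correct formula places $q^{-1}$ to the \emph{left} of $s$ and $(r+q)^{-1}$ to the right, giving
\begin{equation*}
h_{R}\bigl((r+q)^{-m} - q^{-m}\bigr) := -\sum_{k=1}^{m} q^{-k}\, s\, (r+q)^{-(m-k+1)},
\end{equation*}
and $d \circ h_{R} = \Id_{R^{+}}$ follows via $r\,(r+q)^{-j} = (r+q)^{-(j-1)} - q\,(r+q)^{-j}$ and telescoping. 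A key structural consequence is that $h_{R}$ is right-$R$-linear: $h_{R}(xy) = h_{R}(x)\,y$ for $x \in R^{+}$ and $y \in R$ with $xy \in R^{+}$. I then extend $h_{R}$ to $h : A \to A[-1]$ on basis words by: $h(1) := 0$; $h(w) := 0$ if $w_{1} \in S^{+}$; $h(w) := w_{1}\cdot h(w_{2}\cdots w_{n})$ if $w_{1} \in \Lambda^{q}(Q)^{+}$; and $h(w) := h_{R}(w_{1})\cdot(w_{2}\cdots w_{n})$ if $w_{1} \in R^{+}$.

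Right-$R$-linearity of $h_{R}$, combined with the fact that $\epsilon(w_{1} y) = \epsilon(w_{1})\epsilon(y) = 0$ when $w_{1} \in R^{+}$, upgrades the definition to the structural identity $h(w_{1}\cdot x) = h_{R}(w_{1})\cdot x$ for all $w_{1} \in R^{+}$ and $x \in A$; analogously $h(w_{1}\cdot x) = w_{1}\cdot h(x)$ for $w_{1} \in \Lambda^{q}(Q)^{+}$. Granted these, the verification of $dh + hd = \Id - \iota\epsilon$ proceeds by case analysis on $w_{1}$ and induction on $n$: the $\Lambda^{q}(Q)^{+}$ case reduces to the suffix $w_{2}\cdots w_{n}$; the $R^{+}$ case uses $d(h_{R}(w_{1})) = w_{1}$ (producing $w$) together with the cancellation $h(w_{1}\, d(w')) = h_{R}(w_{1})\, d(w')$ (absorbing the Leibniz tail); and the $S^{+}$ case, say $w_{1} = s^{k}$, uses $d(s^{k}) = \sum_{j=0}^{k-1} (-1)^{j} s^{j} r\, s^{k-1-j}$ with only the $j=0$ summand $r s^{k-1} w'$ contributing non-trivially to $h$ (yielding $s\cdot s^{k-1} w' = w$). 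The main obstacle is the correct construction of $h_{R}$ on the localized generators, which has no analogue in Lemma~4.1 and requires careful placement of the scalars $q^{-k}$ and the localized factors $(r+q)^{-j}$ around $s$; once this is arranged, the verification is a direct generalization of the contracting homotopy argument of Lemma~4.1.
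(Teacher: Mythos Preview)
Your proof is correct and follows the same strategy as the paper: construct an explicit contracting homotopy $h$ from the identity to the augmentation $A \to \Lambda^q(Q)$, with $h$ replacing the first occurrence of $r$ (after a $\Lambda^q(Q)$-prefix) by $s$, vanishing when $s$ comes first, and handled on the localized generators by a recursive formula. Your formula $h_R\bigl((r+q)^{-m}-q^{-m}\bigr)=-\sum_{k=1}^{m}q^{-k}s(r+q)^{-(m-k+1)}$ unwinds exactly the paper's recursion $h(f(r+q)^{-1}g)=q^{-1}h(fg)-q^{-1}fs(r+q)^{-1}g$, and your case analysis on alternating words makes precise what the paper leaves implicit in its one-line check on the generators $r,r',s$.
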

\begin{proof}
Extending the preceding construction, define a homotopy $h: A \rightarrow A[-1]$ by:
$$
h(frg)=fsg, \quad h(fsg) = 0, \quad h(f(r+q)^{-1} g) = q^{-1}h(fg) - q^{-1} fs(r+q)^{-1} g
$$
for $f \in \Lambda^{q}(Q)$ and $g \in A$. The definition of $h(f(r+q)^{-1}g)$ is chosen to match the formula for $h(frg)$ in the $r$-adic completion.
There is an augmentation $A \twoheadrightarrow \Lambda^q(Q)$ with kernel $(r, s, r':=(r+q)^{-1} - q^{-1})$. 
Notice that $h \circ d + d \circ h$ is a homotopy from the identity on $A$ to the augmentation $\Lambda^{q}(Q)$, as it annihilates $\Lambda^q(Q)$ and is the identity on $s$, $r$, and $r'$.
\end{proof}


\begin{defn} \label{def: dg_mult_preproj}
The \emph{dg multiplicative preprojective algebra} is a dg-algebra over $kQ_{0}$ defined as a graded algebra by
$$
 \Lambda^{dg, q}(Q) :=  
 L_Q
 *_{kQ_{0}} kQ_{0}[s]  \hspace{1cm} |s| = -1, \ \ |\alpha| = 0 \text{ for } \alpha \in L_{Q}. 
$$
The differential, $d$, is defined by $d(s) = \rho -q$, $d(L_{Q}) \equiv 0$, and extended as a $kQ_{0}$-linear derivation to $L_{Q}*_{kQ_{0}} kQ_{0}[s]$.
\end{defn}


\begin{prop} \label{prop: formal}
If $\Lambda^{q}(Q)$ satisfies the strong free product property\footnote{meaning $(L_{Q}, r, \sigma, kQ_{0}[t, (t+q)^{-1}])$ satisfies the strong free product property for some choice of $\sigma$} then 
$$
H_{*}(\Lambda^{dg, q}(Q)) = H_{0}(\Lambda^{dg, q}(Q)) \cong \Lambda^{q}(Q)
$$ 
so in particular $\Lambda^{dg, q}(Q)$ is formal.  
\end{prop}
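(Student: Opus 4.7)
The plan is to use Lemma \ref{lem: homology of A}, which (after renaming its variable $r$ to $t$ for consistency with Section \ref{section: definition of sffp}) shows the auxiliary dg-algebra $A := \Lambda^{q}(Q) *_{kQ_{0}} B *_{kQ_{0}} kQ_{0}[s]$, with $B := kQ_{0}[t,(t+q)^{-1}]$ and $d(s) = t$, is quasi-isomorphic to $\Lambda^{q}(Q)$ concentrated in degree zero. I will build a chain map $\widetilde{\sigma'}: A \to \Lambda^{dg,q}(Q)$ that is a $kQ_{0}$-bimodule isomorphism, so in particular a quasi-isomorphism, from which the conclusion follows.

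By hypothesis, there is a section $\sigma$ making $\sigma': \Lambda^{q}(Q) *_{kQ_{0}} B \to L_{Q}$ (defined by \eqref{eq: sigma' definition}) a $kQ_{0}$-bimodule isomorphism; recall $\sigma'$ extends the algebra map $\tau: B \to L_{Q}$ sending $t \mapsto r = \rho - q$. Regrouping $A$ as the two-factor free product $(\Lambda^{q}(Q) *_{kQ_{0}} B) *_{kQ_{0}} kQ_{0}[s]$, I define $\widetilde{\sigma'}: A \to \Lambda^{dg,q}(Q)$ on an alternating monomial $v_{0} s^{k_{1}} v_{1} \cdots s^{k_{n}} v_{n}$ (with $v_{i} \in \Lambda^{q}(Q) *_{kQ_{0}} B$) by the rule $v_{0} s^{k_{1}} v_{1} \cdots s^{k_{n}} v_{n} \mapsto \sigma'(v_{0}) s^{k_{1}} \sigma'(v_{1}) \cdots s^{k_{n}} \sigma'(v_{n})$. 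That $\widetilde{\sigma'}$ is a $kQ_{0}$-bimodule isomorphism is immediate, because the underlying $kQ_{0}$-bimodule of a free product of $kQ_{0}$-algebras is functorially determined by the bimodule structures of its factors.

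The heart of the argument is that $\widetilde{\sigma'}$ is a chain map. Both $d_{A}$ and $d_{\Lambda^{dg,q}(Q)}$ are $kQ_{0}$-linear degree $+1$ derivations which vanish on $\Lambda^{q}(Q) *_{kQ_{0}} B$ and $L_{Q}$ respectively, so each acts on an alternating monomial solely by applying Leibniz to its $s$-factors, replacing each $s$ in turn by $t$ (resp.~$r$), with signs dictated by $|s|=-1$. The crucial input---immediate from \eqref{eq: sigma' definition} together with the fact that $\tau$ is an algebra map---is that $\sigma'$ is $B$-bilinear in the sense that
\[
\sigma'(u_{1} b u_{2}) = \sigma'(u_{1})\, \tau(b)\, \sigma'(u_{2}) \quad \text{for } u_{1}, u_{2} \in \Lambda^{q}(Q) *_{kQ_{0}} B,\ b \in B.
\]
Using this, the two Leibniz expansions match term-by-term under $\widetilde{\sigma'}$, regardless of where a differentiated $s$ falls within an $s^{k_{i}}$-factor---including at the boundary, where the resulting $t$-factor gets absorbed into the adjacent $\Lambda^{q}(Q) *_{kQ_{0}} B$-factor of the alternating product.

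The main obstacle is precisely this bookkeeping for the chain-map check: since $\widetilde{\sigma'}$ is \emph{not} an algebra map in general (because $\sigma$ is only a bimodule section of $L_{Q} \twoheadrightarrow \Lambda^{q}(Q)$), one cannot reduce the chain-map property to checking it on algebra generators, but must verify it on an arbitrary alternating monomial, case-splitting on whether the Leibniz index $j$ lies in the interior or boundary of $\{0, 1, \dots, k_{i}-1\}$. The $B$-bilinearity of $\sigma'$ disposes of all cases uniformly. Once this is in place, $\widetilde{\sigma'}$ is a quasi-isomorphism, and Lemma \ref{lem: homology of A} yields $H_{*}(\Lambda^{dg,q}(Q)) \cong H_{*}(A) \cong \Lambda^{q}(Q)$ concentrated in degree zero, establishing formality.
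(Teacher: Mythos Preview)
Your proof is correct and follows essentially the same approach as the paper. The paper's proof simply asserts that the strong free product isomorphism $\sigma'$ extends to an isomorphism ``as complexes'' $\Lambda^{dg,q}(Q) \cong \Lambda^{q}(Q) *_{kQ_{0}} kQ_{0}[r,(r+q)^{-1}] *_{kQ_{0}} kQ_{0}[s]$ and then invokes Lemma~\ref{lem: homology of A}; you carry out the same reduction but explicitly verify that the induced bimodule isomorphism $\widetilde{\sigma'}$ intertwines the differentials, which is exactly the content hidden in the paper's phrase ``as complexes.'' Your observation that the $B$-bilinearity $\sigma'(u_1 b u_2) = \sigma'(u_1)\tau(b)\sigma'(u_2)$ (which follows from $\overline{B}=tB$ being a $B$-submodule and $\tau$ being multiplicative) is precisely what makes the boundary cases work, and this is the point the paper leaves implicit.
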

By Theorem \ref{thm:sfpp}, the proposition holds in particular if $Q$ contains a cycle.

\begin{rem} \label{rem:add_preproj_formal}
Note that for the ordinary preprojective algebra, $\Pi(Q)$, the Ginzburg dg-algebra has homology concentrated in degree zero for any non-Dynkin quiver: $\Pi(Q)$ has a length two bimodule resolution (see \cite{MV, BB} for the characteristic zero case, and \cite{EE} in general) which Anick shows is equivalent for \emph{graded} connected algebras in  \cite[Theorems~2.6 \& 2.9]{Anick}, and \cite{Etingof-Ginzburg} observes this extends to the quiver case.
\end{rem}

\begin{proof}
The strong free product property yields an isomorphism of graded vector spaces,
$$
L_Q \cong \Lambda^{q}(Q) *_{kQ_{0}} kQ_{0}[r, (r+q)^{-1}].
$$ 
Hence, as complexes
$$
\Lambda^{dg,q}(Q) \cong   
L_Q 
*_{kQ_{0}} kQ_{0}[s]
\cong \Lambda^{q}(Q) *_{kQ_{0}} kQ_{0}[r, (r+q)^{-1}] *_{kQ_{0}} kQ_{0}[s],
$$
which by Lemma \ref{lem: homology of A} is quasi-isomorphic to $\Lambda^{q}(Q)$, concentrated in degree zero. It follows that,
$$
\Lambda^{dg, q}(Q) \cong H_{*}(\Lambda^{dg, q}(Q)) \cong H_{0}(\Lambda^{dg, q}(Q)) \cong \Lambda^{q}(Q)
$$ 
as dg-algebras.
\end{proof}

\begin{rem}
In the presence of Conjecture \ref{conj: 2CY}, formality of $\Lambda^{dg,q}(Q)$ implies $\Lambda^{dg,q}(Q)$ is 2-Calabi--Yau. Hence by Theorem \ref{thm: 2CY}, we have shown that $\Lambda^{dg,q}(Q)$ is 2-Calabi--Yau, when $Q$ is connected and contains a cycle. One may be able to adapt the techniques in Section \ref{s: 2CY-property} to prove that $\Lambda^{dq,q}(Q)$ is 2-Calabi--Yau, in general. In more detail, writing $\Lambda^{dg} := \Lambda^{dg, q}(Q)$, the role of the $\Lambda^{q}(Q)$-bimodule resolution, $P_{\bullet}$, should now be played by the  $\Lambda^{dg}$-dg-bimodule given by the total complex of:
$$
\xymatrix{
\Lambda^{dg} \otimes_{kQ_{0}} kQ_{0} \cdot s \otimes_{kQ_{0}} \Lambda^{dg} \ar[r]^-{\alpha^{dg}} \ar@/_1pc/[rr]_-{\beta_1^{dg}} &
\Lambda^{dg} \otimes_{kQ_{0}} kQ_{1} \otimes_{kQ_{0}} \Lambda^{dg} \ar[r]^-{\beta_0^{dg}} &
\Lambda^{dg} \otimes_{kQ_{0}} \Lambda^{dg},
}
$$
where $\beta^{dg}_1(a \otimes s \otimes b) = as \otimes b - a \otimes sb$ and $\alpha^{dg}$ (respectively $\beta^{dg}_0$) has the same formula as $\alpha$ (respectively $\beta$). 
\end{rem}
\begin{rem}
We are grateful to Georgios Dimitroglou Rizell, who pointed out that our definition differs from that arising in symplectic geometry. Indeed in the derived multiplicative preprojective algebra, $\mathscr{L}_{\Gamma}$, in \cite[page 779]{Lekili2} Etg\"{u}--Lekili define additional variables $z_a, \zeta_a$ with $z_a$ invertible and $d(\zeta_a) = z_a -(1+a^*a)$, and hence $(1+a^*a)$ is invertible only after taking homology. In contrast, we invert $(1+a^*a)$ on the chain level in $\Lambda^{dg, q}(Q)$. However, for our main result, Proposition \ref{prop: formal}, this distinction is irrelevant, as we now explain. 

We claim that the dg algebra map $\alpha: \mathscr{L}_{\Gamma} \to \Lambda^{dg, q}(Q)$, given by $\alpha(z_a) = (1+a^* a)$,  $\alpha(\zeta_a) = 0$, and taking arrows to arrows, is a quasi-isomorphism.  To see this, note that $\mathscr{L}_{\Gamma}$ can be viewed as a bigraded dg algebra with two differentials: set $|\zeta_a| = (-1,0)$ and $|s|=(0,-1)$, with horizontal differential $d_H(\zeta_a)=z_a-(1+a^* a)$, $d_H(s)=0$, and vertical differential $d_V(\zeta_a)=0, d_V(s)=\rho-q$. We will show in the next paragraph that the map $\alpha$ induces an isomorphism on horizontal cohomology, that is,
$\alpha: (\mathscr{L}_{\Gamma}, d_H) \to (\Lambda^{dg, q}(Q), 0)$ is a quasi-isomorphism.
Therefore,  $\alpha$
is a morphism of bicomplexes (placing the target in horizontal degree zero), that induces an isomorphism on the first page of the associated spectral sequences. These sequences collapse on the second page. They collapse to the cohomology, since both sequences were third-quadrant (cohomologically) and hence convergent. This proves the claim.

It remains to show that $\alpha$ is an isomorphism on horizontal cohomology.
   More generally, let $A$ be a graded path algebra on the quiver $Q$ (arrows can be assigned any degrees), and let $S \subseteq A$ be a subset of homogeneous elements; in the case above we have $A:=k\overline{Q} *_{kQ_0} kQ_0[s]$ and $S := \{1+a^* a\}_{a \in \overline{Q}_1}$.  We wish to compare two localizations. The first is the naive one, $A[f^{-1}]_{f \in S}$.  The second is given by replacing $A$ by the quasi-isomorphic algebra 
   $\widetilde{A} := A\langle z_f, \zeta_f \rangle_{f \in S}$, with differential  $d(\zeta_f)=z_f-f, \  d(z_f) = 0, \ d(A) \equiv 0$.
   We then consider $\widetilde{A}[z_f^{-1}]_{f \in S}$. To compare these we use the technique of \emph{derived localization}, following \cite{BCL18}. 
   Since $A$ is hereditary with zero differential, by \cite[Corollary 4.20, Theorem 5.1]{BCL18}, 
  its derived localization by $S$ is $A[S^{-1}]$ (i.e., it is underived).   On the other hand, $\widetilde{A}$ is cofibrant in the category of dg algebras equipped with a morphism from $kQ_0\langle z_f\rangle_{f \in S}$, as it is given by cell attachment (although with nonzero differential).  So $\widetilde A[z_f^{-1}] = \widetilde{A} *_{kQ_0\langle z_f \rangle} kQ_0 \langle z_f, z_f^{-1} \rangle$ is also its derived localization.  Now the quasi-isomorphism $\widetilde A \to A$ is compatible with the morphisms from the path algebra $kQ_0\langle z_f \rangle_{f \in S}$, sending $z_f$ to $z_f$ and to $f$, respectively.  Thus the map $\widetilde{A}[z_f^{-1}]_{f \in S} \to A[f^{-1}]_{f \in S}$ is a quasi-isomorphism of derived localizations of $A$ at $S$. 
  
  Note that combining the two preceding paragraphs, in general, the quasi-isomorphism $A \langle z_f, z_f^{-1}, \zeta_f \rangle_{f \in S} \to A[f^{-1}]_{f \in S}$ induces a quasi-isomorphism $A \langle z_f, z_f^{-1}, \zeta_f, s_i \rangle \to A[f^{-1}] \langle s_i \rangle$ for any additional arrows $s_i$ and differential $d(s_i)$ compatible with the morphism (only assuming that $A$ is a graded path algebra with $S$ a collection of homogeneous elements). The same is true replacing $A\langle z_f, z_f^{-1}, \zeta_f \rangle$ by any other model of the derived localization of $A$ at $S$.
 \end{rem}

\begin{rem} \label{rem: dg MPA definition}
The dg multiplicative preprojective algebra is called the Legendrian cohomology dg algebra in \cite[3.2]{Lekili} where they establish that it is a multiplicative analog of Ginzburg's dg algebra for a quiver with zero potential defined in \cite[1.4]{Ginzburg}. It is called a capped Chekanov--Eliashberg algebra in \cite[Section 3.2]{DRET20} where they independently prove formality in the case $Q$ is the Jordan quiver and $q=1$ in \cite[Theorem 3.13]{DRET20}.
\end{rem}

\section{Local structure of multiplicative quiver varieties and moduli spaces attached to 2-Calabi--Yau algebras} \label{s:quiver_varieties}
In this section, we will assume that $k$ is an algebraically closed field of characteristic zero.

We will use our main result to prove, as anticipated in \cite[Section 7.5]{Schedler}, that multiplicative quiver varieties are \'etale-locally (or formally locally) isomorphic to ordinary quiver varieties. Our proof uses (a generalization of) a result of Bocklandt, Galluzzi, and Vaccarino in \cite{Bocklandt} for 2-Calabi--Yau algebras.  While our main result is only proved for quivers \emph{with} cycles, we are able to prove this result for \emph{all} quivers. The key idea is to embed any quiver into one containing a new vertex with a cycle, and put the zero vector space at this new vertex. This identifies every multiplicative quiver variety with one for a quiver containing a cycle.

We recall the definition of multiplicative quiver varieties \cite{Shaw, Yamakawa, Schedler}, beginning with King's notion of (semi)stability.  First, by an algebra over $kQ_0$, we mean a $k$-algebra which contains $kQ_0$ as a subalgebra.  Given a module $M$ over such an algebra $A$, its dimension vector is $\alpha \in \bN^{Q_0}$ given by $\alpha_i = \dim e_i M, i \in Q_0$. Given a $kQ_0$-module $V$, let $\Rep(A,V) := \Hom_{kQ_0-\text{alg}}(A, \End_k(V))$ be the set of $A$-module structures on $V$. Let $\Rep_{\alpha}(A) := \Rep(A,V)$ for $V := \bigoplus_{i \in Q_0}  k^{\alpha_i}$, called the representation space of dimension $\alpha$.
\begin{defn}\cite{King}  Let $Q$ be a finite quiver.
Let $A$ be an algebra over $kQ_0$, $\theta \in \bZ^{Q_0}$ a parameter, $\alpha \in \bN^{Q_0}$ a dimension vector.
Assume that $\theta \cdot \alpha = 0$. Then an $A$-module $M$ of dimension vector $\alpha$
is said to be $\theta$-\emph{semistable} if, for every submodule $N :=  \{ N_i \}_{i \in Q_{0}}$, with dimension
vector $\beta \in \bN^{Q_0}$, we have $\beta \cdot \theta \leq 0$.  
Furthermore $M$ is $\theta$-\emph{stable} if $\beta \cdot \theta < 0$ for all non-zero, proper submodules. 
Let
$Rep^{\theta\text{-ss}}(A,V)\subseteq \Rep(A,V)$ be the subset  of $\theta$-semistable module structures, and denote this by $\Rep^{{\theta\text{-ss}}}_\alpha(A)$ when $V := \bigoplus_{i \in Q_0} k^{\alpha_i}$. \end{defn}
\begin{defn}\cite{King}
Let $Q$, $\alpha$, and $\theta$ be as in the definition above, and let $A$ be an algebra over $kQ_0$.  Then the corresponding (semistable) moduli space is
\begin{equation} \mathcal{M}_\theta(A,\alpha) := \Rep^{\theta\text{-ss}}_\alpha(A) /\!/ \GL(\alpha).
\end{equation}
\end{defn}
In the case $A=\Lambda^q$, this is called a \emph{multiplicative quiver variety}, denoted $\mathcal{M}_{q,\theta}(Q,\alpha)$. In the case $A=\Pi^\lambda$ is a (deformed) preprojective algebra, it is called an ordinary quiver variety, denoted $\mathcal{M}_{\lambda,\theta}^{\text{add}}(Q,\alpha)$.



The main results of this section are the following:
\begin{thm}\label{t:fnt-general}
Let $A$ be a 2-Calabi--Yau algebra over $kQ_0$, and let
 $\rho$ be a $\theta$-semistable representation of $A$ of dimension $\alpha$. Then there exists $Q', \alpha'$ such that the formal neighborhood of $\rho$ of the moduli space $\mathcal{M}_\theta(A,\alpha)$
 is isomorphic to the formal neighborhood
of $\mathcal{M}^{\text{add}}_{0,0}(Q',\alpha')$ at the zero representation.
\end{thm}
\begin{thm}\label{t:fnt-mult}
At every point of a multiplicative quiver variety, a formal neighborhood is isomorphic to the formal neighborhood of zero of an ordinary quiver variety.
\end{thm}
In the case where the quiver contains an oriented cycle, Theorem \ref{t:fnt-mult} follows immediately from Theorem \ref{t:fnt-general} and our main result; in general, we need to enlarge the quiver: see Section \ref{ss:fnt-proof}.
Note that the corresponding result 
for formal neighborhoods of ordinary quiver varieties is known, see \cite[Corollary 3.4]{Bellamy-Schedler-quiver}.  

By Artin's approximation theorem \cite[Corollary 2.6]{Artin}, we can replace ``formal neighborhoods" in the preceding theorems by \'etale neighborhoods, since we are in the setting of varieties (by which we always mean of finite type) over a field.  

\begin{cor}\label{c:symp-sing}
Let $A$ be a 2-Calabi--Yau algebra over $kQ_0$. Then, all moduli spaces $\mathcal{M}_\theta(A,\alpha)$ are symplectic singularities. In particular, they are normal and have rational Gorenstein singularities. The same holds for all multiplicative quiver varieties.
\end{cor}

The proofs of these results are given in the final subsection. 

\subsection{Generalities on completions of $2$-Calabi--Yau algebras}
To prove Theorem \ref{t:fnt-general}, we will need the following results about the local structure of $n$-Calabi--Yau algebras at modules $M$, adapted from \cite{VdB-CalabiYau}.

\begin{defn}
Let $A$ and $B$ be $A_{\infty}$-algebras. $B$ is \emph{minimal} if $m^{B}_{1} = 0$. $B$ is further a \emph{minimal model} for $A$ if 
there exists an $A_{\infty}$-quasi-isomorphism $B \rightarrow A$ lifting the identity.  We make the same definitions for $L_\infty$ algebras.
\end{defn}
In particular, if $B$ is a minimal model for $A$ then $(B,m^B_2) \cong H^*(A)$ as graded algebras. Kadeishvili showed that every $A_{\infty}$-algebra has a minimal model:

\begin{thm}[Minimal Model Theorem \cite{Kadeishvili}] \label{thm:minimal model} 
Let $A$
be an augmented $A_{\infty}$-algebra over a semisimple $k$-algebra $S$. Then, $A$ admits an augmented 
$A_{\infty}$-algebra isomorphism $H^*(A)' \oplus C \to A$, where $H^*(A)'$ is an $A_{\infty}$-algebra which, as a dg algebra, is the cohomology $H^*(A)$ with zero differential, and $C$ is a contractible complex such that all $\geq 2$-ary multiplications involving it are zero. 

Similarly, if $\mathfrak{g}$ is an $L_\infty$-algebra over $k$, then there is an $L_\infty$-isomorphism $H^*(\mathfrak{g})' \oplus \mathfrak{c} \to \fg$, where the underlying ordinary dg Lie algebra of $H^*(\mathfrak{g})'$ is the cohomology of $\fg$ with zero differential, and $\mathfrak{c}$ is a contractible complex such that all $\geq 2$-ary  multiplications involving it are zero.
\end{thm}
Here, an $A_\infty$-algebra $A$ is \emph{augmented} over $S$ if it is of the form $S \oplus \overline{A}$ where $S$ is a subalgebra and $\overline{A}$ is a strict ideal, i.e., all multiplications with $\overline{A}$ as an input land in $\overline{A}$; moreover, we assume that the only nonzero multiplication between $S$ and $\overline{A}$ are the binary operations (i.e. the $S$-bimodule structure).  An \emph{augmented $A_\infty$-morphism} is an $A_\infty$-morphism which is the identity on $S$, preserves strictly the augmentation ideals, and all higher $A_\infty$-structure maps vanish when one of the inputs is in $S$.  

\begin{rem}
The map $A \to \overline{A}$ gives an equivalence between augmented $A_\infty$-algebras and non-unital $A_\infty$-algebras in the category of $S$-bimodules. This makes the statements for $A_\infty$ and $L_\infty$- algebras more symmetric. There are also $L_\infty$ analogues of working over a semisimple algebra: for example, we may work with representations of a reductive group. Given an augmented $A_\infty$-algebra over a matrix algebra, the augmentation ideal has an associated $L_\infty$-algebra which is a representation of the general linear group. 
\end{rem}

Kadeishvili's approach is direct and explicit: he constructs both the $A_{\infty}$-structure on $H^*(A)'$ and the $A_{\infty}$-algebra isomorphism $A \to H^*(A)' \oplus C$. For more conceptual treatments, see e.g.~Theorem 5.4 of \cite{Kajiura-nha} and Remark 4.18 in \cite{CL-Feynman}. For a sketch in the context of $L_{\infty}$-algebras see Lemma 4.9 of \cite{Kont-form}. 

\begin{rem}
Note that the Minimal Model Theorem is usually stated in the literature for fields, but it is known that the statement and proof generalizes to the case of semisimple algebras over a field.
\end{rem}

\begin{defn}
Let $A$ be an $A_{\infty}$-algebra. We say $A$ is \emph{formal} if there is an augmented $A_{\infty}$-isomorphism $H^*(A)' \to H^*(A)$, where $H^*(A)$ has zero $\ell$-ary multiplication for $\ell \geq 3$.
\end{defn}

\begin{defn}
Given a dg associative algebra $A$ with module $M$, define the \emph{derived Koszul dual algebra with respect to $M$} to be $E_M(A) := \REnd_A(M)$.
\end{defn}
This is only defined up to quasi-isomorphism, but it will not matter to us which model is chosen.
 Note that if $A$ is a Koszul algebra over $S$, with $S$ the augmentation module,
 then up to degree conventions,
 $E_S(A)$ is the completion of the Koszul dual algebra, $A^!$, with respect to the filtration by powers of the augmentation ideal. In this case, $A$ and $A^!$ have an additional weight grading, and $(A^!)^! \cong A$. \\
 


 Recall that, if $A$ is an $n$-Calabi--Yau algebra and $M$ a finite-dimensional module, then there is a trace $\lambda: \Ext^n(M,M) \to k$ such that the composition 
 \[ (-,-) :\Ext^i(M,M) \times \Ext^{n-i}(M,M) \overset{\circ}{\longrightarrow} \Ext^n(M,M) \overset{\lambda}{\longrightarrow} k \]  
 is a graded symmetric perfect pairing \cite[Lemma 3.4]{Keller-defCYc}. Since it is also graded commutative, this says that $\Ext^{\bullet}(M,M)$ is a \emph{symmetric dg Frobenius} 
 algebra. In the case that $R:=\End_A(M)$ is semisimple, this says that $\Ext^n(M,M) \cong R$ as $R$-modules. Moreover, if we realize $R$ as endomorphisms of a $kQ'$-representation (i.e $R \cong \prod_{i \in Q'_0} \End_k(k^{\alpha'_i})$ for some finite set $Q'_0$ and dimension vector $\alpha' \in \bN^{Q'_0}$) then we can write, for $V_i := k^{\alpha'_i}$,
 \[ \Ext^m(M,M) \cong \bigoplus_{i,j \in Q'_0} \Hom_k(V_i, V_j)^{c_{i,j}^m}, \]
 for some $c_{i,j}^m \in \bN.$
 
 Moreover,  in the case of interest, $n=2$, we only need to consider $m=1$. Then the pairing on $\Ext^1(M,M)$ is symplectic.  By picking an appropriate symplectic basis on $\Ext^1(M,M)$,
 we can write 
\[ \Ext^1(M,M) \cong T^* \left ( \bigoplus_{a \in Q'_1}  \Hom_k(V_{t(a)}, V_{h(a)}) \right ), \] 
with the standard symplectic structure on the cotangent bundle, for some set $Q'_1$ of arrows with vertex set $Q'_0$ (i.e., extending $Q'_0$ to a quiver $Q'=(Q'_0, Q'_1)$).  It turns out that the symplectic pairing on $\Ext^1(M,M)$, and hence the quiver data $(Q',\alpha')$, completely determines the dg algebra $\REnd(M)$ up to $A_{\infty}$-isomorphism. 

Continue to assume that $R:=\End_A(M)$ is semisimple. In this case, the image, call it $S$, of the action homomorphism $\rho_M: A \to \End_k(M)$  is also semisimple.  We could complete $A$ at $M$, meaning the completion with respect to the filtration by powers of $\ker \rho_M$.  This is not necessarily a quasi-isomorphism invariant, however.  A better way to take the completion is 
by a double Koszul duality, as $E_M (E_M A)$, where $M$ is viewed as an $E_M A$-module via the augmentation map $\REnd(M) \to \End_A(M)$.  The result is certainly complete, and in certain cases it is indeed the completion of $A$ (e.g., for $A = k[x]$ with $M=k$, one obtains $k[\![x]\!]$; see the proof of the next theorem for more cases).

Since $S$ is semisimple, it is Morita equivalent to a direct sum of copies of $k$, namely $kQ_0'$ for $Q_0'$ the set of isomorphism classes of indecomposable summands of $M$.  Then, we can replace the aforementioned ``completion'' of $A$  by a completed quiver algebra, by replacing $M$ by $M'$, the direct sum of one copy of each nonisomorphic indecomposable summand of $M$.  Then $E_{M'} E_M A$ is augmented over $kQ_0'$ and is Morita equivalent to the completion of $A$ at $M$. More precisely, if $V_i = k^{\alpha_i'}$ as before, so that $\End_A(M) = \bigoplus_i \End_k(V_i)$, then $M' = \bigoplus_i V_i$, viewed as an $E_M A$ module via the augmentation $E_M A \to \End_A(M)$.


\begin{thm}\label{thm: VdB-CalabiYau}
  Let $A$ be a 2-Calabi--Yau algebra over $kQ_0$ and $M$ a finite-dimensional module such that $\End_A( M)$ is semisimple.
 Then $E_M A$ is formal.
\end{thm}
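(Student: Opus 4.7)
The plan is to apply Kadeishvili's minimal model theorem (Theorem~\ref{thm:minimal model}) to $B := \REnd_A(M)$ and analyse the resulting minimal $A_\infty$-structure on $H := H^*(B) = \Ext^*_A(M,M)$. Since $A$ is 2-Calabi--Yau, Van den Bergh duality endows $B$ with a 2-Calabi--Yau dg-algebra structure, so $H$ is concentrated in degrees $0,1,2$ with $H^0 = S := \End_A(M)$ (semisimple by assumption) and a perfect graded symmetric pairing $H^i \otimes H^{2-i} \to k$ descending from the Calabi--Yau trace. The minimal model may be chosen augmented with respect to $S \subset H$, so $m_k(a_1,\ldots,a_k) = 0$ whenever $k\geq 3$ and some $a_i \in S$. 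It suffices to prove that this minimal $A_\infty$-structure is $A_\infty$-isomorphic to the formal one in which $m_k = 0$ for all $k \geq 3$.

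The next step is a degree analysis. Each $m_k$ has degree $2-k$. Writing $D = \sum_i d_i$ for the total input degree and requiring each $d_i \in \{1,2\}$ (by the augmentation) and the output degree $D + 2 - k$ to lie in $\{0,1,2\}$ forces $D = k$; that is, all inputs lie in $H^1$ and the output lies in $H^2$. Using the Calabi--Yau pairing, transfer the higher operations to the cyclic multilinear functions
\[
W_k(a_1,\ldots,a_{k+1}) := \langle m_k(a_1,\ldots,a_k),\, a_{k+1}\rangle,\qquad k \geq 3,
\]
which are cyclically symmetric up to the usual signs. Using the identification of $H^1$ as the cotangent of a quiver representation space noted just before the theorem, one identifies the completed cyclic tensor $S$-algebra on $H^1$ with the completed cyclic path algebra of the doubled quiver $\overline{Q'}$. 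The full higher structure of $(H, \{m_k\})$ is thus encoded in a single cyclic formal potential $W = \sum_{k\geq 3} W_k$ on $\overline{Q'}$, whose cubic term $W_3$ reproduces the preprojective-type potential $\sum_{a\in Q'_1} [a,a^*]$ compatible with the given $m_2$.

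The main obstacle, which is the technical heart of \cite[Theorem 11.2.1, Corollary 9.3]{VdB-CalabiYau}, is to show that $W$ is gauge-equivalent, via a cyclic $A_\infty$-automorphism of $H$ fixing $m_2$ and the Calabi--Yau pairing, to its cubic part $W_3$. I would carry this out inductively on the order of vanishing: assuming $W_j = 0$ for $3 < j < N$, the $A_\infty$-relations (equivalently, the cyclic Maurer--Cartan equation for $W$) show that $W_N$ is a cocycle in an appropriate cyclic Hochschild complex of the preprojective algebra on $\overline{Q'}$. Using the semisimplicity of $S$ and the specific cohomological dimensions forced by the 2-Calabi--Yau property, one checks the relevant cyclic cohomology group vanishes, so $W_N$ is a coboundary; the corresponding gauge transformation kills $W_N$ while leaving $W_j$ for $j<N$ unchanged and only modifying $W_j$ for $j>N$. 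Iterating and passing to the (convergent, by order of vanishing) limit provides the desired $A_\infty$-automorphism of $H$ sending $\{m_k\}_{k\geq 2}$ to $\{m_2, 0, 0, \ldots\}$, whence $\REnd_A(M) \simeq H^*(\REnd_A(M))$ is formal.
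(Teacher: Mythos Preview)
Your strategy coincides with the paper's: both rely on Van den Bergh's cyclic $A_\infty$ framework. The paper, however, does not attempt to reprove the gauge-equivalence step. It records that VdB's result (stated for $n\geq 3$) allows one to kill all higher $A_\infty$-operations landing in the top degree $H^n$, and then observes---exactly as in your degree count---that for $n=2$ \emph{every} $m_k$ with $k\geq 3$ lands in $H^2$. Formality is immediate from these two facts.

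There are slips in your write-up. First, indexing: by your definition $W_k$ has $k+1$ arguments, so $W_3$ is quartic, not cubic. The cubic function encoding $m_2$ would be $W_2$, which you have excluded from $W=\sum_{k\geq 3}W_k$; for formality you need this $W$ to be gauge-equivalent to $0$, not to ``its cubic part $W_3$.'' Relatedly, $\sum_a[a,a^*]$ is the preprojective \emph{relation}, not a potential, and it is determined by $m_2$, not $m_3$. Second, the inductive obstruction lives in a cyclic or Hochschild-type complex for the graded Frobenius algebra $H=\Ext^*(M,M)$ itself, not ``of the preprojective algebra on $\overline{Q'}$'' (which is rather the Koszul-dual side). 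Third, and most important, the line ``one checks the relevant cyclic cohomology group vanishes'' is precisely the nontrivial content of \cite[Theorem~11.2.1, Corollary~9.3]{VdB-CalabiYau}; you have asserted it, not proved it. The paper does not claim otherwise: it cites VdB for this step rather than reproving it.
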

\begin{proof} We deduce this result from \cite[Theorem 11.2.1, Corollary 9.3]{VdB-CalabiYau} as follows.
The latter gives a formal local characterization of $n$-Calabi--Yau  algebras (more generally for dg exact Calabi--Yau algebras concentrated in non-positive degrees) for $n\geq 3$. The proof there is valid also in the case $n=2$, where it yields that  the following are equivalent for a \emph{complete augmented} algebra $A$ over $kQ_0$:
\begin{itemize}
\item[(a)] $A$ is a $2$-Calabi--Yau algebra;
\item[(b)] $E_{kQ_0} A$ is formal and has a non-degenerate trace of degree $-2$.
\end{itemize}
In this case, $A$ itself is isomorphic to $E_{kQ_0} \REnd_A(kQ_0)$. 

Now, let $A$ be an \emph{ordinary} 2-Calabi--Yau algebra and $M$ a finite-dimensional module with $\End_A(M)$ semisimple.  Then $E_M A= \REnd_A(M)$ has a non-degenerate trace of degree $-2$.  We can now apply the aforementioned result to the dg algebra $A' := E_{M'} E_M A$, which is complete and augmented over $kQ_0$.  By construction, $E_{kQ_0} A' \cong E_M A$ (formally, this is because $B := E_M A$ is its own double Koszul dual, as it is augmented, finite-dimensional, and concentrated in positive degrees (\cite[Proposition A.5.4]{VdB-CalabiYau}).  Thus $E_M A$ is formal. \qedhere


\end{proof}
\begin{rem}
In fact, the proof shows that the following statements are equivalent for an ordinary algebra $A$ and module $M$ with $\End_A(M)$ semisimple:
\begin{itemize}
\item[(a)] $E_M A$ is formal and has a non-degenerate trace of degree $-2$;
\item[(a')] $E_M A$ has a non-degenerate trace of degree $-2$;
\item[(b)] The double dual $E_M E_M A$ is $2$-Calabi--Yau.
\end{itemize}
Since the double Koszul dual is Morita equivalent to the completed dg quiver algebra $E_{M'} E_M A$, these statements are also equivalent to this latter algebra being $2$-Calabi--Yau.
\end{rem}

\begin{rem} As stated, \cite{VdB-CalabiYau} actually deals with the case of Calabi--Yau dimension $n \geq 3$. In this case, one can also state a version of the theorem: instead of yielding that $E_M A$ is formal, one can 
only kill the higher $A_\infty$-structures of $\Ext^\bullet(M,M)$ which land in top degree $n$.
The main result of \emph{op.~cit.}~can then be stated as saying that the remaining structure of $E_M A$ is governed by a single cyclically symmetric element called the superpotential.  
\end{rem}
\begin{rem} Since submission of this article, Ben Davison has proved a more general formality result for 2-Calabi--Yau categories \cite[Theorem 1.2]{Davison-CY2}.  As he explains, the reason for the formality is quite simple: the Koszul dual $E_M A$ can be taken to be a cyclic $A_\infty$-algebra which is augmented over $\End_A(M)$. This means that, for $s \in \End_A(M)$, $\langle m_n(a_1,\ldots,a_n), s \rangle = \langle a_1, m_n(a_2,\ldots,a_n,s) \rangle = 0$.  This shows that all $A_\infty$-structures landing in top degree (here, degree two) vanish.
\end{rem}



Theorem \ref{thm: VdB-CalabiYau} implies that the formal moduli problem, based at $M$, of modules over a 2-Calabi--Yau algebra $A$ is equivalent to that of a dg preprojective algebra.  
Indeed, using the bar construction, one can realize $E_{M'} \Ext^{\bullet}(M,M)$ as the completed dg preprojective algebra of the quiver $Q'$, for $M'$ as above.
Note that the module $M'$
is a zero representation of this preprojective algebra: all arrows act by zero.

\subsection{The representation and moduli schemes}
We are interested rather in the ordinary representation moduli scheme of $A$, possibly using a nonzero stability condition. In this case, Theorem \ref{thm: VdB-CalabiYau} will imply that, when $A$ is $2$-Calabi--Yau the formal neighborhood
of this scheme at  $M$  will be isomorphic to that of the corresponding quiver variety. 

To prove this, we use the following generalization of \cite[Theorem 6.3]{Bocklandt}, describing the general structure of these schemes whenever $A$ is an algebra with $\End_A(M)$ semisimple. 
Given  (formal) 
schemes $X$, $Y$ with actions by a 
group $G$,
write $X \times^G Y := (X \times Y)/\!/G$ using the diagonal action. 

\begin{thm}\label{t:raf-rep}
Let 
$A$ be an algebra over $kQ_{0}$, and $\alpha \in \bN^{Q_0}$ a dimension vector. Suppose that $M \in \Rep_\alpha(A)$ is a representation whose $\GL_\alpha$-orbit is closed in some $\GL_\alpha$-stable affine open subset $U$ of $\Rep_\alpha(A)$. Let $A' := H^0 E_{M'}E_M A$
(for $M'$ as above).
Then:
\begin{enumerate}
    \item 
$\End_A( M)$ is semisimple;
\item There is a $\GL_{\alpha}$-equivariant isomorphism 
$\widehat{\Rep_\alpha(A)}_{\GL_{\alpha} \cdot M} \cong  \widehat{\Rep_{\alpha'}(A')}_{M'} \times^{\GL_{\alpha'}} \GL_{\alpha}$;
    \item The formal neighborhood of $[M]$ in $U/\!/\GL_\alpha$ is isomorphic to the formal neighborhood of $[M']$ in $\Rep_{\alpha'}(A')/\!/\GL_{\alpha'}$. 
\end{enumerate}
\end{thm}
Before we begin the proof of the theorem, as in 
\cite[\S 6]{Bocklandt}, we need to recall some of the formalism of Maurer--Cartan loci.  Let $\mathfrak{g}$ be a dg associative or Lie algebra. Then the Maurer--Cartan locus is
\[
\MC(\mathfrak{g}) := \left\{ a \in \mathfrak{g}^1 \  \middle | \ da + \frac{1}{2}[a,a] = 0 \right \}.
\]
Let $\wMC(\mathfrak{g})$ be its formal completion at $0 \in \fg^1$. More generally, given an $A_\infty$ or $L_\infty$-algebra, we can define
\[
\wMC(\mathfrak{g}) := Z \left (a \mapsto da + \frac{1}{2!}[a,a] + \frac{1}{3!}[a,a,a] + \cdots \right ) \subseteq \widehat{\fg^1},
\]
the formal subscheme of $\widehat{\mathfrak{g}^1}$ cut out by the Maurer--Cartan equation (now a power series).  

The algebra of functions on this formal scheme is the zeroth Lie algebra cohomology of $\mathfrak{g}^{>0}$,
$H^0 \CE(\mathfrak{g}^{>0}) = 
H^0 (\CE(\mathfrak{g}) /((\mathfrak{g}^0)^*))$. Here, the Chevalley--Eilenberg cochain complex is the completed dg symmetric algebra, $\CE(\mathfrak{g}) = (\widehat{\text{Sym}}(\mathfrak{g}^*[-1]),d_{\CE})$, 
equipped with the Chevalley--Eilenberg differential. For algebras $\mathfrak{g}$ concentrated in positive degrees, this does not depend on $A_\infty$ or $L_\infty$-quasi-isomorphisms. 

The Maurer--Cartan formal scheme has an infinitesimal action by the Lie algebra $\mathfrak{g}^0$, via gauge equivalence. 
The gauge action of an element  $\xi \in \mathfrak{g}^0$ is recorded by applying the differential and contracting with $\xi$.  
The categorical quotient of the Maurer--Cartan formal scheme by this action is defined, on the level of functions, by passing to $\mathfrak{g}^0$-invariant functions. The algebra of functions here is $H^0\CE(\mathfrak{g}^{\geq 0}) / ((H^0 \mathfrak{g})^*[-1])$. For algebras $\mathfrak{g}$ concentrated in non-negative degrees, 
this quotient does not depend on $A_\infty$ or $L_\infty$-quasi-isomorphisms.

Now let $A$ be a $kQ_0$-algebra and $M$ a module. Consider the non-negatively graded dg associative algebra of $kQ_0$-bilinear Hochschild cochains, 
\[
\mathfrak{g} := \text{HC}_{kQ_0}(A, \End_k(M)) := \bigoplus_{i \geq 0} \Hom_{kQ_0-\text{bimod}}(A^{\otimes_{kQ_0} i}, M),
\]
equipped with the usual differential and cup product structure. 
(We remark that this is well known to be quasi-isomorphic to the usual algebra of $k$-linear Hochschild cochains, since $kQ_0$ is semisimple.)  

Given $a \in 
\mathfrak{g}^1 = \Hom_{kQ_0-\text{bimod}}(A, \End_k(M))$, we can consider the deformation $(\rho_M+a): A \to \End_k(M)$, with $\rho_M$ the original module structure.  The condition for $\rho_M+a$ to be a module structure is the Maurer--Cartan equation, $da+a^2=0$.  Hence $\MC(\mathfrak{g}) = \Rep(A,M)$, with zero corresponding to $M$.  Thus $\wMC(\mathfrak{g}) = \widehat{\Rep(A,M)}_M$.




\begin{proof}[Proof of Theorem \ref{t:raf-rep}]  \hfill \\
First,  to show $\End_A(M)$ is semisimple, we will use  Matsushima's criterion \cite{Matsushima}: 
\begin{quote}
If $G$ is a reductive group acting on an affine variety $X$, then the stabilizer of a point in a closed orbit is reductive.
\end{quote}
 In the case at hand, $G=\GL_\alpha$ is acting on $X=U$, so the stabilizer $G_{M} \cong \Aut(M)$ is reductive. So any element $x \in N(\End_A(M))$, the nilradical of $\End_A(M)$, gives rise to an element $1+x$ in the unipotent radical, which is $\{ 1 \}$ as $\Aut(M)$ is reductive. So $N(\End_A(M))=0$, which implies, as $\End_A(M)$ is finite-dimensional, that the Jacobson radical $J(\End_A(M)) = 0$. We conclude that $\End_A(M)$ is semisimple, being Artinian with vanishing Jacobson radical. \\

To obtain (2), let $\mathfrak{g}$ be the dg algebra of $kQ_0$-bilinear cochains, $\text{HC}_{kQ_0}(A, \End_{k}(M))$ as before the proof.
As we explained,
the completed Maurer--Cartan subscheme $\wMC(\fg)=\wMC(\fg^{>0})$ is the same as for the minimal model
$H^*(\mathfrak{g}^{>0})$ of $\fg^{>0}$ (as these are concentrated in positive degrees). Next, let $\mathfrak{h} := Z^0(\mathfrak{g}) \cong \End_A(M)$, the zero-cycles of $\mathfrak{g}$, which is a reductive Lie subalgebra of $\mathfrak{g}^0$.  Its action integrates to the reductive group $H=\Aut_A(M) \cong \GL_{\alpha'}$, so it acts semisimply. Now, we apply
 Lemma \ref{lem: equivariant-minimal-models} below, to obtain a quasi-isomorphic $L_\infty$-algebra (in fact $A_\infty$-algebra, see Remark \ref{r: a-infinity-minimal-model}) $\mathfrak{g}' := \mathfrak{g}^0 \oplus Z^1(\mathfrak{g}) \oplus H^{>1}(\mathfrak{g})$.

Define $\tilde H^1(\mathfrak{g})$ to be an $H$-invariant complement to the one-coboundaries $B^1(\mathfrak{g})$ in $Z^1(\mathfrak{g})$. 
The $L_{\infty}$-structure maps $\tilde H^1(\mathfrak{g})^m \to H^2(\mathfrak{g})$ in $\mathfrak{g}'$ are the same as the ones on any minimal model $H^*(\mathfrak{g}')$ induced by transfer (as in the proof of Lemma \ref{lem: equivariant-minimal-models} below).
This gives an embedding of the Maurer--Cartan locus $\wMC(H^* \mathfrak{g}) = \wMC(H^{>1} \mathfrak{g})$ of the cohomology into the Maurer--Cartan locus of $\mathfrak{g}'$.  By Lemma \ref{lem: equivariant-minimal-models}, this inclusion is compatible with the $H$-action, which is linear.  It is also a formal slice to the infinitesimal $\mathfrak{g}^0$ action on $\wMC(\mathfrak{g}')$: the tangent space to this action is $B^1(\mathfrak{g})$, whereas the tangent space to $\wMC(\mathfrak{g}')$ is $Z^1(\mathfrak{g})$.

Next let us turn from the formal neighborhood of $M$ in $\Rep_\alpha(A)$ to a formal neighborhood of its $\GL_\alpha$ orbit. Luna's slice theorem \cite{Luna} implies that there is a $(\GL_\alpha)_{M} = \Aut_A(M)=H$-stable affine subset $V \subseteq U$, such that the action map $\phi: \GL_\alpha \times^{H} V \to \Rep_\alpha(A)$ induces a $\GL_\alpha$-equivariant isomorphism onto an \'etale neighborhood of the orbit $\GL_\alpha \cdot M$. 
Using the fact that $\Aut(M)$ is connected, we have the following identifications. For ease of reading let
$\text{FN}(X, Y) := \widehat{Y}_X$ denote the formal neighborhood of $X$ in $Y$. 
\begin{align*}
\text{FN}(\GL_\alpha \cdot M, U) 
& \cong \text{FN}(\GL_\alpha \times^{H} \{M\}, \GL_\alpha \times^{H} V) \\
& \cong 
\GL_\alpha \times^{H} \text{FN}( \{M \}, V). 
\end{align*}

Finally, we showed above that the slice $V$ can be taken to be the Maurer--Cartan locus of $H^{>0}(\mathfrak{g})$. This identifies with $\widehat{\Rep_{\alpha'}(A')}_{M'}$, since the latter is isomorphic to the Maurer--Cartan locus of the minimal model $H^*(\mathfrak{g})$. (Explicitly, since the augmentation ideal of $A'$ acts by zero on $M'$, $\End_{kQ_0}(M')=\End_{A'}(M')$ is the degree zero part of the Hochschild cochain complex of $M'$ with zero differential, so $\Ext^{>0}_{A'}(M',M')$ is quasi-isomorphic to $H^{>0}(\mathfrak{g})$.)  
This completes the proof of (2), as $H$ is identified with $\GL_{\alpha'}$  by definition of $\alpha'$. \\
 
 It remains to deduce (3) from (2). First note that, since $\GL_\alpha$ is reductive and the orbit $\GL_\alpha \cdot M \subseteq U$ is closed, by Hilbert's theorem, the ideal of $[M]$ in $\mathcal{O}(U/\!/\GL_\alpha)=\mathcal{O}(U)^{\GL_\alpha}$ is the set of
$\GL_\alpha$-invariant functions in the ideal of $\GL_\alpha \cdot M$ in $\mathcal{O}(U)$. Therefore, 
functions on $\text{FN}([M],U/\!/\GL_\alpha)$ are the $\GL_\alpha$-invariant functions in the  completion of $\mathcal{O}(U)$ at the fiber $F \subseteq U$ of the projection $U \to U/\!/\GL_\alpha$:
\begin{equation*}
\text{FN}([M], U/\!/\GL_\alpha) \cong \text{FN}(F, U)/\!/\GL_\alpha.
\end{equation*}
Note that  $\GL_\alpha \cdot M \subseteq F$, so we get a further map $\text{FN}(F,U)/\!/\GL_\alpha \to \text{FN}(\GL_\alpha \cdot M, U)/\!/\GL_\alpha$.  We claim that this is an isomorphism. Indeed, let $I_{\GL_\alpha \cdot M} \supseteq I_F$ be the ideals. Then we are considering two different completions of $\mathcal{O}(U/\!/\GL_\alpha)$ concentrated at $[M]$, by the systems $\{I_{\GL_\alpha \cdot M}^{\GL_\alpha}\}$ and $\{I_F^{\GL_\alpha}\}$.
Since $U$ is irreducible, by Krull's intersection theorem, $\bigcap_{n \geq 0} I_{\GL_\alpha \cdot M}^n = 0$.  Hence
 the systems  are both exhaustive. Since
$I_{[M]}^n/I_{[M]}^{n+1}$ is finite-dimensional for all $n$, both systems must yield the $I_{[M]}$-adic completion (equivalently, the completion by all finite-dimensional quotients supported at $[M]$).   We deduce that
\begin{equation}
\label{e:fn-invts-swap}
\text{FN}([M], U/\!/\GL_\alpha) \cong \text{FN}(\GL_{\alpha} \cdot M, U)/\!/\GL_\alpha.
\end{equation}



Applying (2), 
we have
\[
\text{FN}(\GL_\alpha \cdot M, U)/\!/\GL_\alpha 
\cong 
\text{FN}(\GL_{\alpha'} \cdot M', \Rep_{\alpha'}(A'))/\!/\GL_{\alpha'}.
\]
By \eqref{e:fn-invts-swap} applied to the first and last terms, we obtain finally the desired isomorphism.
\qedhere




\qedhere 

\end{proof}
\begin{rem}
Part of the proof is actually showing is that the derived formal moduli stack at $[M]$ of representations of $A$ is identified with the same for the dg algebra $E_{M'} E_M A$ at the zero representation $[M']$. This is true more generally, but under our hypotheses this  implies the stated result by taking a truncation and applying Luna's slice theorem. 
\end{rem}
\begin{rem} The second statement of the theorem is a strengthened version of the statement in \cite{Bocklandt}
that a formal neighborhood of $[M]$ in $\Rep_\alpha(A)$ identifies with that of $[M']$ in $\Rep_{\alpha'}(A')$ times a formal disc of dimension $\dim \GL_\alpha - \dim \GL_{\alpha'}$. This is because $\GL_\alpha$ is smooth, and taking the formal completion at the identity, the product construction here is multiplying by such a formal disc.
\end{rem}
The theorem above uses the following lemma:
\begin{lem} \label{lem: equivariant-minimal-models}
Suppose that $\mathfrak{h} \subseteq Z(\mathfrak{g}^0)$ acts on a dg Lie algebra $\mathfrak{g}$ concentrated in non-negative degrees. Suppose that all $\mathfrak{h}$-subrepresentations have complements 
(e.g., this is true if the $\mathfrak{h}$ action integrates to an action of a connected reductive group $H$ with Lie algebra $\fh$).
Then there is an  $L_\infty$-quasi-isomorphism 
$$
\phi: \fg' := \mathfrak{g}^0 \oplus Z^1(\mathfrak{g}) \oplus H^{>1}(\mathfrak{g}) \to \fg,
$$
where on the source,
all higher brackets
\begin{equation}\label{e:br-vanish}
\mathfrak{h} \times (\fg')^{\geq 2} \to \mathfrak{g}'
    \end{equation}
    vanish.  The bracket $\fg^0 \times \fg^0 \to \fg^0$ is the original one. 
        Moreover, the linear part $\phi^1: \fg' \to \fg$ is $\fh$-linear and induces the identity on $\fg^0\oplus Z^1(\fg)$, as well as on cohomology. Finally, $\phi^{\geq 2}$ vanishes on $\fh \times (\fg')^{\geq 1}$.
\end{lem}
\begin{proof}
We apply the homotopy transfer formulae  from \cite{Markl-transfer} (stated for $A_\infty$-algebras but easily adapted to the $L_\infty$ setting).  
To do this, for each $i$ we pick a decomposition
$\fg^i = B^i(\fg) \oplus \tilde H^i(\fg) \oplus \mathfrak{q}^i$, with $B^i(\fg)$ the $i$-coboundaries, $\tilde H^i(\fg)$ an $\fh$-linear complement to $B^i(\fg)$ in the $i$-cocycles $Z^i(\fg)$, and $\mathfrak{q}^i$ a $\fh$-linear complement to $Z^i(\fg)$ in $\fg^i$. We then define a homotopy $h: \fg^{>1} \to \fg^{> 0}$ via 
the projection $\mathfrak{g}^i \to B^i(\fg)$ followed by a $\fh$-linear isomorphism $B^i(\fg) \to \mathfrak{q}^{i-1}$, for $i > 1$, setting $h|_{\fg^{\leq 1}} = 0$.

The resulting homotopy is $\fh$-linear and has the property that $t:= \Id-(dh+hd)$ is a projection onto the subcomplex $\fg^0 \oplus Z^1(\fg) \oplus \tilde H^{> 1}(\fg)$, which is an $\fh$-subrepresentation.  Call this subcomplex $\fg'$. We have an $\fh$-linear decomposition $\fg = \fg' \oplus \mathfrak{c}$ as complexes, with $\mathfrak{c} = \imm(dh+hd)$ a contractible subcomplex (and $\fh$-subrepresentation).

Now use $h$ on all of $\mathfrak{g}$, as in
 the proof of Theorem \ref{thm:minimal model} (see the references above). 
 We obtain a new $L_\infty$-structure on $\fg$, which is $L_\infty$-isomorphic to the original one (with linear part the identity), so that all structures vanish on $\mathfrak{c}$ aside from the differential.
  The $L_\infty$-structures on $\fg'$ are linear combinations of expressions such as
$$t[a_1,h[a_2,[h[a_3,a_4],h[a_5,a_6]]]],$$
given by iteratively bracketing and applying $h$, except at the end where $t$ is applied.  

By $\fh$-linearity of $h$, if $x \in \fh$ and $a \in \fg' = \imm t$, then $h[x,a] = [x,ha] = 0$.  Similarly, $t[x,ha] = th[x,a]=0$.  Hence, all contributions to higher brackets $\fh \times \fg^{>1} \to \fg$ vanish.  Similarly, $\phi^{>1}$ vanishes on $\fh$ (since $h[x,a] = 0$). By construction $\phi$ is the identity on $\fg^0 \oplus Z^1(\fg)$ and on cohomology.
\qedhere
\end{proof}
\begin{rem}\label{r: a-infinity-minimal-model}
The lemma has an associative analogue with the same proof: let $\fg$
be a dg associative algebra and $\fh$ is a subalgebra for which every $\fh$-subbimodule of $\fg$ admits an $\fh$-complement (e.g., $\fg$ is augmented over a semisimple algebra $\fh$).
Then we obtain the same result with an $A_\infty$-quasi-isomorphism with higher order parts vanishing on $\fh$, and with higher multiplications on $\fg'$ vanishing on $\fh$.  This applies to the situation at hand, so that we could use an $A_\infty$-quasi-isomorphism in the proof of Theorem \ref{t:raf-rep}. However, it makes no difference for the Maurer--Cartan locus. (Actually, this says that the decomposition in Theorem \ref{t:raf-rep} enhances to a decomposition of \emph{non-commutative} representation schemes, meaning it describes representations with coefficients in non-commutative Artinian rings.)
\end{rem}

\subsection{Proof of main results}\label{ss:fnt-proof}
In the case where $A$ is 2-Calabi--Yau, we can use Theorem \ref{thm: VdB-CalabiYau} (which applies because of part one in Theorem \ref{t:raf-rep}) and the discussion following it, to refine part three of Theorem \ref{t:raf-rep}. Namely, we can identify the formal neighborhood of $[M']$ in $\Rep_{\alpha}(A') /\!/ GL_{\alpha'}$ with a formal neighborhood of the zero representation in a quiver variety.

\begin{cor} \label{c: FN_quiver_variety}
Let $A$ be a 2-Calabi--Yau algebra over $kQ_0$ for a quiver $Q$. Let $\alpha \in \bN^{Q_0}$ and let $M \in \Rep_{\alpha}(A)$, such that $\GL_{\alpha} \cdot M$ is closed in some $\GL_{\alpha}$-stable open affine subset, $U$.  Then a formal neighborhood of $[M]$ in $U/\!/\GL_\alpha$ is isomorphic to the formal neighborhood $\widehat{\mathcal{M}^{\text{add}}_{0,0}(Q',\alpha')}_0$ of the zero representation in a quiver variety. 
\end{cor}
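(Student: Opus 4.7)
The plan is to deduce this from Theorem \ref{t:raf-rep}(3) by identifying the algebra $A'=H^0 E_M(\REnd(M))$ with a completed preprojective algebra, at which point the quotient $\Rep_{\alpha'}(A')/\!/\GL_{\alpha'}$ becomes the formal multiplicative quiver variety at zero. The key input beyond Theorem \ref{t:raf-rep} is that $A$ being 2-Calabi--Yau forces $\REnd(M)$ to be formal in the strong (symmetric Frobenius/cyclic) sense via Theorem \ref{thm: VdB-CalabiYau}, which in turn rigidifies the full $A_\infty$-structure in terms of a quiver with symplectic pairing on $\Ext^1(M,M)$.

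First I would apply Theorem \ref{t:raf-rep}(3) directly. This reduces the problem to showing that $\text{FN}([M'],\Rep_{\alpha'}(A')/\!/\GL_{\alpha'}) \cong \widehat{\mathcal{M}^{\text{add}}_{0,0}(Q',\alpha')}_0$ for a suitable quiver $Q'$ and dimension vector $\alpha'$. Since Theorem \ref{t:raf-rep}(1) gives that $\End(M)$ is semisimple, the discussion following Theorem \ref{thm: VdB-CalabiYau} applies: one can write $\Ext^\bullet(M,M)$, equipped with its degree $-2$ trace coming from the Calabi--Yau structure, as determined by a (finite) quiver $Q'$ with vertex set indexing simple summands of $\End(M)$, dimension vector $\alpha'$ recording their multiplicities in $M$, and arrow set $Q'_1$ chosen so that $\Ext^1(M,M) \cong T^\ast(\bigoplus_{a\in Q'_1}\Hom(V_{t(a)},V_{h(a)}))$ with its canonical symplectic pairing.

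Next, I would invoke formality of $\REnd(M)$, which holds by Theorem \ref{thm: VdB-CalabiYau}. Via the bar construction, the derived Koszul dual $E_M(\REnd(M))$ is then quasi-isomorphic to the derived Koszul dual of the cyclic Frobenius algebra $\Ext^\bullet(M,M)$, which by Van den Bergh's calculus \cite{VdB-CalabiYau} is precisely the completed dg preprojective algebra $\widehat{\Pi^{dg}(Q')}$ of $Q'$ (completed at the augmentation ideal). Taking $H^0$ kills the higher differentials and yields $A' \cong \widehat{\Pi^0(Q')}$, the completion of the ordinary preprojective algebra at the augmentation ideal corresponding to the zero representation. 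Under this identification, the module $M'$ corresponds to the zero representation $0 \in \Rep_{\alpha'}(\Pi^0(Q'))$, since the correspondence sends $M$ to the module whose derived endomorphisms recover $\Ext^\bullet(M,M)$, which for the zero representation is tautologically the Koszul dual algebra itself.

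Finally, since $\Rep_{\alpha'}(\widehat{\Pi^0(Q')})$ is by construction the formal neighborhood of the zero representation inside $\Rep_{\alpha'}(\Pi^0(Q'))$, and the GIT quotient by $\GL_{\alpha'}$ commutes with passing to the formal neighborhood of the (closed, $\GL_{\alpha'}$-fixed) zero orbit, we obtain
\[
\text{FN}([M'],\Rep_{\alpha'}(A')/\!/\GL_{\alpha'}) \;\cong\; \widehat{\Rep_{\alpha'}(\Pi^0(Q'))/\!/\GL_{\alpha'}}_{\,0} \;=\; \widehat{\mathcal{M}^{\text{add}}_{0,0}(Q',\alpha')}_0.
\]
Combined with Theorem \ref{t:raf-rep}(3), this proves the corollary. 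The main delicate point, and the step I expect to require the most care, is the identification $A' \cong \widehat{\Pi^0(Q')}$ as topological algebras (and the compatibility of $M'$ with the zero representation), since this relies on unpacking the Koszul/bar duality between formal cyclic $A_\infty$-algebras and complete dg preprojective algebras implicit in \cite{VdB-CalabiYau}; every subsequent step is then formal.
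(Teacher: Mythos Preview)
Your proposal is correct and follows essentially the same route as the paper: apply Theorem \ref{t:raf-rep}(1) to get $\End(M)$ semisimple, invoke Theorem \ref{thm: VdB-CalabiYau} for formality, then use the bar construction (as in the discussion after Theorem \ref{thm: VdB-CalabiYau}) to identify $E_M(\REnd(M))$ with the completed dg preprojective algebra of $Q'$, and finally apply Theorem \ref{t:raf-rep}(3). The paper presents this as a one-sentence corollary of the preceding discussion, whereas you have spelled out the intermediate identifications (in particular $A'\cong\widehat{\Pi^0(Q')}$ and $M'\leftrightarrow 0$) more explicitly.
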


Pick a stability parameter $\theta$. If $M \in \Rep_{\alpha}(A)$ is $\theta$-semistable, one has the open set $\Rep_{\alpha}(A)^{\theta\text{-ss}}$, which is a union of $GL_{\alpha}$-stable affine open subsets. As $M$ lies in one such affine open subset, $M$ satisfies the hypotheses of Theorem \ref{t:raf-rep} and Corollary \ref{c: FN_quiver_variety}. This implies the following corollary:

\begin{cor}
Let $Q, \alpha, A$ be as in Theorem \ref{t:raf-rep}, let $\theta \in \bZ^{Q_0}$.
Then for every $M \in \Rep_\alpha(A)^{\theta\text{-ss}}$, the conditions of Theorem \ref{t:raf-rep} are satisfied. So, the formal neighborhood of $[M]$ in $\mathcal{M}_\theta(A,\alpha)$
is isomorphic to that of zero in $\Rep_{\alpha'}(A')/\!/\GL_{\alpha'}$, for $A'$ as in the theorem.
\end{cor}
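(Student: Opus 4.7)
The plan is to verify the hypothesis of Theorem \ref{t:raf-rep}, namely that any $\theta$-semistable $M$ admits a $\GL_\alpha$-stable affine open neighborhood in $\Rep_\alpha(A)$, and then to quote Theorem \ref{t:raf-rep} directly.

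For the first step, I would use King's GIT construction of the semistable locus \cite{King}. Given $\theta \in \bZ^{Q_0}$ satisfying $\theta \cdot \alpha = 0$, let $\chi_\theta \colon \GL_\alpha \to \mathbb{G}_m$ be the associated character, sending $(g_i)_{i \in Q_0} \mapsto \prod_i \det(g_i)^{\theta_i}$. By King's definition, a representation is $\theta$-semistable precisely when there exists some $n \geq 1$ and some $\chi_\theta^n$-semi-invariant $f \in \mathcal{O}(\Rep_\alpha(A))^{\GL_\alpha,\chi_\theta^n}$ with $f(M) \neq 0$. The non-vanishing locus $V_f := \{\rho : f(\rho) \neq 0\}$ is the complement of a $\GL_\alpha$-stable hypersurface, hence is an affine open subset of $\Rep_\alpha(A)$ that is stable under $\GL_\alpha$. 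Thus the semistable locus $V$ is covered by such $\GL_\alpha$-stable affine opens, and $M$ lies in one of them, call it $U := V_f$.

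With $U$ in hand, Theorem \ref{t:raf-rep} applies verbatim to the pair $(M, U)$. This yields $\End(M)$ semisimple, the local identification of formal deformation problems, and the isomorphism of formal neighborhoods
\[
\widehat{(U/\!/\GL_\alpha)}_{[M]} \;\cong\; \widehat{(\Rep_{\alpha'}(A')/\!/\GL_{\alpha'})}_{[M']}.
\]

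Finally, I would observe that $U/\!/\GL_\alpha$ sits inside $V/\!/\GL_\alpha$ as an open subscheme; this is built into the GIT construction of $V/\!/\GL_\alpha$, which is glued from the affine pieces $\Spec \mathcal{O}(V_f)^{\GL_\alpha}$ as $f$ ranges over $\chi_\theta^n$-semi-invariants. Since formal neighborhoods only depend on the Zariski germ at $[M]$, the formal neighborhood of $[M]$ in $V/\!/\GL_\alpha$ coincides with that in $U/\!/\GL_\alpha$, which completes the proof.

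There is really no serious obstacle here: the corollary is essentially a packaging of Theorem \ref{t:raf-rep} together with the standard GIT fact, and the argument is already sketched in the paragraph immediately preceding the statement. The only mild point to be careful about is the compatibility of formal neighborhoods under the open embedding $U/\!/\GL_\alpha \hookrightarrow V/\!/\GL_\alpha$, which is immediate from how the GIT quotient of the semistable locus is glued from affine pieces.
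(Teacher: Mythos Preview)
Your proposal is correct and follows exactly the paper's approach: the paper simply notes (in the paragraph preceding the corollary) that the $\theta$-semistable locus is a union of $\GL_\alpha$-stable affine opens, so any $M\in V$ lies in one and Theorem \ref{t:raf-rep} applies. Your write-up just makes explicit the standard GIT reason for this covering and the passage from $U/\!/\GL_\alpha$ to $V/\!/\GL_\alpha$, which the paper leaves implicit.
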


\begin{proof}[Proof of Theorem \ref{t:fnt-general}]
Let $Q, \alpha, A$ be as in Corollary \ref{c: FN_quiver_variety}, let $\theta \in \bZ^{Q_0}$, and $V := \Rep_\alpha(A)^{\theta\text{-ss}}$. 
For every $M \in V$ the conditions of Corollary \ref{c: FN_quiver_variety} are satisfied. So, the formal neighborhood of $[M]$ in $V /\!/\GL_\alpha$ is isomorphic to the formal neighborhood $\widehat{\mathcal{M}^{\text{add}}_{0,0}(Q',\alpha')}_0$ of the zero representation in a quiver variety.
\end{proof}

\begin{proof}[Proof of Theorem \ref{t:fnt-mult}]
If the quiver $Q$ contains a cycle, then Theorem \ref{t:fnt-mult} follows immediately from 
Theorem \ref{t:fnt-general} since $\Lambda^{q}(Q)$ is 2-Calabi--Yau, by Theorem \ref{thm: 2CY}.  

If $Q$ does not contain a cycle, then build $\widetilde{Q}$ from $Q$ by adding a new vertex $i_0$, an arrow from $i_0$ to itself, and an arrow from $i_0$ to any vertex of $Q$. If $\alpha \in \bN^{Q_{0}}$ is a dimension vector then define $\widetilde{\alpha} \in \bN^{\widetilde{Q}_{0}}$ such that $\widetilde{\alpha}|_{Q_0} = \alpha$ and $\widetilde{\alpha}_{i_0} = 0$. Note that $\Rep_\alpha(\Lambda^q(Q)) = \Rep_{\widetilde \alpha}(\Lambda^{\widetilde q}(\widetilde{Q}))$ where $\widetilde{q}$ is similarly such that $\widetilde{q}|_{Q_0} = q$ and $\widetilde{q}_{i_0} = 1$.

Under this identification, the $\GL_{\widetilde{\alpha}} = \GL_\alpha \times \GL_1$ action factors through the projection to $\GL_\alpha$, which identifies the actions on the two varieties. For every $\theta \in \bZ^{Q_0}$, extending by zero to $\widetilde{\theta}$, one also identifies $\theta$-semistable representations of $\Lambda^q(Q)$ of dimension $\alpha$ with $\widetilde{\theta}$-semistable representations of $\Lambda^{\widetilde q}(\widetilde{Q})$ of dimension $\widetilde{\alpha}$. Therefore, $\mathcal{M}_{q, \theta}(Q, \alpha) = \mathcal{M}_{\widetilde{q}, \widetilde{\theta}}(\widetilde{Q}, \widetilde{\alpha})$, i.e. the semistable moduli spaces in question are identical. So the result follows in general from the specific case where $Q$ contains a cycle.
\end{proof}

\begin{proof}[Proof of Corollary \ref{c:symp-sing}]
By \cite{Beauss}, a (normal) symplectic singularity is rational Gorenstein.  The latter is a formal local property. By \cite[Theorem 1.2]{Bellamy-Schedler-quiver}, ordinary quiver varieties are symplectic singularities. Thus, the moduli spaces in question
have rational Gorenstein singularities, and in particular are normal.

Next, thanks to Namikawa \cite[Theorem 4]{Nam-ext2form}, the property of being a (normal) symplectic singularity is a equivalent to having rational Gorenstein singularities and having a symplectic form on the smooth locus. It remains to check the last property.
(Note that this property is certainly known for many multiplicative quiver varieties: for instance,
Yamakawa \cite[Theorem 3.4]{Yamakawa} showed that the stable locus is smooth symplectic, and this is often the entire smooth locus.  For another example, character varieties of Riemann surfaces of genus $\geq 1$ (and many of genus zero) have symplectic smooth locus by \cite[\S 1.2]{Schedler}.)

To see that the smooth locus is symplectic in general, first we can assume that we are in the situation of a 2-Calabi--Yau algebra $A$ (in the case of multiplicative quiver varieties, the proof of Theorem \ref{t:fnt-mult} in Section \ref{ss:fnt-proof} identifies the moduli space with  one for a 2-Calabi--Yau algebra obtained by enlarging the quiver).
At a smooth point of the moduli space, Theorem \ref{t:fnt-general} endows the formal neighborhood of the point with a symplectic form,  given by
the canonical symplectic pairing $\Ext^1(M,M) \times \Ext^1(M,M) \to \Ext^2(M,M) \overset{\text{tr}}{\cong} k$ coming from the Calabi--Yau structure. 
This is functorial in the point of the moduli space: the Calabi--Yau structure furnishes a fixed $A$-bimodule isomorphism $A \cong \Ext^2(A, A^e)$. This induces a functorial isomorphism  
$$\Ext^2(M,M) \cong H^2(\RHom(A, A^e) \otimes^{\mathbb{L}}_{A^e} \End_k(M)) \to A \otimes_{A^e} \End_k(M) = \frac{\End_k(M)}{[A,\End_k(M)]}.
$$
Composing this with the trace map we obtain the functorial trace pairing.
\end{proof}
\begin{rem}Alternatively, one should be able to construct the symplectic structure on the smooth locus because the latter is
an open substack of 
the symplectic derived moduli stack of representations of $\Lambda^q(Q)$, shown to be symplectic in \cite{Brav}. 
\end{rem}

\section{The multiplicative preprojective algebra of the cycle is an NCCR} \label{s:NCCR}
The purpose of this section is to prove Conjecture \ref{conj: NCCR} in the case where $Q$ is a cycle. We begin with the necessary definitions.  Throughout this section, $Q$ denotes an extended Dynkin quiver (not necessarily a cycle).

According to the conjecture, the center of the multiplicative preprojective algebra is the ring of functions on the multiplicative quiver variety $\mathcal{M}_{1,0}(Q,\delta)$.  Here $\delta$ is the primitive positive imaginary root. In terms of the McKay correspondence, $Q$ is the McKay graph of a finite subgroup $\Gamma < \mathrm{SL}_2(\bC)$, which means that the vertices are labelled by the irreducible representations of $\Gamma$. In these terms, $\delta_v$ is the dimension of the irreducible representation of $\Gamma_Q$ attached to the vertex $v$. In particular, for the cycle with $n$ vertices, $\Gamma=\bZ/n\bZ$, and $\delta=(1,\ldots,1)$ is the all ones vector.


We next recall the notion of an NCCR.
Van den Bergh originally defined these in \cite[Appendix A]{VdB_Flop} to give an alternate proof of Bridgeland's theorem that a flop of three-dimensional smooth varieties induces an equivalence of their bounded derived categories. Van den Bergh later simplified and generalized the definition to the following:

 \begin{defn} \cite[Definition 4.1 and Lemma 4.2]{VdB_NCCR} 
 Let $R$ be an Gorenstein commutative integral domain. 
 An algebra $A$ is an \emph{NCCR} over $R$ if: 
 \begin{itemize}
 \item[(1)] $A$ is (maximal) Cohen--Macaulay,  
 \item[(2)] $A$ has finite global dimension, and
 \item[(3)] $A \cong \End_{R}(M)$ for some  reflexive\footnote{Recall an $R$-module $M$ is \emph{reflexive} if the natural map $M \rightarrow \Hom_{R}( \Hom_{R}(M, R), R)$ sending $m \in M$ to evaluation on $m$ (i.e. $m \mapsto [\varphi \in \Hom_{R}(M, R) \mapsto \varphi(m) \in R$]) is an isomorphism.}  module $M$.
 \end{itemize}
 \end{defn}
Note that, if $A$ is derived equivalent to a \emph{commutative} crepant resolution of $\text{Spec}(R)$, then it will have to satisfy these conditions by \cite[Corollary 4.15]{IyamaWemyss}. (However, in general, $R$ could admit a commutative crepant resolution but not a non-commutative one, and vice-versa). 

In our case, with $\dim R = 2$, it is convenient to observe that we don't have to check the Cohen--Macaulay condition:

\begin{lem}
Let $R$ be a normal Noetherian domain of dimension 2 over $k$. Let $M$ be a finitely-generated, reflexive $R$-module. Then $A := \End_{R}(M)$ is Cohen--Macaulay.
\end{lem}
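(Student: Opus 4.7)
The plan is to reduce the Cohen--Macaulay property to reflexivity, then extract reflexivity from a standard free-presentation argument together with the depth lemma.

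First, I would observe that $R$ itself is Cohen--Macaulay: since $R$ is normal, it satisfies Serre's condition $S_{2}$ by Serre's normality criterion, and for a Noetherian ring of Krull dimension $2$, the condition $S_{2}$ is equivalent to being Cohen--Macaulay. Over such an $R$, I would invoke the standard equivalence (see e.g.\ Bourbaki, \emph{Alg\`ebre commutative}, Ch.\ VII, or \cite{VdB_NCCR} and references therein): for a finitely-generated, torsion-free $R$-module $N$, the following are equivalent:
\begin{itemize}
\item[(a)] $N$ is reflexive;
\item[(b)] $N$ satisfies Serre's condition $S_{2}$;
\item[(c)] $N$ is maximal Cohen--Macaulay.
\end{itemize}
Thus, since $A=\End_{R}(M)$ is automatically torsion-free (if $rf=0$ with $r\neq 0$, then $rf(m)=0$ for all $m\in M$, forcing $f(m)=0$ as $M$ is torsion-free), it suffices to show that $A$ is reflexive as an $R$-module.

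Next, I would take any finite free presentation $R^{m}\to R^{n}\to M\to 0$ and apply $\Hom_{R}(-,M)$ to obtain a left-exact sequence
\[
0\longrightarrow A\longrightarrow M^{n}\longrightarrow M^{m}.
\]
Setting $I:=\operatorname{im}(M^{n}\to M^{m})$, there is a short exact sequence $0\to A\to M^{n}\to I\to 0$ in which $M^{n}$ is reflexive (since $M$ is) and $I\subseteq M^{m}$ is a submodule of a reflexive, hence torsion-free, module.

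To finish, I would verify $A$ is $S_{2}$. At any height-$1$ prime $\mathfrak{p}$ the ring $R_{\mathfrak{p}}$ is a DVR, so $A_{\mathfrak{p}}$ (being torsion-free) has depth at least $1$. At a height-$2$ maximal ideal $\mathfrak{m}$, apply the depth lemma to the above short exact sequence: $\operatorname{depth} M^{n}_{\mathfrak{m}}=2$ because $M$ is MCM, and $\operatorname{depth} I_{\mathfrak{m}}\geq 1$ because $I$ is torsion-free; hence
\[
\operatorname{depth} A_{\mathfrak{m}}\;\geq\;\min\bigl(\operatorname{depth} M^{n}_{\mathfrak{m}},\;\operatorname{depth} I_{\mathfrak{m}}+1\bigr)\;\geq\;2.
\]
Combined with torsion-freeness, this gives $A\in S_{2}$, so by the equivalence above $A$ is reflexive and MCM, proving that $A$ is Cohen--Macaulay.

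There is no serious obstacle in this argument; its content is the assembly of well-established facts (Serre's criterion, the reflexive/$S_{2}$/MCM equivalence in dimension two, and the depth lemma). The only point requiring mild care is ensuring that $A$ is torsion-free so that the equivalence between reflexivity and $S_{2}$ can be invoked, which follows immediately from the corresponding property of $M$.
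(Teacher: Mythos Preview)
Your proof is correct and follows essentially the same route as the paper: show that $A$ is reflexive and then use the equivalence between reflexive and maximal Cohen--Macaulay over a normal Noetherian domain of dimension two. The paper simply cites these two steps (Stacks \cite[Lemma 15.23.8]{Stacks} for reflexivity of $\End_R(M)$, and \cite[Corollary 3.9]{Burban-Drozd-survey} for reflexive $\Rightarrow$ Cohen--Macaulay), whereas you unpack both citations explicitly via the free-presentation/depth-lemma argument.
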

\begin{proof}
Since $R$ is Noetherian and $M$ is finitely-generated and reflexive, \cite[Lemma 15.23.8]{Stacks} 
implies that $A$ is reflexive. Since $R$ is 2-dimensional and normal \cite[Corollary 3.9]{Burban-Drozd-survey} implies that $A$ is Cohen--Macaulay.
\end{proof}
\begin{rem}
Note that, in higher dimensions, while the Cohen--Macaulay property for $A$ is not automatic, it nevertheless can be deduced  from the Calabi--Yau property thanks to \cite[Theorem 3.2 (3)]{IR}. This gives an alternative way to handle condition (2) in our situation.
\end{rem}

\subsection{Shaw's results on the center}\label{s:shaw}


 
While the center of the multiplicative preprojective algebra is in general unknown, in Shaw's thesis, \cite{Shaw_thesis}, he proves the following. Let $v$ be an extending vertex.
 \begin{thm} [Theorem 4.1.1 \cite{Shaw_thesis}] \label{thm: shaw}
 $e_{v}\Lambda^{1}(Q)e_{v} \cong k[X, Y, Z]/(f(X, Y, Z))$ where $f$ has isolated singularity at the origin. Explicitly, 
 \[
 f(X, Y, Z) = 
 \begin{cases}
 Z^{n+1} + XY + XYZ & \text{if } Q = \widetilde{A_{n}}, \ n\geq 1 \\
 Z^2 - p_{n-4}(X)XZ + p_{n-5}(X) X^2Y - XY^2 - XYZ & \text{if } Q = \widetilde{D_{n}}, \ n \geq 4 \\
 Z^2 +X^2Z + Y^3 - XYZ & \text{if } Q = \widetilde{E_{6}} \\
 Z^2 + Y^3 + X^3Y - XYZ & \text{if } Q = \widetilde{E_{7}} \\
 Z^2-Y^3-X^5+XYZ & \text{if } Q = \widetilde{E_{8}}, 
 \end{cases}
\]
where $p_{-1}(X) := -1, p_{0}(X) := 0$, and $p_{i+1}(X) := X(p_{i-1}(X) + p_{i}(X))$ for $i \geq 1$.
 \end{thm}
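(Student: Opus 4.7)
The plan is to proceed by case-by-case analysis for each extended Dynkin type. For each type, I would identify three natural elements $X, Y, Z \in e_v \Lambda^1(Q) e_v$ coming from cycles based at the extending vertex, use the multiplicative preprojective relation together with the commutation identities $g_a a = a g_{a^*}$ and $r_a l_a = q g_a^{-\epsilon(a)}$ (from \eqref{identity: g} and \eqref{identity: rl}) to derive one explicit polynomial relation $f(X,Y,Z) = 0$, and finally show that this relation generates all relations by comparing Hilbert series via a degeneration to the classical Kleinian case.

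In more detail, for $Q = \widetilde{A_n}$, labeling the cyclic vertices $0, 1, \dots, n$ with $v=0$ and arrows $a_i : i \to i+1$, I would take
\[
X := a_0 a_1 \cdots a_n, \qquad Y := a_n^* a_{n-1}^* \cdots a_0^*, \qquad Z := a_0 a_0^* \ (= g_{a_0}-1).
\]
Then iteratively applying \eqref{identity: g} moves each $g$-factor past the $a_i$'s; combined with the relation $\prod_a g_a^{\epsilon(a)} = 1$, one collapses $XY$ to a polynomial expression in $Z$, namely $XY(1+Z) = -Z^{n+1}$, giving the claimed $f$. The analogous strategy works in types $\widetilde{D_n}$ and $\widetilde{E_n}$: pick one generator $X$ from a loop running around the unique cycle in $Q$, and two generators $Y, Z$ corresponding to paths that traverse the long branch(es) of the star-shaped Dynkin diagram (as extended Dynkin quivers outside type $A$ are unions of a cycle and one or more tails emanating from it, one can reduce by Remark~\ref{rem: independence} to a nice orientation). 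The recursive polynomials $p_i(X)$ appearing in the $\widetilde{D_n}$ formula reflect the iterative use of the relations in the $D$-type tail, and should fall out naturally from a recursion on $n$.

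To prove that $f$ generates the entire ideal of relations, I would filter $e_v \Lambda^1(Q) e_v$ and show that the associated graded algebra agrees with $e_v \Pi^0(Q) e_v$, which by the classical McKay correspondence is $k[X,Y,Z]/(\bar f)$ where $\bar f$ is the corresponding du Val singularity. One sees directly from the formulas that the lowest-degree part of $f$ is exactly $\bar f$ (e.g.\ $Z^{n+1} + XY$ for $\widetilde{A_n}$, $Z^2 - XY^2 + p_{n-5}(X)X^2 Y$ for $\widetilde{D_n}$, etc.). So it suffices to prove that the associated graded map $k[X,Y,Z]/(\bar f) \twoheadrightarrow \gr\bigl(e_v \Lambda^1(Q)e_v\bigr)$ is an isomorphism; this can be done by constructing, via the strong free product property (Theorem~\ref{thm:sfpp}) and the explicit basis of $\Lambda^q(Q)$ produced in Section~\ref{s: free-product}, an explicit basis of $e_v \Lambda^1(Q) e_v$ parametrized by monomials in $X, Y, Z$ modulo the ideal generated by $\bar f$.

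The main obstacle will be the combinatorial bookkeeping in types $\widetilde{D_n}$ and $\widetilde{E_n}$: identifying the correct generators, propagating $g_a$-factors through a branched path, and proving the inductive polynomial identities that yield the $p_i(X)$'s. A secondary difficulty is verifying the isolated-singularity claim for $f$, which reduces to checking that the partial derivatives of $f$ cut out only the origin; this is a finite Jacobian computation in each case, easy in types $\widetilde{A}_n$ and $\widetilde{E_8}$ and more delicate, though still elementary, in the remaining types. Conceptually, one could also try to avoid most of the case analysis by working on the multiplicative quiver variety $\mathcal{M}_{1,0}(Q,\delta)$ directly and invoking local structure results for 2-Calabi--Yau algebras (as in Section~\ref{s:quiver_varieties}), but making this route produce an explicit equation for $f$ appears to require just as much calculation as the direct approach.
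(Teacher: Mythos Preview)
The paper does not prove this theorem; it is quoted from Shaw's thesis and used as a black box (see Section~\ref{s:shaw}). So there is no proof in the paper to compare your proposal against.

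That said, your proposal contains a genuine error that would derail the $\widetilde{D}$ and $\widetilde{E}$ cases. You assert that ``extended Dynkin quivers outside type $A$ are unions of a cycle and one or more tails emanating from it.'' This is false: the underlying graphs of $\widetilde{D_n}$ and $\widetilde{E_6}, \widetilde{E_7}, \widetilde{E_8}$ are all \emph{trees}; only $\widetilde{A_n}$ contains a cycle. Consequently your proposed generator ``$X$ from a loop running around the unique cycle in $Q$'' makes no sense in these types, and more seriously, you cannot invoke Theorem~\ref{thm:sfpp} or the bases of Section~\ref{s: free-product}, since those results are established only for quivers containing an unoriented cycle. The degeneration step is equally problematic: asserting that $\gr\bigl(e_v \Lambda^1(Q) e_v\bigr) \cong e_v \Pi^0(Q) e_v$ for extended Dynkin $Q$ of type $D$ or $E$ is essentially a form of Conjecture~\ref{conj: sfpp}, which is open for precisely these quivers.

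For $\widetilde{A_n}$ your outline is reasonable, and your generators (the two full cycles and $Z = a_0 a_0^*$) agree, up to orientation conventions, with those recorded in the proof of Proposition~\ref{prop: NCCR}. Even here, though, the Hilbert-series comparison hides the nontrivial fact that the path-length filtration on $\Lambda^1(\widetilde{A_n})$ has associated graded exactly $\Pi^0(\widetilde{A_n})$; the basis of Proposition~\ref{prop: basis for An} would let you verify this, but you should make that dependence explicit rather than treat the degeneration as obvious.
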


 \begin{rem}\label{r:wemyss}
 Shaw expected that the singularities at the origin have the du Val type corresponding to the quiver.
 Over a field of characteristic zero,
  Michael Wemyss checked this in $E$ types via Magma. It is also clear that in $A$ types, the singularity is du Val of the same type as the quiver, by the rational substitution $y \mapsto y/(1+z)$. Presumably it can be checked that in type $D$ (over  characteristic not equal to two) the singularity also is the corresponding du Val one. 
  
  Note that having du Val singularities is 
 equivalent to the statement that the minimal commutative resolution is symplectic,  i.e., 2-Calabi--Yau. Thanks to \cite{StaffordVdB}, it is also true that if a Gorenstein surface admits an NCCR, then it has du Val singularities. This is another reason to believe Shaw's expectation.
 \end{rem}
\begin{rem} Suppose as expected that the singularities are du Val. Then, as in \cite{KVmck}, one may construct an NCCR from the minimal resolution. It seems an interesting question to show that this is Morita equivalent to  $\Lambda^1(Q)$. 
 \end{rem}
 
 This motivates the final statement in Conjecture \ref{conj: NCCR}, that the Satake map,  $Z(\Lambda^1(Q)) \to e_v \Lambda^1(Q) e_v$, given by $z \mapsto e_v z$, is an isomorphism. With this in place, the above translates into an explicit description of the center.

\subsection{Proof of Conjecture \ref{conj: NCCR} for a cycle} \label{s:NCCR_proof}
Fix $n \geq 1$. In the remainder of this section we prove Conjecture \ref{conj: NCCR} for $Q=\widetilde{A_n}$.  As a consequence, using Shaw's result, we conclude:
\begin{cor}\label{c:center}
The center of $\Lambda^1(\widetilde{A_n})$ is isomorphic to $k[X,Y,Z]/(Z^{n+1}+XY+XYZ)$.
\end{cor}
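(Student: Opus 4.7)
The corollary follows by combining the Satake isomorphism part of Conjecture \ref{conj: NCCR}, applied to $Q = \widetilde{A_n}$, with Shaw's Theorem \ref{thm: shaw}: once one knows that the Satake map
\[
\Phi \colon Z(\Lambda^1(\widetilde{A_n})) \longrightarrow e_v \Lambda^1(\widetilde{A_n}) e_v, \qquad z \mapsto e_v z,
\]
is an isomorphism of commutative algebras, Shaw's presentation transfers directly to $Z(\Lambda^1(\widetilde{A_n}))$. Thus my plan reduces to proving that $\Phi$ is an isomorphism.

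\emph{Injectivity of $\Phi$} is immediate from the prime property established in Theorem \ref{thm: 2CY}: if $z \in Z(\Lambda^1(\widetilde{A_n}))$ satisfies $e_v z = 0$, then $z e_v = e_v z = 0$, so $z \cdot \Lambda^1(\widetilde{A_n}) \cdot e_v = \Lambda^1(\widetilde{A_n}) \cdot z e_v = 0$. Since $e_v \neq 0$, primeness forces $z = 0$. Note also that $\Phi$ automatically lands in $e_v \Lambda^1(\widetilde{A_n}) e_v$, which is commutative by Shaw, so the problem is genuinely one of commutative algebras.

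\emph{Surjectivity of $\Phi$} is the heart of the matter, and my plan is to exploit the cyclic symmetry of $\widetilde{A_n}$ explicitly. The rotation $i \mapsto i+1 \pmod{n+1}$ of vertex labels, together with the corresponding shift of arrow labels, induces an automorphism $\sigma$ of $L_{\widetilde{A_n}}$ preserving the relation $\rho-1$ up to the orientation and arrow-ordering conventions allowed by Remark \ref{rem: independence}, and therefore descends to an algebra automorphism of $\Lambda^1(\widetilde{A_n})$ of order $n+1$. For any $x \in e_v \Lambda^1(\widetilde{A_n}) e_v$, the symmetrization $\widetilde{x} := \sum_{i=0}^{n} \sigma^i(x)$ lies in $\bigoplus_i e_i \Lambda^1(\widetilde{A_n}) e_i$ and satisfies $\Phi(\widetilde{x}) = x$. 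The real step is to choose $x$ so that $\widetilde{x}$ is actually central. Natural candidate lifts of Shaw's generators are: the forward cycle product $a_0 a_1 \cdots a_n$ for $X$, the reverse cycle product $a_n^* a_{n-1}^* \cdots a_0^*$ for $Y$, and a suitable polynomial expression in $g_{a_0} = 1 + a_0 a_0^*$ for $Z$. Commutation of $\widetilde{X}$ with any $a_j$ should reduce, via cyclic telescoping of consecutive cycle products, to matching the two summands indexed by $j$ and $j+1$; the checks for $\widetilde{Y}$ and for the opposite arrows use the identities in \eqref{identity: g} in an analogous way, with the explicit basis from Section \ref{s: free-product} used to carry out the reductions unambiguously.

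The \emph{main obstacle} will be two-fold: first, pinning down the correct lift $\widetilde{Z}$ so that both centrality \emph{and} the expected relation $\widetilde{Z}^{n+1} + \widetilde{X}\widetilde{Y} + \widetilde{X}\widetilde{Y}\widetilde{Z} = 0$ hold at the level of the center; and second, verifying that no further central generators are needed. The polynomial relation itself provides a useful consistency check, since by injectivity of $\Phi$ it is forced on $\widetilde{X}, \widetilde{Y}, \widetilde{Z}$ as soon as it holds for their images. Once the three symmetrizations are constructed and shown to be central, surjectivity of $\Phi$ is automatic from Shaw's Theorem \ref{thm: shaw} (since $X, Y, Z$ generate $e_v \Lambda^1(\widetilde{A_n}) e_v$ as a $k$-algebra), and the corollary follows.
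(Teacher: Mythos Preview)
Your overall strategy—reduce to the Satake isomorphism and then invoke Shaw's Theorem \ref{thm: shaw}—is exactly the paper's strategy; the difference lies in how the Satake map is shown to be an isomorphism. Your injectivity argument via primeness is clean and correct. For surjectivity, however, the paper does \emph{not} construct central preimages by hand. Instead it first proves the NCCR statement (Proposition \ref{prop: NCCR}), exhibiting an explicit isomorphism $\Lambda^1(\widetilde{A_n}) \cong \End_{e_0\Lambda e_0}(e_0\Lambda)$ by identifying $e_0\Lambda e_i$ with the ideal $(Z^i,Y)$, and then applies the general Lemma \ref{lem: satake}: once $A^{\op}\to \End_{eAe}(eA)$ is an isomorphism and $eAe$ is commutative, the Satake map is automatically bijective. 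Both directions come for free, and the NCCR is obtained as a byproduct rather than an extra piece of work.

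Your alternative for surjectivity—symmetrize Shaw's $X,Y,Z$ under the rotation automorphism and check centrality—is a genuinely different and more elementary route, and it should succeed. With the cyclically oriented conventions of Section \ref{ss:cycles} and $q=1$, the relation at every vertex is literally $a_i a_i^* = a_{i-1}^* a_{i-1}$, so the rotation is an honest automorphism and the commutator checks for $\sum_i a_i a_{i+1}\cdots a_{i-1}$, $\sum_i a_{i-1}^*\cdots a_i^*$, and $\sum_i a_i a_i^*$ become uniform telescoping identities (no ``special'' vertex). Once these three elements are central and hit $X,Y,Z$ under $z\mapsto e_v z$, surjectivity follows from Shaw's generation statement. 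The only bookkeeping you gloss over is matching your cyclic orientation to the one Shaw uses in Theorem \ref{thm: shaw}; this is harmless by Remark \ref{rem: independence}, but you should say so explicitly. Compared with the paper, your route avoids the $\End$-computation but forfeits the NCCR conclusion; the paper's route is more structural and yields strictly more.
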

The steps of the proof of Conjecture \ref{conj: NCCR} for $\widetilde{A_n}$ are as follows:
\begin{enumerate}
    \item First we show that $\Lambda^1(\widetilde{A_n})$ is isomorphic to an NCCR over $e_{0}\Lambda^{1}(\widetilde{A_{n}}) e_{0}$;
    \item Then we use the preceding result to establish that the Satake map $Z(\Lambda^{1}(\widetilde{A_{n}})) \rightarrow e_{0}\Lambda^{1}(\widetilde{A_{n}}) e_{0}$, is an isomorphism;
    \item To complete the proof we consider the  canonical map  $Z(\Lambda^1(\widetilde{A_n})) \to k[\mathcal{M}_{0,1}(\widetilde{A_n},\delta)]$, given by associating to a central element and a simple representation the scalar by which the element acts in the representation. We show that this is an isomorphism.
    \end{enumerate}
We carry out these steps in the next subsections.

In the first step, we will make use of
the prime property for $\Lambda^{1}(\widetilde{A_{n}})$. We state the prime property now, but defer the proof until Section \ref{s: free-product}, as our proof uses an explicit basis produced in Proposition \ref{prop: basis for An}. 

\begin{rem} \label{rem:non-circular}
Note that there is no circular logic in the paper, as Section \ref{s: free-product} does not rely on any results after Section \ref{s:MPA}, and hence could instead fit logically between Sections \ref{section: definition of sffp} and \ref{ss:bimodule-resolution},
whereby every result would be proven in order. We decided that, due to the technical nature of   Section \ref{s: free-product}, whose methods are not used in the preceding material, it would be better to use its results as a black box in Sections \ref{ss:bimodule-resolution}--\ref{s:NCCR}. 
\end{rem}

 \begin{defn}
 Let $R$ be a ring. 
 We say $R$ is \emph{prime} if $r R r'=0$ implies $r =0$ or $r'=0$, for all $r, r' \in R$. 
 \end{defn}
 
For a commutative ring, this recovers the usual notion of an integral domain, i.e., that the zero ideal is a prime ideal. 

\begin{exam} \label{exam: prime}
For a non-example, take $B = \oplus_{n \in \bN} B_{n}$ to be a finite-dimensional $\bN$-graded algebra not concentrated in degree zero. Then there exists $N \in \bN$ such that $B_{m} =0$ for all $m>N$ but $B_{N} \neq 0$. Pick $b \in B_{N}$ nonzero and notice that $bBb \in \oplus_{m \geq 2N} B_{m} = \{0 \}$ since $2N>N$. Hence $B$ is not prime. 

In particular for $Q$ Dynkin and $k=\bC$, $\Lambda^{1}(Q) \cong \Pi^{0}(Q)$ is a finite-dimensional $\bN$-graded algebra and therefore not prime. However, for $Q = A_{2}$ and $q= (1/2, 2) \neq (1, 1)$, then 
$\Lambda^{q}(A_{2}) \cong \Pi^{(-1, 1)}(A_{2}) \cong \Mat_{2 \times 2}(k)$ is prime. 
\end{exam}

\begin{prop} [Proposition \autoref{prop: prime}] 
\label{prop:prime_earlier}
$\Lambda^{q}(\widetilde{A_n})$ is prime for all $n \geq 0$ and all $q \in (k^{\times})^{n+1}$.
\end{prop}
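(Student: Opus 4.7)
My plan is to deduce primality of $\Lambda := \Lambda^q(\widetilde{A_n})$ from the integral domain property of the commutative corner ring $e_0 \Lambda e_0$ via a Morita-equivalence argument for a full idempotent. The explicit basis of $\Lambda$ constructed in Proposition \ref{prop: basis for An} provides the main computational tool, and the reference to Shaw's Theorem \ref{thm: shaw} handles the commutative side.

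First I would use the basis to show that $e_0 \Lambda e_0$ is a (commutative) integral domain. When $q=1$, Theorem \ref{thm: shaw} gives $e_0 \Lambda e_0 \cong k[X,Y,Z]/(Z^{n+1} + XY(1+Z))$, and the defining polynomial is irreducible in $k[X,Y,Z]$: any factorisation must reduce modulo $Z$ to a factorisation of $XY$, and a direct computation shows no such lift of $\{X,Y\}$ is compatible with the full polynomial. For general $q \in (k^\times)^{n+1}$, the basis expansion of elements of $e_0 \Lambda^q e_0$ yields an analogous presentation $k[X,Y,Z]/(f_q)$ where $f_q$ is a continuous deformation of the $q=1$ polynomial, and irreducibility is preserved.

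Next I would show that $e_0$ is a full idempotent, i.e.\ $\Lambda e_0 \Lambda = \Lambda$, by exhibiting each $e_i$ in $\Lambda e_0 \Lambda$. Using the cyclic structure of $\widetilde{A_n}$, consider the paths $p_i := a_0 a_1 \cdots a_{i-1} \in e_i \Lambda e_0$ and $p_i^* := a_{i-1}^* \cdots a_1^* a_0^* \in e_0 \Lambda e_i$. Using the invertibility of each $g_a$ in $L_Q$ together with the identities (\ref{identity: g}) and (\ref{identity: rl}), the composition $p_i^* p_i \in e_i \Lambda e_i$ can be rewritten as an invertible element of $e_i \Lambda e_i$ (more precisely, it differs from the idempotent $e_i$ by a product of elements of the form $g_{a_j^*}^{\pm 1}$). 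Hence
\[
e_i = (p_i^* p_i)(p_i^* p_i)^{-1} = p_i^* \cdot e_0 \cdot \bigl(p_i (p_i^* p_i)^{-1}\bigr) \in \Lambda e_0 \Lambda,
\]
establishing fullness.

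Finally, by the standard Morita equivalence for full idempotents, $\Lambda\text{-mod}$ and $e_0 \Lambda e_0\text{-mod}$ are equivalent, so $\Lambda$ is prime if and only if $e_0 \Lambda e_0$ is prime; a commutative ring is prime if and only if it is a domain, so Step~1 concludes. The main obstacle will be the invertibility of $p_i^* p_i$ in $e_i \Lambda e_i$ for general $q$, which requires careful bookkeeping of the localised generators $g_a^{\pm 1}$ and the $q$-twisted relations; the explicit normal forms from Proposition \ref{prop: basis for An} should make this tractable. As a fallback, one can argue primality directly from the basis: given nonzero $a \in e_i \Lambda e_k$ and $b \in e_l \Lambda e_j$, pick a suitable short path $x \in e_k \Lambda e_l$ and show that the leading term of $axb$ with respect to the natural filtration underlying the strong free product property is nonzero, hence $axb \neq 0$.
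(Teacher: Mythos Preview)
Your main approach via Morita equivalence has a genuine gap: the idempotent $e_0$ is \emph{not} full in $\Lambda^q(\widetilde{A_n})$ when $q=1$. Indeed, for any vertex $i\neq 0$ the one-dimensional module $S_i$ (supported at $i$ with all arrows acting by zero) satisfies the multiplicative preprojective relations, since each $g_a$ acts as $1$ and the relation reads $1=q_i=1$. As $e_0 S_i=0$, we get $\Lambda e_0\Lambda\subsetneq\Lambda$, so the Morita transfer ``$e_0\Lambda e_0$ prime $\Rightarrow$ $\Lambda$ prime'' is unavailable precisely in the case you need it. The specific invertibility claim also fails at the level of identities: one has $a a^{*}=g_a-1$, not $g_a$, so $p_i^{*}p_i$ is a product of elements of the form $g_{a_j}-1$, which are not units in general. (For generic $q$ one can sometimes argue fullness, but the proposition is for \emph{all} $q\in(k^\times)^{n+1}$, and $q=1$ is exactly the case used later in the NCCR section.) A secondary issue: Shaw's Theorem~\ref{thm: shaw} is stated only for $q=1$, and your sketch of a deformation $f_q$ for general $q$ is not a proof.

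Your fallback is the right idea and is essentially what the paper does. The paper's argument (Proposition~\ref{prop: prime}) is a direct basis computation: given nonzero $f,g$, one first multiplies by suitable powers of $a$ and $a^{*}$ to reduce to elements in $e_0\Lambda e_0$, then uses the normal form from Proposition~\ref{prop: basis for An} to write such elements as $e_0\,f_1(x,x^{-1})\,f_2(a^{n+1})$ with $f_1\neq 0$ and $f_2$ having nonzero constant term, and finally observes that the product of two such expressions has a nonzero leading term. If you pursue this, the key point to make precise is why multiplication by $a$ (and by $a^{*}$) is injective on the relevant pieces---this follows from the explicit action of these elements on the basis $\{x^m a^\ell,\, x^m (a^{*})^\ell\}$---and then the ``leading term'' argument in $e_0\Lambda e_0$ goes through.
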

 
\subsubsection{The NCCR property}
We first show that the multiplicative preprojective algebra is an NCCR (Step 1).
\begin{prop} \label{prop: NCCR}
$\Lambda^{1}(\widetilde{A_{n}})$ is isomorphic to an NCCR over $e_{0}\Lambda^1(\widetilde{A_{n}}) e_0$.
\end{prop}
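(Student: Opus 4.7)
The plan is to verify the three defining properties of an NCCR for $A := \Lambda^{1}(\widetilde{A_n})$ over $R := e_0 \Lambda^{1}(\widetilde{A_n}) e_0$: (a) $R$ is a Gorenstein commutative integral domain; (b) $A$ has finite global dimension; (c) $A \cong \End_R(M)$ as $R$-algebras for some finitely generated reflexive $R$-module $M$. The maximal Cohen--Macaulay condition is then automatic from the lemma immediately preceding Remark \ref{rem:non-circular}.

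For (a), I will invoke Shaw's theorem (Theorem \ref{thm: shaw}) to identify $R$ with $k[X,Y,Z]/(f)$ for $f := Z^{n+1}+XY+XYZ$. Viewing $f$ as linear in $Y$ with coefficient $X(1+Z)$ coprime to the constant term $Z^{n+1}$ in $k[X,Z]$ shows $f$ is irreducible, so $R$ is a 2-dimensional integral domain. As a hypersurface, $R$ is Gorenstein; together with the isolated-singularity statement in Shaw's theorem, Serre's $R_1 + S_2$ criterion gives normality. For (b), Corollary \ref{cor: global dim 2} applies directly to yield $\mathrm{gl.dim}(A) \leq 2$, as $\widetilde{A_n}$ is connected and contains a cycle.

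The main work is (c). I will take $M := A e_0$ as a right $R$-module and study the natural algebra homomorphism $\phi : A \to \End_R(M)^{\op}$ given by left multiplication. Injectivity is immediate from the prime property (Proposition \ref{prop:prime_earlier}): the kernel $\{a \in A : aAe_0 = 0\}$ vanishes since $e_0 \neq 0$. For surjectivity, the key input is the explicit basis of $\Lambda^{1}(\widetilde{A_n})$ from Proposition \ref{prop: basis for An}, which I will use to show that $A$ is a finitely generated torsion-free (hence reflexive, by normality of $R$) module of generic rank $(n+1)^2$, with $M = Ae_0$ reflexive of generic rank $n+1$. Both $A$ and $\End_R(M)^{\op}$ are then finitely generated reflexive modules over the 2-dimensional normal ring $R$, so it suffices to verify that $\phi$ is an isomorphism after localizing at all primes of height at most one; the reflexivity extension property for modules over a normal 2-dimensional Noetherian ring then upgrades this to a global isomorphism. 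At the generic point $K := \mathrm{Frac}(R)$, $\phi \otimes K$ is an injective $K$-linear map between $K$-vector spaces of the same finite dimension $(n+1)^2$, hence an isomorphism. At a height-one prime $\mathfrak{p}$, $R_{\mathfrak{p}}$ is a DVR, so $A_{\mathfrak{p}}$ and $\End_R(M)^{\op}_{\mathfrak{p}}$ are free $R_{\mathfrak{p}}$-modules of the same rank; the injection $\phi_{\mathfrak{p}}$ agrees with the generic isomorphism, forcing it to be an isomorphism.

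The principal obstacle is the structural input from Proposition \ref{prop: basis for An}: one must verify that each $e_i A e_j$ is a rank-one $R$-module, so that $A$ decomposes as a reflexive $R$-module of rank $(n+1)^2$ and $M$ of rank $n+1$, and that these ranks match on the target side. Once this structural claim is in hand, reflexivity of $M$ follows from its realization as a direct $R$-module summand of $A$, and the generic-plus-height-one plus reflexivity-extension argument above yields the required isomorphism $A \cong \End_R(M)^{\op}$, completing the proof of the NCCR property.
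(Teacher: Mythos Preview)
Your overall architecture is reasonable, and parts (a) and (b) are fine, but part (c) has two genuine gaps that the rank-counting strategy cannot close.

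First, the implication ``finitely generated torsion-free over a normal domain $\Rightarrow$ reflexive'' is false in dimension $2$: the maximal ideal $(x,y) \subset k[x,y]$ is torsion-free but has reflexive hull $k[x,y]$. So you have not established that $A$ is reflexive as an $R$-module, and hence not that $M = Ae_0$ is reflexive either. Without reflexivity of $A$, the codimension-$2$ extension argument you invoke does not apply; and you cannot fall back on the lemma preceding Remark~\ref{rem:non-circular}, since that lemma already presupposes $M$ reflexive and $A \cong \End_R(M)$.

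Second, even granting reflexivity, your height-one step fails: an injection of free $R_{\mathfrak{p}}$-modules of equal rank over a DVR, which becomes an isomorphism at the generic point, need \emph{not} be an isomorphism. Multiplication by a uniformizer $t$ on $R_{\mathfrak{p}}$ itself is a counterexample. You would need to show directly that $A_{\mathfrak{p}} = \End_{R_{\mathfrak{p}}}(M_{\mathfrak{p}})$ at height one (e.g.\ that $A_{\mathfrak{p}}$ is a maximal order), which does not follow from a rank count.

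The paper sidesteps both issues by working concretely rather than by comparison of ranks. It identifies each $e_0 \Lambda e_i$ with an explicit fractional ideal $(Z^i,Y)$ of $R$, so that $\End_R(M) = \bigoplus_{i,j} \Hom_R(M_i,M_j)$ consists of multiplication-by-fraction-field-element maps between known ideals; surjectivity of $\phi$ is then verified on generators. Reflexivity of $M$ is obtained not from torsion-freeness but from the explicit self-duality $\Hom_R(M,R) \cong \bigoplus_i e_i \Lambda e_0 \cong M$, which falls out of the same ideal computation.
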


\begin{proof}
 Define $\Lambda := \Lambda^{1}(\widetilde{A_{n}})$ for ease of notation. Write the vertex set as $\{ 0, 1, \dots, n \}$ and the arrow set $\{ a_{0}, a_{0}^{*}, a_{1}, a_{1}^*, \dots, a_{n}, a_{n}^* \}$, with $t(a_{i}) = i = h(a_{i}^*)$ for $i<n$ but $t(a_{n}) = 0 = h(a_{n}^*)$. So the multiplicative preprojective relation at each vertex is:
 \[
 e_{i} (\rho - 1) =
 \begin{cases}
a_{0} a_{0}^* + a_{n} a_{n}^* + a_{0} a_{0}^*a_{n} a_{n}^* & \text{if } i = 0 \\
a_{n}^* a_{n} + a_{n-1}^* a_{n-1} + a_{n}^* a_{n} a_{n-1}^* a_{n-1} & \text{if } i = n \\
a_{i} a_{i}^* - a_{i-1}^* a_{i-1} & \text{otherwise.} \\
 \end{cases}
 \]
Shaw's isomorphism in Theorem \ref{thm: shaw} takes the form:
 \[
  a_{0} a_{0}^* \mapsto Z \hspace{1cm} a_{0} a_{1} \cdots a_{n-1}a_{n}^* \mapsto X \hspace{1cm} a_{n} a_{n-1}^* a_{n-2}^* \cdots a_{0}^* \mapsto Y.
 \]
 Define  $M :=e_{0} \Lambda$ and note that  $M= \oplus_{i=0}^{n} M_{i}$ where $M_{i} := e_{0} \Lambda e_{i}$. Observe that $M_{i} \cong (Z^{i}, Y)$, the two-sided ideal generated by $Z^{i}$ and $Y$ in $\Lambda$, as $e_{0} \Lambda e_{0}$-modules via a map,
 \begin{align*}
  a_{0} a_{1} \cdots a_{i-1} & \mapsto a_{0} a_{1} \cdots a_{i-1} a_{i-1}^* a_{i-2}^* \cdots a_{0}^* = (a_{0}a_{0}^*)^{i} = Z^{i} \\ 
  a_{n} a_{n-1}^* \cdots a_{i}^* & \mapsto a_{n} a_{n-1}^* \cdots a_{0}^*  = Y.
 \end{align*} 

Define the map
$$
\xymatrix{
\Lambda \ar[rr]^-{\phi} & &  \End_{e_{0} \Lambda e_{0} }(M),
}
$$
on generators by sending the idempotent $e_{i}$ at vertex $i$ to the projection map $M \rightarrow M_{i}$, and sending the arrows as follows:
\[
\resizebox{\textwidth}{!}{
\xymatrix{
& & \overset{0}{\bullet} \ar@/^/[lld]^{a_{n}} \ar@/^/[rrd]^{a_{0}}  & &       &&        & & M_{0} \ar@/^/[lld]^{Y} \ar@/^/[rrd]^{Z}  & & \\
\overset{n}{\bullet} \ar@/^/[rdd]^{a_{n-1}^*} \ar@/^/[rru]^{a_{n}^*} & & & & \overset{1}{\bullet} \ar@/^/[ldd]^{a_{1}} \ar@/^/[llu]^{a_{0}^*}          & \overset{\phi}{\mapsto} &
                                                        M_{n} \ar@/^/[rdd]^{\iota} \ar@/^/[rru]^{\frac{-Z}{Y(1+Z)}} & & & & M_{1} \ar@/^/[ldd]^{Z} \ar@/^/[llu]^{\iota}\\
& & & &     &&       & & & & \\
& \overset{n-1}{\bullet} \ar@/^/[luu]^{a_{n-1}} & \cdots & \overset{2}{\bullet} \ar@/^/[ruu]^{a_{2}^*} &      &&      
                                                        & M_{n-1} \ar@/^/[luu]^{Z} & \cdots &  M_{2} \ar@/^/[ruu]^{\iota} &     
}
}
\]
where $\iota$ denotes the inclusion map. This map is well-defined at vertex $0$ and $n$ since
\[
Z + \frac{-YZ}{Y(1+Z)} + \frac{-YZ^{2}}{Y(1+Z)} = Z + \frac{-Z(1+Z)}{(1+Z)} = Z -Z = 0 
\]
and at vertex $i \neq 0, n$ since $Z-Z = 0$.

The surjectivity of $\phi$ follows from the observation that every $e_{0} \Lambda e_{0}$-module map of ideals 
is given by left multiplication by some element of the field of fractions of $e_{0} \Lambda e_{0}$.  The injectivity follows from the fact that $\Lambda$ is prime  (Proposition \ref{prop:prime_earlier}) and injectivity on $e_{0} \Lambda e_{0}$, as we now explain.

 By definition of primality, for any $a, c \in \Lambda$ both nonzero, there exists $b \in \Lambda$ such that $abc \neq 0$. Fix $\gamma \in \Lambda$ non-zero and take $a = e_0$ and $c = \gamma$ to get a non-zero path $\gamma' \in e_0 \Lambda$ containing $\gamma$ as a subpath. Then take $a = \gamma'$ and $c = e_0$ to get a non-zero path $\gamma'' \in e_0 \Lambda e_0$ containing $\gamma$ as a subpath. Since $\phi$ is injective on $e_0 \Lambda e_0$, $\phi(\gamma'') \neq 0$. Hence $\phi(\gamma) \neq 0$ and $\phi$ is injective.

To complete the proof that $\Lambda $ is an NCCR, we need to show that the module $M = e_{0}\Lambda $ is a reflexive $e_{0}\Lambda e_{0}$-module. The computation above shows that 
\[
\Hom_{e_0 \Lambda  e_0}(M_i, M_j) \cong e_i \Lambda  e_j \cong M_{j-i}
\]
as a module over $e_0 \Lambda e_0 \cong e_i \Lambda  e_i$, so in particular $\Hom_{e_0 \Lambda  e_0}(M, e_{0} \Lambda e_{0}) \cong  \oplus_{i} e_i \Lambda  e_0 \cong M.$
So $M$ is self-dual and hence reflexive as a $e_0 \Lambda e_0$-module. 
 \end{proof}

\subsubsection{The center}
Observe, if $A$ is an NCCR over some ring $R$, then the center $Z(A)$ is an $R$-algebra. Under suitable hypotheses, they are actually isomorphic. For example, this holds if $R$ an integrally closed Noetherian domain,
by Zariski's main theorem (as $\Spec Z(A) \to \Spec R$ is finite and birational). 

Instead of using this to establish our isomorphism, we consider an explicit map in the other direction.  More generally, suppose $A$ is a ring,  $e \in A$ is an idempotent, and $R := eAe$. Then we have a canonical map:
\begin{equation}\label{e:satake}
Z(A) \to R = eAe, \quad z \mapsto ez.
    \end{equation}
We call this the ``Satake map'' following the terminology for Hecke algebras, symplectic reflection algebras, etc.

Under natural conditions, the Satake map is well known to be an isomorphism.  Namely, note that $eA$ is an $(eAe)-A$ bimodule, and $\End_{A^{\op}}(eA) = eAe$. Then we have a natural map $A^{\op} \to \End_{eAe}(eA)$. 

\begin{lem}\label{lem: satake} 
Suppose that: (I) the natural map $A^{\op} \to \End_{eAe}(eA)$ is an isomorphism, and (II) $eAe$ is commutative. Then the Satake map \eqref{e:satake} is an isomorphism.
\end{lem}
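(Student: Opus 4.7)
My plan is to work entirely through the identification $A^{\op} \cong \End_{eAe}(eA)$ given by (I), viewing elements of $A$ as right multiplication operators on $eA$ and reading off both halves of the bijection in those terms. For injectivity of the Satake map, suppose $z \in Z(A)$ with $ez = 0$. Then centrality gives $exz = ezx = 0$ for every $x \in A$, so right multiplication by $z$ vanishes on $eA$, and (I) forces $z = 0$. This half uses neither (II) nor any further input.

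For surjectivity, fix $y \in eAe$ and consider the left multiplication operator $L_y : eA \to eA$, $ex \mapsto yex$. The verification that $L_y$ is $eAe$-linear boils down to $y \cdot (eae) = (eae) \cdot y$ inside $eAe$, which is precisely assumption (II). So under (II) we have $L_y \in \End_{eAe}(eA)$, and then by (I) there is a unique $z \in A$ with $yex = exz$ for all $x \in A$. Specializing to $x = 1$ yields $y = ye = ez$, so $z$ is a candidate preimage of $y$ under the Satake map.

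The only step that really uses the full strength of both hypotheses, and the one I expect to be the single delicate point, is checking that this $z$ lies in $Z(A)$. I would verify this by computing right multiplication by $az$ and by $za$ on $eA$ for an arbitrary $a \in A$ using $yex = exz$ in both directions: $(ex)(az) = (exa)z = y(exa) = yexa$, and $(ex)(za) = ((ex)z)a = (yex)a = yexa$. Thus right multiplication by $az - za$ annihilates $eA$, so (I) forces $az = za$, giving $z \in Z(A)$. Morally, (II) allows $eAe$ to be lifted back into $\End_{eAe}(eA)$ as operators commuting with every right multiplication, and (I) then transports these operators into genuine central elements of $A$; the rest is bookkeeping.
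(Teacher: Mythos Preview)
Your proof is correct and follows essentially the same route as the paper. The paper compresses the argument into the single chain of identifications $Z(eAe) \cong \End_{eAe \otimes A^{\op}}(eA) \cong Z(A)$ (the second isomorphism being (I) after taking centers, the first being the observation that bimodule endomorphisms of $eA$ are left multiplications by $Z(eAe)$, which equals $eAe$ by (II)); you simply unpack this by hand, exhibiting left multiplication $L_y$ as the $eAe$-linear operator that (I) converts into a right multiplication by a central $z$.
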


\begin{proof}
We have an identification 
\[
Z(eAe) \cong \End_{eAe \otimes A^{\text{op}}}(eA) \cong Z(A) \hspace{1cm} z \mapsto ez.
\]
Since $eAe$ is commutative, $Z(A) \cong eAe$, via the Satake map.
\end{proof}
\begin{cor}\label{c:satake-mpa} The Satake map \eqref{e:satake} is an isomorphism for $A=\Lambda^1(\widetilde{A_n})$ and $e=e_v$, the idempotent at any vertex.
\end{cor}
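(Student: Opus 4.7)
The plan is to reduce the corollary to a direct application of Lemma \ref{lem: satake} by verifying its two hypotheses (I) and (II) for $A = \Lambda^1(\widetilde{A_n})$ and $e = e_v$. For condition (II), that $e_v A e_v$ is commutative, this is immediate from Shaw's Theorem \ref{thm: shaw}: $e_v \Lambda^1(\widetilde{A_n}) e_v \cong k[X,Y,Z]/(Z^{n+1} + XY + XYZ)$, which is a commutative ring. So the entire content is in establishing hypothesis (I).

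For hypothesis (I), I would first treat the case $v = 0$, using Proposition \ref{prop: NCCR}. The map $\phi : \Lambda \to \End_{e_0 \Lambda e_0}(e_0 \Lambda)$ constructed there, sending an element $a \in \Lambda$ to the right-multiplication endomorphism $m \mapsto ma$ (equivalently, viewing $e_0 \Lambda$ as a left $e_0 \Lambda e_0$-module, each $a \in \Lambda$ defines an endomorphism by right multiplication that commutes with left multiplication by $e_0 \Lambda e_0$), is precisely the natural map $\Lambda^{\op} \to \End_{e_0 \Lambda e_0}(e_0 \Lambda)$ appearing in hypothesis (I). Proposition \ref{prop: NCCR} establishes that $\phi$ is a bijection, so (I) holds for $v = 0$.

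For general $v$, I would invoke the cyclic symmetry of $\widetilde{A_n}$: the rotation of the quiver by one vertex induces an algebra automorphism of $\Lambda^1(\widetilde{A_n})$ carrying $e_0$ to $e_v$ (for any choice of $v$, by iteration). This automorphism transports the isomorphism $\phi$ of Proposition \ref{prop: NCCR} to an isomorphism $\Lambda^{\op} \xrightarrow{\sim} \End_{e_v \Lambda e_v}(e_v \Lambda)$, verifying (I) for arbitrary $v$. With both (I) and (II) in hand, Lemma \ref{lem: satake} applies and gives that the Satake map $Z(\Lambda^1(\widetilde{A_n})) \to e_v \Lambda^1(\widetilde{A_n}) e_v$ is an isomorphism.

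There is no serious obstacle here: the heavy lifting was already done in establishing the NCCR structure (and in particular in proving primeness, which underlies the injectivity of $\phi$). The present corollary is essentially a bookkeeping step that combines Proposition \ref{prop: NCCR}, Shaw's description of $e_v \Lambda e_v$, and Lemma \ref{lem: satake}. Combining this with Theorem \ref{thm: shaw} then yields the explicit description of $Z(\Lambda^1(\widetilde{A_n}))$ recorded in Corollary \ref{c:center}.
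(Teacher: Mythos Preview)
Your proposal is correct and follows essentially the same route as the paper: verify hypotheses (I) and (II) of Lemma \ref{lem: satake}, using Proposition \ref{prop: NCCR} for (I) and Shaw's Theorem \ref{thm: shaw} for (II). Two minor differences are worth noting. First, where the paper deduces (I) from the isomorphism $\phi$ of Proposition \ref{prop: NCCR} together with $A \cong A^{\op}$ (independence of orientation), you observe directly that $\phi$ \emph{is} the natural right-multiplication map $A^{\op} \to \End_{eAe}(eA)$; this is slightly more direct and avoids the (harmless) detour. Second, your use of the cyclic automorphism to pass from $v=0$ to arbitrary $v$ is more explicit than the paper, which effectively relies on the fact that every vertex of $\widetilde{A_n}$ is an extending vertex so that Shaw's theorem and the argument of Proposition \ref{prop: NCCR} apply uniformly.
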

\begin{proof}
This is a direct consequence of Lemma \ref{lem: satake}, once we
check the hypotheses: (I) and (II).
Thanks to
Proposition \ref{prop: NCCR}, $A \cong \End_{eAe}(eA)$ so (I) follows from $A \cong A^{op}$, a consequence of the independence of orientation established in \cite[Theorem 1.4]{Shaw}. By Shaw's Theorem \ref{thm: shaw} (II) holds (alternatively, the commutativity of the generators can be checked directly). 
\end{proof}

\begin{cor}
$\Lambda^1(\widetilde{A_n})$ is an NCCR over its center. 
\end{cor}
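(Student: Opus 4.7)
The plan is to combine the two main results already established in this section: Proposition \ref{prop: NCCR}, which exhibits $\Lambda^1(\widetilde{A_n})$ as an NCCR over $R := e_0 \Lambda^1(\widetilde{A_n}) e_0$, and Corollary \ref{c:satake-mpa}, which shows the Satake map $Z(\Lambda^1(\widetilde{A_n})) \to R$, $z \mapsto e_0 z$, is a ring isomorphism. Since the NCCR property is defined with respect to a base ring only up to isomorphism, transporting structure along the Satake map should suffice.

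First, I would pull back the $R$-module structure on $M := e_0 \Lambda^1(\widetilde{A_n})$ through the Satake isomorphism to obtain a $Z(\Lambda^1(\widetilde{A_n}))$-module structure on $M$. Reflexivity, being relative only to the base ring, is preserved under ring isomorphism. I would then verify that this pulled-back action agrees with the natural action of the center on $M$ by left multiplication inside $\Lambda^1(\widetilde{A_n})$: for $z \in Z(\Lambda^1(\widetilde{A_n}))$ and $m = e_0 x \in M$, the Satake preimage $e_0 z \in R$ acts by $(e_0 z)(e_0 x) = e_0 z x = z (e_0 x)$, using the centrality of $z$. So the two $Z$-module structures coincide.

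Next, the isomorphism $\Lambda^1(\widetilde{A_n}) \cong \End_R(M)$ from Proposition \ref{prop: NCCR} transfers to an isomorphism $\Lambda^1(\widetilde{A_n}) \cong \End_{Z(\Lambda^1(\widetilde{A_n}))}(M)$. The remaining NCCR conditions (finite global dimension and the Cohen--Macaulay property) are intrinsic to the ring $\Lambda^1(\widetilde{A_n})$ and so carry over without change.

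There is no real obstacle at this stage: all substantive content (reflexivity of $M$, the endomorphism algebra realization via the prime property, and the identification of the center via the Satake isomorphism) has already been accomplished in the preceding results. This corollary is essentially a matter of assembling Proposition \ref{prop: NCCR} with Corollary \ref{c:satake-mpa}.
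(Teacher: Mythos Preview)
Your proposal is correct and follows essentially the same approach as the paper: combine Proposition \ref{prop: NCCR} with Corollary \ref{c:satake-mpa}, transport the NCCR structure along the Satake isomorphism, and verify that the resulting $Z(\Lambda^1(\widetilde{A_n}))$-module structure agrees with the natural one given by central multiplication. One small imprecision: the Cohen--Macaulay property is not intrinsic to the ring (it depends on the base), but it nonetheless transfers under the ring isomorphism $Z \cong R$, which is all you need.
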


\begin{proof}
This follows immediately, provided we identify the $Z(\Lambda^1(\widetilde{A_n}))$-module structure on $\Lambda^1(\widetilde{A_n})$ with left multiplication. Indeed, given $z \in Z(\Lambda^1(\widetilde{A_n}))$ (by tracing through the above maps) its action on $\End_{e\Lambda^1(\widetilde{A_n})e}(M)$ via the Satake map is multiplication by $ez$.
\end{proof}
Note that Corollary \ref{c:satake-mpa} and Theorem \ref{thm: shaw} immediately imply Corollary \ref{c:center}.

\subsubsection{The center as functions on a quiver variety}
It remains to identify the center with the algebra of functions on the multiplicative quiver variety.  

In general, given a $kQ_0$-algebra $A$ and a finite-dimensional $kQ_0$-module $V$, we have a canonical algebra homomorphism $\ev: A \to k[\Rep(A,V)] \otimes \End_k(V)$, called ``evaluation'': $\ev(a)(\rho)=\rho(a)$. 

Suppose that $\rho: A \to \End(V)$ is an irreducible representation. Consider $z \in Z(A)$. If $k$ is algebraically closed, then by Schur's Lemma, $\rho(z)=\lambda_{\rho,z} \Id_V$ for some scalar $\lambda_{\rho,z}$. However, we don't assume here that $k$ is algebraically closed. We could fix this by passing to the algebraic closure, but this turns out to be unnecessary as follows.
\begin{lem} \label{lem: schur}
Suppose that 
$v \in Q_0$ is a vertex with $\dim V_v = 1$. Suppose $\rho$ is an irreducible representation. Then $\End(\rho)=k \cdot \Id_V$. 
\end{lem}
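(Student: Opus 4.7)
The plan is to use the classical form of Schur's lemma (endomorphisms of an irreducible are either zero or invertible), together with the one-dimensionality of $V_v$, to pin down any endomorphism as a scalar on all of $V$. The algebraic closure hypothesis is replaced by the presence of the $1$-dimensional ``pivot'' component $V_v$, which forces scalars to live in $k$ rather than in some larger division algebra.

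More precisely, let $\phi \in \End(\rho)$. Since $\phi$ is $kQ_0$-linear (endomorphisms of a $kQ_0$-algebra representation commute with the idempotents $e_i \in kQ_0 \subseteq A$), it preserves the decomposition $V = \bigoplus_i V_i$. In particular, the restriction $\phi|_{V_v}$ is an endomorphism of the $1$-dimensional $k$-vector space $V_v$, so $\phi|_{V_v} = \lambda \cdot \Id_{V_v}$ for some $\lambda \in k$. Then $\phi - \lambda\,\Id_V$ is again an $A$-module endomorphism of $\rho$, and it has $V_v \subseteq \ker(\phi - \lambda\,\Id_V)$.

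Since $V_v \neq 0$ by assumption, $\phi - \lambda\,\Id_V$ is not injective, hence not an isomorphism. The standard Schur lemma, which holds over any field (an $A$-module endomorphism of an irreducible representation is either zero or an isomorphism, because its kernel and image are subrepresentations), then forces $\phi - \lambda\,\Id_V = 0$. Therefore $\phi = \lambda\,\Id_V \in k \cdot \Id_V$, and we conclude $\End(\rho) = k\cdot\Id_V$.

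There is no real obstacle here; the only thing to keep in mind is that one must verify $\phi$ preserves $V_v$ (which is automatic from $kQ_0$-linearity, i.e.\ from $\phi\circ\rho(e_v) = \rho(e_v)\circ\phi$) and that the usual Schur's lemma works over an arbitrary field as long as we do not try to identify $\End(\rho)$ with $k$ a priori---the one-dimensional component $V_v$ is precisely what rules out a larger division algebra of endomorphisms.
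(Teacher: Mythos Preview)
Your proof is correct and follows essentially the same line as the paper's: restrict $\phi$ to the one-dimensional component $V_v$ to extract a scalar $\lambda\in k$, then apply the field-agnostic form of Schur's lemma to $\phi-\lambda\,\Id_V$. The only differences are cosmetic---you spell out that $\phi$ preserves $V_v$ via commutation with the idempotent $e_v$ and phrase non-invertibility as ``not injective'', whereas the paper simply says ``not invertible''.
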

\begin{proof}
If $\phi \in \End(\rho)$, then $\rho(e_v) \phi = \phi \rho(e_v)$.
Therefore, $\phi$ preserves $\rho(e_v) V = V_v$. As this has dimension one, we have $\phi|_{V_v} = \lambda \Id_{V_v}$. Now, $\phi-\lambda \Id_V$ is not invertible. By Schur's Lemma over a general field, 
this implies that $\phi-\lambda \Id_V$ is zero.  So $\phi = \lambda \Id_V$.
\end{proof}
\begin{cor} Let $Q_0, A ,V, v$ and $\rho$ be as in Lemma \ref{lem: schur}. If $z \in Z(A)$, then $\rho(z) \in \End(V)$ is a scalar. 
\end{cor}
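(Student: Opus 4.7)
The plan is to observe that this corollary is essentially a direct consequence of the preceding Lemma \ref{lem: schur}, with centrality supplying the key input. First, I would note that any central element $z \in Z(A)$ satisfies $z a = a z$ for every $a \in A$, and applying the representation $\rho$ yields $\rho(z)\rho(a) = \rho(a)\rho(z)$ for all $a \in A$. This says precisely that $\rho(z)$ commutes with the image of $\rho$, i.e., $\rho(z) \in \End(\rho)$.

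Next, since the hypotheses on $Q_0, A, V, v, \rho$ are those of Lemma \ref{lem: schur} (in particular $\dim V_v = 1$ and $\rho$ irreducible), we may invoke that lemma to conclude $\End(\rho) = k \cdot \Id_V$. Combining these two observations, $\rho(z) = \lambda \Id_V$ for some $\lambda \in k$, which is exactly the statement of the corollary.

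There is essentially no obstacle here; the only subtle point worth emphasizing is why one needs Lemma \ref{lem: schur} rather than invoking Schur's lemma directly. Without the algebraic closure of $k$, the endomorphism ring of an irreducible representation is only guaranteed to be a division algebra over $k$, so $\rho(z)$ could a priori lie in a nontrivial division ring. Lemma \ref{lem: schur} uses the hypothesis $\dim V_v = 1$ to pin down a canonical one-dimensional subspace $V_v$ on which $\phi \in \End(\rho)$ must act as a scalar $\lambda$, and then the fact that $\phi - \lambda \Id_V$ is a non-invertible element of a division algebra forces it to vanish. I would simply cite this lemma and conclude.
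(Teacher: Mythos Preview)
Your proof is correct and matches the paper's approach exactly: observe that $\rho(z) \in \End(\rho)$ since $z$ is central, then apply Lemma \ref{lem: schur}. Your additional paragraph explaining why the bare Schur lemma does not suffice over a non-algebraically-closed field is a helpful elaboration not present in the paper's terse one-line proof.
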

\begin{proof} Note that $\rho(z) \in \End(\rho)$. Then apply the lemma.
\end{proof}
\begin{cor} \label{cor: ev map}
Suppose that for some vertex $v$, we have $V_v = 1$, and moreover that there exists an irreducible representation $A \to \End(V)$. Then the restriction  $\ev|_{Z(A)}$ is an algebra map $Z(A) \to k[\Rep(A,V)] \cdot \Id_V$.
\end{cor}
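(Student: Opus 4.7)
The plan is to combine the preceding corollary---which identifies $\rho(z)$ as a scalar for every irreducible representation $\rho$---with a density argument on the representation scheme. First I would fix a basis of $V$ compatible with the $kQ_0$-decomposition $V = \bigoplus_i V_i$ and write
\[
\ev(z) = \sum_{i,j} f_{ij} \otimes E_{ij} \in k[\Rep(A,V)] \otimes \End(V),
\]
so the task becomes showing $f_{ij} = 0$ for $i \neq j$ and $f_{ii} = f_{jj}$ for all $i,j$, i.e., that $\ev(z)$ lies in the subalgebra $k[\Rep(A,V)] \cdot \Id_V$. Once this is established, the algebra-map property for $\ev|_{Z(A)}$ is inherited from the fact that $\ev$ itself is an algebra map.

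Next, I would let $U \subseteq \Rep(A,V)$ denote the locus of irreducible representations, which is Zariski open (its complement is cut out by the closed conditions of admitting a proper subrepresentation of a given dimension vector, summed over all such vectors) and nonempty by hypothesis. For every closed point $\rho \in U$, the corollary preceding the statement gives $\rho(z) \in k \cdot \Id_V$, so the functions $f_{ij}$ (for $i \neq j$) and $f_{ii} - f_{jj}$ all vanish pointwise on $U$.

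The crux is to extend this vanishing from $U$ to all of $\Rep(A,V)$. I would argue via generic points: for any irreducible component $Z \subseteq \Rep(A,V)$ meeting $U$, irreducibility being an open condition ensures that the tautological representation over the function field $K$ of $Z$ is itself (absolutely) irreducible. Because Lemma \ref{lem: schur} and its corollary work verbatim over any field, applying them to this generic representation shows $\ev(z) \in K \cdot \Id_V$ at the generic point of $Z$, whence $f_{ij}$ ($i \neq j$) and $f_{ii} - f_{jj}$ vanish identically on $Z$.

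The main obstacle is handling possible irreducible components of $\Rep(A,V)$ that do not meet $U$. In the intended application, with $A = \Lambda^1(\widetilde{A_n})$ and $V$ of dimension vector $\delta = (1,\dots,1)$, this does not arise: each arrow acts by a scalar, the multiplicative preprojective relations cut out an irreducible closed subset of an open subvariety of affine space (one can eliminate variables using $\alpha_i \alpha_i^* = \alpha_0 \alpha_0^*$ for $i < n$ and the single remaining relation at the extending vertex), and $U$ contains the dense open locus where all arrows are nonzero. Hence $U$ is dense in $\Rep(A,V)$ and the desired inclusion $\ev(Z(A)) \subseteq k[\Rep(A,V)] \cdot \Id_V$ follows.
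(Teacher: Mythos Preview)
Your density argument is essentially the paper's approach, with two differences worth noting.

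First, the paper takes $U$ to be the open locus $\{\rho : \End(\rho) = k \cdot \Id_V\}$ rather than the irreducible locus. Since $\rho(z) \in \End(\rho)$ for any central $z$, this immediately makes $\rho(z)$ a scalar for every $\rho \in U$, so there is no need for your generic-point detour through field extensions. Nonemptiness of $U$ follows from the hypothesis together with Lemma~\ref{lem: schur}, exactly as you say.

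Second, for density the paper asserts in one line that $\Rep(A,V)$ is a vector space, hence irreducible, so the nonempty open $U$ is automatically dense and the coordinate functions $f_{ij}$ $(i\neq j)$ and $f_{ii}-f_{jj}$ vanish identically. Your worry about irreducible components missing $U$ is thus disposed of without further work. You are right that for a general algebra $A$ this assertion is not innocent---$\Rep(A,V)$ is only a closed subscheme of a vector space---so your direct verification of irreducibility in the one case actually used, $A = \Lambda^1(\widetilde{A_n})$ with dimension vector $\delta$, is the more scrupulous route and suffices for everything downstream.
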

\begin{proof}
Let $U \subseteq \Rep(A,V)$ be the locus of representations $\rho$ such that $\End(\rho)=k \cdot \Id_V$. This is a Zariski open subset, since $k \cdot \Id_V$ is always contained in $\End(\rho)$.  If $\rho \in \Rep(A,V)$ is irreducible, then by Lemma \ref{lem: schur}, $\rho \in U$. Thus, by our assumptions, $U$ is nonempty.
Since $\Rep(A,V)$ is a vector space, it is irreducible.  We conclude that $U$ is Zariski dense.


Now, for every $z \in Z(A)$, $\ev(z): \Rep(A,V) \to \End(V)$ is scalar-valued on $U$. As $U$ is dense, it is a scalar on all of $\Rep(A,V)$. Hence $\ev(z) \in k[\Rep(A,V)] \otimes \Id$. As $z$ was arbitrary, we obtain the result.
\end{proof}

Back to the situation at hand, for convenience let us orient $\widetilde{A_n}$ clockwise (note that the statement does not depend on orientation). We consider the vector space $V = k^{Q_0}$, which has the property $\dim V_v = 1$ for all $v \in Q_0$. Consider the representation on $V$
where each clockwise arrow is the identity (i.e., the one-by-one matrix $[1]$) and each counterclockwise arrow is zero. This defines a representation of the localization $L_{Q}$ that descends to an irreducible representation of $\Lambda^{1}(Q)$. Therefore, having satisfied the hypotheses of Corollary \ref{cor: ev map}, we obtain a canonical map
\begin{equation} \label{eq: can map}
    \ev_Z: Z(\Lambda^1(Q)) \to k[\mathcal{M}_{0,1}(Q,\delta)].
\end{equation}


\begin{prop} \label{prop: center-qv} The map $\ev_Z$ is an isomorphism.
\end{prop}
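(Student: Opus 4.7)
The approach is to compute $k[\mathcal{M}_{0,1}(\widetilde{A_n},\delta)]$ explicitly as a presented ring, and then match it to $Z(\Lambda^1(\widetilde{A_n}))$ via the images of Shaw's generators under $\ev_Z$. First I would describe $\Rep_\delta(\Lambda^1(\widetilde{A_n}))$. Since $\dim V_i = 1$ for each $i$, a representation $\rho$ is a tuple of scalars $x_i := \rho(a_i), y_i := \rho(a_i^*)$ with each $1+x_iy_i$ invertible. Because scalars commute, $g_{a_i}$ acts on $V_{t(a_i)}$ by $1+x_iy_i$ and trivially on other summands, and similarly for $g_{a_i^*}$ at $t(a_i^*)$. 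Setting $A_i := x_iy_i$, the multiplicative preprojective relation evaluates at the interior vertex $i$ (for $0<i<n$) to give $A_{i-1}=A_i$, and at both vertex $0$ and vertex $n$ to give $(1+A_0)(1+A_n)=1$. Writing $A := A_0 = \cdots = A_{n-1}$, the representation variety is therefore cut out inside $\{(x_i,y_i) : 1+x_iy_i \neq 0,\,\forall i\} \subset \bA^{2(n+1)}$ by these equations.

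Next I would compute the $\GL_\delta = (k^\times)^{n+1}$-invariants. The diagonal acts trivially, and by the standard description of torus invariants of a quiver representation space, the invariants of $k[x_i, y_i]_{i=0}^n$ are generated by the length-two loops $A_i$ together with the two closed cycles based at vertex $0$, namely $X := x_0x_1\cdots x_{n-1}y_n$ and $Y := x_ny_{n-1}\cdots y_0$, modulo the single relation $XY = \prod_{i=0}^n A_i$. Imposing $A_0 = \cdots = A_{n-1} = A$ and $A_n = -A/(1+A)$ yields
\[
XY = A^n \cdot \frac{-A}{1+A}, \qquad \text{i.e.,} \qquad A^{n+1} + XY + XYA = 0.
\]
I would then verify directly that $1+A$ is already a unit in $R := k[A,X,Y]/(A^{n+1}+XY+XYA)$, with explicit inverse
\[
(1+A)^{-1} = 1 + (-1)^n XY - A + A^2 - \cdots + (-1)^n A^n,
\]
which can be checked using only the defining relation. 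Consequently localising at $1+A$ (equivalently at $1+A_n$, its inverse) does not enlarge $R$, and therefore $k[\mathcal{M}_{0,1}(\widetilde{A_n},\delta)] \cong R$.

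Finally I would match $R$ with the center. By Corollary \ref{c:center} and the Satake isomorphism of Corollary \ref{c:satake-mpa}, $Z(\Lambda^1(\widetilde{A_n})) \cong k[X,Y,Z]/(Z^{n+1}+XY+XYZ)$, with $X, Y, Z$ corresponding to central elements $z_X, z_Y, z_Z$ whose $e_0$-images are Shaw's paths $a_0a_1\cdots a_{n-1}a_n^*$, $a_na_{n-1}^*\cdots a_0^*$, and $a_0a_0^*$ respectively. On any $\delta$-dimensional representation, a central element $z$ acts on $V_0$ (hence on all of $V$) by the scalar obtained by substituting $a_i \mapsto x_i$, $a_i^* \mapsto y_i$ into the expression for $e_0 z$. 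This yields $\ev_Z(z_X) = X$, $\ev_Z(z_Y) = Y$, $\ev_Z(z_Z) = A$ in $R$, and the defining relations agree under $Z \leftrightarrow A$, so $\ev_Z$ is an isomorphism. The crux is the explicit formula for $(1+A)^{-1}$: without it, the invariant ring of $\mathcal{M}_{0,1}$ would merely be a localisation of $Z(\Lambda^1)$ rather than isomorphic to it; everything else is a routine unwinding via the Satake isomorphism.
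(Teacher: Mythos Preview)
Your proof is correct and takes a genuinely different, more computational route than the paper's. The paper argues surjectivity abstractly: any $\GL_\delta$-invariant function is a polynomial in closed-path functions (since all $V_i$ are one-dimensional, this is just the elementary description of torus invariants), and each closed path based at a vertex $w$ lifts to a central element via the Satake isomorphism at $w$ (Corollary~\ref{c:satake-mpa}, which is stated for \emph{any} vertex). Injectivity is then deduced by noting that the source is a two-dimensional integral domain (Corollary~\ref{c:center}) surjecting onto a target of dimension at least two (exhibited via a two-parameter family of representations). By contrast, you compute $k[\mathcal{M}_{1,0}(\widetilde{A_n},\delta)]$ explicitly as a presented ring and match generators and relations through $\ev_Z$. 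Your key observation---that $1+A$ is already a unit in $k[A,X,Y]/(A^{n+1}+XY+XYA)$ via an explicit polynomial inverse---is not needed in the paper, which never presents the target ring directly. Your approach is more self-contained (no dimension argument) and yields the presentation of the multiplicative quiver variety as a byproduct; the paper's argument is shorter and more readily transportable to other extended Dynkin types, relying only on the Satake isomorphism and a coarse dimension bound.
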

\begin{proof}
 To check surjectivity, let $f \in k[\mathcal{M}_{0,1}(Q,\delta)] = k[\Rep_\alpha(\Lambda^1(Q))]^{\GL_\alpha}$.  We wish to show that $f \in \ev_Z(Z(\Lambda^1(Q)))$. Note that $f$ is a polynomial in the matrix coefficient functions of the arrows (these are one by one matrices).  To be invariant under $\GL_\alpha$, the polynomial must in fact be a polynomial in the functions defined by closed paths in the quiver: each such closed path is canonically a scalar, as it is an endomorphism of a one-dimensional vector space. Thus it suffices to assume that there is a single closed path $a \in e_v \Lambda^1(Q) e_v$ such that $\rho(a) = f(\rho) \cdot \Id_{V_v}$ for all $\rho$.
 As the Satake map is an isomorphism (Corollary \ref{c:satake-mpa}), we must have $a = e_v z$ for some $z \in Z(\Lambda^1(Q))$.  Then, $\rho(a) = \ev_Z(z) \cdot \Id_{V_v}$. Hence $f(\rho) = \ev_Z(z)$ for all $\rho \in \Rep_\alpha(\Lambda^1(Q))$. This shows that $\ev_Z$ is surjective.
 
 By Corollary \ref{c:center} the source is an integral domain. Since we already proved surjectivity, injectivity will follow provided that the target also has dimension at least two.  This can be seen by constructing a two-parameter family of representations, e.g.,  we can take the representations with all clockwise arrows a matrix $(a)$ and all counterclockwise arrows a matrix $(b)$, with $ab \neq -1$. Alternatively, this statement follows from
 Theorem \ref{t:fnt-mult}. 
 \end{proof}

\section{The strong free product property}\label{s: free-product}
In this section, we prove the strong free product property for connected quivers containing a cycle. We first establish the strong free product property for the quivers $\widetilde{A_n}$ for $n \geq 0$ using the Diamond Lemma to build a section of the quotient map $\pi : L \rightarrow \Lambda^{q}(\widetilde{A_n})$. Then we establish the more general result using the corresponding result for \emph{partial} multiplicative preprojective algebras. See Subsection \ref{section: definition of sffp} for the prerequisite definitions.

 As results in previous Sections \ref{ss:bimodule-resolution}, \ref{ss: dual complex}, \ref{s:formal}, \ref{s:quiver_varieties}, \ref{s:NCCR} rely on results established in this section, the reader should note that we do not use any results beyond Section \ref{section: definition of sffp}, see Remark \ref{rem:non-circular}.

\subsection{The case of cycles}\label{ss:cycles}

Consider the quiver $\widetilde{A}_{n-1}$: with vertex set $(\widetilde{A}_{n-1})_{0} := \{ 0, 1, \dots, n-1 \}$ and arrow set $(\widetilde{A}_{n-1})_{1} =\{ a_{0}, a_{0}^*, a_{1}, a_{1}^*, \dots, a_{n-1}, a_{n-1}^* \}$ with $t(a_{i}) = i$ and $h(a_{i}) = i+1 \text{ (mod } n)$. Fix the ordering $a_{i} < a_{i+1} < a_{j}^* < a_{j+1}^*$ for all $i, j \in \{0, 1, \dots, n-2 \}$. The multiplicative preprojective algebra for this quiver, with respect to the ordering, is defined to be
$$
\Lambda^{q}(\widetilde{A}_{n-1}) := \frac{ k \overline{\widetilde{A}}_{n-1} [ (1+ a_{i} a_{i}^{*})^{-1}, (1+ a_{i}^* a_{i})^{-1}]_{i=0, \dots, n-1}  }
{  \left \langle \prod_{i=0}^{n-1} (1+ a_{i} a_{i}^*) \prod_{i=0}^{n-1} (1+ a_{i}^* a_{i})^{-1} - \sum_{i=1}^{n} q_{i} e_{i}  \right \rangle} =: \frac{L}{J}.
$$
Writing $a := \sum_{i} a_{i}$, $a^* := \sum_{i} a_{i}^*$, and $q = \sum_{i} q_{i} e_{i}$ since 
$$
1+a a^* = 1+\sum_{i} a_i a_i^{*} =  \prod_{i=0}^{n-1} (1+ a_{i} a_{i}^*)  \hspace{1cm} 
1+a^* a = 1+ \sum_{i} a_{i}^* a_{i} = \prod_{i=0}^{n-1} (1+ a_{i}^* a_{i})
$$
we have,
$$
\Lambda^{q}(\widetilde{A}_{n-1}) := \frac{ k \overline{\widetilde{A}}_{n-1} [ (1 + a a^*)^{-1}, (1+a^* a)^{-1}]
}
{ \left \langle  (1+ a a^*)(1+a^* a)^{-1} - q \right \rangle}.
$$
We write $r := (1+ a a^*)(1+a^* a)^{-1} - q$ for this relation, $S$ for the degree zero piece $k (\widetilde{A}_{n-1})_{0}$ of $\Lambda^{q}(\widetilde{A}_{n-1})$. As in Subsection \ref{section: definition of sffp}, let $B := S[t, (q+t)^{-1}]$ and $\overline{B} = tB$, spanned over $S$
by $t^m, (t')^m, m \geq 1$, for $t' := (q+t)^{-1}-q^{-1}$. Let $r' := (q+r)^{-1} - q^{-1}$.

We construct $\sigma: L/(r) *_{S} B \rightarrow L$ so that $(L, r, \sigma', B)$ satisfies the strong free product property using an explicit basis:
\begin{prop} \label{prop: basis for An}
$L$ is a free left $S$-module with basis consisting of $1$ together with all 
alternating products of elements of the following two sets, for $x := (1+aa^*)$:
$$
\fB := \left \{ x^{m} a^{\ell}, x^{m} (a^*)^{\ell} \mid m \in \bZ, \ell \in \bN  \right \}, \  \fR := \{r^m, (r')^m \mid m \in \bN\}.
$$
In particular, $\fB$ forms a basis for $\Lambda^q(\widetilde{A}_{n-1}) = L/(r)$, and 
$(L, r, \sigma, B)$ satisfies the strong free product property, with $\sigma$ induced from the inclusion of $\fB$ into $L$.
\end{prop}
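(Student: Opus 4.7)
The plan is to apply Bergman's Diamond Lemma \cite{Bergman} to a reduction system on a free $S$-algebra surjecting onto $L$, whose irreducible monomials will be exactly the proposed basis elements, and then to derive the strong free product property as a direct consequence. Taking the generators to be $a_i, a_i^*, g_i^{\pm 1}, h_i^{\pm 1}$ (the latter two standing for $g_{a_i}^{\pm 1}$ and $g_{a_i^*}^{\pm 1}$) together with atomic generators $r$ and $(q+r)^{-1}$, I would impose the following reductions: inverse cancellations $g_i^{\pm 1} g_i^{\mp 1} \to e_i$ and $h_i^{\pm 1} h_i^{\mp 1} \to e_{i+1}$; the substitutions $a_i a_i^* \to g_i - e_i$ and $a_i^* a_i \to h_i - e_{i+1}$; left-pushing commutations $a_i h_i^{\pm 1} \to g_i^{\pm 1} a_i$ and $a_i^* g_i^{\pm 1} \to h_i^{\pm 1} a_i^*$ derived from \eqref{identity: g}; and a rule eliminating every $h_i^{\pm 1}$ in favor of a finite expression in $g_{i+1}^{\pm 1}$, $r$, and $(q+r)^{-1}$, coming from the multiplicative relation $xy^{-1} = q+r$ with $y = \sum_i h_i$. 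Iterating these reductions, every word becomes an $S$-scalar times an alternating product in $\mathfrak{B}$ and $\mathfrak{R}$, with the $g$'s on the left consolidated into powers of $x = \sum_i g_i$ vertex by vertex.

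Next I would verify confluence of the reduction system. The principal composition ambiguities are $g_i^{\pm 1} g_i^{\mp 1} g_i^{\pm 1}$, overlaps such as $g_i \cdot a_i \cdot a_i^*$ and $a_i^* \cdot a_i \cdot a_i^*$ where multiple rules apply, and---most delicately---the compatibility of the commutation rules with the $h$-elimination rule. This last check hinges on the fact that $r$ commutes with each idempotent $e_i$, which holds because $r = xy^{-1} - q$ lies in $\bigoplus_i e_i L e_i$ by a direct vertex-wise computation (both $x$ and $y$ are diagonal sums over vertices, as is $xy^{-1}$). Once confluence is established, the Diamond Lemma yields the claimed $S$-basis of $L$. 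The basis claim for $\Lambda^q(\widetilde{A}_{n-1}) = L/(r)$ then follows immediately: in the quotient every $\mathfrak{R}$-factor vanishes, so only the elements of $\mathfrak{B}$ survive as a basis.

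Finally, for the strong free product property, take $\sigma$ to be the $S$-bilinear section sending each $\mathfrak{B}$-basis element of $\Lambda^q(\widetilde{A}_{n-1})$ to itself in $L$. Under the identifications $t \leftrightarrow r$ and $(q+t)^{-1} - q^{-1} \leftrightarrow r'$, the $S$-basis of $\overline{B} = tB$ corresponds bijectively to the nontrivial elements of $\mathfrak{R}$, so the induced map $\sigma'$ from \eqref{eq: sigma' definition} sends each alternating tensor in $\Lambda^q *_S B$ to the corresponding alternating product in $L$. By the basis of $L$ this is a bijection on bases, hence a linear isomorphism. The main obstacle is the Diamond Lemma verification: one must treat $(q+r)^{-1}$ as an atomic generator (rather than an infinite power series in $r$) so that the reductions terminate in finitely many steps, and then patiently check the handful of nontrivial ambiguities, the most subtle being the interaction between the commutation $g_i a_i = a_i h_i$ and the $h$-elimination rule, whose consistency is guaranteed by the centrality of $r$ among the idempotents.
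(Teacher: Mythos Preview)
Your overall strategy---Bergman's Diamond Lemma on a presentation with generators for $a, a^*$, the $g$'s and $h$'s, and atomic symbols for $r$ and its inverse companion---is exactly what the paper does. But there is a genuine gap in the choice of atomic generator for the inverse. You take $(q+r)^{-1}$ itself; the paper takes $r' := (q+r)^{-1} - q^{-1}$, and this distinction is not cosmetic. With $(q+r)^{-1}$ as the atomic generator, the ambiguity $a a^* a$ does \emph{not} resolve: reducing $(aa^*)a$ gives $(x-1)a = xa - a$, while reducing $a(a^*a)$ gives $a(y-1) = a(q+r)^{-1}x - a$, and both are irreducible in your system, yielding the spurious relation $xa = a(q+r)^{-1}x$ between two distinct normal forms. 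The paper makes exactly this point in Remark~\ref{r:overline-b-matters}. The fix is to use $r'$ so that $a(q+r)^{-1}x = ar'x + q^{-1}ax$, where the second term is now reducible and the confluence check goes through.

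A second, related gap: your commutation rules move $a$ past $h$ and $a^*$ past $g$, but after $h$-elimination you still need explicit rules to move $a$ past $g$ (and $a^*$ past the eliminated $h$), since otherwise words like $a_i g_{i+1}$ remain irreducible and do not lie in the claimed normal-form set. The paper derives these as reductions (5)--(8), e.g.\ $ax \mapsto qxa - q a r' x$, and it is precisely here that the $r'$ (not $(q+r)^{-1}$) appears and that the alternation between $\mathfrak{B}$ and $\mathfrak{R}$ is forced. Finally, you do not specify a termination ordering; several of the substitutions increase word length, so one needs a carefully weighted $\mathbb{N}^k$-valued function (the paper uses $k=5$) to guarantee that every reduction sequence terminates.
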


\begin{proof}
Note that, for every vertex $i$, we have $e_i a = a e_j$ for a unique $j$, and similarly for the elements
$a^*, x, y := 1+ a^* a, x^{-1}, y^{-1}$, and by definition, $e_i r = r e_i$.
Therefore $L$ is spanned as a \emph{left} $S$-module by non-commutative monomials in $a, a^*, x, y, x^{-1}, y^{-1}, r$, and $r'$.
Define $\mathcal{M} := \langle a, a^*, x, y, x^{-1}, y^{-1}, r, r' \rangle$ the set of monomials and $\mathcal{P} := S\langle a, a^*, x, y, x^{-1}, y^{-1}, r, r' \rangle$ the set of non-commutative polynomials with coefficients in $S$. 

The set of relations, $R$, is the two-sided ideal generated by: 
\begin{gather} \label{relations: prop7.1}
    x x^{-1} = 1 = x^{-1} x, \quad y y^{-1} = 1 = y^{-1} y, \quad x = 1+aa^*, \quad y = 1+a^* a, \\ \label{relations_more: prop7.1}
    r = xy^{-1} - q, \quad r' = yx^{-1} - q^{-1}. 
\end{gather} 
So we have the presentation $L \cong \mathcal{P}/R$ and hence $\Lambda^{q}(\widetilde{A}_{n-1}) \cong \mathcal{P}/(R, r)$. \\
\\
The idea of the proof is to produce a basis of the \emph{quotient} $L = \mathcal{P} /R$ by realizing it as an $S$-module \emph{subspace} $\mathcal{P}_{\text{irr}} \subset \mathcal{P}$ spanned by \emph{irreducible} monomials, defined below. 

That is, we define an ordering, $\leq$, on the set $\mathcal{M}$. Then we use this ordering to build a system of reductions $\{ r_{i} \}$ from $R$ by reading each relation $R_{i} \in R$ as an $S$-module map, $r_{i}$, taking the leading term lt$(R_{i})$ to the smaller term lt$(R_{i}) - R_{i}$. We extend $r_{i}$ to $\mathcal{M}$ via $a$lt$(R_{i})b \mapsto a($lt$(R_{i}) - R_{i})b$ for $a, b \in \mathcal{M}$. We say $m \in \mathcal{M}$ is \emph{irreducible} (or in normal form) if every reduction is the identity on $m$ or, equivalently, if $m$ doesn't contain the leading term of any relation as a submonomial.

We will show that every $m \in \mathcal{M}$, reduces \emph{uniquely} to normal form, $m' \in \mathcal{P}_{\text{irr}}$, after applying \emph{finitely} many reductions. This implies the $S$-module map $\operatorname{r}: \mathcal{P} \rightarrow \mathcal{P}_{\text{irr}}$ given by $S$-linear extension of $m \mapsto m'$ is well-defined. Hence $\operatorname{r}$ splits the inclusion map $\mathcal{P}_{\text{irr}} \rightarrow \mathcal{P}$. As $\ker(\operatorname{r}) = R$, we conclude that $\operatorname{r}$ induces an $S$-module isomorphism $L \cong \mathcal{P}_{\text{irr}}$ and the set of irreducible monomials gives our desired basis.\\
\\
First we equip $\mathcal{M}$ with an ordering. Fix $w, z, z' \in \mathcal{M}$ and subsets $Z, Z' \subset \mathcal{M}$. Define,
\begin{align} \label{eq: number of occurences}
n_z(w) &:= \text{ the number of occurrences of } z \text{ in } w. \\
n_{z,z'}(w) &:= \text{ the number of occurrences of } z \text{ and } z' \text{ in } w \text{ with } z \text{ appearing before } z'  \label{eq: number of occurences 2}\\
n_{Z}(w) &:= \sum_{z \in Z} n_{z}(w) \hspace{1cm} \text{and} \hspace{1cm} n_{Z,Z'} := \sum_{z \in Z, z' \in Z'} n_{z,z'}.  \label{eq: number of occurences 3}  
\end{align}
Define a function $N: \mathcal{M} \rightarrow \bN^{5}$ taking $w$ to 
\begin{equation}
N(w) := (n_a(w), n_{\{a,a^*\},\{x,x^{-1},y,y^{-1}\}}(w), n_{\{ax, ax^{-1}\}}(w), n_{\{y,y^{-1}\}}(w), n_{\{r, r' \}}(w)) \in \bN^{5}.
\end{equation}
Define the ordering $w' \leq w$ in $\mathcal{M}$ if $N(w') \leq N(w)$ in the lexicographical ordering on $\bN^{5}$. This induces an ordering on $\mathcal{P}$, by extending $N$ to $\mathcal{P}$, via $N(\sum_{i} m_{i}) := \text{max}_{i} \{ N(m_{i}) \}$.\\
\\
Next, using this ordering, we define a system of reductions from the relations in \ref{relations: prop7.1}, \ref{relations_more: prop7.1}: 
\begin{itemize}
    \item Inverse Reductions: $x x^{-1}, \  x^{-1} x, \  y y^{-1}, \  y^{-1} y \mapsto 1$.
\item Short Cycle Reductions: $a a^* \mapsto x-1, \  a^* a \mapsto y-1$.
\item Reordering Reductions: $a^* x^{\pm 1} \mapsto y^{\pm 1} a^*, \  a y^{\pm 1} \mapsto x^{\pm 1} a$.
\item Substitution Reductions: $y^{-1} \mapsto x^{-1} (r +q), \ y \mapsto (r' + q^{-1}) x$ (if not preceded by $a$); \\ 
\textcolor{white}{Substitution Reductions:} $a x \mapsto a(r+q)y, \ ax^{-1} \mapsto a y^{-1} (r' + q^{-1})$.
\item Reductions in $B$: $rr', \ r'r \mapsto -q r' - q^{-1} r$.
\end{itemize} 
By design, if $w'$ is obtained from $w$ by applying a reduction, then $N(w') < N(w)$. This implies that any sequence of reductions terminates in finitely many steps, by the descending chain condition for the lexicographical ordering on $\bN^{5}$.

Next observe that under this reduction system $m \in \mathcal{M}$ is in normal form (or irreducible) if and only if it is alternating in $\fB$ and $\fR$. Therefore, the set of alternating words in $\fB$ and $\fR$ is a spanning set. It remains to show that $m \in \mathcal{M}$ reduces uniquely to normal form, which establishes linear independence. 






To prove uniqueness, we need to show whenever $w$ reduces to $r_{1}(w)$ and $r_{2}(w)$ that each further reduces to the same irreducible $w'$. Bergman's Diamond Lemma says to show uniqueness for all monomials $w$ it suffices to show uniqueness for specific $w =xyz$ where $xy$ and $yz$ are both leading terms for a relation in \ref{relations: prop7.1}, \ref{relations_more: prop7.1}, \cite[Theorem 1.2]{Bergman}. These $w$ are called \emph{overlap ambiguities}. If the two reduced expressions of $w = xyz$ (i.e $r_{1}(xy)z$ and $x r_{2}(yz)$) both further reduce to the same $w'$, we say the overlap ambiguity \emph{resolves}. To complete the proof it suffices to show all overlap ambiguities resolve.

Next, notice that any unresolvable ambiguity involving $y^{\pm 1}$ gives rise to an unresolvable ambiguity not involving $y^{\pm 1}$ by applying the Substitution or Reordering Reductions. So it suffices to check ambiguities in the following smaller system of reductions:
\begin{multicols}{3}
\noindent Inverse Reductions:
\begin{itemize}
\item[(1)] $x x^{-1} \xmapsto{r_{1}} 1$
\item[(2)] $ x^{-1} x \xmapsto{r_{2}} 1$
\end{itemize}
Short Cycle Reductions:
\begin{itemize}
\item[(3)] $a a^*\xmapsto{r_{3}} x-1$
\item[(4)] $a^* a \xmapsto{r_{4}} (r' + q^{-1}) x - 1$
\end{itemize} 
Reordering Reductions:
\begin{itemize}
\item[(5)] $a^* x \xmapsto{r_{5}} (r' + q^{-1}) x a^*$
\item[(6)] $ a x \xmapsto{r_{6}} qxa 
- q a r' x$
\item[(7)] $ax^{-1} \xmapsto{r_{7}}  \\ 
 \textcolor{white}{ax-1} x^{-1} a (r' + q^{-1})$
\item[(8)] $a^* x^{-1} \xmapsto{r_{8}}  x^{-1} (r+q) a^*$
\end{itemize}
 Substitution Reductions:
 \begin{itemize}
     \item[(9)]  $y^{-1} \mapsto x^{-1} (r + q)$
     \item[(10)] $y \mapsto (r' + q^{-1}) x$
 \end{itemize}
 Reductions in $B$:
 \begin{itemize}
     \item[(11)] $rr' \mapsto -q r' - q^{-1} r$
     \item[(12)] $r'r \mapsto -q r' - q^{-1} r$
 \end{itemize}
\end{multicols}
 

 The Substitution Reductions and Reductions in $B$ don't overlap with any others, so the only overlap ambiguities are amongst the (1)--(8), involving the generators $a,a^*,x,x^{-1}$ only. The Inverse, Short Cycle, and Reordering Reductions are quadratic in these generators giving rise to the following 12 cubic overlap ambiguities:
 



\begin{multicols}{4}
\begin{itemize}
    \item [(I)]  $x x^{-1} x$    \item[(II)]  $x^{-1} x x^{-1}$  \item[(III)]  $a a^* a$ 
    \item[(IV)]  $a^* a a^*$     \item[(V)]   $a^* x x^{-1}$     \item[(VI)] $a x x^{-1}$  \item[(VII)] $a^* x^{-1} x$  \item[(VIII)]$a x^{-1} x$       \item[(IX)]  $a a^* x$
    \item[(X)] $a^* a x$      \item[(XI)] $a a^* x^{-1}$       \item[(XII)] $a^* a x^{-1}$. 
\end{itemize}
\end{multicols}
The resolution of (I) and (II) are immediate (and are completely general, having to do with a basis for $k[x,x^{-1}]$). Here is a summary of the remaining resolutions of ambiguities:
$$
\begin{array}{ll}
\text{(III) } (r_3 - r_6 \circ r_4)(aa^* a) = 0 & \text{(VIII) } (r_{8} \circ r_{7} - r_{2})(a x^{-1} x) = 0 \\
\text{(IV) } (r_4 - r_{5} \circ r_{3})(a^* a a^*) = 0 & \text{(IX) } (r_3 - r_3 \circ r_6 \circ r_5)(a a^* x) = 0 \\
\text{(V) } (r_{8} \circ r_{5} - r_{1})(a^* x x^{-1}) = 0 & \text{(X) } (r_4 - r_4 \circ r_{4} \circ r_{5} \circ r_{6})(a^* a x) = 0 \\
\text{(VI) } (r_{7} \circ r_{6} - r_{1})(a x x^{-1}) = 0 & \text{(XI) } (r_3 - r_{3} \circ r_{7} \circ r_{8})(a a^* x^{-1}) = 0 \\
\text{(VII) } (r_{5} \circ r_{8} - r_{2})(a^* x^{-1} x) = 0 & \text{(XII) } (r_4 - r_{4} \circ r_{8} \circ r_{7})(a^* a x^{-1}) = 0.
\end{array}
$$
We explicitly demonstrate (X), one of the more involved resolutions:
\begin{align*}
a^* a x &= (a^* a) x \xmapsto{r_4} [(r'+q^{-1}) x - 1]x = (r'+q^{-1}) x^2 - x 
\intertext{and}  
a^* a x &= a^* (ax) \xmapsto{r_6} a^* (q x a - q a r' x) \\
&\xmapsto{r_4 \circ r_5} q  (r'+q^{-1}) x a^* a - q ((r'+q^{-1}) x - 1 ) r' x \\
&\xmapsto{r_4} q (r'+q^{-1}) x ((r' + q^{-1}) x - 1) - q ((r'+q^{-1}) x - 1) r' x \\
&= q (r'+q^{-1}) x (q^{-1} x - 1) + q r' x = (r'+q^{-1}) x^2 - x.
\end{align*}

\end{proof}
\begin{rem}\label{r:overline-b-matters}
The choice of $\overline{B}$ was important here. If we instead had defined it so that $(q+t)^{-1} \in \overline{B}$, i.e., if we replace $r'=(q+r)^{-1} - q^{-1} \in \fR$ by $(q+r)^{-1}$, then our desired basis would no longer be linearly independent. Indeed, reducing $a a^* a$ one way, we get $(x-1)a = xa - a$, which is irreducible, whereas the other way we get $a(y-1)=a(q+r)^{-1} x - a$, also irreducible. That is, $xa = a(q+r)^{-1} x$, an equality of two distinct irreducible elements.
\end{rem}


\begin{prop} \label{prop: prime}
$\Lambda^{q}(\widetilde{A_n})$ is prime for all $n \geq 0$ and all $q \in (k^{\times})^{n+1}$.
\end{prop}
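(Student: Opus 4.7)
The plan is to exploit the natural $\bZ$-grading on $\Lambda^q(\widetilde{A_n})$ defined by $\deg a = 1$, $\deg a^* = -1$, and $\deg x^{\pm 1} = 0$. Since the defining relation $xy^{-1} - q$ is homogeneous of degree zero (as both $x = 1+aa^*$ and $y = 1+a^*a$ have degree $0$), this grading descends from $L$ to $\Lambda^q(\widetilde{A_n})$. By Proposition \ref{prop: basis for An}, the degree-$d$ component of $e_u \Lambda^q(\widetilde{A_n}) e_v$ is nonzero exactly when $v \equiv u + d \pmod{n+1}$, in which case it is a free rank-one $k[x_u, x_u^{-1}]$-module generated by $e_u a^d$ (for $d \geq 0$) or $e_u (a^*)^{-d}$ (for $d \leq 0$).

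I would first reduce primeness to a statement about homogeneous elements. Given nonzero $f, g \in \Lambda^q(\widetilde{A_n})$, decompose into homogeneous pieces $f = \sum_d f_d$ and $g = \sum_d g_d$, and let $M_f$, $M_g$ denote the maximal degrees with nonzero component. For any homogeneous $h$ of degree $e$, the degree $M_f + e + M_g$ component of $fhg$ is exactly $f_{M_f} h g_{M_g}$, since no other product can reach that degree. So it suffices to produce, for each pair of nonzero homogeneous $f, g$, a homogeneous $h$ with $fhg \neq 0$.

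For the homogeneous step, write nonzero homogeneous $f = p(x_u) e_u b_f$ and $g = q(x_{v'}) e_{v'} b_g$ where $p, q \in k[x, x^{-1}]$ are nonzero Laurent polynomials and $b_f, b_g$ are single basis monomials of the form $a^i$ or $(a^*)^j$. Take $h = e_v a^N$ (or $e_v (a^*)^N$, matching signs to the situation) with $N$ large enough and chosen so $v + N \equiv v' \pmod{n+1}$. To evaluate $fhg$, use the identities valid in $\Lambda^q(\widetilde{A_n})$: the commutation rules $ax = qxa$ and $a^* x = q^{-1} x a^*$ (which come from the reordering reductions of Proposition \ref{prop: basis for An} upon setting $r = r' = 0$), together with $aa^* = x - 1$ and $a^*a = q^{-1} x - 1$. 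A short induction using these gives, at each vertex $v$,
\[
a^m (a^*)^m = \prod_{k=1}^m (Q_k^{(v)} x_v - 1), \qquad Q_k^{(v)} \in k^\times,
\]
and a symmetric formula for $(a^*)^m a^m$; both are manifestly nonzero Laurent polynomials in $x_v$. Moving all $x$-factors to the left past the $a$'s and $a^*$'s, then collapsing any opposing pair via the cancellation formulas, reduces $fhg$ to a single basis element multiplied by a product of nonzero Laurent polynomials in $x_u$. Since $k[x_u, x_u^{-1}]$ is an integral domain, this product is nonzero, hence $fhg \neq 0$.

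The main obstacle will be the vertex-by-vertex bookkeeping: when $x$ is transposed past a long string of arrows around the cycle via $a_j x_{j+1} = q_j x_j a_j$ and $a_j^* x_j = q_{j+1}^{-1} x_{j+1} a_j^*$, one accumulates a scalar depending on both the length of the string and the starting vertex, and one must carefully verify that the resulting scalars $Q_k^{(v)}$ remain in $k^\times$ for every $q \in (k^\times)^{n+1}$. Because each $q_i \in k^\times$, every accumulated scalar lies in $k^\times$, so the cancellation polynomials are nonzero and the argument runs uniformly in $n$ and $q$.
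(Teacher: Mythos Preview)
Your argument is correct and rests on the same ingredients as the paper's proof: the basis of Proposition~\ref{prop: basis for An}, the $q$-commutation $a x = (\text{unit})\cdot xa$ in $\Lambda^q$, and the fact that each homogeneous vertex-piece is free of rank one over the Laurent ring $k[x_u^{\pm 1}]$. The paper packages this by multiplying $f$ and $g$ into $e_0 \Lambda e_0$ and then extracting a lowest-$a^{n+1}$-degree term of the product; you instead pass directly to top-degree homogeneous components via the $\bZ$-grading and compute $fhg$ explicitly. The two routes are equivalent, and yours is arguably the cleaner organization.

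One point to tighten: the top-degree piece $f_{M_f}$ of a general $f$ need not be supported at a single source vertex, so before writing $f = p(x_u)\, e_u b_f$ you should first fix $u$ with $e_u f_{M_f} \neq 0$ and $v'$ with $e_{v'} g_{M_g} \neq 0$; taking $h = e_v a^N$ with $v \equiv u + M_f$ and $v + N \equiv v'$ then isolates exactly these vertex-components in $fhg$, after which your rank-one computation applies. (A minor index slip: from $x = qy$ and $ay = xa$ one gets $a_j x_{j+1} = q_{j+1} x_j a_j$ rather than $q_j$, but as you observe only membership in $k^\times$ is needed.)
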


\begin{proof}
We need to show, for every pair $f, g  \in \Lambda^{q}(\widetilde{A_n})$, both nonzero, there exists some $h \in \Lambda^{q}(\widetilde{A_n})$ such that $fhg \neq 0$. It suffices to show that there exists vertices $i, j$ and $h$ such that $e_{i}fe_{j} h g  \neq 0$, and hence we can take $f$ to be a linear combination of basis elements that all begin at $i$ and end $j$. By right multiplication by $a^{n-j}$ or $(a^*)^{j}$, one can take $f$ to be a linear combination of basis elements ending at vertex 0. By left multiplication by $a^{i}$ or $(a^*)^{n-i}$ and then applying Reordering Reductions---the $q$-commutator, $ax - qxa$ is zero, for instance, in $\Lambda^{q}(\widetilde{A_n})$---one can take $f$ to be a linear combination of basis elements starting and ending at vertex 0. In fact, $f$ is of the form $e_0 f_1(x,x^{-1})f_2(a^{n+1})$, where $f_1 \neq 0$ and $f_2$ has nonzero constant term. And similarly, we can take $g =e_0 g_1(x,x^{-1})g_2(a^{n+1}) $. Then their product has nonzero term $e_{0}f_1(x,x^{-1})f_{2}(a^{n+1})(0) g_1(x,x^{-1})g_{2}(a^{n+1})(0)$ and hence is nonzero.

\end{proof}

\subsection{Partial multiplicative preprojective algebras} \label{ss:pmpa}

First we define a \emph{partial multiplicative} preprojective algebra following the definition of a partial preprojective algebra by Etingof and Eu in \cite[Definition 3.1.1]{EE}.
\begin{defn}
Fix a quiver $Q$ and $q \in (k^{*})^{Q_{0}}$. Define a partition of the vertex set $Q_{0} = \cB \sqcup \cW$ into a set $\cB$ of \emph{black} vertices and a set $\cW$ of \emph{white} vertices. The \emph{partial multiplicative preprojective algebra} of $(Q, \cW)$ is 
\[
\Lambda^{q}(Q,\cW) := L / (r_{\cB}), \quad \text{where}\quad  r_{\cB} := 1_{\cB} r 1_{\cB}, \quad \text{for} \quad 1_{\cB} := \sum_{j \in \cB} e_j.
\]
\end{defn}
In words, we don't enforce the relations at the white vertices. 
Hence this algebra interpolates between $\Lambda^{q}(Q, Q_{0})= L$ and $\Lambda^q(Q,\emptyset) = \Lambda^q(Q)$. 

\begin{defn}
Let $Q$ be a quiver and let $\Gamma$ be its underlying graph. Fix $\mathcal{R} \subset Q_{0}$.
\begin{itemize}
\item A subgraph $T \subset \Gamma$ is a \emph{tree} if it is connected and acyclic. 
\item A tree $T \subset \Gamma$ is \emph{rooted in $\mathcal{R}$} if it has a single vertex, called the root, in $\mathcal{R}$.
\item A \emph{forest rooted in $\mathcal{R}$} is a disjoint union of trees \emph{rooted in $\mathcal{R}$}.
\item A subgraph $S \subset \Gamma$ is \emph{spanning} if the vertex set of $S$ is $Q_{0}$.
\end{itemize}
\end{defn}

Notice that every doubled quiver $\overline{Q}$ with $\cW \subset Q_{0}$ non-empty has a spanning forest, $F$, rooted in $\cW$. We view such an $F$ as a subquiver of $\overline{Q}$ by orienting the arrows \emph{towards} the roots, see Figure \ref{fig: spanning forest}. Since the isomorphism class of $\Lambda^{q}(Q)$ is independent of the orientation of $Q$, see Remark \ref{rem: independence}, we can assume that $F_{1} \subset Q_{1}$.

Let $B := \cB[t, (t+q)^{-1}]$. 
Each choice of spanning forest of $\overline{Q}$ rooted at $\cW$ gives rise to a linear isomorphism $\sigma':\Lambda^{q}(Q, \cW)*_{kQ_{0}} B \rightarrow L$ and hence a basis for $\Lambda^{q}(Q, \cW) =L / (r_{\cB})$.

\begin{prop}\label{p:pmpa-basis} 
Let $Q$ be a connected quiver and $Q_0 = \cB \sqcup \cW$ a decomposition into black and white vertices with $\cW \neq \emptyset$. Then
$(L,r_\cB, \sigma, B)$  satisfies the strong free product property for some choice of $\sigma$.

In more detail, let $F \subset \overline{Q}$ be a spanning forest rooted in $\cW$ with arrows $F_{1} \subset Q_{1}$ directed towards the roots. 

A basis for $L$ is given by concatenable words in the set,
\[
\{ a, \ x_{a}, \ x_{a}^{-1} \mid a \in \overline{Q}_1 \} \cup \{r_{\cB}, \  r'_{\cB}:=(q+r_{\cB})^{-1} - q^{-1}\},
\]
such that the following subwords do not occur:
\[
\begin{array}{cccccl}
x_a x_a^{-1}, &  x_a^{-1} x_a, &  a a^*, &  a x_{a^*}^{\pm 1}, && \text{ for } a \in \overline{Q}_1 \\  
x_{a}^{\pm 1}, & x_{a^*}^{-1}, & x_{a^*}a^*, & x_{a^*}^2, && \text{ for } a \in F_1 \\
r_{\cB}r_{\cB}', & r_{\cB}'r_{\cB} &&&&
\end{array}
\]
The words in which $r_{\cB}$ and $r_{\cB}'$ do not occur form a basis for $\Lambda^q(Q,\cW) = L/(r_{\cB})$, and
the section $\sigma$ is given by the inclusion of these elements.
\end{prop}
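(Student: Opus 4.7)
The plan is to extend the Diamond Lemma argument of Proposition \ref{prop: basis for An} to this setting, with a reduction system tailored to the chosen spanning forest $F$. First, I would present $L$ as $\mathcal{P}/R$, where $\mathcal{P}$ is the free $S$-algebra (with $S := kQ_0$) on generators $\{a, x_a, x_a^{-1}\}_{a \in \overline{Q}_1} \cup \{r_\cB, r_\cB'\}$, and $R$ is the two-sided ideal encoding: the inversion relations $x_a x_a^{-1} = e_{t(a)} = x_a^{-1} x_a$; the short-cycle relations $a a^* = x_a - e_{t(a)}$ (so that $x_a$ plays the role of $g_a$); Shaw's commutation rules $a x_{a^*}^{\pm 1} = x_a^{\pm 1} a$ (consequences of \eqref{identity: g}); the $B$-identity $r_\cB' = (q + r_\cB)^{-1} - q^{-1}$ giving reductions $r_\cB r_\cB', r_\cB' r_\cB \mapsto -q r_\cB' - q^{-1} r_\cB$; and, for each $a \in F_1$, a \emph{substitution reduction} eliminating $x_a$, obtained by solving the multiplicative relation $e_{t(a)}\rho = q_{t(a)} e_{t(a)} + e_{t(a)} r_\cB$ at the black vertex $t(a)$ for $x_a$ in terms of the other $x_b^{\pm 1}$'s incident at $t(a)$ and of $r_\cB$. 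The forest hypothesis guarantees that each black vertex is the tail of exactly one arrow of $F_1$, so these substitutions are consistent and non-overlapping.

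Next, I would equip the free monoid with a lexicographic ordering valued in $\bN^N$, generalising the $\bN^5$-valued invariant $N$ of Proposition \ref{prop: basis for An}. Its components, in decreasing priority, should count: the total number of arrow letters (decreased by short-cycle reductions); a weighted count of arrows preceding $x$-letters (decreased by commutations); the occurrences of $x_a^{\pm 1}$ and of the forbidden patterns $x_{a^*}^{-1}, x_{a^*} a^*, x_{a^*}^2$ for $a \in F_1$, weighted by the depth of $t(a)$ in its tree; and the number of $r_\cB, r_\cB'$ letters. The depth-weighting, well-defined because $F$ is acyclic, ensures strict decrease on substitution even when the substitution for $x_a$ temporarily reintroduces $x_b^{\pm 1}$ at a shallower arrow $b \in F_1$. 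The irreducible monomials are then precisely the words described in the statement, and the linear isomorphism $\sigma': \Lambda^q(Q,\cW) *_S B \to L$ is read off by sending each basis element to its unique irreducible representative in $L$; quotienting by $r_\cB$ eliminates precisely the words containing $r_\cB$ or $r_\cB'$, and the restriction of $\sigma'$ to words without $r_\cB, r_\cB'$ gives the section $\sigma$. This proves the strong free product property for $(L, r_\cB, \sigma, B)$.

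The technical heart will be verifying that every overlap ambiguity resolves, via Bergman's Diamond Lemma \cite{Bergman}. Most ambiguities are localised at a single pair $(a, a^*)$ and are formally identical to the twelve overlaps (I)--(XII) resolved in the proof of Proposition \ref{prop: basis for An}. The genuinely new ambiguities come from the substitution reductions at $a \in F_1$: the leading term $x_a$ overlaps with the commutation rules $x_a a \mapsto a x_{a^*}$, with the short-cycle rule $a a^* \mapsto x_a - e_{t(a)}$, and with substitutions at neighbouring forest arrows sharing the vertex $t(a)$. Each resolves because the substitution rule is derived from a genuine equality in $L$, so both branches of the overlap reduce to the same expression by Shaw's identities \eqref{identity: g} and \eqref{identity: rho}. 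The main obstacle I anticipate is calibrating the ordering so that the secondary reductions for the $x_{a^*}$-patterns remain order-decreasing after an $x_a$-substitution has been applied; the forest condition, which prevents cyclic dependencies between substitutions, is precisely what makes this bookkeeping tractable.
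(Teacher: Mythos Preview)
Your approach is essentially the paper's: a Diamond Lemma argument with substitution reductions indexed by $F_1$ and a tree-depth ordering on arrows. Two corrections are needed.

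First, the depth direction is reversed. Substituting $x_a^{\pm 1}$ for $a \in F_1$ uses the relation at the black vertex $t(a)$, which involves $x_c$ only for arrows $c$ with $t(c)=t(a)$; among $\overline{F}_1$-arrows these are $b^*$ for $b \in F_1$ with $h(b)=t(a)$, i.e., $b$ a \emph{child} of $a$ in the forest. So substitution reintroduces $x$-letters at \emph{deeper} arrows, not shallower ones. The correct ordering (as in the paper) puts arrows closer to $\cW$ earlier in the lexicographic priority, so that decreasing the weight at $a$ while increasing it at deeper $b \succ a$ is still a strict decrease. With ``shallower'' as you wrote it, the termination argument fails.

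Second, merely counting the patterns $x_{a^*}^{-1}$, $x_{a^*}a^*$, $x_{a^*}^2$ in the ordering is not enough: you need explicit reduction rules for them, or words containing them would be irreducible and the basis description would be wrong. The paper derives these from $x_a = \red_a$ together with $x_{a^*}a^* = a^* x_a$: for $a \in F_1$ one sets $x_{a^*}^{-1} \mapsto 1 - a^*\,\red_a^{-1} a$, $x_{a^*}^2 \mapsto x_{a^*} + a^*\,\red_a\, a$, and $x_{a^*}a^* \mapsto a^*\,\red_a$. Once these are in place, the overlap ambiguities---thirteen of them, labelled (I)--(XIII)---are \emph{not} formally identical to the twelve of Proposition~\ref{prop: basis for An}; several, such as $a x_{a^*}^2$, $x_{a^*}^2 a^*$, and $x_{a^*} a^* a$ for $a \in F_1$, are genuinely new and need their own resolution check.
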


\begin{figure}

\begin{tikzpicture}[scale=.9]
\draw (0,0) circle [radius=0.125]; 
\draw (2,0) circle [radius=0.125]; 
\draw (4,0) circle [radius=0.125]; 
\draw[fill] (1,1) circle [radius=0.125]; 
\draw[fill] (3,1) circle [radius=0.125]; 
\draw[fill] (2,2) circle [radius=0.125]; 

\draw[->, thick] (0,.15) to [out=75,in=200] (.85, 1); 
\draw[gray, ->, thick] (1,.85) to [out=250, in=20] (.15, 0); 

\draw[->, thick] (1.85,.1) to [out=158, in=292] (1.15, .85); 
\draw[gray, ->, thick] (1.2,.95) to [out=338, in=112] (1.85, .20); 

\draw[->, thick] (2.15,0) to [out=22, in=158] (3.85, 0);  
\draw[gray, ->, thick] (3.85,-.15) to [out=202, in=338] (2.15, -.15); 

\draw[->, thick] (2,.2) to [out=68, in=210] (2.85, .9); 
\draw[gray, ->, thick] (3, .8) to [out=268, in=22] (2.15, .15); 

\draw[->, thick] (3,1.15) to [out=112, in=338] (2.15, 2);  
\draw[gray, ->, thick] (2.05, 1.8) to [out=292, in=150] (2.85, 1.1); 

\draw[thick,->] (2, 2.25) arc (-45:280:3mm); 
\draw[gray, thick,->] (1.75, 1.95) arc (45:370:3mm);

\end{tikzpicture} \hspace{1cm}
\begin{tikzpicture}[scale=.9]
\draw[forest] (0,0) circle [radius=0.125]; 
\draw[forest] (2,0) circle [radius=0.125]; 
\draw[forest] (4,0) circle [radius=0.125]; 
\draw[forest, fill=forest] (1,1) circle [radius=0.125]; 
\draw[forest, fill=forest] (3,1) circle [radius=0.125]; 
\draw[forest, fill=forest] (2,2) circle [radius=0.125]; 

\draw[->, thick] (0,.15) to [out=75,in=200] (.85, 1); 
\draw[forest, ->, thick] (1,.85) to [out=250, in=20] (.15, 0); 

\draw[->, thick] (1.85,.1) to [out=158, in=292] (1.15, .85); 
\draw[->, thick] (1.2,.95) to [out=338, in=112] (1.85, .20); 

\draw[->, thick] (2.15,0) to [out=22, in=158] (3.85, 0);  
\draw[->, thick] (3.85,-.15) to [out=202, in=338] (2.15, -.15); 

\draw[->, thick] (2,.2) to [out=68, in=210] (2.85, .9); 
\draw[forest, ->, thick] (3, .8) to [out=268, in=22] (2.15, .15); 

\draw[->, thick] (3,1.15) to [out=112, in=338] (2.15, 2);  
\draw[forest, ->, thick] (2.05, 1.8) to [out=292, in=150] (2.85, 1.1); 

\draw[thick,->] (2, 2.25) arc (-45:280:3mm);
\draw[thick,->] (1.75, 1.95) arc (45:370:3mm);

\end{tikzpicture} \hspace{1cm}
\begin{tikzpicture}[scale=.9]
\draw[forest] (0,0) circle [radius=0.125]; 
\draw[forest] (2,0) circle [radius=0.125]; 
\draw[forest] (4,0) circle [radius=0.125]; 
\draw[forest, fill=forest] (1,1) circle [radius=0.125]; 
\draw[forest, fill=forest] (3,1) circle [radius=0.125]; 
\draw[forest, fill=forest] (2,2) circle [radius=0.125]; 

\draw[->, thick] (0,.15) to [out=75,in=200] (.85, 1); 
\draw[->, thick] (1,.85) to [out=250, in=20] (.15, 0); 

\draw[->, thick] (1.85,.1) to [out=158, in=292] (1.15, .85); 
\draw[forest, ->, thick] (1.2,.95) to [out=338, in=112] (1.85, .20); 

\draw[->, thick] (2.15,0) to [out=22, in=158] (3.85, 0);  
\draw[->, thick] (3.85,-.15) to [out=202, in=338] (2.15, -.15); 

\draw[->, thick] (2,.2) to [out=68, in=210] (2.85, .9); 
\draw[forest, ->, thick] (3, .8) to [out=268, in=22] (2.15, .15); 

\draw[->, thick] (3,1.15) to [out=112, in=338] (2.15, 2);  
\draw[forest, ->, thick] (2.05, 1.8) to [out=292, in=150] (2.85, 1.1); 

\draw[thick,->] (2, 2.25) arc (-45:280:3mm);
\draw[thick,->] (1.75, 1.95) arc (45:370:3mm);

\end{tikzpicture}
\caption{The quiver on the left is a doubled quiver, obtained by adding the grey arrows. It has three white vertices and three black vertices. The middle and right diagrams show two inequivalent spanning forests, in green, with roots at the white vertices.} 
\label{fig: spanning forest}
\end{figure}
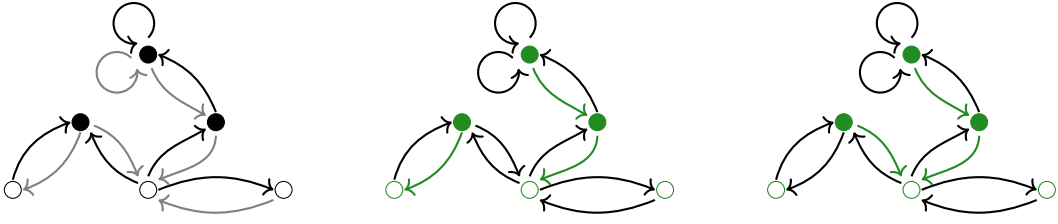

\begin{proof}
The proof parallels that of Proposition \ref{prop: basis for An}. 
Write $r:=r_{\cB}$ and $r':= r'_{\cB}$. 

Note that $L$ is spanned by the set, $\mathcal{M}$, of concatenable words in $\{ a, x_{a}, x_{a}^{-1}, r, r' \mid a \in \overline{Q}_{1} \}$. These words are subject to the following relations, depending on a choice of ordering $\leq$ on the arrows $a \in \overline{Q}_{1}$:
\begin{gather}
    x_{a} x_{a}^{-1} = 1 = x_{a}^{-1} x_{a}, 
    \quad  x_{a} = 1 + a a^*, \\
     r = \prod_{ \scalemath{0.65}{\begin{array}{c} a \in \overline{Q} \\ t(a) \in \cB \end{array}} } x_{a}^{\epsilon(a)} 
     - q, \quad r' = \prod_{\scalemath{0.65}{\begin{array}{c} a \in \overline{Q} \\ t(a) \in \cB \end{array}} } x_a^{-\epsilon(a)}
     - q^{-1} \\
     rr' = r'r = -q r' - q^{-1} r,
\end{gather}
where recall we write $t(a)$ for the \emph{tail} or source of $a$, not the target. Define 
\[
l_{a} := \prod_{  \scalemath{0.65}{\begin{array}{c} b \in \overline{Q} \\ b < a, \ t(b) \in \cB \end{array}} } x_{b}^{\epsilon(b)}  \quad \quad  \text{and} \quad \quad r_{a} := \prod_{ \scalemath{0.65}{\begin{array}{c} b \in \overline{Q} \\ b > a, \ t(b) \in \cB \end{array}}} x_{b}^{\epsilon(b)}.
\]
So for $a \in \overline{Q}_{1}$ with $t(a) \in \cB$ we have the relation 
$$
l_{a} (1+ a a^*)^{\epsilon(a)} r_{a} = (r+q)e_{t(a)} \  \implies \  x_{a}^{\epsilon(a)}= l_{a}^{-1} (r+q)(e_{t(a)}) r_{a}^{-1}.
$$
Hence in $L$, define $\red_{a}^{\epsilon(a)} := l_a^{-1} (r+q)(e_{t(a)}) r_a^{-1}$.
 
 We implement the above relations with the following reductions:
\begin{itemize}
    \item Inverse Reductions: $x_{a} x_{a}^{-1}, x_{a}^{-1} x_{a} \mapsto 1$ for $a \in \overline{Q}_{1}$.
    \item Short Cycle Reductions: $a a^* \mapsto x_{a}-1$ for $a \in \overline{Q}_{1}$.
    \item Reordering Reductions: $a^* x_{a}^{\pm} \mapsto x_{a^*}^{\pm} a^*$ for $a \in \overline{Q}_{1}$. 
    \item Substitution Reductions: $x_{a}^{\pm} \mapsto \red_{a}^{\pm}, \quad x_{a^*}^{-1} \mapsto 1- a^* \red_{a}^{-1} a$, \\ 
    \textcolor{white}{Substitution Reductions:} $x_{a^*}^{2} \mapsto x_{a^*} + a^* \red_{a} a, \quad x_{a^*} a^* \mapsto a^* \red_{a}, \text{ for } a \in F_{1}$ 
    \item Reductions in $B$: $rr', r'r \mapsto -q r' - q^{-1} r$.
\end{itemize}


For each word $w \in \mathcal{M}$, use the definition in (\ref{eq: number of occurences 2}) to define a weighted size,
$$
\varphi_a(w)  := n_{ \{ a, a^* \}}(w) + \frac{3}{2} n_{ \{ x_{a}, x_{a^*} \}}(w) + 3 n_{ \{ x_{a}^{-1}, x_{a^*}^{-1} \}}(w) 
$$
for each $a \in \overline{Q}_{1}$. Define a total ordering on the arrows $(\overline{Q}_{1}, \prec)$ such that,
\begin{align*}
a \prec a' &\text{ if } a \in F_1, a' \in \overline{Q}_{1} \setminus F_{1}, \\
&\text{ or if } a, a' \in F_{1} \text{ with } a' \text{ disconnected from } \cW \text{ in } F_{1} \setminus \{ a \}.
\end{align*}
Intuitively, we are saying that arrows in the spanning forest come before the rest in the ordering, with arrows closer to the white vertices coming first.  Using $\prec$, $\varphi_{a}$, and (\ref{eq: number of occurences 2}), (\ref{eq: number of occurences 3}) define 
\[
N': \mathcal{M} \rightarrow \bN^{(\overline{Q}_{1}, \prec)} \times \bN^{2}  \quad \quad
w \mapsto (2 \varphi_{a}(w), n_{ \{ a \mid a \in \overline{Q}_{1} \}, \{x_{a} \mid a \in \overline{Q}_{1} \}}(w), n_{\{r, r'\}}(w)),
\]
from which we say $w \leq w'$ if $N'(w) \leq N'(w')$ in the lexicographical ordering on $\bN^{|\overline{Q}_{1}| +2}$. 

Notice, as in Proposition \ref{prop: basis for An}, that $N'(r_{i}(w)) < N'(w)$ for any word $w$ and reduction $r_{i}$ with $r_{i}(w) \neq w$. First notice that, by design, $\varphi_{a}$ decreases under the following reductions:
\begin{itemize}
    \item Inverse Reductions: $\varphi_{a}(x_{a} x_{a}^{-1}) = \varphi_{a}( x_{a}^{-1} x_{a})= 3+ 3/2  > 0 = \varphi_{a}(1)$   
    \item Short Cycle Reductions: $\varphi_{a}(a a^*) = 2 > 3/2 = \varphi_{a}(x_{a})$ 
    \item Substitution Reductions: $\varphi_{a}(x_{a}) = 3/2  > 0 =\varphi_{a}(\red_{a})$,\\
    \textcolor{white}{Substitution:} 
    $\varphi_{a}(x_{a}^{-1}) = 3  > 0 =\varphi_{a}(\red_{a}^{-1})$, 
    \quad 
    $\varphi(x_{a^*}^{-1}) = 3 >  2 =\varphi_{a}(a^* \red_{a}^{-1} a)$, \\ 
    \textcolor{white}{Substitution:} 
    $\varphi(x_{a^*}^{2}) = 3 > 2 =\varphi_{a}(a^* \red_{a} a) \text{ and } 
    \varphi(x_{a^*}^{2}) = 3 > 3/2 = \varphi_{a}(x_{a^*}),$ \\
    \textcolor{white}{Substitution:}
    $\varphi_{a}(x_{a^*} a^*)= 5/2 > 1 = \varphi_{a}(a^* \red_{a}) $.
\end{itemize}
For the Substitution Reductions observe that $\red_{a}$ for $a \in F_{1}$ has subwords $x_{b}^{\pm 1}, x_{b^*}^{\pm 1}$ for only $b \in F_{1}$ which are necessarily \emph{farther} from the root than $a$, and the remaining arrows are not in the spanning forest. Consequently, $\varphi_{a}$ decreasing---despite $\varphi_{b}$ increasing for some $b \succ a$---implies that $N'$ decreases. The Reordering Reductions preserve all $\varphi_{a}$ but decrease $n_{ \{ a \mid a \in \overline{Q}_{1} \}, \{x_{a} \mid a \in \overline{Q}_{1} \}}$ by definition, and hence decrease $N'$. The Reductions in $B$ preserve all $\varphi_{a}$ and $n_{ \{ a \mid a \in \overline{Q}_{1} \}, \{x_{a} \mid a \in \overline{Q}_{1} \}}$ but decrease $n_{\{r, r'\}}$, hence $N'$.

We conclude that every $w \in \mathcal{M}$ reduces to a $kQ_{0}$-linear combination of words without subwords in the leading terms of the reductions:
$$
\{ x_{a} x_{a}^{-1}, x_{a}^{-1} x_{a}, a a^*, a x_{a^*}, a x_{a^*}^{-1} \mid a \in \overline{Q}_{1} \} \cup \{ x_{a^*}^{-1}, x_{a^*} a, x_{a^*}^{2} \mid a \in F_{1} \}
$$
after applying \emph{finitely} many reductions.

Note that some generators are nonreduced: $x_{a}$, $x_{a}^{-1}$, and $x_{a^*}^{-1}$ for $a \in F_{1}$.  Therefore, we can put in reductions for each of these and throw out all other reductions involving these generators, provided we check that all the defining relations still reduce to zero.
We have the reductions:
\begin{multicols}{2}
\begin{itemize}
    \item[(1)] $x_{a} x_{a}^{-1} \xmapsto{r_{1}} 1$ for $a \in \overline{Q}_{1}$
    \item[(2)] $x_{a}^{-1} x_{a} \xmapsto{r_{2}} 1$ for $a \in \overline{Q}_{1}$
    \item[(3)] $a a^* \xmapsto{r_{3}} \red_{a}-1$ for $a \in F_{1}$ 
    \item[(4)] $a a^* \xmapsto{r_{4}} x_{a} -1$ for $a \notin F_{1}$ 
\end{itemize}
\begin{itemize}
    \item[(5)] $a x_{a^*}^{\pm} \xmapsto{r_{5}} x_{a}^{\pm} a$ for $a \notin \overline{F}_{1}$
    \item[(6)] $a x_{a^*} \xmapsto{r_{6}} \red_{a} a$ for $a \in F_{1}$
    \item[(7)] $x_{a^*}^{2} \xmapsto{r_{7}} x_{a^*} + a^* \red_{a} a$ for $a \in F_{1}$
    \item[(8)] $x_{a^*} a^* \xmapsto{r_{8}} a^* \red_{a}$  for $a \in F_{1}$
\end{itemize}
\end{multicols}
\noindent
which don't overlap with the remaining reductions:
\begin{itemize}
    \item[] Substitution Reductions: $x_a^{\pm 1} \mapsto \red_a^{\pm 1}, x_{a^*}^{-1} \mapsto 1-a^* x_a^{-1} a, \quad a \in F_1$; 
     \item[] Reductions in B: $rr', r'r \mapsto -q r' - q^{-1} r$.
\end{itemize}

As before, reductions (3) and (4) imply the relations $x_a = 1+aa^*$, whereas the Substitution Reductions imply the defining relations for $r, r'$. So this is a valid reduction system.

This reduction system has thirteen ambiguities: 
\begin{multicols}{3}
\begin{itemize}
    \item[(I)] $x_{a} x_{a}^{-1} x_{a}$ for $a \notin \overline{F}_{1}$
    \item[(II)] $x_{a}^{-1} x_{a} x_{a}^{-1}$ for $a \notin \overline{F}_{1}$
    \item[(III)]  $a x_{a^*} x_{a^*}^{-1}$ for $a \notin \overline{F}_{1}$
    \item[(IV)]  $a x_{a^*}^{-1} x_{a^*}$ for $a \notin \overline{F}_{1}$
    \item[(V)] $a x_{a^*}^{2}$ for $a \in F_{1}$
    \item[(VI)] $x_{a^*}^{2} a^*$ for $a \in F_{1}$
    \item[(VII)] $x_{a^*} a^* a$ for $a \in F_{1}$ 
    \item[(VIII)] $a x_{a^*} a^*$ for $a \in F_{1}$
    \item[(IX)] $a^* a x_{a^*}$ for $a \in F_{1}$ 
    \item[(X)] $a^* a x_{a^*}$ for $a \in \overline{Q} \backslash F_{1}$
    \item[(XI)]  $a a^* a$ for $a \in \overline{Q}_{1} \backslash \overline{F}_{1}$ 
    \item[(XII)] $a a^* a$ for $a \in F_{1}$
    \item[(XIII)] $a a^* a$ for $a^* \in F_{1}$
    \item []
    \item [] 
\end{itemize}
\end{multicols}
which all resolve by the resolutions
\begin{multicols}{2}
\begin{itemize}
    \item[(I)] $(r_{1} - r_{2})(x_{a} x_{a}^{-1} x_{a})=0$
    \item[(II)] $(r_{2} - r_{1})(x_{a}^{-1} x_{a} x_{a}^{-1}) = 0$
    \item[(III)] $(r_{1}- r_{1} \circ r_{5} \circ r_{5})(a x_{a^*} x_{a^*}^{-1}) = 0$
    \item[(IV)] $(r_{2}- r_{2} \circ r_{5} \circ r_{5})(a x_{a^*}^{-1} x_{a^*}) = 0$
    \item[(V)] $(r_{3} \circ r_{6} \circ r_{7} - r_{6} \circ r_{6})(a x_{a^*}^{2}) = 0$
    \item[(VI)] $(r_{3} \circ r_{8} \circ r_{7} - r_{8} \circ r_{8})(x_{a^*}^{2} a^*) = 0$
    \item[(VII)] $(r_{7} \circ r_{4} - r_{8})(x_{a^*} a^* a) = 0$
    \item[(VIII)] $(r_{3} \circ r_{6}  - r_{3} \circ r_{8})(a x_{a^*} a^*) = 0$
    \item[(IX)] $(r_{7} \circ r_{4} - r_{6})(a^* a x_{a^*}) = 0$
    \item[(X)] $(r_{4} \circ r_{5} \circ r_{5} - r_{4})(a^* a x_{a^*})=0 $
    \item[(XI)] $(r_{4} - r_{5} \circ r_{4})(a a^* a) = 0$
    \item[(XII)] $(r_{6} \circ r_{4} -r_{3})(a a^* a)=0$
    \item[(XIII)] $(r_{8} \circ r_{4} -r_{3})(a a^* a)=0$.
\end{itemize}
\end{multicols}

The resolutions of the ambiguities (I)--(IV) and (X)--(XIII) are quick, leaving the computational heart of the calculations with the five resolutions (V)--(IX). Note that the resolutions for (V) and (VI) are identical after swapping the roles of reductions $r_{6}$ and $r_{8}$, and similarly for (IX) and (VII), leaving three calculations: (V), (VIII), and (IX). These ambiguities express the overlap of $r_{6}$ with $r_{7}$, $r_{8}$, and $r_{4}$ respectively and further reduce uniquely to $\red_{a}^{2} a$, $\red_{a}(\red_{a} -1)$, and $a^* \red_{a} a$.   
\end{proof}

\subsection{A convenient substitution} \label{ss:substitution}
It will be convenient for us to make the substitutions:
\begin{equation}
\overline{x_a^{\pm}} := x_a^{\pm 1} - 1,
\end{equation}
motivated as follows:

Let $A \cong \Lambda^{q}(Q, \cW)$ for $Q$ connected, and $\cW$ possibly empty. Let $I$ be the ideal generated by all paths beginning and ending at vertices having either $q=1$ or in $\cW$ (if non-empty). Then $A/I$ is nonzero, and we can make use of the $I$-adic filtration.  The modified generators $\overline{x_a^{\pm}}$, for $a$ an arrow in $I$, have the advantage of lying in the ideal $I$. As we will show, in the cases $Q$ contains a cycle and $\cW \neq \emptyset$, the $I$-adic filtration is Hausdorff. 

Thus, we get an embedding of $A$ into the completion $\widehat A_I$, realizing $\overline{x_a^{\pm}}$ as power series with zero constant term. In the special case where $q=1$ at all black vertices, this embedding sends every modified generator, $\overline{x_a^{\pm}}$, to a non-commutative power series in arrows \emph{with} zero constant term. This completion is closely related to the completion of (partial) additive preprojective algebras with $\lambda=0$ at all black vertices.


Practically speaking, we only require the above substitution at white vertices to obtain a basis for quivers containing cycles, see Subsection \ref{ss: cont cycle}. But theoretically, we advocate for this substitution at any vertex where we think of $q$ as a deformation parameter based at $q=1$.

Let us explain how this substitution works in the case of the cycle $\widetilde{A_n}$ (although we do not strictly need it in that case).
We formally set $\overline{x^{\pm}} := x^{\pm 1} - 1$ and
$\overline{y^{\pm}} := y^{\pm 1} - 1$; then the modified reductions from Section \ref{ss:cycles} are the following ones: 
\begin{itemize}
    \item Inverse Reductions: $\overline{x^+} \overline{x^-}, \overline{x^-} \overline{x^+} \mapsto -\overline{x^+} - \overline{x^-}$ and 
    $\overline{y^+} \overline{y^-}, \overline{y^-} \overline{y^+} \mapsto -\overline{y^+} - \overline{y^-}$
\item Short Cycle Reductions: $a a^* \mapsto \overline{x^+}, a^* a \mapsto \overline{y^+}$.
\item Reordering Reductions: $a^* \overline{x^{\pm}} \mapsto \overline{x^{\pm}} a^*, a \overline{y^{\pm}} \mapsto \overline{y^{\pm}} a$.
\item Substitution Reductions: $\overline{y^-} \mapsto \overline{x^-}(r +q) + r + (q-1)$, (if not preceded by $a$); \\
\textcolor{white}{Substitution Reductions:} $\overline{y^+} \mapsto (r' + q^{-1}) \overline{x^+} + r' + (q^{-1}-1)$ (if not preceded by $a$); \\ 
\textcolor{white}{Substitution Reductions:} $a \overline{x^+} \mapsto a(r+q) \overline{y^+} + ar + (q-1)a$;  \\
\textcolor{white}{Substitution Reductions:} $a \overline{x^-} \mapsto a \overline{y^-} (r'+q^{-1}) + ar' + (q^{-1}-1) a$.
\end{itemize} 
This produces the same ambiguities as before, which resolve in the same way after eliminating the nonreduced
generators $\overline{y^{\pm}}$ (another way to say this is that the reductions are the same up to the change of variables, so ambiguities resolve if and only if they did before). The modified ordering function,
$$
N^{z}(w) := (n_a(w), n_{\{a,a^*\},\{\overline{x^+},\overline{x^-},\overline{y^+}, \overline{y^-}\}}(w), n_{\{a \overline{x^+}, a \overline{x^-}\}}(w), n_{\{\overline{y^+}, \overline{y^-}\}}(w)),
$$
is strictly decreasing under applications of reductions and hence every term reduces after applying finitely many reductions. So we have proven the following variant of Proposition \ref{prop: basis for An}:

\begin{prop} \label{p:basis_for_cycle_w/sub}
Let $Q \cong \widetilde{A}_{n}$ be a cycle. Then $L_{Q}$ is a free left $kQ_{0}$-module with basis given by alternating words in $\mathfrak{R}$ and $\mathfrak{B}' := \{ (\overline{x^{\pm}})^{m} a^{\ell}, (\overline{x^{\pm}})^{m} (a^*)^{\ell} \mid m \in \bN, \ell \in \bN \}.$
Hence $\mathfrak{B}'$ is a basis for $\Lambda^{q}(Q)$.
\end{prop}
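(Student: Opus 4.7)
The plan is to adapt the Diamond Lemma argument of Proposition \ref{prop: basis for An} after the invertible change of generators $x^{\pm 1} \leftrightarrow \overline{x^{\pm}} = x^{\pm 1} - 1$ and $y^{\pm 1} \leftrightarrow \overline{y^{\pm}} = y^{\pm 1} - 1$. Since this substitution is invertible over $S := k(\widetilde{A_{n}})_{0}$, the left $S$-module $L_{Q}$ is still spanned by the set $\mathcal{M}$ of monomials in $\{a, a^{*}, \overline{x^{\pm}}, \overline{y^{\pm}}, r, r'\}$, and the modified reductions listed just before the proposition present $L_{Q}$ as the quotient by the relations they express. Inspection of the leading subwords shows that a monomial is irreducible exactly when it is an alternating word in $\fR$ and $\fB'$: the Substitution Reductions eliminate all $\overline{y^{\pm}}$ not preceded by $a$ together with $a \overline{x^{\pm}}$, the Reordering Reductions force the $a, a^{*}$ letter-pattern, the Short Cycle Reductions forbid $aa^{*}$ and $a^{*}a$, and the Inverse Reductions forbid the mixing of $\overline{x^{+}}$ with $\overline{x^{-}}$.

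First I would verify termination by checking that each modified reduction strictly decreases $N^{z}$ in the lexicographic order on $\bN^{4}$. Each coordinate was designed to witness a decrease for one family of reductions while leaving preceding coordinates fixed; the only subtle case is the Inverse Reduction $\overline{x^{+}}\overline{x^{-}} \mapsto -\overline{x^{+}} - \overline{x^{-}}$ (and its mirror), where the first coordinate $n_{a}$ is preserved and the second strictly decreases because the output contains no new $a, a^{*}$ letters and strictly fewer $\overline{x^{\pm}}$ occurrences adjacent to $a, a^{*}$ letters than the input. The Reductions in $B$ are handled verbatim as in Proposition \ref{prop: basis for An}.

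The main task, and the place where the main care lies, is resolving the overlap ambiguities. Because the modified reduction system arises from the original one by the invertible shifts $x^{\pm 1} = 1 + \overline{x^{\pm}}$ and $y^{\pm 1} = 1 + \overline{y^{\pm}}$, the overlap ambiguities are in natural bijection with the list (I)--(XII) of Proposition \ref{prop: basis for An}: each leading subword there corresponds to a unique leading subword in the new system of the same length and arrow-content. For each ambiguity I would transport the original resolution through the substitution; the two reduction branches now differ by additional constant and $r$-linear correction terms produced by the $-1$ shifts, but these corrections land in the span of already-irreducible words (scalars, $r, r'$, and scalar multiples of $a, a^{*}, \overline{x^{\pm}}$) and cancel between the two branches after at most a few further applications of Reordering, Substitution, and Reductions in $B$. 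Bergman's Diamond Lemma \cite{Bergman} then yields the desired $S$-basis of $L_{Q}$, and passing to the quotient $\Lambda^{q}(Q) = L_{Q}/(r)$---equivalently, discarding the alternating words that contain a factor from $\fR$---yields $\fB'$ as a basis for $\Lambda^{q}(Q)$. The main obstacle is the bookkeeping of these correction terms, which is routine but requires attention in the ambiguities (such as \text{(III), (X), (XII)}) where Inverse, Short Cycle, and Substitution Reductions all interact.
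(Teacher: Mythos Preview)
Your approach is essentially the paper's own: rewrite the reduction system of Proposition~\ref{prop: basis for An} in the shifted generators $\overline{x^{\pm}}, \overline{y^{\pm}}$, verify termination via the modified ordering function $N^{z}$, and invoke the Diamond Lemma. The one place you work harder than necessary is the resolution of ambiguities. You propose to transport each resolution through the substitution and then chase the ``correction terms'' produced by the $-1$ shifts until they cancel. The paper short-circuits this entirely: since the substitution $x^{\pm 1} \leftrightarrow 1 + \overline{x^{\pm}}$ is invertible, the new reductions are literally the old ones rewritten in new coordinates, so an overlap ambiguity resolves in the new system if and only if it resolved in the old one---no bookkeeping required. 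Your route would work, but the paper's observation saves the entire case-by-case check.
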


In the case of the partial multiplicative preprojective algebra, the modified reductions are as follows:
\begin{itemize}
    \item Inverse Reductions: $\overline{x_{a}^+} \overline{x_{a}^{-}}, \overline{x_{a}^{-}} \overline{x_{a}^+} \mapsto -\overline{x_a^+} - \overline{x_a^-}$ for $a \in \overline{Q}_{1}$.
    \item Short Cycle Reductions: $a a^* \mapsto \overline{x_{a}^{+}}$ for $a \in \overline{Q}_{1}$.
    \item Reordering Reductions: $a^* \overline{x_{a}^{\pm}} \mapsto \overline{x_{a^*}^{\pm}} a^*$ for $a \in \overline{Q}_{1}$. 
    \item Substitution Reductions: $\overline{x_{a}^{\pm}} \mapsto \red_{a}^{\pm}-1, \quad \overline{x_{a^*}^{-}} \mapsto - a^* \red_{a}^{-1} a$, \\ 
    \textcolor{white}{Substitution Reductions:} $\overline{x_{a^*}^+}^{2} \mapsto -\overline{x_{a^*}^{+}} + a^* \red_{a} a, 
    \quad \overline{x_{a^*}^{+}} a^* \mapsto a^* (\red_{a}-1), \text{ for } a \in F_{1}$. 
\end{itemize}
Again, the same ordering function applies here and strictly decreases under these reductions. The ambiguities must resolve since they did before.

\begin{prop} \label{p:basis_for_pmpa_w/sub}
Let $Q, \cB, \cW$ be as in Proposition \ref{p:pmpa-basis}. 
Then $L_{Q}$ is a free left $kQ_{0}$-module with basis given by concatenable words in the set,
\[
\{ a, \ \overline{x_{a}^+}, \ \overline{x_{a}^{-}} \mid a \in \overline{Q}_1 \} \cup \{r_{\cB}, \  r'_{\cB}\},
\]
such that the following subwords do not occur:
\[
\begin{array}{cccccl}
\overline{x_a^+} \overline{x_a^{-}}, &  \overline{x_a^{-}} \overline{x_a^+}, &  a a^*, &  a \overline{x_{a^*}^{\pm}}, && \text{ for } a \in \overline{Q}_1 \\  
\overline{x_{a}^{\pm}}, & \overline{x_{a^*}^{-}}, & \overline{x_{a^*}^+}a^*, & \overline{x_{a^*}^+}^2, && \text{ for } a \in F_1 \\
r_{\cB}r_{\cB}', & r_{\cB}'r_{\cB}. &&&&
\end{array}
\]
A basis for $\Lambda_Q^q$ as a free $kQ_0$-module  is given by those words above not containing $r_{\cB}, r_{\cB}'$.
\end{prop}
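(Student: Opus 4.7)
The plan is to follow the Diamond Lemma proof of Proposition \ref{p:pmpa-basis} step by step, with $\overline{x_a^{\pm}} = x_a^{\pm 1}-1$ in place of $x_a^{\pm 1}$. Since the change of variables $x_a^{\pm 1} \leftrightarrow \overline{x_a^{\pm}}$ is an invertible affine substitution on the generating set of $L_Q$, the concatenable words in $\{a, \overline{x_a^{\pm}}, r_\cB, r_\cB'\}_{a \in \overline{Q}_1}$ still span $L_Q$ as a left $kQ_0$-module. The modified reductions listed in the text are obtained by substituting $\overline{x_a^{\pm}} + 1$ for $x_a^{\pm 1}$ in the reductions of Proposition \ref{p:pmpa-basis}; they implement the same defining relations of $L_Q$, so the reduction system is valid.

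For termination, I would reuse the weighted ordering function $N'$ from the proof of Proposition \ref{p:pmpa-basis}, now interpreting the counting functions $n_{(-)}(w)$ in \eqref{eq: number of occurences}--\eqref{eq: number of occurences 3} on the $\overline{x_a^{\pm}}$-generators. Because $N'$ only records occurrences of generators (not their scalar coefficients), the right-hand side of each modified reduction differs from the original only by lower-order terms of strictly smaller $N'$-value, arising from the additive constant in $x_a^{\pm 1}=\overline{x_a^{\pm}}+1$. Hence each reduction strictly decreases $N'$, and by the descending chain condition, every word reduces to irreducible form — namely, a concatenable word in the generators avoiding the subwords listed in the proposition — in finitely many steps.

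For uniqueness of reduction, I would verify that all overlap ambiguities resolve. These occur at the same thirteen cubic positions (I)--(XIII) as in Proposition \ref{p:pmpa-basis}, because the leading terms of the modified reductions involve the same sequences of generators up to the relabelling $x_a^{\pm 1} \leftrightarrow \overline{x_a^{\pm}}$. The cleanest argument is to transfer each resolution: starting from the identity in the original system, substitute $\overline{x_a^{\pm}}+1$ for $x_a^{\pm 1}$ in both sides of each ambiguity computation, expand, and bring the result to normal form via the modified reductions. Since this substitution defines a $kQ_0$-linear bijection of ambient free modules sending old relations to new ones, the transferred identity holds in $L_Q$. Bergman's Diamond Lemma \cite[Theorem 1.2]{Bergman} then yields the claimed basis for $L_Q$, and the words avoiding $r_\cB, r_\cB'$ form a basis of $\Lambda^q(Q, \cW) = L_Q/(r_\cB)$.

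The principal technical obstacle is verifying that the modified Substitution Reductions for $a \in F_1$ — in particular $\overline{x_{a^*}^+}^2 \mapsto -\overline{x_{a^*}^+} + a^*\red_a a$ and $\overline{x_{a^*}^+}a^* \mapsto a^*(\red_a-1)$ — strictly decrease $N'$, even though new additive terms (like the lone $-\overline{x_{a^*}^+}$ or $-a^*$) appear that were absent from the original reductions. These new terms are of strictly smaller length and involve only arrows $b \succeq a$ farther from the white roots in $F$; together with the orientation of the spanning forest toward $\cW$, this guarantees that the $\varphi_b$-weights for $b \prec a$ do not increase, so the lexicographic $N'$-value still decreases. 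Once this verification is in hand, the proof concludes as outlined.
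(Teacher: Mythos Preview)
Your proposal is correct and follows essentially the same approach as the paper: substitute $\overline{x_a^{\pm}} = x_a^{\pm 1}-1$, reuse the ordering function $N'$ from Proposition~\ref{p:pmpa-basis}, and observe that ambiguities resolve because the substitution is an invertible change of variables. The paper's justification is in fact just two sentences (``the same ordering function applies here and strictly decreases under these reductions'' and ``the ambiguities must resolve since they did before''), so your version is considerably more detailed than what the paper actually writes, including your explicit check that the new additive terms arising from the constant in $x_a^{\pm 1}=\overline{x_a^{\pm}}+1$ still have strictly smaller $N'$-value.
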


\begin{rem}
Note that, for the following subsection, we only require the substitutions $\overline{x_a}$ in the case where the arrow $a$ begins at a white vertex (which in particular implies that $a \notin F_1$, although it could be that $a^* \in F_1$).  If we only make these substitutions, it is similarly easy to write the above reductions in the case where for certain arrows $\overline{x_a^{\pm}}$ appears and for others $x_a^{\pm1}$ appears; we leave this to the reader.
\end{rem}

The only thing that we require from the above in the next subsection is the following observation: 
\begin{equation} \label{eq:reductions_preserve_augmentation} \text{\emph{reductions on} } 1_{\cW} L_Q 1_{\cW}  \text{  \emph{preserve the augmentation ideal,} } \ker(\Lambda^{q}(Q, \cW) \rightarrow k\cW). 
\end{equation}
In other words, any monomial of positive length beginning and ending at white vertices reduces to a linear combination of other such monomials. This was not true with the original generators (e.g., looking at the inverse reductions).

\subsection{Quivers containing cycles} \label{ss: cont cycle}
In this subsection, we prove the strong free product property for a connected quiver containing a cycle, along with providing a natural decomposition and basis for its multiplicative preprojective algebra.
In more detail, the multiplicative preprojective algebra decomposes (as a vector space) into a free product of the multiplicative preprojective algebra for the cycle and a \emph{partial} multiplicative preprojective algebra for the complement of the cycle. This technique should extend to the case of general extended Dynkin quivers, hence reducing Conjecture \ref{conj: 2CY} to the extended Dynkin case.

Let $Q$ be a connected quiver containing a cycle $Q_E$, with complement $Q' := Q \backslash Q_E$. Let $\cW := (Q_{E})_{0}$, so the vertices of the cycle are white. Fix $q \in (k^*)^{Q_{0}}$ and a decomposition $q = (q_{E}, q')$. There is a linear isomorphism:
\begin{equation} \label{e:iso-general-q}
\Psi: \Lambda^{q_{E}}(Q_{E}) *_{kQ_{0}} \Lambda^{q'}(Q', \cW) \to \Lambda^{q}(Q).
\end{equation}
We prove this by producing a basis of $\Lambda^{q}(Q)$ of alternating words in $\Lambda^{q_{E}}(Q_{E})$ and $\Lambda^{q'}(Q', \cW)$.

 \begin{rem}
 For the (deformed) \emph{additive} preprojective algebra, the analogous map,
 \[
 \Psi_{\text{add}}: \Pi^{\lambda_{E}}(Q_{E}) *_{kQ_{0}} \Pi^{\lambda'}(Q', \cW) \to \Pi^{\lambda}(Q),
 \]
is an isomorphism for all connected quivers $Q$ containing an extended Dynkin quiver $Q_{E}$. This follows from the proof of \cite[Theorem 3.4.2]{EE} (see also \cite[Section 5]{Schedler_HH}, particularly Corollary 5.2.9.(ii)).  
 \end{rem}

As before, let $B := kQ_0[t,(q+t)^{-1}]$ and $\overline B = tB$, which is spanned by elements $\{ t^m, (t')^m \mid m \geq 1 \}$ where $t' := (q+t)^{-1}-q^{-1}$.

\begin{prop}\label{p:mpa-general-q}
Let $Q$ be a connected quiver containing a cycle $Q_E \subseteq Q$ ($Q_E \cong \widetilde{A}_{n-1}$).
Then there exists a section $\sigma: \Lambda^{q}(Q) \rightarrow L$ such that $(L, r, \sigma, B)$ satisfies the strong free product property. 

In more detail, $L_Q$ is a free left $kQ_{0}$-module with basis given by concatenable alternating products in the bases of  $\Lambda^{q_E}(Q_E)$  given by Proposition \ref{prop: basis for An} or Proposition \ref{p:basis_for_cycle_w/sub}, of $\Lambda^{q'}(Q',\cW)$   given by Proposition \ref{p:basis_for_pmpa_w/sub}, and $r^m, (r')^m$  ($m\geq 1$).

\end{prop}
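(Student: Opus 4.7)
The plan is to construct a single reduction system on $L_Q$ that combines the cycle reductions from Proposition \ref{p:basis_for_cycle_w/sub} with the partial reductions from Proposition \ref{p:basis_for_pmpa_w/sub} applied to $(Q',\cW)$, and then apply Bergman's Diamond Lemma to read off the basis and hence the section $\sigma$. Because $Q_E$ and $Q'$ share only vertices, their arrow generators and associated $\overline{x_a^\pm}$ are disjoint, so the two arrow-level reduction systems combine essentially orthogonally. The real complication is that $L_Q$ carries a single relation $r_Q$, whereas the desired decomposition is governed by two: the full cycle relation $r_{Q_E}$ and the partial relation $r_{Q',\cB} := 1_\cB r_{Q'} 1_\cB$.

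\textbf{Decomposing $r_Q$ and combining the systems.} The central algebraic step is to show that, modulo the arrow reductions, $r_Q = r_{Q_E} + u$ where $u$ lies in the two-sided ideal generated by $r_{Q',\cB}$. For a black vertex $v \in \cB$, no cycle arrows are incident, so $e_v r_Q = e_v r_{Q'}$ directly. For a white vertex $v \in \cW$, I would order $\overline{Q}_1$ so that cycle arrows at $v$ appear contiguously and ahead of the $Q'$-arrows at $v$, so that $e_v \prod_a g_a^{\epsilon(a)}$ factors as $e_v r_{Q_E} \cdot (\text{product over $Q'$ arrows at $v$})$; using the substituted generators $\overline{x_a^\pm} := x_a^{\pm 1} - 1$ for $Q'$-arrows at $\cW$, the second factor differs from $e_v$ by an element of the augmentation ideal $\ker(\Lambda^{q'}(Q',\cW) \to k\cW)$, preserved by the partial reductions thanks to \eqref{eq:reductions_preserve_augmentation}. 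The combined system thus consists of the cycle reductions on $Q_E$, the partial reductions on $(Q',\cW)$, a single $r_Q$-substitution realizing this decomposition, and the $B$-reductions $rr',r'r \mapsto -qr'-q^{-1}r$. A lexicographic ordering function built from the $N$ and $N'$ of Propositions \ref{prop: basis for An} and \ref{p:pmpa-basis} (with the $\{r,r'\}$-count placed last) should strictly decrease under each reduction, ensuring termination.

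\textbf{Main obstacle and conclusion.} The bulk of the work lies in verifying confluence via the Diamond Lemma. Overlap ambiguities internal to either the cycle or the partial reductions are resolved by the preceding propositions. The genuinely new ambiguities arise at white vertices, where reordering reductions can bring a $Q'$-arrow substitution or short-cycle reduction into contact with a cycle-arrow reduction on the same idempotent, and where the $r_Q$-substitution overlaps with the arrow reductions at $\cW$. Each such ambiguity should unwind to a common irreducible form via $r_Q = r_{Q_E} + u$ and the confluence already established on the two constituent systems, but carrying this out is a finite combinatorial check analogous to the thirteen-ambiguity computation of Proposition \ref{p:pmpa-basis}. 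Once confluence is secured, the irreducible monomials are exactly the concatenable alternating words in the bases of Propositions \ref{p:basis_for_cycle_w/sub} and \ref{p:basis_for_pmpa_w/sub} together with powers of $r$ and $r'$ (no $rr'$ or $r'r$ subword); defining $\sigma$ by the inclusion of those not involving $r, r'$ yields a section whose associated $\sigma'$ is a $kQ_0$-linear isomorphism, giving the strong free product property.
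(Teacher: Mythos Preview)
Your strategy—combine the two reduction systems, perturb to account for the single relation $r_Q$, and apply the Diamond Lemma—is exactly the paper's approach. But you miss the key simplification that makes the argument actually close.

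You anticipate ``genuinely new ambiguities at white vertices'' requiring a case-by-case check analogous to the thirteen-ambiguity computation. There are none. Overlap ambiguities concern only the \emph{leading terms} of reductions, and these lie in disjoint alphabets: the cycle reductions have leading terms in $\{a,a^*,x^{\pm1},y^{\pm1}\}$ (cycle generators), while the partial reductions on $(Q',\cW)$ have leading terms in $\{b,\overline{x_b^{\pm}}: b\in\overline{Q'}_1\}$. The $B$-reductions $rr',r'r$ involve only the single symbols $r,r'$. Since no leading term from one system shares a letter with a leading term from the other, every overlap ambiguity lies entirely within $L_{Q_E}$ or entirely within $L_{Q'}$.

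The remaining issue is whether the \emph{old} cycle ambiguities still resolve after perturbation. The paper handles this cleanly: rather than writing $r_Q = r_{Q_E} + u$, it simply observes that perturbing the cycle Substitution Reductions amounts to replacing the formal symbols $\rho_{Q_E}^{\pm1}$ (equivalently $(r+q)^{\pm1}$) by the new formal symbols $(\rho_{Q_E}\rho_{Q'})^{\pm1}$. Since in the cycle resolution computations these symbols are inert—they do not interact with $a,a^*,x^{\pm1},y^{\pm1}$ via any reduction—the resolution of each cycle ambiguity proceeds symbolically exactly as before. The only thing to verify is that the ordering still decreases under the perturbed reductions; this is where \eqref{eq:reductions_preserve_augmentation} enters, ensuring that the $(\rho_{Q'}^{\pm1}-1)$ tails produced on the right-hand side stay in the augmentation ideal and do not disturb the lexicographic ordering built from $N$ and $N'$.

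So your framework is right, but the combinatorial check you flag as the ``main obstacle'' evaporates once you notice the alphabets are disjoint and the perturbation is formal.
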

\begin{cor}\label{cor: flat} Let $Q$ be as in Proposition \ref{p:mpa-general-q}.
A basis for $\Lambda^q(Q)$ is given by concatenable alternating words in the mentioned bases of $\Lambda^{q_E}(Q_E)$ and $\Lambda^{q'}(Q',\cW)$. In particular, the family $\Lambda^q(Q)$ defines a free $k[q_i,q_i^{-1}]_{i \in Q_0}$-module, and hence is flat over $(k^\times)^{Q_0}$.
\end{cor}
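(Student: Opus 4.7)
My plan has two parts, corresponding to the two sentences of the corollary.

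\textbf{Basis for $\Lambda^{q}(Q)$.} By Proposition \ref{p:mpa-general-q}, $L_Q$ is a free left $kQ_0$-module with basis obtained from concatenable alternating products in the bases of $\Lambda^{q_E}(Q_E)$, $\Lambda^{q'}(Q',\cW)$, and the set $\{r^m,(r')^m\mid m\geq 1\}$. Equivalently, the strong free product property (Theorem \ref{thm:sfpp}) gives a $kQ_0$-bimodule isomorphism $\sigma': \Lambda^q(Q) *_{kQ_0} B \to L_Q$, where $B = kQ_0[t,(q+t)^{-1}]$ and $\overline{B} = tB$ is spanned by $t^m,(t')^m$, with $\sigma'(t)=r$, $\sigma'(t')=r'$. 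Under $\sigma'$, the sub-bimodule $\Lambda^q(Q) \subset \Lambda^q(Q)*_{kQ_0} B$ (words not using $\overline{B}$) maps bijectively onto the $kQ_0$-span of those alternating basis words in which no power of $r$ or $r'$ occurs, and this sub-bimodule is precisely the image of $\sigma$. Composing with the quotient $\pi: L_Q \to \Lambda^q(Q)$ and noting that $\pi\circ\sigma = \mathrm{id}$, we conclude that the stated alternating words in the bases of $\Lambda^{q_E}(Q_E)$ and $\Lambda^{q'}(Q',\cW)$ form a $kQ_0$-basis of $\Lambda^q(Q)$.

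\textbf{Flatness in $q$.} Let $R := k[q_i,q_i^{-1}]_{i\in Q_0}$, viewed as $k$-algebra with $R$-algebra structure map $kQ_0 \hookrightarrow R \otimes kQ_0$ by $e_i \mapsto 1\otimes e_i$; and let $\Lambda^{R}(Q)$, $L^R_Q$ denote the ``universal'' multiplicative preprojective algebra and its localized path algebra over $R \otimes kQ_0$, where the parameter is the tautological $q = \sum_i q_i e_i \in R\otimes kQ_0$. The key observation is that every reduction appearing in the proofs of Proposition \ref{prop: basis for An}, Proposition \ref{p:pmpa-basis}, and Proposition \ref{p:mpa-general-q} has coefficients lying in $R$: the only scalars that appear are integers together with $q_i$ and $q_i^{-1}$ (and the expression $(q+t)^{-1}$, which is handled by the base change $R \to R[t,(q+t)^{-1}]$). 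The ordering functions $N$, $N'$, and $N^z$ depend only on the combinatorics of words, not on $q$, and the resolutions of the overlap ambiguities are polynomial identities in $q^{\pm 1}$ valid over $R$. Hence the Diamond Lemma applies verbatim over $R\otimes kQ_0$, and we obtain: $L^R_Q$ is a free $R\otimes kQ_0$-module on the alternating words of Proposition \ref{p:mpa-general-q}; quotienting by the $R$-form of the relation produces the claimed $R\otimes kQ_0$-basis of $\Lambda^R(Q)$. For any $q\in (k^\times)^{Q_0}$, the specialization map $R \to k$, $q_i \mapsto q_i$, is flat (it is a quotient by a maximal ideal of a Laurent polynomial ring, but more to the point, $\Lambda^R(Q)$ is already a free $R$-module, so base change preserves bases), and $\Lambda^R(Q) \otimes_R k \cong \Lambda^q(Q)$ by inspection of generators and relations. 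Freeness over $R$ therefore specializes to a free $k$-module structure on $\Lambda^q(Q)$ with a basis independent of $q$, proving flatness over the parameter torus $\Spec R = (k^\times)^{Q_0}$.

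\textbf{Main obstacle.} The subtle point is not the Diamond Lemma itself but ensuring that the reduction system and the ordering functions are truly $q$-independent, so that the argument descends to a genuinely universal statement over $R$. One must verify that every substitution involving $(q+r)^{-1}$ (and similarly $x^{\pm 1}_a$) can be written with coefficients in $R[t,(q+t)^{-1}]$, and that the monomial orderings used to prove termination are defined purely combinatorially. Once this bookkeeping is done, the Diamond Lemma produces an $R$-basis whose cardinality and combinatorial description are unchanged under any specialization $q_i \mapsto \text{unit}$, giving both the basis statement and flatness simultaneously.
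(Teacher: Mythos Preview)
Your proposal is correct and follows the same approach as the paper, which leaves the corollary without an explicit proof since both claims are immediate from Proposition~\ref{p:mpa-general-q}: the basis statement is already contained in the proof of that proposition (the words not containing $r,r'$ give a basis for $\Lambda^q(Q)$), and flatness follows because the Diamond Lemma reductions used there have coefficients in $k[q_i^{\pm 1}]$ and the ordering functions are purely combinatorial, so the same basis works universally over the Laurent polynomial ring. You have simply spelled out in more detail what the paper takes as evident.
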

\begin{rem} \label{rem:substitution}
Note in Proposition \ref{p:mpa-general-q} that we only need to replace  $x_a^{\pm1}$ for $\overline{x_a^{\pm}}$ if $a \in \overline{Q'}_1$ begins at a vertex of $Q_E$. Moreover, making this change to the statement does not affect the proof. On the other hand, we could freely replace $x_a^{\pm1}$ by $\overline{x_a^{\pm}}$ for \emph{all} arrow in $\overline{Q}_{1}$, again without changing the proof.
\end{rem}

\begin{proof}[Proof of Proposition \ref{p:mpa-general-q}]
First we will establish that our proposed basis for $L$ implies the strong free product property. To see this, observe that the set of subwords not containing $r_{i}, r'_{i}$ for $i \in Q_{0}$ form a basis for $\Lambda^{q}(Q)$. The inclusion of basis elements $\Lambda^{q}(Q) \rightarrow L$ defines a section $\sigma$. Using $\sigma$ define $\sigma': \Lambda^{q}(Q) *_{kQ_{0}} kQ_{0}[t, (q+t)^{-1}] \rightarrow L$ to be the extension of the map taking $t \mapsto r$, $(q+t)^{-1} \mapsto r'+q, \text{and } p \mapsto \sigma(p)$ for $p \in \Lambda^{q}(Q)$. Then $\sigma'$ is clearly a  $kQ_{0}$-linear isomorphism, and hence $(L, r, \sigma, B)$ satisfies the strong free product property.

Next we will show that the proposed basis for $L$ implies that there exists a $kQ_{0}$-linear isomorphism: $\Psi: \Lambda^{q_{E}}(Q_{E}) *_{kQ_{0}} \Lambda^{q}(Q', \cW) \rightarrow \Lambda^{q}(Q)$. For this, identify:
\begin{itemize}
    \item $\Lambda^{q}(Q)$ as the span of words in $L$ without the subwords $r_{i}, r_{i}'$,
    \item $\Lambda^{q_{E}}(Q_{E})$ as the span of words in $\Lambda^{q}(Q)$ without the subwords $a, \overline{x_{a}^{\pm}}$ for $a \in \overline{Q'}_{1}$, and
    \item  $\Lambda^{q}(Q', \cW)$ as the span of words in $\Lambda^{q}(Q)$ without the subwords  $b_{i}, x_{b_{i}}^{\pm1}$ for $b_{i} \in \overline{Q_{E}}_{1}$. 
\end{itemize}
 Hence there exists $kQ_{0}$-linear maps $\iota_{1}, \iota_{2}: \Lambda^{q_{E}}(Q_{E}), \Lambda^{q}(Q', \cW) \rightarrow \Lambda^{q}(Q)$ defined by the inclusion of basis elements. These maps determine a unique injective $kQ_{0}$-linear map $\Psi:= \iota_{1} *_{kQ_{0}} \iota_{2}: \Lambda^{q_{E}}(Q_{E}) *_{kQ_{0}} \Lambda^{q}(Q', \cW) \rightarrow \Lambda^{q}(Q)$, which is clearly surjective, hence an isomorphism. 
 
It remains to establish that the given set is indeed a basis for $L$. By Proposition \ref{prop: basis for An} we have a basis $\mathfrak{B}_{Q_{E}}$ for $L_{Q_{E}}$ and by Proposition \ref{p:basis_for_pmpa_w/sub} we have bases $\mathfrak{B}_{Q'}$ for $L_{Q'}$. Therefore we have a basis of alternating words in $\mathfrak{B}_{Q_{E}}$ and $\mathfrak{B}_{Q'}$ for $L =L_{Q_{E}} *_{kQ_{0}} L_{Q'}$.  


However this basis gives rise to a basis for the quotient $L/(\rho_{Q_{E}}+\rho_{Q'}1_{\cB}-q)$. So we need to show that $L/(\rho_{Q_{E}}+\rho_{Q'}1_{\cB}-q)$ is isomorphic to $ L/(\rho_{Q_{E}}\rho_{Q'}-q)=:\Lambda^{q}(Q)$ as $kQ_{0}$-modules. Hence we consider the system of reductions combining the systems of reductions from Proposition \ref{prop: basis for An} and Proposition \ref{p:basis_for_pmpa_w/sub}. Crucially, we perturb the system of reductions by perturbing the relation $r^{\text{pre}} :=\rho_{Q_{E}}+\rho_{Q'}1_{\cB}-q$ to $r =  \rho_{Q_E}\rho_{Q'} - q$. 



First observe that this change does nothing to the reductions for $L_{Q'}$, since the transformation is the identity on black vertices. That is, $r^{\text{pre}} 1_{\cB} = \rho_{Q'}1_{\cB}-q 1_{\cB} = r1_{\cB}$. 

For $L_{Q_E}$, notice $r^{\text{pre}} 1_{\cW} = \rho_{Q_{E}}-q_{E}$ while $r 1_{\cW} = \rho_{Q_{E}}\rho_{Q'} - q_{E}$. So we alter each reduction involving $\rho_{Q_{E}}^{\pm}$ by the transformation:
\[
\rho_{Q_{E}} \mapsto \rho_{Q_{E}} \rho_{Q'} = \rho_{Q_{E}} (\rho_{Q'}-1)+ \rho_{Q_{E}} \quad \quad \rho_{Q_{E}}^{-1} \mapsto  \rho_{Q'}^{-1} \rho_{Q_{E}}^{-1} =  (\rho_{Q'}^{-1}-1)\rho_{Q_{E}}^{-1}+ \rho_{Q_{E}}^{-1}.
\]
Note that we choose this form for the transformation to emphasize that the new relation splits as a sum of (I) the old relation and (II) a piece in the ideal generated by $\overline{x_{a}^{\pm}}$ for $a \in \overline{Q'}_{1}$.
 This transformation only effects the Substitution Reductions in the original reduction system for the cycle, see the proof of Proposition \ref{prop: basis for An}. The Substitution Reductions become (after applying a Reordering Reduction) the following: 
\begin{align*} 
y^{-1} &\mapsto x^{-1}(\rho_{Q'} - 1)(r +q) + x^{-1}(r+q), \quad \text{  (if not preceded by }a),  \\
y &\mapsto (r' + q^{-1})(\rho_{Q'}^{-1} - 1) x + (r'+q^{-1})x \quad \text{  (if not preceded by }a), \\
a x &\mapsto a(\rho_{Q'}-1) (r+q) y + ary + qya, \\
 ax^{-1} &\mapsto ay^{-1} (r'+q^{-1})(\rho_{Q'}^{-1}-1) + ay^{-1}r'+q^{-1}y^{-1} a.
\end{align*} 



Order monomials in $L$ lexicographically in the orderings $N$ and $N'$ of Propositions \ref{prop: basis for An} and \ref{p:pmpa-basis}. Then the above reductions
strictly decrease the ordering. Here we are using \ref{eq:reductions_preserve_augmentation} from the previous subsection to deduce that the ideal of positive-length monomials beginning and ending at vertices of $Q_E$ is preserved under reductions.

All ambiguities lie either entirely in $L_{Q_{E}}$ or entirely in $L_{Q'}$. Hence the ambiguities in $L_{Q'}$ resolve as before. The ambiguities in $L_{Q_{E}}$ still resolve using the same reductions as before perturbing. To see this, note that we have replaced the formal variables $\rho_{Q_{E}}^{\pm1}$ (which do not interact with $a, a^*, x^{\pm1}, y^{\pm1})$ with the new formal variables $(\rho_{Q_{E}} \rho_{Q'})^{\pm1}$. 

Since the perturbed system of reductions has all the same leading coefficients as the original, we conclude that $L$ has the desired basis.
\end{proof}

\section{The center and primality of multiplicative preprojective algebras} \label{s:center}
Let $Q$ be a connected quiver strictly containing a cycle. 
The goal of this section is to complete the proof of Theorem \ref{thm: 2CY} by first establishing that $\Lambda^{q}(Q)$ is prime and then that $Z(\Lambda^{q}(Q)) = k$ and hence the Calabi--Yau structure is unique up to rescaling.

\subsection{Primality of  multiplicative preprojective algebras}
We will show $\Lambda^{q}(Q)$ is prime by first showing that left multiplication by certain elements is injective on the subspace of concatenable elements. 

\begin{lem} \label{lem: left mult by a is inj}
Let $a$ denote the sum of all the positively oriented arrows of the cycle in $\overline{Q}_{1}$. Then left multiplication by $a$, $L_{a}: 1_{\cW} \Lambda^{q}(Q) \rightarrow 1_{\cW} \Lambda^{q}(Q)$, is injective. 
\end{lem}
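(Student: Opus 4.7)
My plan is as follows.

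First, I would reduce to the case of a single cycle arrow. Since $a = \sum_i a_i$ where $a_i$ ranges over the positively oriented cycle arrows, for $v \in \cW = (Q_E)_0$ we have $a \cdot e_v = a_{v-1}$, the unique cycle arrow with head $v$. Thus $L_a$ decomposes as $\bigoplus_{v \in \cW} L_{a_{v-1}}$, where $L_{a_{v-1}} : e_v \Lambda^q(Q) \to e_{v-1} \Lambda^q(Q)$, and the images lie in disjoint idempotent blocks of $1_\cW \Lambda^q(Q)$. So it suffices to show that each $L_{a_{v-1}}$ is injective on $e_v \Lambda^q(Q)$.

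Next, I would exploit the free product structure from Proposition \ref{p:mpa-general-q}. Setting $A := \Lambda^{q_E}(Q_E)$, $B := \Lambda^{q'}(Q',\cW)$, and $S := kQ_0$, Proposition \ref{p:mpa-general-q} identifies $\Lambda^q(Q)$ with $A *_S B$ as $S$-bimodules. Grouping basis elements of the free product according to their leftmost syllable gives an isomorphism of left $A$-modules
\[
\Lambda^q(Q) \cong A \otimes_S M_0,
\]
where $M_0 \subset A *_S B$ is the $S$-subbimodule spanned by $1$ together with all alternating words whose leftmost syllable lies in $B/S$. Under this identification, left multiplication by $a_{v-1} \in A$ corresponds to $L_{a_{v-1}}^A \otimes \operatorname{id}_{M_0}$. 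Because $S$ is semisimple, $M_0$ is a free (in particular flat) left $S$-module, so injectivity of $L_{a_{v-1}}$ on $e_v \Lambda^q(Q)$ follows from injectivity of $L_{a_{v-1}}^A : e_v A \to e_{v-1} A$.

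Finally, I would verify the cycle case directly using the basis $\{x^m a^\ell, x^m (a^*)^\ell \mid m \in \bZ,\ \ell \geq 0\}$ of $A$ from Proposition \ref{prop: basis for An}. The relation $xy^{-1}=q$ gives $a_{v-1} x = q_v\, x a_{v-1}$, and the short-cycle identity gives $a_{v-1}a_{v-1}^* = e_{v-1}(x-1)$. Together these yield
\begin{align*}
a_{v-1} \cdot x^m a^\ell &= q_v^m\, x^m a^{\ell+1}, \\
a_{v-1} \cdot x^m (a^*)^\ell &= q_v^m (x^{m+1} - x^m)(a^*)^{\ell-1} \quad (\ell \geq 1).
\end{align*}
Writing $w = \sum c_{m,\ell}\, x^m a^\ell + \sum d_{m,\ell}\, x^m (a^*)^\ell \in e_v A$ with $a_{v-1} w = 0$, the coefficients of $x^{m'} a^{\ell'}$ ($\ell' \geq 1$) in $a_{v-1}w$ force $c_{m,\ell} = 0$, while equating coefficients of $x^{m'}(a^*)^{\ell'}$ yields the recursion $d_{m,\ell} = q_v^{-1} d_{m-1,\ell}$; since only finitely many $d_{m,\ell}$ are nonzero, this forces $d_{m,\ell} = 0$. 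I expect the main delicate point to be this last finiteness-versus-recursion step, since linear independence of the $\{x^m\}_{m \in \bZ}$ is what prevents the $d$-coefficients from cancelling amongst themselves; everything else is essentially formal once the free product decomposition is in hand.
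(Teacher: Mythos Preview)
Your Steps 1 and 3 are fine, but Step 2 contains a genuine gap that breaks the argument. The isomorphism $\Psi$ furnished by Proposition~\ref{p:mpa-general-q} is only a $kQ_0$-bimodule isomorphism; it does \emph{not} intertwine left multiplication by $a_{v-1}$, so your claim that $L_{a_{v-1}}$ becomes $L^A_{a_{v-1}}\otimes\operatorname{id}_{M_0}$ is unjustified and in fact false. The reason is that the relation in $\Lambda^q(Q)$ at a white vertex $v$ is $\rho_{Q_E}\rho_{Q'}=q$, not $\rho_{Q_E}=q_E$. Hence in $\Lambda^q(Q)$ one computes
\[
a_{v-1}\,x_v \;=\; q_v\,x_{v-1}\,a_{v-1}\,\rho_{Q',v}^{-1}
\;=\; q_v\,x_{v-1}\,a_{v-1} \;+\; q_v\,x_{v-1}\,a_{v-1}\bigl(\rho_{Q',v}^{-1}-1\bigr),
\]
and the second term lies in the ideal generated by $B_+$. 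So the identity $a_{v-1}x=q_v\,x\,a_{v-1}$ you invoke holds in $A=\Lambda^{q_E}(Q_E)$ but not in $\Lambda^q(Q)$. The paper says this explicitly: the inclusion $i:A\to\Lambda^q(Q)$ is only a $kQ_0$-module splitting, with $i(\alpha)\cdot i(\beta)\equiv\alpha\cdot\beta$ only modulo $(B_+)$; and its formula $a\cdot(x^p)\cdot b=q^p x^p(x-1)\rho_{Q'}^{-1}b$ in the associated graded exhibits exactly the correction your Step 2 discards.

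The paper deals with this by introducing the descending filtration $\cF_m=\text{Span}\bigl(B_+(A_+B_+)^{\geq m}\bigr)$ and arguing injectivity on the associated graded, where the $\rho_{Q'}$-correction either lands in higher filtration (when the $A$-syllable carries a trailing $a^\ell$ or $(a^*)^\ell$) or is absorbed into the adjacent $B_+$-factor. Your cycle computation in Step 3 is essentially how the paper treats the $A$-piece there; what is missing from your argument is a genuine replacement for Step 2---the filtration argument or something equivalent---to control these correction terms.
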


\begin{proof}
Decompose the vertices $Q_{0} = \cB \sqcup \cW$ where the white vertices are in the cycle. Decompose the arrows in $\overline{Q}_{1} = \overline{Q_{E}}_{1} \sqcup \overline{Q'}_{1}$. Define
\[
A_{+} := \ker( \epsilon_{A}: \Lambda^{q_E}(Q_E) \rightarrow k \cW )
\hspace{1cm}
B_{+} := \ker( \epsilon_{B}: \Lambda^{q}(Q', \cW) \rightarrow k \cW ).
\]
Then one can define a descending filtration by $\cF_{0} = \Lambda^{q}(Q)$ and $\cF_{m} := \text{Span}( B_{+} (A_{+}B_{+})^{\geq m}) )$ for $m > 0$. Notice $a \in \cF_{m}, b \in \cF_{\ell}$ implies $ab \in \cF_{m+\ell}$, so this is an \emph{algebra} filtration.

Consider the exact sequence $B_{+} \overset{\iota}{\hookrightarrow} \Lambda^{q}(Q) \overset{\pi}{\twoheadrightarrow} \Lambda^{q_E}(Q_E)$. The basis of Proposition \ref{p:mpa-general-q} realizes an inclusion $i: \Lambda^{q_E}(Q_E) \rightarrow \Lambda^q(Q)$, a $kQ_{0}$-module splitting. So for $\alpha, \beta \in \Lambda^{q_E}(Q_E)$, $i(\alpha) \cdot i(\beta) \equiv  \alpha \cdot \beta$ modulo the two-sided ideal generated by $B_{+}$. Therefore, in the associated graded algebra $gr_{\cF}(\Lambda^q(Q)) :=\oplus_{m=0} \cF_{m}/ \cF_{m+1}$,
\[
i(\alpha) \cdot i(\beta) = \alpha \cdot \beta + \alpha \cdot \beta b'' + \alpha b' \beta  + b \alpha \cdot \beta 
\]
for $b, b', b'' \in B_{+}$. Therefore, for $b_1 \in B_{+}$, there exists $b_2 \in B_+$ such that
\[
i(\alpha) \cdot i(\beta) b_1 = \alpha \cdot \beta (1+b_2)b_1. 
\]

Recall $A_{+}$ has $kQ_{0}$-module basis given by $\{ x_{a}^{p},  \ x_{a}^{m}a^\ell, \ x_{a}^{m} (a^*)^{\ell} \mid m, \ell, p \in \bZ, \ p \neq 0, \ \ell > 0 \}$ by Proposition \ref{prop: basis for An}. In the associated graded algebra $gr_{\cF}(\Lambda^q(Q))$, $L_{a}$ acts on $A_{+} B_{+}$ as follows:
\begin{align*}
    a & (x_{a}^{m} a^{\ell}) b = q^{m} x_{a}^{m} a^{\ell+1} b \\
    a & (x_{a}^{m} (a^*)^{\ell}) b = q^{m} x_{a}^{m} (x_{a} -1)(a^*)^{\ell-1} b \\
    a & (x_{a}^{p}) b = q^{p} x_{a}^{p}(x_a -1)(\rho_{Q'}^{-1}) b
\end{align*}
for $b \in B_{+}$. Since $L_{a}$ is injective on $A_{+}$, by Proposition \ref{prop: prime}, we conclude that $L_{a}$ is injective on the right ideal generated by $A_{+}$. 

Consider the basis of Proposition \ref{p:mpa-general-q}, and write $b \in 1_{\cW} B_+$ in this basis. Then $ab$ is again a basis element, and hence $L_{a}$ takes basis elements injectively to basis elements. We conclude that $L_{a}$ is injective on the right ideal generated by $1_{\cW} B_+$, and therefore on all of $1_{\cW} \Lambda^{q}(Q)$.
\end{proof}

\begin{lem} \label{lem: right mult by a is inj}
Right multiplication by $a$, $R_a: \Lambda^{q}(Q) 1_{\cW} \rightarrow \Lambda^{q}(Q) 1_{\cW}$ is injective. 
\end{lem}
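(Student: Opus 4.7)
The plan is to mirror the proof of Lemma \ref{lem: left mult by a is inj}, interchanging the roles of left and right throughout. All the key ingredients---the free-product basis of $\Lambda^{q}(Q)$ from Proposition \ref{p:mpa-general-q}, the basis of $\Lambda^{q_E}(Q_E)$ from Proposition \ref{prop: basis for An}, and the primality of $\Lambda^{q_E}(Q_E)$ from Proposition \ref{prop:prime_earlier}---are manifestly two-sided in nature, so the adaptation should be essentially formal.

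Concretely, I would keep the same decompositions $Q_{0} = \cB \sqcup \cW$ and $\overline{Q}_1 = \overline{Q_E}_1 \sqcup \overline{Q'}_1$ and the same augmentation ideals $A_+ \subset \Lambda^{q_E}(Q_E)$ and $B_+ \subset \Lambda^{q}(Q', \cW)$ as in the previous lemma. Then I would define the opposite filtration $\cF'_0 := \Lambda^{q}(Q)$, $\cF'_m := \operatorname{Span}\bigl((B_+ A_+)^{\geq m} B_+\bigr)$ for $m \geq 1$, which is again an algebra filtration and is adapted to the right ideal $\Lambda^{q}(Q) 1_\cW$. In the associated graded algebra, $R_a$ acts on basis elements $b \cdot x_a^{m} a^\ell$, $b \cdot x_a^{m} (a^*)^\ell$, $b \cdot x_a^{p}$ (for $b \in B_+$) by the symmetric formulas to those in Lemma \ref{lem: left mult by a is inj}, sending each to a nonzero basis combination. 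Primality of $\Lambda^{q_E}(Q_E)$ ensures $R_a$ is injective on $A_+$, and then the free-product basis of Proposition \ref{p:mpa-general-q} extends this injectivity to all of $\Lambda^{q}(Q) 1_\cW$ (since $R_a$ visibly sends basis elements to basis elements there).

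An alternative, conceptually cleaner route is to appeal to the anti-isomorphism $\Lambda^{q}(Q) \cong \Lambda^{q}(Q)^{\op}$ coming from reversing the orientation of $Q$ (see the proof of Corollary \ref{c:satake-mpa} and Remark \ref{rem: independence}, following \cite[Theorem 1.4]{Shaw}). This anti-isomorphism fixes $1_\cW$ and sends $a = \sum a_i$ to $a^* = \sum a_i^*$ (up to the obvious relabelling of arrows in the reversed quiver). Right multiplication by $a$ on $\Lambda^{q}(Q) 1_\cW$ therefore transforms into left multiplication by the image of $a$ on $1_\cW \Lambda^{q}(Q)$ in the reversed quiver; by the symmetry of the cycle $Q_E$ under orientation reversal, the proof of Lemma \ref{lem: left mult by a is inj} applies verbatim to yield injectivity.

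The only real obstacle is bookkeeping---checking that every computation involving $A_+$, $B_+$, and the $I$-adic filtration retains the form needed when multiplying on the right rather than the left---but no new ideas are required. I expect the step most prone to error is verifying that the right-action formulas in the associated graded algebra really do land in the linear span of distinct basis elements; once this is in place, injectivity follows from the primality of $\Lambda^{q_E}(Q_E)$ exactly as before.
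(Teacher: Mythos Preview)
Your proposal is correct and takes essentially the same approach as the paper: the paper likewise declares the proof ``completely analogous'' to Lemma \ref{lem: left mult by a is inj} and merely records the right-multiplication formulas on $b\,x_a^m a^\ell$, $b\,x_a^m (a^*)^\ell$, $b\,x_a^p$ for $b\in B_+$. The only minor deviation is that the paper reuses the \emph{same} filtration $\cF_m$ rather than introducing your mirrored filtration $\cF'_m$; either works, and your anti-isomorphism route is a valid alternative (the paper invokes this device later, for Lemma \ref{lem: left mult by gamma is inj}).
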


The proof is completely analogous, using the same filtration, together with the calculations:
\begin{align*}
    b (x_{a}^{m} a^{\ell}) & a = b x_a^m a^{\ell+1} \\
    b (x_{a}^{m} (a^*)^{\ell}) & a = b  q^{-\ell+1} x_{a}^{m+1} (a^{*})^{\ell-1} - b x_{a}^{m} (a^{*})^{\ell-1} \\
    b (x_{a}^{p}) & a = b x_{a}^p a. 
\end{align*}

\begin{lem} \label{lem: right mult by gamma is inj}
Let $v \in Q_{0}$. There is unique path $\gamma_{v, w}$ in the spanning forest from $v$ to a white vertex $w \in \cW$. Right multiplication by $\gamma_{v, w}$, $R_{\gamma_{v, w}}:  \Lambda^{q}(Q)e_{v} \rightarrow \Lambda^q(Q)e_{w}$, is injective. 
\end{lem}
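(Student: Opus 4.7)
The plan is to induct on the length of $\gamma_{v,w}$ as a path in $F$. The base case, $v = w$, is trivial since $\gamma_{v,w} = e_w$ and $R_{e_w}$ is the identity on $\Lambda^q(Q) e_w$. For the inductive step, I would write $\gamma_{v,w} = b \cdot \gamma_{h(b), w}$, where $b \in F_1$ is the initial arrow, so $v = t(b)$. By the inductive hypothesis, $R_{\gamma_{h(b), w}}$ is injective on $\Lambda^q(Q) e_{h(b)}$, so the task reduces to showing that $R_b \colon \Lambda^q(Q) e_v \to \Lambda^q(Q) e_{h(b)}$ is injective for every $b \in F_1$.

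For this core step, I would analyze the action of $R_b$ directly on the basis of Proposition \ref{p:mpa-general-q}. A basis element of $\Lambda^q(Q) e_v$ is a concatenable alternating word whose final factor is either a basis element of $\Lambda^{q_E}(Q_E) e_v$ from Proposition \ref{prop: basis for An} (possible only if $v \in \cW$) or a basis element of $\Lambda^{q'}(Q', \cW) e_v$ from Proposition \ref{p:pmpa-basis}. Because $b \in F_1 \subseteq \overline{Q'}_1$, right multiplication by $b$ only affects this final factor, with three possible outcomes: (I) if the final factor lies in $\Lambda^{q_E}(Q_E)$, then $b$ is appended as a new, single-arrow $\Lambda^{q'}(Q', \cW)$-factor; (II) if the final factor $\xi_n \in \Lambda^{q'}(Q', \cW) e_v$ does not end in $b^*$, then $\xi_n b$ is itself a basis element of $\Lambda^{q'}(Q', \cW) e_{h(b)}$ ending in $b$, of length at least two; (III) if $\xi_n = \xi_n' \cdot b^*$, then $\xi_n b = \xi_n'(x_{b^*} - 1) = \xi_n' x_{b^*} - \xi_n'$, and the subword restrictions of Proposition \ref{p:pmpa-basis} inherited from $\xi_n' b^*$ being a basis element (excluding $b b^*$ and $x_{b^*} b^*$ as subwords) guarantee that both $\xi_n' x_{b^*}$ and $\xi_n'$ are again basis elements, the leading one ending in $x_{b^*}$.

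The main obstacle will be verifying injectivity of $R_b$, which I expect to follow from a block-triangular argument. The image basis elements fall into four mutually disjoint types, determined by the final $\Lambda^{q'}(Q', \cW)$-factor of the resulting alternating word: precisely the single arrow $b$ for case (I); a longer word ending in $b$ for case (II); a word ending in $x_{b^*}$ for the leading term of case (III); and a word ending in neither $b$ nor $x_{b^*}$ (or with no $\Lambda^{q'}$-factor at the end, when $\xi_n' = e_{h(b)}$) for the correction term $-\xi_n'$ in case (III). Distinct source basis elements biject onto their respective leading target basis elements within each of these types, so no nontrivial linear combination can cancel. This will yield injectivity of $R_b$ and complete the induction.
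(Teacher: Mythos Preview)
Your argument is correct, and it rests on the same idea as the paper's --- analyze how right multiplication by forest arrows acts on basis words --- but the paper's execution is cleaner in two respects. First, it multiplies by the whole path $\gamma_{v,w}$ at once rather than inducting arrow by arrow. Second, and more to the point, it works with the \emph{substituted} basis of Proposition~\ref{p:basis_for_pmpa_w/sub} for the $\Lambda^{q'}(Q',\cW)$-factor (which is what Proposition~\ref{p:mpa-general-q} actually invokes); there the only reduction needed at the junction is $b^* b \mapsto \overline{x_{b^*}^+}$, a single term. So each basis word of $\Lambda^q(Q) e_v$ is sent to a single basis word of $\Lambda^q(Q) e_w$, and injectivity is just the observation that distinct inputs give distinct outputs. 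Your two-term split $b^* b = x_{b^*} - 1$ and the block-triangular bookkeeping in case~(III) are entirely an artifact of citing Proposition~\ref{p:pmpa-basis} instead; switching bases collapses your case~(III) to a single basis word and your inductive step becomes the paper's direct argument.

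One small side remark: your case~(I) is vacuous. Every $b \in F_1$ has $t(b) \notin \cW$, since the roots carry no outgoing forest arrow; hence the final alternating factor of any basis word in $\Lambda^q(Q) e_v$ must lie in $\Lambda^{q'}(Q',\cW)$, never in $\Lambda^{q_E}(Q_E)$.
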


\begin{proof}
We need to show $\alpha \gamma_{h(\alpha), w} \neq 0$ for $\alpha \neq 0$. Consider the basis in Proposition \ref{p:basis_for_pmpa_w/sub}, consisting of words in $a, \overline{x_{a}^{\pm}}$ for $a$ an arrow, without certain disallowed subwords, e.g. $aa^*$ for $a \in \overline{Q}_{1}$. Note that $\gamma_{h(\alpha), w}$ is a basis element as $aa^*$ cannot appear in a shortest path. Write $\alpha$ as a linear combination of basis elements. Notice $\alpha \gamma_{h(\alpha), w}$ is a linear combination of basis elements \emph{unless} the disallowed subword $a^*a$ is created for some arrow $a \in F_{1}$. This disallowed subword reduces to $\overline{x_{a^*}^+}$ (which is not itself disallowed since $a^* \not \in F_{1}$, as $a \in F_{1}$.) Furthermore, the appearance of $\overline{x_{a^*}^+}$ for $a \in F_{1}$ cannot create the disallowed subwords: 
\[
\text{(I) } \overline{x_{a^*}^+} \overline{x_{a^*}^-}, \quad
\text{(II) } \overline{x_{a^*}^-} \overline{x_{a^*}^+}, \quad
\text{ (III) }a \overline{x_{a^*}^{\pm}}, \quad
\text{ (IV) } \overline{x_{a^*}^+}^2, \quad
\text{ (V) } \overline{x_{a^*}^+}a^*, 
\]
for $a \in F_{1}$, as in each case $\alpha$ or $\gamma_{h(\alpha), w}$ would itself contain a disallowed subword: 
\[
\text{(I) } \overline{x_{a^*}^-}, \quad
\text{(II) } \overline{x_{a^*}^-}, \quad 
\text{ (III) }aa^*, \quad
\text{ (IV) }a \overline{x_{a^*}^+} \text{ or  } \overline{x_{a^*}^+} a^*, \quad
\text{ (V) } aa^*,
\]
each a contradiction. 
We conclude that right multiplication by $\gamma_{h(\alpha), w}$ takes basis elements injectively to basis elements and hence is injective.
\end{proof}

\begin{lem} \label{lem: left mult by gamma is inj}
Let $v \in Q_{0}$. There is unique path $\gamma_{w, v}$ in the opposite of the spanning forest, $F_{1}^{\text{op}}$, from $w \in \cW$ to $v$. Left multiplication by $\gamma_{w, v}$, $L_{\gamma_{w, v}}:  e_{v} \Lambda^{q}(Q) \rightarrow e_{w} \Lambda^q(Q)$, is injective. 
\end{lem}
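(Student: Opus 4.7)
The plan is to mirror the proof of Lemma \ref{lem: right mult by gamma is inj} directly, via one of two essentially equivalent routes.

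The first and cleaner route is to construct an algebra anti-automorphism $\Phi$ of $\Lambda^{q}(Q)$ induced by the involution $a \leftrightarrow a^{*}$ on $\overline{Q}_{1}$, and use it to transfer the previous lemma. Explicitly, the assignment $a \mapsto a^{*}$ extends to an anti-isomorphism of $k\overline{Q}$ (sending a product of arrows to the reversed product of their duals), which preserves every $g_{a} = 1 + a a^{*}$ and hence extends to the localization $L$. On the relation, it sends $\rho_{\leq}$ to $\rho_{\geq}$. Combining with the isomorphism $\Lambda^{q}(Q,\leq) \cong \Lambda^{q}(Q,\geq)$ of Remark \ref{rem: independence}, one obtains the desired anti-automorphism $\Phi$. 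Under $\Phi$, left multiplication by $\gamma_{w,v}$ becomes right multiplication by $\Phi(\gamma_{w,v})$; since $\gamma_{w,v}$ consists of arrows $a^{*}$ with $a \in F_{1}$, its image is the reversed path in $F_{1}$ from $v$ to $w$, which is exactly a $\gamma_{v,w}$ handled by Lemma \ref{lem: right mult by gamma is inj}. Injectivity follows.

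The second route is to redo the case analysis of Lemma \ref{lem: right mult by gamma is inj} with left and right swapped. The path $\gamma_{w,v}$ is a shortest path in $F_{1}^{\text{op}}$, so no two consecutive arrows form a disallowed $cc^{*}$ pair; therefore $\gamma_{w,v}$ is itself a basis element of Proposition \ref{p:basis_for_pmpa_w/sub}. Write $\gamma_{w,v} = a_{1}^{*} \cdots a_{k}^{*}$ with $a_{i} \in F_{1}$, and expand $\alpha \in e_{v} \Lambda^{q}(Q)$ in the basis. Then $\gamma_{w,v} \alpha$ is again a $kQ_{0}$-linear combination of basis elements unless a disallowed subword is created at the junction. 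The only such possibility is $a_{k}^{*} a_{k}$, which arises precisely when $\alpha$ begins with $a_{k}$; this reduces to $\overline{x_{a_{k}^{*}}^{+}}$, which is itself allowed since $a_{k}^{*} \notin F_{1}$. One then enumerates (exactly as in Lemma \ref{lem: right mult by gamma is inj}) the forbidden subwords that could appear around the newly created $\overline{x_{a_{k}^{*}}^{+}}$, namely $\overline{x_{a_{k}^{*}}^{+}} \overline{x_{a_{k}^{*}}^{-}}$, $\overline{x_{a_{k}^{*}}^{+}}^{2}$, $a_{k} \overline{x_{a_{k}^{*}}^{\pm}}$, and $\overline{x_{a_{k}^{*}}^{+}} a_{k}^{*}$; each would force $\alpha$ itself to have already contained a disallowed subword, contradicting the choice of $\alpha$ as a basis element.

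The main obstacle in route one is verifying that the anti-isomorphism of $L$ really descends to $\Lambda^{q}(Q)$, which relies on the precise behaviour of the relation $\rho - q$ under reversal of the arrow ordering and the orientation-independence statement in \cite{Shaw}. In route two, the obstacle is the combinatorial bookkeeping of all junction patterns; this is finite and structured, but each case must be inspected. I would favour route one, since it recycles Lemma \ref{lem: right mult by gamma is inj} wholesale, leaving only the one-line construction of $\Phi$ to verify.
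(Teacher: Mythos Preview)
Your proposal is correct, and your route one is precisely the paper's approach: the paper's entire proof is the one-line remark that the statement ``is identical, and follows from the isomorphism $\Lambda^{q}(Q) \cong \Lambda^{q}(Q)^{\text{op}}$,'' which is exactly your anti-automorphism $\Phi$ transporting Lemma~\ref{lem: right mult by gamma is inj} to the left-multiplication setting. Your route two would also work and simply spells out what ``identical'' means combinatorially.
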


The proof is identical, and follows from the isomorphism $\Lambda^{q}(Q) \cong \Lambda^{q}(Q)^{\text{op}}$. 

\begin{prop} \label{prop: prime for quivers containing cycle}
$\Lambda^{q}(Q)$ is prime, for $Q$ connected and containing a cycle. 
\end{prop}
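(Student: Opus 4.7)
The plan is to reduce primality of $\Lambda^q(Q)$ to primality of the cycle subalgebra $A := \Lambda^{q_E}(Q_E)$ (Proposition \ref{prop: prime}), by combining the four injectivity lemmas above with the free-product decomposition of Proposition \ref{p:mpa-general-q}.

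First I would reduce to the case where $\alpha$ and $\beta$ are concentrated between white vertices. Given nonzero $\alpha, \beta \in \Lambda^q(Q)$, choose vertices $i, j, k, l$ with $e_i \alpha e_j \neq 0$ and $e_k \beta e_l \neq 0$; it suffices to find $x \in e_j \Lambda^q(Q) e_k$ with $\alpha x \beta \neq 0$. Applying Lemma \ref{lem: left mult by gamma is inj} on the left and Lemma \ref{lem: right mult by gamma is inj} on the right produces nonzero elements $\alpha' := \gamma_{w_1,i}\,\alpha\,\gamma_{j,w_2} \in e_{w_1} \Lambda^q(Q) e_{w_2}$ and $\beta' := \gamma_{w_3,k}\,\beta\,\gamma_{l,w_4} \in e_{w_3} \Lambda^q(Q) e_{w_4}$ with $w_1, w_2, w_3, w_4 \in \cW$. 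Exhibiting $y \in e_{w_2} \Lambda^q(Q) e_{w_3}$ with $\alpha' y \beta' \neq 0$ then yields $x := \gamma_{j,w_2}\,y\,\gamma_{w_3,k}$ satisfying $\alpha x \beta \neq 0$, so it suffices to treat $\alpha', \beta' \in 1_\cW \Lambda^q(Q) 1_\cW$.

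For the main step, I would pass to the associated graded for the filtration $\cF_m = \mathrm{Span}(B_+(A_+ B_+)^{\geq m})$ from the proof of Lemma \ref{lem: left mult by a is inj}, where $B := \Lambda^q(Q',\cW)$; in the basis of Proposition \ref{p:mpa-general-q} this is the filtration by the number of $B_+$-factors in an alternating word. In $\operatorname{gr}_\cF \Lambda^q(Q)$, the relation $r = \rho_{Q_E}\rho_{Q'} - q$ decouples into its cycle part on $1_\cW$ and its $Q'$-part on $1_\cB$—exactly the decoupling exploited in the proof of Proposition \ref{p:mpa-general-q}—so the associated graded coincides with the free product $A *_{kQ_0} B$ at the level of the basis. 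Pick nonzero leading terms $\overline{\alpha'} \in \cF_{m_\alpha}/\cF_{m_\alpha+1}$ and $\overline{\beta'} \in \cF_{m_\beta}/\cF_{m_\beta+1}$, and write each alternating word in the canonical form $a_0 b_1 a_1 \cdots b_m a_m$, permitting $a_0, a_m$ to be the idempotents $e_{w_1}, e_{w_2}$. Choose a monomial order refining the orderings of the bases of $A$ and $B$, and isolate unique leading monomials $w_\alpha, w_\beta$ of $\overline{\alpha'}, \overline{\beta'}$. Let $a_{\mathrm{end}} \in e_{u_1} A e_{w_2}$ be the rightmost $A$-factor of $w_\alpha$ and $a_{\mathrm{start}} \in e_{w_3} A e_{u_2}$ the leftmost $A$-factor of $w_\beta$, both nonzero basis elements of $A$. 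By primality of $A$, pick $y \in e_{w_2} A e_{w_3}$ with $a_{\mathrm{end}}\,y\,a_{\mathrm{start}} \neq 0$ in $A$. Viewing $y$ also as an element of $\Lambda^q(Q)$ via the inclusion from Proposition \ref{p:mpa-general-q}, the expansion of $\overline{\alpha'}\,y\,\overline{\beta'}$ in the free-product basis has a leading monomial in which the nonzero factor $a_{\mathrm{end}}\,y\,a_{\mathrm{start}}$ sits between the $B$-factors of $w_\alpha$ and $w_\beta$, and therefore $\alpha' y \beta' \neq 0$.

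The main obstacle is controlling cancellation when $\overline{\alpha'}\,y\,\overline{\beta'}$ is re-expanded in the free-product basis: several summands could share the same middle $B$-factors, and the various products $a_{\mathrm{end}}^{(i)}\,y\,a_{\mathrm{start}}^{(j)}$ could in principle conspire to cancel. The remedy is a sufficiently fine monomial order on the free-product basis (for instance, lexicographic in the $B$-length and refined by the orderings from the proofs of Propositions \ref{prop: basis for An} and \ref{p:pmpa-basis}) that uniquely isolates a pair of leading monomials whose product has a top basis term not attainable from any strictly lower pair under the same $y$-multiplication; primality of $A$ then ensures this top term has nonzero coefficient.
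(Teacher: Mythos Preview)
Your overall strategy matches the paper's: reduce to $1_\cW \Lambda^q(Q) 1_\cW$ via the forest paths (Lemmas \ref{lem: right mult by gamma is inj}, \ref{lem: left mult by gamma is inj}), pass to the associated graded $\gr_\cF$ (which is the free product $A *_{kQ_0} B$), and produce a middle element drawn from the cycle algebra $A$. The paper differs only in execution: rather than invoking primality of $A$ abstractly, it chooses the explicit middle element $x_a^M a^{N+N'} x_a^{M'}$ and uses Lemmas \ref{lem: left mult by a is inj}, \ref{lem: right mult by a is inj} (which you do not use) to see that $\alpha\gamma_1,\ \gamma_2\beta \neq 0$. The gain is structural: every basis word of $\overline{\alpha\gamma_1}$ then has last $A$-factor of the rigid form $x_a^m a^n$ with $m,n>0$, and the product $(x_a^m a^n)(a^{n'}x_a^{m'})$ in $A$ is a \emph{single} scaled basis element, so no further expansion occurs when concatenating.

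Your monomial-order remedy for the cancellation problem does not work as stated, because multiplication in $A$ is not compatible with any monomial order on its basis (for instance $a\cdot a^* = x_a - 1$ mixes directions), so isolating leading monomials $w_\alpha, w_\beta$ and a single $y$ with $a_{\mathrm{end}}\,y\,a_{\mathrm{start}} \neq 0$ gives no control over the top term of $\overline{\alpha'}\,y\,\overline{\beta'}$. The correct fix---implicit in the paper's argument---is to group basis words of $\overline{\alpha'}$ by their \emph{prefix} $p$ (everything up to the last $A$-factor), writing $\overline{\alpha'} = \sum_p p\cdot \tilde a_p$ with $\tilde a_p \in A$, and similarly $\overline{\beta'} = \sum_q \tilde b_q\cdot q$. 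Distinct pairs $(p,q)$ then contribute to disjoint sets of free-product basis words, so it suffices that $\tilde a_{p_0}\,y\,\tilde b_{q_0}\neq 0$ in $A$ for one pair, \emph{and} that this product lies in $A_+$ (so it does not collapse into $p_0 q_0$). Primality of $A$ gives the first; the paper's specific choice $y = x_a^M a^K x_a^{M'}$ with $K$ large forces the second, since the result then has $a$-degree at least $K$ minus a bound depending only on $\tilde a_{p_0},\tilde b_{q_0}$. Without this refinement your last step has a genuine gap.
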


\begin{proof}
Let $\alpha, \beta\in \Lambda^{q}(Q)$ be nonzero. We will show $\alpha \Lambda^{q}(Q) \beta \neq 0$ by building an explicit element $\gamma = \gamma_{1} \gamma_{2}$ so that $\alpha \gamma \beta \neq 0$. That is, define
\[
\gamma_1 := \gamma_{h(\alpha), w} x_{a}^{M} a^{N}  \hspace{1cm} \gamma_2 := a^{N'} x_{a}^{M'} \gamma_{w, t(\beta)}
\]
where $M, M', N, N' \in \bN$ are sufficiently large (depending on $\alpha$ and $\beta$) and where $\gamma_{h(\alpha), w}$ and $\gamma_{w, t(\beta)}$ are as defined in Lemma \ref{lem: right mult by gamma is inj} and Lemma \ref{lem: left mult by gamma is inj} respectively.

We will first show that right multiplication by $\gamma_{1}$ is injective on concatenable paths to conclude $\alpha \gamma_{1} \neq 0$. Then we will argue that left multiplication by $\gamma_{2}$ is injective on concatenable paths to conclude $\gamma_{2}\beta \neq 0$. Finally, we will show that $\alpha \gamma_{1} \gamma_{2} \beta \neq 0$.

To show $R_{\gamma_{1}} : \Lambda^{q}(Q) e_{h(\alpha)} \rightarrow  \Lambda^{q}(Q)e_{h(\gamma_{1})}$ is injective, it suffices to show that right multiplication by each piece, $\gamma_{h(\alpha), w}$,  $x_{a}^{M}$, and  $a^{N}$, is injective. $R_{\gamma_{h(\alpha), w}}$ is injective by Lemma \ref{lem: right mult by gamma is inj}, $R_{x_{a}^{M}}$ is injective since $x_a$ is invertible, and $R_{a^{N}}$ is injective by Lemma \ref{lem: right mult by a is inj}.

Similarly, $L_{\gamma_{2}}: e_{t(\beta)} \Lambda^{q}(Q) \rightarrow e_{t(\gamma_{2})}\Lambda^{q}(Q)$ is injective since $L_{a}, L_{x_{a}},$ and $L_{\gamma_{w, t(\beta)}}$ are injective by Lemma \ref{lem: left mult by a is inj}, invertibility of $x_a$, and Lemma \ref{lem: left mult by gamma is inj}, respectively. 

Finally notice that $\alpha \gamma_{1} \neq 0$ and $\gamma_{2} \beta \neq 0$ implies $\alpha \gamma_{1} \gamma_{2} \beta \neq 0$. To see this, consider the filtration $\cF$ defined in the proof of Lemma \ref{lem: left mult by a is inj}. It suffices to show $\alpha \gamma_{1} \gamma_{2} \beta \neq 0$ in $\text{gr}_{\cF}(\Lambda^{q}(Q))$. Write $\alpha \gamma_{1}$ and $\gamma_{2} \beta$ in the basis of Proposition \ref{p:mpa-general-q} (see the basis in Proposition \ref{prop: basis for An}). By design $\alpha \gamma_{1}$ ends with a basis element of the form $x_{a}^{m} a^{n}$ for $m, n >0$ and $\gamma_{2} \beta$ begins with a basis element of the form $a^{n'} x_{a}^{m'}$ for $m', n' >0$. Their product in $gr_{\cF}(\Lambda^{q}(Q))$ is the scaled basis element $q^{nm'} x_{a}^{m+m'} a^{n+n'}$. So $\alpha \gamma_{1} \gamma_{2} \beta \neq 0$ in $\text{gr}_{\cF}(\Lambda^{q}(Q))$ and hence in $\Lambda^{q}(Q)$, completing the proof. 

\end{proof}

\subsection{The center of multiplicative preprojective algebras}

The center of $\Lambda^{1}(Q)$ depends dramatically on the taxonomy of quiver $Q$ into Dynkin, extended Dynkin, and others. 
\begin{itemize}
\item For $Q$ Dynkin and $k$ characteristic not 2, 3, or 5, one can compute the center using the isomorphism $\Lambda^{1}(Q) \cong \Pi(Q)$, see Example \ref{exam: Dynkin}.
\item For $Q$ extended Dynkin, Conjecture \ref{conj: NCCR} predicts $Z(\Lambda^{1}(Q)) \cong e_{v} \Lambda^{1}(Q) e_{v}$, which is proven in Section \ref{s:NCCR_proof} 
in the case $Q=\widetilde{A_n}$. 
\item In the remaining cases, Conjecture \ref{conj: 2CY} predicts $Z(\Lambda^{q}(Q))=k$, for any $q \in (k^*)^{Q_{0}}$. 
\end{itemize}
The goal of this section is to establish the conjecture in the case $Q$ contains a cycle. 

\begin{prop} \label{prop: general center trivial}
Let $Q$ be a connected quiver strictly containing a cycle and fix $q \in (k^{\times})^{Q_{0}}$.  Then $Z(\Lambda^{q}(Q)) = k$.
\end{prop}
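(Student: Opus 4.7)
The plan is in three steps. First I would reduce the statement to showing that $e_w z e_w \in k\cdot e_w$ for every white vertex $w \in \cW$. Given $z \in Z(\Lambda^q(Q))$, decompose $z = \sum_{v \in Q_0} z_v$ with $z_v := e_v z e_v$, and for each $v$ let $\gamma_{v,w}$ be the unique spanning-forest path from $v$ to its white root $w \in \cW$. Centrality gives $\gamma_{v,w} z_w = z_v \gamma_{v,w}$, and by Lemma \ref{lem: right mult by gamma is inj} right multiplication by $\gamma_{v,w}$ is injective on $\Lambda^q(Q) e_v$, so any identity of the form $z_w = \lambda\cdot e_w$ propagates to $z_v = \lambda \cdot e_v$. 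Moreover, centrality along cycle arrows $a_i : w \to w'$ forces $\lambda_w = \lambda_{w'}$ whenever $z_w = \lambda_w e_w$ and $z_{w'} = \lambda_{w'} e_{w'}$, since $a_i$ is a non-zero basis element; connectedness of $Q$ then yields a common scalar $\lambda$, and hence $z = \lambda \cdot 1 \in k$.

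Second, I would exploit the assumption that $Q$ properly contains the cycle to choose $w \in \cW$ incident to at least one arrow $b \in \overline{Q'}_1$. Such a $w$ exists because $Q$ has either an extra vertex or an extra arrow beyond $Q_E$, and after possibly travelling along the spanning forest one may arrange $t(b) = w \in \cW$. By Proposition \ref{p:mpa-general-q}, $z_w$ has a unique expansion as a $k$-linear combination of alternating concatenable words built from the cycle basis of Proposition \ref{prop: basis for An} and the partial-preprojective basis of Proposition \ref{p:basis_for_pmpa_w/sub}. Centrality then imposes the two families of identities $a^N z_w = z_w a^N$ for all $N \geq 1$ (with $a$ the sum of positively oriented cycle arrows as in Lemma \ref{lem: left mult by a is inj}) and $b z_w = z_{h(b)} b$, each of which has to be read in the normal form coming from the reduction system of Section \ref{s: free-product}.

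The hard part will be the leading-term analysis of these commutation identities. The plan is to filter the basis words by the pair (cycle winding number, number of $Q'$-factors). For large enough $N$, comparing the normal forms of $a^N z_w$ and $z_w a^N$ should show that cycle factors of the form $x^m a^\ell$ or $x^m(a^*)^\ell$ with $\ell > 0$ contribute basis terms of strictly greater winding number on one side of the identity than on the other, and hence that every cycle factor appearing in $z_w$ must already reduce to a pure power $x^m$. Having restricted $z_w$ to a combination of alternating words whose cycle factors are powers of $x$, centrality with $b$ combined with the injectivity results of Lemmas \ref{lem: left mult by a is inj}, \ref{lem: right mult by gamma is inj}, and \ref{lem: left mult by gamma is inj} would then rule out any remaining non-trivial $Q'$-factors and any non-zero powers of $x_a$, leaving $z_w \in k \cdot e_w$. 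The first step then concludes $z \in k$.
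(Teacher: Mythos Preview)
Your strategy shares the essential tools with the paper's proof—the explicit basis of Proposition \ref{p:mpa-general-q}, commutation with high powers $a^N$ of the cycle, commutation with an arrow $b \notin \overline{Q_E}_1$, and the injectivity Lemmas \ref{lem: left mult by a is inj}--\ref{lem: left mult by gamma is inj}—but the \emph{order} in which you deploy $a^N$ and $b$ is reversed, and this creates a genuine gap in your Step 4.

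Your claim there is that comparing $a^N z_w$ with $z_w a^N$ forces every cycle factor to be a pure power $x^m$. This works for \emph{mixed} basis words $c_1 p_1 c_2 \cdots$ (those containing at least one $Q'$-factor $p_i$): left multiplication by $a^N$ pushes the first cycle factor $c_1$ to winding $\geq N$, while right multiplication leaves $c_1$ unchanged, so for $N$ large enough no mixed term survives. But for a \emph{pure cycle} term $c \in e_w \Lambda^{q_E}(Q_E) e_w$, both $a^N c$ and $c\, a^N$ are pure cycle elements of the same total winding, so the inequality you invoke fails. Concretely, when $q_E = 1$ the algebra $\Lambda^{q_E}(\widetilde{A_n})$ has a two-dimensional center (Corollary \ref{c:center}), and the Satake preimages of the generators $X, Y$ from Theorem \ref{thm: shaw} give central elements at $w$ that are not polynomials in $x$ yet commute with every $a^N$. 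So $a^N$-commutation alone cannot reduce $z_w$ to a polynomial in $x$; at best it shows $z_w$ has no mixed terms.

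The paper sidesteps this by working globally with $z_+ = z - z_0$ and applying the two commutation arguments in the opposite order: \emph{first} it uses an arrow $b$ outside the cycle to show that every basis term of $z_+$ already contains a non-cycle arrow (ruling out pure cycle terms), and \emph{then} applies the $a^N$-winding argument to the remaining mixed terms to reach a contradiction. Your reduction to a single vertex $z_w$ is sound and makes the propagation step clean, but it complicates the $b$-step: the identity you would need is $z_w b = b\, z_{h(b)}$, which involves $z_{h(b)}$ at a (possibly black) vertex that you have not independently controlled, so you cannot read off constraints on $z_w$ directly. Working globally, as the paper does, avoids this circularity because $z_+ b = b z_+$ compares elements on both sides whose full basis expansions are available.
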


\begin{proof}
Let $z \in Z(\Lambda^{q}(Q))$. Decompose $z = z_{0} + z_{+}$ into a sum of length zero and positive length paths.  First suppose that $z_+ = 0$. Then $z = \sum_{i \in Q_0} c_i e_i$. Note that every individual arrow forms a basis element of Proposition \ref{p:mpa-general-q}. 
Then $z a = a z$ for every arrow implies that all $c_i$ are equal, as $Q$ is connected.
 


Now assume $z_+ \neq 0$. Expanding $z_+$ in the basis of Proposition \ref{p:mpa-general-q}, we write $z_+ =\sum_{i} c_{i} z_{i}$, where each $z_i$ is a positive-length alternating word in the cycle and the complement. We claim that each $z_{i}$ has an arrow not in the cycle. Suppose, by contradiction, there exists $j$ such that $z_{j}$ consists of only arrows in the cycle. Since $Q$ strictly contains the cycle, there exists an arrow $b \in Q_{1}$ not in the cycle. And as $z_+$ commutes with each arrow $a_{i}$ in the cycle, there exists $l$ such that $z_l$ consists of only arrows in the cycle that ends at $t(b)$. Then $z_+  b = b z_+$.
But $z_+ b$ contains a term beginning with $x_a^{m} a^{j}$ for some $m, j$ with $(m, j) \neq (0,0)$. However, $b z_{+}$ has no term beginning $x_a^{m} a^{j}$ unless $(m, j) = (0,0)$. This contradicts the existence of $z_{j}$ consisting of only arrows in the cycle. 

Since $z_+ \neq 0$, thanks to
Lemma \ref{lem: right mult by gamma is inj},  there exists a vertex $i$ and a path $b =\gamma_{h(z_+), i}$ such that $z_+ b e_i \neq 0$.  Therefore also $b z_+ e_i \neq 0$, so $z_+ e_i \neq 0$.  By Lemma \ref{lem: right mult by a is inj}, we then have $z_+ a^n \neq 0$ for all $n$. Hence also $a^n z_+ \neq 0$.  Now, for sufficiently large $N \gg 0$, $a^{N} z_{+}$ contains basis elements beginning with an arbitrarily high power of the cycle. However,  terms of $z_{+} a^{N}$ begin only with powers of the cycle appearing in $z_+$, 
since every $z_j$ has a term not in the cycle. These powers are bounded, so this contradicts the assumption that $z_+ \neq 0$. We conclude that $z$ is a scalar multiple of the identity.
\end{proof}

\begin{cor} \label{c: unique CY}
If $Q$ is connected and properly contains a non-oriented cycle, then $\Lambda^{q}(Q)$ has a unique, up to scaling, Calabi--Yau structure. 
\end{cor}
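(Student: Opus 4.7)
The plan is to unpack what ``Calabi--Yau structure'' means and reduce uniqueness to the triviality of the center. By Corollary \ref{cor: 2-CY}, $A := \Lambda^{q}(Q)$ is 2-Calabi--Yau, so a Calabi--Yau structure on $A$ is by definition an $A$-bimodule isomorphism
\[
\eta \colon \mathrm{Ext}^{2}_{A\text{-bimod}}(A, A \otimes A) \longrightarrow A.
\]
As observed in the remark following the definition of $d$-Calabi--Yau in the excerpt, for ordinary (non-dg) algebras the accompanying negative cyclic homology data is canonical (by \cite[Proposition 5.7]{VdB-TV}), so uniqueness reduces to uniqueness of $\eta$ up to an appropriate automorphism.

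Given two 2-Calabi--Yau structures $\eta_{1}, \eta_{2}$ on $A$, the composition $\eta_{2} \circ \eta_{1}^{-1} \colon A \to A$ is an $A$-bimodule automorphism. So I would next show that every $A$-bimodule endomorphism $\phi \colon A \to A$ has the form $\phi(a) = z a = a z$ for some $z \in Z(A)$. Indeed, $z := \phi(1)$ satisfies $a z = a \phi(1) = \phi(a) = \phi(1) a = z a$ for all $a \in A$, so $z \in Z(A)$, and conversely multiplication by any central element is a bimodule map. Invertibility of $\phi$ translates to invertibility of $z$ in $Z(A)$, so the group of $A$-bimodule automorphisms of $A$ is $Z(A)^{\times}$.

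By the hypothesis that $Q$ is connected and properly contains an unoriented cycle, Proposition \ref{prop: general center trivial} gives $Z(A) = k$, hence $Z(A)^{\times} = k^{\times}$. Combining these steps, $\eta_{2} \circ \eta_{1}^{-1}$ is multiplication by some $\lambda \in k^{\times}$, so $\eta_{2} = \lambda \cdot \eta_{1}$. This proves that the Calabi--Yau structure on $A$ is unique up to a nonzero scalar. There is no real obstacle here: all the work has been done in establishing the 2-Calabi--Yau property (Corollary \ref{cor: 2-CY}) and the triviality of the center (Proposition \ref{prop: general center trivial}); the corollary is essentially a formal consequence.
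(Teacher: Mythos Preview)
Your proposal is correct and follows essentially the same approach as the paper: both argue that two Calabi--Yau structures differ by an $A$-bimodule automorphism of $A$, identify such automorphisms with $Z(A)^{\times}$ via the image of $1$, and then invoke Proposition \ref{prop: general center trivial} to conclude $Z(A)^{\times}=k^{\times}$. Your version simply spells out the bimodule endomorphism $\leftrightarrow$ center correspondence in slightly more detail.
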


\begin{proof}
Write $\Lambda := \Lambda^{q}(Q)$. Any two Calabi--Yau structures  differ by an invertible map in $\Hom_{\Lambda-\text{bimod}}(\Lambda, \Lambda)$, which is determined by the image of the unit, a central invertible element. So the set of Calabi--Yau structures on $\Lambda$, when non-empty, is a $Z(\Lambda)^{\times}$-torsor. By Proposition \ref{prop: general center trivial}, $Z(\Lambda)^{\times} = k^\times$, so any two Calabi--Yau structures differ by an invertible scalar.
\end{proof}

This completes the proof of Theorem \ref{thm: 2CY}.

 
\bibliographystyle{alpha}
\bibliography{MPA-Final-2ndVersion}


\Addresses
\end{document}